\setlist{leftmargin=1.6em}
\newtheorem{definition}{Definition}
\newtheorem{example}{Example}
\newtheorem{remark}{Remark}
\newtheorem{theorem}{Theorem}
\newtheorem{proposition}{Proposition}
\newtheorem{lemma}{Lemma} 
\newtheorem{corollary}{Corollary}
\def\subsubsection{\@startsection{subsubsection}{3}%
  \z@{.5\linespacing\@plus.7\linespacing}{-.5em}%
  {\normalfont\bfseries}}
\newcommand{\EOT}{\mathsf{S}_c^{\mspace{1mu}\epsilon}}
\newcommand{\Cov}[0]{\mathrm{Cov}}
\newcommand{\Var}[0]{\mathrm{Var}}
\newcommand{\sign}[0]{\mathrm{sign}}
\newcommand{\ind}[0]{\mathbbm{1}}
\newcommand{\calC}[0]{\mathcal{C}}
\newcommand{\calF}[0]{\mathcal{F}}
\newcommand{\calP}[0]{\mathcal{P}}
\newcommand{\calS}[0]{\mathcal{S}}
\newcommand{\calT}[0]{\mathcal{T}}
\newcommand{\calX}[0]{\mathcal{X}}
\newcommand{\calY}[0]{\mathcal{Y}}
\newcommand{\supp}{\mathrm{spt}}
\newcommand{\E}[0]{\mathbb{E}}
\newcommand{\R}[0]{\mathbb{R}}
\newcommand{\Prob}[0]{\mathbb{P}}
\newcommand{\Hmt}[1]{\dot H^{-1,2}\left( #1 \right)}
\newcommand{\Ht}[1]{\dot H^{1,2}\left( #1 \right)}
\newcommand{\dx}[1]{{d}#1}
\newcommand{\vertiii}[1]{{\left\vert\kern-0.25ex\left\vert\kern-0.25ex\left\vert #1 
    \right\vert\kern-0.25ex\right\vert\kern-0.25ex\right\vert}}
\newcommand{\sS}{\mathsf{S}}
\newcommand{\cA}{\mathcal{A}}
\newcommand{\cB}{\mathcal{B}}
\newcommand{\cF}{\mathcal{F}}
\newcommand{\cP}{\mathcal{P}}
\newcommand{\cX}{\mathcal{X}}
\newcommand{\cY}{\mathcal{Y}}
\newcommand{\EE}{\mathbb{E}}
\newcommand{\NN}{\mathbb{N}}
\newcommand{\RR}{\mathbb{R}}
\newcommand{\vasti}{\bBigg@{3.5 }}
\newcommand{\vast}{\bBigg@{4}}
\newcommand{\Vast}{\bBigg@{5}}
\newcommand{\Vastt}{\bBigg@{7}}
\newcommand{\be}{\begin{equation}}
\newcommand{\ee}{\end{equation}}
\newcommand{\ba}{\begin{align}}
\newcommand{\ea}{\end{align}}
\newcommand{\baa}{\begin{align*}}
\newcommand{\eaa}{\end{align*}}
\newcommand{\abs}[1]{\left|#1\right|}
\newcommand{\wass}{\mathsf{W}_1}
\newcommand{\Wp}{\mathsf{W}_p}
\newcommand{\GWp}{\mathsf{W}_p^{(\gamma_\sigma)}}
\newcommand{\GWo}{\mathsf{W}_1^{(\gamma_\sigma)}}
\newcommand{\Lip}{\mathsf{Lip}_{1,0}}
\newcommand{\ip}[1]{\left\langle#1\right\rangle}
\newcommand{\SWp}{\underline{\mathsf{W}}_p}
\newcommand{\SWone}{\underline{\mathsf{W}}_1}
\newcommand{\MSWp}{\overline{\mathsf{W}}_p}
\newcommand{\MSWone}{\overline{\mathsf{W}}_1}
\newcommand{\unitsph}{\mathbb{S}^{d-1}}
\newcommand{\ptheta}{\mathfrak{p}^\theta}
\newcommand{\dconv}{\stackrel{d}{\to}}
\newcommand{\kk}[1] {{\color{black!30!green}KK: [#1]}}
\DeclareMathOperator{\interior}{int}
\DeclareMathOperator{\diam}{diam}
\DeclareMathOperator{\inte}{int}
\DeclareMathOperator{\esssup}{esssup}
\begin{document}

\title[Statistical inference with regularized optimal transport]{Statistical inference with\\regularized optimal transport}

\thanks{
Z. Goldfeld is supported by the NSF CRII grant CCF-1947801, in part by the 2020 IBM Academic Award, and in part by the NSF CAREER Award CCF-2046018. 
K. Kato is partially supported by the  NSF grants DMS-1952306 and DMS-2014636.}

\date{First version: May 6, 2022. This version: \today}

\author[Z. Goldfeld]{Ziv Goldfeld}
\address[Z. Goldfeld]{
School of Electrical and Computer Engineering, Cornell University.
}
\email{goldfeld@cornell.edu}

\author[K. Kato]{Kengo Kato}
\address[K. Kato]{
Department of Statistics and Data Science, Cornell University.
}
\email{kk976@cornell.edu}

\author[G. Rioux]{Gabriel Rioux}

\address[G. Rioux]{
Center for Applied Mathematics, Cornell University.}
\email{ger84@cornell.edu}

\author[R. Sadhu]{Ritwik Sadhu}
\address[R. Sadhu]{
Department of Statistics and Data Science, Cornell University.
}
\email{rs2526@cornell.edu}

\begin{abstract}
Optimal transport (OT) is a versatile framework for comparing probability measures, with many applications to statistics, machine learning, and applied mathematics. However, OT distances suffer from computational and statistical scalability issues to high dimensions, which motivated the study of regularized OT methods like slicing, smoothing, and entropic penalty. This work establishes a unified framework for deriving limit distributions of empirical regularized OT distances, semiparametric efficiency of the plug-in empirical estimator, and bootstrap consistency. We apply the unified framework to provide a comprehensive statistical treatment of: (i) average- and max-sliced $p$-Wasserstein distances, for which several gaps in existing literature are closed; (ii) smooth distances with compactly supported kernels, the analysis of which is motivated by computational considerations; and (iii) entropic OT, for which our method generalizes existing limit distribution results and establishes, for the first time, efficiency and bootstrap consistency. While our focus is on these three regularized OT distances as applications, the flexibility of the proposed framework renders it applicable to broad classes of functionals beyond these examples.
\end{abstract}

\keywords{bootstrap consistency, entropic optimal transport, limit distribution, semiparametric efficiency, sliced Wasserstein distance, smooth Wasserstein distance}

\subjclass[2010]{62G20, 60F05, 62E20}

\maketitle

\section{Introduction}
Optimal transport (OT) theory \cite{villani2008optimal,santambrogio2010} provides a versatile framework for comparing probability distributions. Originally introduced by Monge \cite{mongeOT1781} and later formulated by Kantorovich  \cite{kantorovich1942translocation}, the OT problem between two Borel probability measures $\mu,\nu$ on $\RR^d$ is defined by
\begin{equation}
    \mathsf{T}_c(\mu,\nu):=\inf_{\pi\in\Pi(\mu,\nu)}\int_{\RR^d \times \RR^d} c(x,y)d\pi(x,y),\label{EQ:Kantorovich_OT}
\end{equation}
where $\Pi(\mu,\nu)$ is the set of couplings between $\mu$ and $\nu$. The special case of the $p$-Wasserstein distance for $p \in [1,\infty)$ is given by $\mathsf{W}_p(\mu,\nu):=\left(\mathsf{T}_{\|\cdot\|^p}(\mu,\nu)\right)^{1/p}$. Thanks to an array of favorable properties, including the Wasserstein metric structure, a convenient duality theory, robustness to support mismatch, and the rich geometry induced on the space of probability measures, OT and the Wasserstein distance have seen a surge of applications in statistics, machine learning, and applied mathematics. These include generative modeling \cite{arjovsky_wgan_2017,gulrajani2017improved,tolstikhin2018wasserstein,bernton2019parameter,chen2021inferential}, robust/adversarial machine learning (ML) \cite{blanchet2019quantifying,wong2019wasserstein}, domain adaptation \cite{solomon2015convolutional,courty2016optimal}, image recognition \cite{rubner2000earth,sandler2011nonnegative,li2013novel}, vector quantile regression \cite{carlier2016vector,chernozhukov2017monge,ghosal2019multivariate,hallin2021quantile}, 
Bayesian estimation \cite{bernton2019approximate}, and causal inference \cite{torous2021optimal}. 
Unfortunately, OT distances are generally hard to compute and suffer from the curse of dimensionality in empirical estimation, whereby the number of samples needed for reliable estimation grows exponentially with dimension.

These deficits have motivated the introduction of regularized OT methods that aim to alleviate the said computational and statistical bottlenecks. Three prominent regularizations are: (1) slicing via lower-dimensional projections \cite{rabin2011wasserstein,bonnotte2013unidimensional,nadjahi2019asymptotic,bayraktar2021,nadjahi2020statistical}; (2) smoothing via convolution with a chosen kernel \cite{Goldfeld2020convergence,Goldfeld2020GOT,Goldfeld2020limit_fdiv,Goldfeld2020limit_wass,nietert21,sadhu21limit,zhang2021convergence,chen2021asymptotics,han2021nonparametric,goldfeld22limit,block2022smooth}; and (3) convexification via entropic penalty \cite{schrodinger1931uber,leonard2014survey,cuturi2013sinkhorn,altschuler2017near,genevay2019sample,mena2019statistical,delbarrio22EOT}. These techniques preserve many properties of classic OT but avoid the curse of dimensionality, which enables a scalable statistical theory. As reviewed below\footnote{We postpone the literature review on each regularization method to its respective section.}, much effort was devoted to exploring dimension-free empirical convergence rates and limit distributions, bootstrapping, and other statistical aspects of regularized OT, although several notable gaps in the literature remain. Furthermore, proof techniques for such results are typically on a case-by-case basis and do not follow a unified approach, despite evident similarities between~the three regularization methods as complexity reduction techniques of the classic OT framework. 

The present paper develops a unified framework for deriving limit distributions, semiparametric efficiency bounds, and bootstrap consistency for a broad class of functionals that, in particular, encompasses the empirical regularized OT distances mentioned above (\cref{sec: unified}). As example applications of the general framework, we explore a comprehensive treatment of the following problems:
\begin{itemize}[leftmargin=3.5mm]
    \item \textbf{Average- and max-sliced $\bm{\Wp}$ (\cref{sec: slicing}):} Our limit distribution theory closes existing gaps in the literature (e.g., a limit distribution result for sliced $\wass$ was assumed in  \cite{nadjahi2020statistical} but left unproven), with the efficiency and bootstrap consistency results providing additional constituents for valid statistical inference. 
    \item \textbf{Smooth $\bm{\Wp}$ with compactly supported kernels (\cref{sec: smooth Wp}):} Gaussian-smoothed OT was previously shown to preserve the classic Wasserstein structure while alleviating the curse of dimensionality. Motivated by computational considerations, herein we study smoothing with compactly supported kernels
    . We explore the metric, topological, and statistical aspects previously derived under Gaussian smoothing, and then discuss computation by lifting the algorithm from \cite{vacher2021dimension} to the smooth distance with a compactly supported kernel. 
    \item \textbf{Entropic OT (\cref{sec: EOT}):} A central limit theorem (CLT) for empirical entropic OT (EOT) was derived \cite{mena2019statistical,delbarrio22EOT} for independent data via a markedly different proof technique than proposed herein. Revisiting this problem using our general machinery, we rederive this CLT allowing for dependent data, and also obtain new results on semiparametric efficiency and bootstrap consistency.
\end{itemize}

The unified limit distribution framework, stated in Proposition \ref{prop: master proposition}, relies on the extended functional delta method for Hadamard directionally differentiable functionals \cite{shapiro1990,romisch2004}. To match the delta method with the regularized OT setup, we focus on a functional on a space of probability measures that is (a) locally Lipschitz with respect to (w.r.t.) the sup-norm for a Donsker function class and (b) G\^ateaux directionally differentiable at the population distribution. To apply this framework, we seek to: (i) set up the regularized distance as a locally Lipschitz functional $\delta$ w.r.t. $\|\cdot\|_{\infty,\cF} = \sup_{f \in \calF}| \cdot |$; (ii) show $\cF$ to be Donsker to obtain convergence of the empirical process in $\ell^\infty(\cF)$; (iii) characterize the G\^{a}teaux directional  derivative of $\delta$ at $\mu$. For each regularized distance (sliced, smooth, and entropic), we identify the appropriate function class $\cF$ and establish the desired Lipschitz continuity and differentiability, relying on OT duality theory. Regularization enforces the dual potentials to possess smoothness or low-dimensionality properties, which are leveraged to show that $\cF$ is Donsker. It is worth noting that our framework does not require independent and identically distributed (i.i.d.) data and can be applied for any estimate (not only the empirical distribution) of the population distribution, so long as the uniform limit theorem mentioned in (ii) holds true.

As the general framework stems from the extended functional delta method, the limiting variable of the (scaled and centered) empirical regularized distance is given by the directional derivative of $\delta$ at the population distribution. Linearity of the derivative implies that the limit variable is centered Gaussian. In this case, it is natural to ask whether the empirical distance attains the semiparametric efficiency lower bound (cf.  \cite[Chapter 25]{vanderVaart1998asymptotic}). Semiparametric efficiency bounds serve as analogs of Cram\'{e}r-Rao lower bounds in semiparametric estimation and account for the fundamental difficulty of estimating functionals of interest. We show that the asymptotic variance of the empirical distance indeed agrees with the semiparametric efficiency bound, relative to a certain tangent space. Still, even when the limiting variable is Gaussian, direct analytic estimation of the asymptotic variance may be nontrivial. 
To account for that, we explore bootstrap consistency for empirical regularized OT distances. 
Altogether, the limit distribution theory, semiparametric efficiency, and bootstrap consistency provide a comprehensive statistical account of the considered regularized OT distances.

A unifying approach of a similar flavor to ours, but for classic OT distances, was proposed in \cite{hundrieser2022unifying}. Focusing solely on the supremum functional, they used the extended functional delta method to derive limit distributions for classic $\Wp$, with $p\geq 2$, for compactly supported distributions under the alternative in dimensions $d\leq 3$. In comparison, our approach is more general and can treat any functional that adheres to the aforementioned local Lipschitz continuity and differentiability. This is crucial for analyzing regularized OT distances as some instances do not amount to a supremum functional. For instance, average-sliced Wasserstein distances correspond to mixed $L^1$-$L^\infty$ functionals, which are  not accounted for by the setup from \cite{hundrieser2022unifying}. The functional delta method was also used in \cite{sommerfeld2018,tameling2019} to derive limit distributions for OT between discrete population distributions by parametrizing them using simplex vectors. This result was extended to semi-discrete OT in \cite{del2022central} by exploiting the fact that complexity of the optimal potentials class is reduced when one of the measures is supported on a discrete set. Another recent application can be found in \cite{goldfeld22limit}, where this approach was leveraged for Gaussian-smoothed $\Wp$ by embedding the domain of the Wasserstein distance into a certain dual Sobolev space. 

The rest of the paper is organized as follows. \cref{sec: background} presents notation used throughout the paper and  necessary background on Wasserstein distances and the extended functional delta method. \cref{sec: unified} presents a unified framework for deriving limit distributions, bootstrap consistency, and semiparametric efficiency bounds for regularized OT distances. The general tools developed in \cref{sec: unified} will be applied to sliced Wasserstein distances in \cref{sec: slicing}, smooth Wasserstein distances with compactly supported kernels in \cref{sec: smooth Wp}, and EOT in  \cref{sec: EOT}. \cref{sec: summary} leaves some concluding remarks. Proofs for the results in Sections \ref{sec: background}--\ref{sec: EOT} can be found in Appendices \ref{sec: proof master proposition}--\ref{sec: EOT proof}.

\section{Background and Preliminaries}\label{sec: background}

This section collects notation used throughout the paper and sets up necessary background on Wasserstein distances and the extended functional delta method.

\subsection{Notation}\label{subsec: notation}
On a finite dimensional Euclidean space, $\| \cdot \|$ denotes the standard Euclidean norm. 
The unit sphere in $\RR^d$ is denoted by $\unitsph = \{ x \in \R^d : \|x\|=1\}$, while $B(x,r)$ denotes the open ball with center $x \in \R^d$ and radius $r>0$. For a subset $A$ of a topological space~$S$, let $\overline{A}^S$ denote the closure of $A$; if the space $S$ is clear from the context, then we simply write $\overline{A}$ for the closure. The space of Borel probability measures on $S$ is denoted by $\calP(S)$. 
When $S$ is a normed space with norm $\| \cdot \|$, we denote $\calP_p(S):= \{ \mu \in \calP(S) : \int \| x \|^p d\mu(x) < \infty \}$ for $1 \le p <\infty$. The (topological) support of $\mu \in\calP(S)$, denoted as $\supp(\mu)$, is defined by the set of $x \in S$ for which every open neighborhood of $x$ has positive $\mu$-measure.
For $\mu \in \calP(S)$ and a measurable map $f$ from $S$ into another measurable space, the pushforward of $\mu$ under $f$ is denoted as $f_{\sharp}\mu = \mu \circ f^{-1}$, i.e., if $X \sim \mu$ then $f(X) \sim f_{\sharp}\mu$. For any finite signed Borel measure $\gamma$ on $S$, we identify $\gamma$ with the linear functional $f \mapsto \gamma (f) = \int f d\gamma$. 
For $\mu \in \calP(S)$ and a $\mu$-integrable function $h$ on $S$, $h\mu$ denotes the signed measure $h d\mu$.

For given $\mu,\nu \in \calP(\R^d)$, we use $(X_1,Y_1),(X_2,Y_2),\dots$ to designate the coordinate projections of the product probability space $\prod_{i=1}^{\infty} \big(\R^{2d}, \cB(\R^{2d}),\mu \otimes \nu\big)$. To generate auxiliary random variables, we extend the probability space as $(\Omega, \cA, \Prob) = \left [ \prod_{i=1}^{\infty} \big(\R^{2d}, \cB(\R^{2d}),\mu \otimes \nu\big)\right] \times \big([0,1],\cB([0,1]), \mathrm{Leb}\big)$, where $\mathrm{Leb}$ is the Lebesgue measure on $[0,1]$. 
Let $\stackrel{w}{\to}, \stackrel{d}{\to}$, and $\stackrel{\Prob}{\to}$ denote weak convergence of probability measures, convergence in distribution of random variables, and convergence in probability, respectively. When necessary, convergence in distribution is understood in the sense of Hoffmann-J{\o}rgensen (cf. Chapter 1 in \cite{vanderVaart1996}). 
Throughout the paper, we frequently consider the empirical and bootstrap measures, which are defined as follows. Given a probability measure $\mu\in\cP(S)$, we denote the empirical measure of i.i.d. observations $X_1,\ldots,X_n$ from $\mu$ by $\hat{\mu}_n:=n^{-1}\sum_{i=1}^n\delta_{X_i}$. Given such data $X_1,\dots,X_n$, let $X_1^B,\dots,X_n^B$ be an independent sample from $\hat{\mu}_n$, and set $\hat{\mu}_n^B:= n^{-1}\sum_{i=1}^n \delta_{X_i^B}$ as the bootstrap empirical distribution. We use $\Prob^B$ to denote the conditional probability given the data.

We write $N(\epsilon, \cF, d)$ for the $\epsilon$-covering number of a function class $\cF$ w.r.t. a pseudometric $d$, and $N_{[\,]}(\epsilon, \cF, d)$ for the corresponding bracketing number. 
For two functions $f$ and $g$ on $\calX$ and $\calY$, respectively, let $f \oplus g$ be a function on $\calX \times \calY$ defined by $(f\oplus g) (x,y) = f(x) + g(y)$ for $(x,y) \in \calX \times \calY$. 
For any nonempty set $S$, let $\ell^\infty (S)$ the space of bounded real functions on $S$ equipped with the sup-norm $\| \cdot \|_{\infty,S} = \sup_{x \in S}| \cdot |$. The space $(\ell^\infty (S),\| \cdot \|_{\infty,S})$ is a Banach space. For any measure space $(S,\calS,\mu)$ and $1 \le p < \infty$, let $L^p(\mu) = L^p (S,\calS,\mu)$ denote the space of measurable functions $f: S \to \R$ with $\| f \|_{L^p(\mu)} = (\int |f|^p d\mu)^{1/p} < \infty$. The space $(L^p(\mu),\| \cdot \|_{L^p(\mu)})$ is a Banach space, and if $\mu$ is $\sigma$-finite and $\calS$ is countably generated, then the space is separable.  For $\beta \in (0,2]$, let $\psi_{\beta}(t) = e^{t^\beta}-1$ for $t \ge 0$, and recall that the corresponding Orlicz (quasi-)norm of a real-valued random variable $\xi$ is defined as $\| \xi \|_{\psi_{\beta}}:= \inf \{ C>0 : \E[\psi_{\beta}(|\xi|/C)] \le 1 \}$. We call $\mu\in\cP(\R^d)$ \textit{$\beta$-sub-Weibull} if $\| \| X \| \|_{\psi_{\beta}} < \infty$ for $X \sim \mu$. 
We say that $\mu$ is \textit{sub-Gaussian} if it is $2$-sub-Weibull. 
For two numbers $a$ and $b$, we use the notation $a \wedge b = \min \{ a,b \}$ and $a \vee b = \max \{a,b \}$.

\subsection{Wasserstein distances}\label{subsec: Wp}

The Wasserstein distance is a specific instance of the OT problem from \eqref{EQ:Kantorovich_OT}, defined as follows.
\begin{definition}[Wasserstein distance]
Let $1 \le p < \infty$. The $p$-th Wasserstein distance between $\mu,\nu \in \calP_p(\R^d)$ is defined as 
\begin{equation}\label{eq:OT-problem}
\Wp(\mu,\nu):= \inf_{\pi \in \Pi(\mu,\nu)} \left [ \int_{\R^d \times \R^d} \|x-y\|^p \, d \pi(x,y) \right]^{1/p},
\end{equation}
where  $\Pi(\mu,\nu)$ is the set of couplings of $\mu$ and $\nu$.
\end{definition}

The $p$-Wasserstein distance is a metric on $\calP_p(\R^d)$ and metrizes weak convergence plus convergence of $p$th moments, i.e., $\Wp(\mu_n,\mu) \to 0$ if and only if $\mu_n \stackrel{w}{\to} \mu$ and $\int \| x \|^p d\mu_n(x) \to \int \|x\|^p d\mu(x)$. 

Wasserstein distances adhere to the following dual form (cf. \cite[Theorem 5.9]{villani2008optimal} or \cite[Theorem 6.1.5]{ambrosio2005}):
\begin{equation}
\label{eq: duality}
\Wp^p(\mu,\nu) = \sup_{\varphi \in L^1(\mu)} \left [ \int_{\R^d} \varphi d\mu + \int_{\R^d} \varphi^c d\nu \right ],
\end{equation}
where $\varphi^c(y) = \inf_{x \in \R^d}\big [ \|x-y\|^p -\varphi(x) \big]$ is the $c$-transform of $\varphi$ (for the cost $c(x,y)=\|x-y\|^p$). A function $f:\R^d \to [-\infty,\infty)$ is called \textit{c-concave} if $f=g^c$ for some function $g: \R^d \to [-\infty,\infty)$. There is at least one $c$-concave  $\varphi \in L^1(\mu)$ that attains the supremum in~\eqref{eq: duality}, and we call this $\varphi$ an \textit{OT potential} from $\mu$ to $\nu$ for $\Wp$. Further, when $1 < p < \infty$ and $\mu$ is supported on a connected set with negligible boundary and has a (Lebesgue) density, then the OT potential from $\mu$ to $\nu$ is unique on $\inte(\supp(\mu))$ up to additive constants \cite[Corollary 2.7]{delbarrio2021}. Various smoothness properties of the potentials can be established under appropriate regularity conditions on the cost and $\mu,\nu$---a fact that we shall leverage in our derivations. 

\begin{remark}[Literature review on $\Wp$ limit distribution theory]
Distributional limits of $\sqrt{n}\big(\Wp^p(\hat{\mu}_n,\nu)-\Wp^p(\mu,\nu)\big)$ and its two-sample analogue for discrete $\mu,\nu$ under both the null $\mu=\nu$ and the alternative $\mu \ne \nu$ were derived in \cite{sommerfeld2018,tameling2019}. Similar results for~general distributions are known only in the one-dimensional case. Specifically,  for $p=1,2$, \cite{del1999central,delbarrio2005} leverage the representations of $\Wp$ in $d=1$ as the $L^p$ norm between distribution functions ($p=1$) and quantile functions ($p=2$) to derive distributional limits under the null. Limit distributions in $d=1$ for $p \ge 2$ under the alternative ($\mu\neq\nu$) were derived in \cite{delbarrio2019}. 
In arbitrary dimension, \cite{delbarrio2019central} establish~asymptotic normality of $\sqrt{n}\big(\mathsf{W}_2^2(\hat{\mu}_n,\nu)-\EE\big[\mathsf{W}_2^2(\hat{\mu}_n,\nu)\big]\big)$ under the alternative $\mu\ne\nu$. This~was extended to general transportation costs satisfying certain regularity conditions in \cite{delbarrio2021}. The main limitation of these results is the centering around the expected empirical distance (and not the population one), which does not enable performing inference for $\Wp$. This gap was addressed in \cite{manole2021plugin}, where a CLT for $\sqrt{n}\big(\mathsf{W}_2^2(\tilde{\mu}_n,\nu)-\mathsf{W}_2^2(\mu,\nu)\big)$ was established, but for a wavelet-based estimator $\tilde{\mu}_n$ of $\mu$ (as opposed to the empirical distribution), while assuming several technical conditions on the Lebesgue densities of $\mu,\nu$. As mentioned in the introduction, \cite{hundrieser2022unifying} leverage the extended functional delta method for the supremum functional to obtain limit distributions for $\Wp$, with $p\geq 2$, for compactly supported distributions under the alternative in dimensions $d\leq 3$. 
\end{remark}

\subsection{Extended functional delta method}\label{subsec: functional delta method}

Our unified framework for deriving limit distributions of empirical regularized OT distances relies on the extended functional delta method, which we set up next. Let $\mathfrak{D},\mathfrak{E}$ be normed spaces and $\phi: \Theta \subset \mathfrak{D} \to \mathfrak{E}$ be a map. Following \cite{shapiro1990,romisch2004}, we say that $\phi$ is 
\textit{Hadamard directionally differentiable}  at $\theta \in \Theta$ if there exists a map $\phi'_{\theta}: \calT_{\Theta}(\theta) \to \mathfrak{E}$ such that 
\begin{equation}
\lim_{n \to \infty} \frac{\phi(\theta + t_n h_n) - \phi(\theta)}{t_n} = \phi'_{\theta}(h)
\label{eq: Hadamard}
\end{equation}
for any $h \in \calT_{\Theta}(\theta)$, $t_n \downarrow 0$, and $h_n \to h$ in $\mathfrak{D}$ such that $\theta + t_n h_n \in \Theta$. Here $\calT_{\Theta}(\theta)$ is the \textit{tangent cone} to $\Theta$ at $\theta$ defined as 
\[
\calT_{\Theta}(\theta):= \left \{ h \in \mathfrak{D}:\, h = \lim_{n \to \infty} \frac{\theta_n - \theta}{t_n}  \ \text{for some $\theta_n \to \theta$ in $\Theta$ and $t_n \downarrow 0$} \right \}. 
\]
The tangent cone $\calT_{\Theta}(\theta)$ is closed, and if $\Theta$ is convex, then $\calT_{\Theta}(\theta)$ coincides with the closure in $\mathfrak{D}$ of $\{ (\vartheta - \theta)/t:\, \vartheta \in \Theta, t > 0 \}$. The derivative $\phi_{\theta}'$ is positively homogeneous and  continuous but need not be linear.

\begin{lemma}[Extended functional delta method \cite{shapiro1991,dumbgen1993,romisch2004,fang2019}]
\label{lem: functional delta method}
Let $\mathfrak{D},\mathfrak{E}$ be normed spaces and $\phi: \Theta \subset \mathfrak{D} \to \mathfrak{E}$ be a map that is Hadamard directionally differentiable at $\theta \in \Theta$ with derivative $\phi_{\theta}': \calT_{\Theta}(\theta) \to \mathfrak{E}$. Let $T_n: \Omega \to \Theta$ be maps such that $r_n (T_n - \theta) \stackrel{d}{\to} T$ for some $r_n \to \infty$ and Borel measurable map $T: \Omega \to \mathfrak{D}$ with values in $\calT_{\Theta}(\theta)$. Then, $r_n \big(\phi(T_n) - \phi(\theta)\big) \stackrel{d}{\to} \phi_{\theta}'(T)$. Further, if $\Theta$ is convex, then we have $r_n \big(\phi(T_n) - \phi(\theta)\big) - \phi_{\theta}'\big(r_n(T_n-\theta)\big) \to 0$ in outer probability. 
\end{lemma}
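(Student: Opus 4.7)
The plan is to derive both assertions from the extended continuous mapping theorem (in the Hoffmann-J\o{}rgensen sense), applied to a sequence of difference-quotient maps that encode the derivative. For each $n$, define
\[
g_n(h) := r_n \bigl[\phi(\theta + r_n^{-1} h) - \phi(\theta)\bigr]
\]
on the set $\{h \in \mathfrak{D} : \theta + r_n^{-1} h \in \Theta\}$, and set $Z_n := r_n(T_n - \theta)$. By construction, $\theta + r_n^{-1} Z_n = T_n \in \Theta$, so $g_n(Z_n) = r_n[\phi(T_n) - \phi(\theta)]$; by hypothesis, $Z_n \stackrel{d}{\to} T$ with $T$ taking values in the closed tangent cone $\calT_{\Theta}(\theta)$.

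The key observation is that \eqref{eq: Hadamard} is equivalent to the following statement: for every $h \in \calT_{\Theta}(\theta)$ and every sequence $h_n \to h$ in $\mathfrak{D}$ with $h_n$ in the domain of $g_n$, one has $g_n(h_n) \to \phi'_{\theta}(h)$. This is precisely the continuous-convergence hypothesis required by the extended continuous mapping theorem (cf.\ Theorem 1.11.1 in \cite{vanderVaart1996}), whose application yields
\[
g_n(Z_n) = r_n\bigl[\phi(T_n) - \phi(\theta)\bigr] \stackrel{d}{\to} \phi'_{\theta}(T),
\]
establishing the first claim. Continuity of $\phi'_{\theta}$ on $\calT_{\Theta}(\theta)$ (which ensures Borel measurability of the limit $\phi'_{\theta}(T)$) follows directly from its sequential definition and the closedness of the cone.

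For the convexity statement, convexity of $\Theta$ guarantees that $\theta + r_n^{-1} Z_n \in \Theta$ for every $n$, so the quantity $g_n(Z_n) - \phi'_{\theta}(Z_n)$ is globally well-defined. Invoke the almost sure representation theorem (in its Hoffmann-J\o{}rgensen version) on an enriched probability space to produce perfect copies $\tilde Z_n$, $\tilde T$ of $Z_n$, $T$ with $\tilde Z_n \to \tilde T$ outer almost surely. Since $\tilde T \in \calT_{\Theta}(\theta)$, the sequential definition of Hadamard directional differentiability with $t_n = r_n^{-1} \downarrow 0$ and $h_n = \tilde Z_n$ gives $g_n(\tilde Z_n) \to \phi'_{\theta}(\tilde T)$ outer a.s., while continuity of $\phi'_{\theta}$ yields $\phi'_{\theta}(\tilde Z_n) \to \phi'_{\theta}(\tilde T)$. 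Subtracting and transferring back to the original space produces the desired convergence $g_n(Z_n) - \phi'_{\theta}(Z_n) \to 0$ in outer probability.

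The main subtlety I anticipate is measurability bookkeeping: $T_n$, and hence $Z_n$, need not be Borel measurable under the Hoffmann-J\o{}rgensen framework, so the extended continuous mapping theorem and the almost sure representation must be invoked in the form appropriate for non-measurable maps, and one must check that $\phi'_{\theta}$ --- although possibly non-linear --- is continuous on the closed tangent cone. Once these points are in place, the proof is essentially a direct translation of the sequential definition of Hadamard directional differentiability through the continuous mapping machinery.
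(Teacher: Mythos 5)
The paper does not supply its own proof of this lemma; it is quoted from the literature (\cite{shapiro1991,dumbgen1993,romisch2004,fang2019}), so there is no internal proof to compare against. Your argument is correct and reproduces the canonical proof from those sources: the first assertion is the extended continuous mapping theorem (Theorem~1.11.1 in \cite{vanderVaart1996}) applied to the difference-quotient maps $g_n$, with the hypothesis of continuous convergence supplied exactly by the sequential definition of Hadamard directional differentiability together with the requirement that the limit $T$ take values in the tangent cone; the second assertion follows by almost-sure representation together with continuity and positive homogeneity of $\phi'_\theta$ on $\calT_\Theta(\theta)$, using that convexity of $\Theta$ places $r_n(T_n-\theta)$ inside $\{t(\vartheta-\theta):\vartheta\in\Theta,\ t>0\}\subset\calT_\Theta(\theta)$ so that $\phi'_\theta\big(r_n(T_n-\theta)\big)$ is well defined. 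The only point you pass over silently --- and it is harmless --- is that $r_n\to\infty$ does not force $r_n^{-1}$ to be monotonically decreasing, whereas \eqref{eq: Hadamard} is stated with $t_n\downarrow 0$; this is bridged by observing that if the difference-quotient limit failed along some positive null sequence $r_n^{-1}$ then it would already fail along a monotone subsequence of it, contradicting Hadamard directional differentiability.
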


Lemma \ref{lem: functional delta method} is at the core of our framework for deriving limit distributions. It is termed the ``extended'' functional delta method as it extends the (classical) functional delta method for Hadamard differentiable maps to directionally differentiable ones.  

\medskip
While  Hadamard directional differentiability is sufficient to derive limit distributions, bootstrap consistency often requires (full) Hadamard differentiability. Recall that the map $\phi$ is \textit{Hadamard differentiable} at $\theta$ \textit{tangentially to} a vector subspace $\mathfrak{D}_0 \subset \mathfrak{D}$ if there exists a continuous \textit{linear} map $\phi_\theta': \mathfrak{D}_0 \to \mathfrak{E}$ satisfying \eqref{eq: Hadamard} for any $h \in \mathfrak{D}_0$, $t_n \to 0 \ (t_n \ne 0)$, and  $h_n \to h$ in $\mathfrak{D}$ such that $\theta + t_n h_n \in \Theta$. 
The differences from Hadamard directional differentiability is that the derivative $\phi_\theta'$ must be linear and thus the domain must be a vector subspace of $\mathfrak{D}$, and the sequence $t_n \to 0$ must be a generic (nonzero) sequence converging to zero. The next lemma is useful for verifying Hadamard differentiability from the directional one. 

\begin{lemma}
\label{lem: H differentiability}
Let $\phi: \Theta \subset \mathfrak{D} \to \mathfrak{E}$ be Hadamard directionally differentiable at $\theta \in \Theta$ with derivative $\phi_{\theta}': \calT_{\Theta}(\theta) \to \mathfrak{E}$. If $\calT_{\Theta}(\theta)$ contains a subspace $\mathfrak{D}_0$ on which $\phi_\theta'$ is linear, then $\phi$ is Hadamard differentiable at $\theta$ tangentially to $\mathfrak{D}_0$.
\end{lemma}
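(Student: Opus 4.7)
The plan is straightforward: given that Hadamard differentiability differs from Hadamard directional differentiability in two respects — linearity of the derivative on a vector subspace and permitting $t_n \to 0$ through both positive and negative values — I need only verify that the directional limit persists when $t_n$ approaches zero from below, and check that continuity and linearity are inherited on $\mathfrak{D}_0$. Linearity of $\phi'_\theta$ restricted to $\mathfrak{D}_0$ is assumed. Continuity of $\phi'_\theta$ on $\calT_\Theta(\theta)$ is a known property of Hadamard directional derivatives (noted in the paragraph preceding \cref{lem: functional delta method}), which restricts to continuity on $\mathfrak{D}_0$.

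The core step is the following. Fix $h \in \mathfrak{D}_0$, a sequence $t_n \to 0$ with $t_n \ne 0$, and $h_n \to h$ in $\mathfrak{D}$ with $\theta + t_n h_n \in \Theta$; I want to show
\[
\frac{\phi(\theta + t_n h_n) - \phi(\theta)}{t_n} \to \phi'_\theta(h).
\]
By a standard subsequence argument, it suffices to show this for subsequences on which $t_n$ has constant sign. If $t_n > 0$ eventually, then $t_n \downarrow 0$ and Hadamard directional differentiability applied to $h \in \mathfrak{D}_0 \subset \calT_\Theta(\theta)$ gives the limit $\phi'_\theta(h)$ directly. If $t_n < 0$ eventually, write $s_n = -t_n > 0$, so $s_n \downarrow 0$, and observe that
\[
\theta + t_n h_n = \theta + s_n(-h_n) \in \Theta,
\]
with $-h_n \to -h$ in $\mathfrak{D}$. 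Crucially, because $\mathfrak{D}_0$ is a vector subspace of $\calT_\Theta(\theta)$, we have $-h \in \mathfrak{D}_0 \subset \calT_\Theta(\theta)$, so Hadamard directional differentiability applies and yields
\[
\frac{\phi(\theta + s_n(-h_n)) - \phi(\theta)}{s_n} \to \phi'_\theta(-h).
\]
Dividing by $t_n = -s_n$ and invoking linearity of $\phi'_\theta$ on $\mathfrak{D}_0$ gives
\[
\frac{\phi(\theta + t_n h_n) - \phi(\theta)}{t_n} \to -\phi'_\theta(-h) = \phi'_\theta(h),
\]
as required.

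I do not foresee a serious obstacle; the only subtle point is the need for $\mathfrak{D}_0$ to be a \emph{subspace} (not merely a cone contained in $\calT_\Theta(\theta)$), which is precisely what ensures $-h \in \calT_\Theta(\theta)$ and makes the negative-sign case amenable to the directional definition. The linearity hypothesis then converts $\phi'_\theta(-h)$ into $-\phi'_\theta(h)$, closing the argument. This proves that $\phi'_\theta$, restricted to $\mathfrak{D}_0$, is a continuous linear map satisfying the Hadamard differentiability limit, completing the proof.
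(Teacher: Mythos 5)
Your proof is correct and follows essentially the same route as the paper's: a subsequence argument reducing to the cases $t_n > 0$ and $t_n < 0$, with the negative case handled by rewriting $\theta + t_n h_n = \theta + (-t_n)(-h_n)$, invoking $-h \in \mathfrak{D}_0 \subset \calT_\Theta(\theta)$, and using linearity to convert $\phi'_\theta(-h)$ into $-\phi'_\theta(h)$. The paper attributes this to Proposition 2.1 of Fang and Santos (2019) but gives the identical argument.
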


\section{Unified Framework for Statistical Inference}\label{sec: unified}

This section develops a general framework for deriving limit distributions, bootstrap consistency, and semiparametric efficiency bounds for regularized OT distances. We first treat the former two aspects together, and then move on to discuss efficiency.  Throughout this section, $\mu_n$ designates an arbitrary random probability measure and not necessarily the empirical measure (unless explicitly stated otherwise). 
\subsection{Limit distributions and bootstrap consistency}

The following result is an adaptation of the extended functional delta method from \cref{lem: functional delta method} to the space of probability measures, which enables directly applying it to empirical regularized OT. 

\begin{proposition}[Limit distributions]
\label{prop: master proposition}
Consider the setting:
\begin{quote}
\textbf{(Setting $\bm{\circledast}$)} \
Let $\calF$ be a class of Borel measurable functions on a topological space $S$ with a finite envelope $F$. For a given $\mu \in \calP(S)$, let $\delta$ be a map from $\calP_0 \subset \calP(S)$ into a Banach space $(\mathfrak{E},\| \cdot \|_{\mathfrak{E}})$, where $\calP_0$ is a convex subset such that  $\mu \in \calP_0$ and  $\int F d\nu < \infty$ for all $\nu \in \calP_0$.
\end{quote}
Further suppose that
\begin{enumerate}[label=(\alph*)]
    \item $\mu_n: \Omega \to \calP_0$ are random probability measures with values in $\calP_0$ for all $n \in \NN$, such that there exists a tight random variable $G_\mu$ in $\ell^\infty (\calF)$ with $\sqrt{n}(\mu_n -\mu) \stackrel{d}{\to} G_{\mu}$ in $\ell^\infty (\calF)$; 
    \item $\delta$ is locally Lipschitz continuous at $\mu$ with respect to $\| \cdot \|_{\infty,\calF}$, in the sense that there exist constants $\epsilon > 0$ and $C < \infty$ such that 
\[\qquad\quad \| \nu -\mu \|_{\infty,\calF} \vee \| \nu' - \mu \|_{\infty,\calF}< \epsilon\ \ \implies \ \ \| \delta (\nu) - \delta(\nu') \|_{\mathfrak{E}} \le C\| \nu - \nu' \|_{\infty,\calF};
\]
\item For every $\nu \in \calP_0$, the mapping $t \mapsto \delta\big(\mu + t (\nu-\mu)\big)$ is right differentiable at $t=0$, and denote its right derivative by 
\begin{equation}
\delta_{\mu}'(\nu-\mu)=\lim_{t \downarrow 0} \frac{\delta\big(\mu+t(\nu-\mu)\big)-\delta(\mu)}{t}. 
\label{eq: right derivative}
\end{equation}
\end{enumerate}
Then (i) $\delta_{\mu}'$ uniquely extends to a continuous, positively homogeneous map on the tangent cone of $\calP_0$ at $\mu$:
\[
\calT_{\calP_0}(\mu):=\overline{\big\{ t(\nu-\mu):\,\nu \in \calP_0, t > 0 \big\}}^{\ell^\infty (\calF)};
\] 
(ii) $G_{\mu} \in T_{\calP_0}(\mu)$ almost surely (a.s.); and
(iii) $\sqrt{n}\big(\delta (\mu_n) - \delta(\mu)\big) -\delta_{\mu}'\big(\sqrt{n}(\mu_n-\mu)\big) \to 0$ holds
in outer probability. Consequently, we have the following convergence in distribution $\sqrt{n}\big(\delta (\mu_n) - \delta(\mu)\big)  \stackrel{d}{\to} \delta_{\mu}'(G_\mu)$. 
\end{proposition}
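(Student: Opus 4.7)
The plan is to recast the claim as an application of the extended functional delta method (\lemref{lem: functional delta method}) via the canonical affine embedding $\iota: \calP_0 \to \ell^\infty(\calF)$ defined by $\iota(\nu)(f) = \int f\, d\nu$, which is well-defined by the envelope integrability hypothesis and satisfies $\|\iota(\nu)-\iota(\nu')\|_{\infty,\calF} = \|\nu - \nu'\|_{\infty,\calF}$; its image $\iota(\calP_0)$ inherits convexity from $\calP_0$.

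For claim (i), I would first extend $\delta'_\mu$ to $D := \{t(\nu-\mu) : \nu \in \calP_0,\, t>0\}$ by positive homogeneity; the extension is consistent because, for $s > 0$ and $\nu \in \calP_0$, the substitution $u = t s$ in \eqref{eq: right derivative} yields $(\delta(\mu + t s(\nu-\mu)) - \delta(\mu))/t \to s\, \delta'_\mu(\nu-\mu)$ as $t \downarrow 0$. Lipschitz continuity on $D$ then follows by picking $h_1, h_2 \in D$ and $r > 0$ so small that $\mu + r h_i \in \calP_0$ with $\|r h_i\|_{\infty,\calF} < \epsilon$; local Lipschitz of $\delta$ gives
$$\|\delta(\mu + r h_1) - \delta(\mu + r h_2)\|_\mathfrak{E} \le C r \|h_1 - h_2\|_{\infty,\calF},$$
and dividing by $r$ and sending $r \downarrow 0$ produces $\|\delta'_\mu(h_1) - \delta'_\mu(h_2)\|_\mathfrak{E} \le C\|h_1 - h_2\|_{\infty,\calF}$. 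Since $D$ is dense in $\calT_{\calP_0}(\mu)$ by definition and $\mathfrak{E}$ is complete, $\delta'_\mu$ admits a unique continuous extension to the full tangent cone, preserving both the Lipschitz constant and positive homogeneity. Claim (ii) is then immediate from the Portmanteau theorem applied to the closed set $\calT_{\calP_0}(\mu)$: every realization $\sqrt{n}(\mu_n-\mu)$ lies in $D$ (take $t = \sqrt{n}$, $\nu = \mu_n \in \calP_0$), and hence $G_\mu$ lies in the closure almost surely.

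For claim (iii), define $\phi$ on $\Theta := \iota(\calP_0) \cap B(\iota(\mu), \epsilon)$ by $\phi(\iota(\nu)) = \delta(\nu)$; local Lipschitz makes $\phi$ well-defined and $C$-Lipschitz on the convex set $\Theta$. To verify Hadamard directional differentiability of $\phi$ at $\iota(\mu)$ with derivative (the extension of) $\delta'_\mu$, fix $h \in \calT_{\calP_0}(\mu)$, $t_n \downarrow 0$, and $h_n \to h$ in $\ell^\infty(\calF)$ with $\iota(\mu) + t_n h_n \in \Theta$; approximate $h$ by $\tilde h = t'(\nu' - \mu) \in D$ and decompose
\begin{align*}
\frac{\phi(\iota(\mu) + t_n h_n) - \phi(\iota(\mu))}{t_n} - \delta'_\mu(h)
&= \frac{\delta(\mu + t_n h_n) - \delta(\mu + t_n \tilde h)}{t_n} \\
&\quad + \left[\frac{\delta(\mu + t_n \tilde h) - \delta(\mu)}{t_n} - \delta'_\mu(\tilde h)\right] \\
&\quad + \bigl[\delta'_\mu(\tilde h) - \delta'_\mu(h)\bigr].
\end{align*}
The first term is bounded by $C\|h_n - \tilde h\|_{\infty,\calF}$ through local Lipschitz (noting $\mu + t_n \tilde h \in \calP_0$ for large $n$ by convexity); the second vanishes as $n \to \infty$ by assumption (c) together with positive homogeneity of the extension; the third is small by continuity of the extended $\delta'_\mu$. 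Sending $n \to \infty$ and then $\tilde h \to h$ within $D$ completes the verification. Since $\|\mu_n - \mu\|_{\infty,\calF} = O_{\Prob^*}(n^{-1/2})$, $\iota(\mu_n) \in \Theta$ eventually with outer probability one, and the convex-$\Theta$ version of \lemref{lem: functional delta method} then produces both the in-outer-probability linearization and the distributional limit $\sqrt{n}\bigl(\delta(\mu_n) - \delta(\mu)\bigr) \stackrel{d}{\to} \delta'_\mu(G_\mu)$.

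The principal obstacle is reconciling (a) the fact that $\phi$ is genuinely defined only on $\iota(\calP_0)$, never on an open subset of $\ell^\infty(\calF)$, with (b) the measurability and tightness subtleties of weak convergence in the non-separable space $\ell^\infty(\calF)$ in the Hoffmann-J{\o}rgensen sense. The two devices that manage these issues are restricting to the local Lipschitz ball so that $\phi$ is unambiguously single-valued and Lipschitz, and invoking the form of the delta method requiring only convexity of the constraint set $\Theta$, which is what yields a linearization valid in outer probability on the effectively ``lower-dimensional'' subset $\iota(\calP_0) \subset \ell^\infty(\calF)$.
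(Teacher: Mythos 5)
Your proposal is correct and follows essentially the same route as the paper: embed $\calP_0$ into $\ell^\infty(\calF)$, use local Lipschitz continuity to make the induced map well-defined, extend $\delta_\mu'$ by positive homogeneity to the pre-cone $D$, pass to the tangent cone via completeness of $\mathfrak{E}$, and then invoke the extended delta method (\lemref{lem: functional delta method}) with Portmanteau handling claim (ii). The one organizational difference is that you establish Lipschitz continuity of $\delta_\mu'$ on $D$ explicitly (via the rescaled difference quotient) before extending, whereas the paper folds this into a single Cauchy-sequence argument for the limit $t_n^{-1}[\delta(\mu+t_nm_n)-\delta(\mu)]$; the arguments are logically equivalent, and your version makes the continuity of the extension slightly more transparent at the cost of a second pass to verify Hadamard directional differentiability.
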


The proof first identifies $\delta$ as a map defined on a subset of $\ell^\infty (\calF)$. Formally, let $\tau: \calP_0 \ni \nu \mapsto (f \mapsto \nu (f)) \in \ell^\infty (\calF)$, and we identify $\delta$ with $\bar{\delta}: \tau \calP_{0,\epsilon} \to \mathfrak{E}$~defined by $\bar{\delta}(\tau \nu) =\delta(\nu)$, where $\calP_{0,\epsilon} = \{ \nu \in \calP_0 : \| \nu - \mu \|_{\infty,\calF} < \epsilon \}$.  The local Lipschitz condition (b) guarantees that the map $\bar{\delta}$ is well-defined (indeed, without the local Lipschitz condition, $\bar\delta$ may not be well-defined as $\tau$ may fail to be one-to-one). With this identification, we apply the extended functional delta method, Lemma \ref{lem: functional delta method}, by establishing  Hadamard directional differentiability of $\delta$ at $\mu$. The latter essentially follows by local Lipschitz continuity (condition (b)) and  G\^{a}teaux directional differentiability (condition (c)). Since the derivative $\delta_\mu'$ is a priori defined only on $\calP_{0,\epsilon} - \mu$, we need to extend the derivative to the tangent cone $\calT_{\calP_0}(\mu)$, for which we need completeness of the space $\mathfrak{E}$; see the proof in \cref{sec: proof master proposition} for details.

\medskip
For i.i.d. data $X_1,\dots,X_n \sim \mu$ and $\mu_n=\hat{\mu}_n$ as the empirical measure, to apply \cref{prop: master proposition} we will: (i) find a $\mu$-Donsker function class $\calF$ such that the functional $\delta$ is locally Lipschitz w.r.t. $\| \cdot \|_{\infty,\calF}$ at $\mu$; and (ii) find the G\^{a}teaux  directional derivative~\eqref{eq: right derivative}. In our applications to regularized OT, such a function class $\calF$ will be chosen to contain dual potentials corresponding to a proper class of distributions. Regularization enforces dual potentials to possess certain smoothness or low-dimensionality properties, which will guarantee that $\calF$ is indeed $\mu$-Donsker. The dual OT formulation also plays a crucial role in finding the  G\^{a}teaux directional derivative \eqref{eq: right derivative}.

\begin{remark}[Relaxed condition]
When $\delta (\mu_n)$ is well-defined, the condition that $\mu_n$ takes values in $\calP_0$ can be relaxed to $\mu_n \in \calP_0$ with inner probability approaching one.
\end{remark}

\begin{remark}[Data generating process]
\cref{prop: master proposition} does not impose any dependence conditions on the data. In particular, \cref{prop: master proposition} can be applied to dependent data as long as one can verify the uniform limit theorem in Condition (a). See, e.g., \cite{levental1988uniform,andrews1994introduction,arcones1994central,doukhan1995invariance,bae1995uniform,nishiyama2000weak,davezies2021empirical} on uniform CLTs for dependent data. 
\end{remark}

\begin{remark}[Convexity of $\calP_0$]
\label{rem: convexity}
The assumption that $\calP_0$ is convex can be replaced with the condition that $\calP_0$ is convex as a subset of $\ell^\infty (\calF)$. Namely, using the mapping $\tau: \calP_0 \ni \nu \mapsto (f \mapsto \nu (f)) \in \ell^\infty (\calF)$, we only need that $\tau \calP_0 = \{ \tau \nu : \nu \in \calP_0 \} \subset \ell^\infty (\calF)$ is convex. Condition (c) then should read that $t \mapsto \bar{\delta}((1-t)\tau \mu + t\tau\nu)$ is differentiable from the right at $t =0$ with derivative $\delta_\mu'(\mu-\nu)  = \lim_{t \downarrow 0} t^{-1} \big\{\bar{\delta}((1-t)\tau \mu + t\tau\nu) - \bar{\delta}(\tau\mu) \big\}$, where $\bar{\delta}(\tau \nu) =\delta(\nu)$. This modification is needed to cover the two-sample setting; see, e.g., the proof of \cref{thm: limit distribution SWp} Part (ii). 
\end{remark}

\subsubsection{Bootstrap consistency} In applications of \cref{prop: master proposition}, the obtained limit distribution is often non-pivotal in the sense that it depends on the population distribution $\mu$, which is unknown in practice. To circumvent the difficulty of estimating the distribution of $\delta_\mu'(G_\mu)$ directly, one may apply the bootstrap. When $\calF$ is $\mu$-Donsker and $\mu_n=\hat{\mu}_n$ is the empirical distribution of i.i.d. data from $\mu$, then the bootstrap (applied to the functional $\delta$) is consistent for estimating the distribution of $\delta_\mu'(G_\mu)$ provided that the map $\nu \mapsto \delta (\nu)$ is Hadamard differentiable w.r.t. $\| \cdot \|_{\infty,\calF}$ at $\nu=\mu$ tangentially to a subspace of $\ell^\infty (\calF)$ that contains the support of $G_\mu$; cf. Theorem 23.9 in \cite{vanderVaart1998asymptotic} or Theorem 3.9.11 in \cite{van1996weak}. The following corollary is useful for invoking such theorems under the setting of Proposition \ref{prop: master proposition}.

\begin{corollary}[Bootstrap consistency via Hadamard differentiability]
\label{cor: master}
Consider the setting of Proposition \ref{prop: master proposition}. If, in addition,  $G_\mu$ is a mean-zero Gaussian variable in $\ell^\infty (\calF)$, then  $\supp(G_\mu)$ is a vector subspace of $\ell^\infty (\calF)$. If further $\delta_\mu'$ is linear on $\supp(G_\mu)$, then $\nu \mapsto \delta (\nu)$ is Hadamard differentiable w.r.t. $\| \cdot \|_{\infty,\calF}$ at $\nu=\mu$ tangentially to~$\supp(G_\mu)$.
\end{corollary}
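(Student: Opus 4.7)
The plan is to argue the two assertions sequentially, with both reducing to structural facts about tight centered Gaussian laws combined with one application of \cref{lem: H differentiability}. First I would show that the support of any tight centered Gaussian in a Banach space is a closed linear subspace. Next I would observe that $\supp(G_\mu)$ sits inside the tangent cone $\calT_{\calP_0}(\mu)$ by conclusion (ii) of \cref{prop: master proposition} together with closedness of the cone. Finally I would invoke \cref{lem: H differentiability} with $\mathfrak{D}_0 = \supp(G_\mu)$ to upgrade the Hadamard directional differentiability of $\delta$ already delivered by \cref{prop: master proposition} into Hadamard differentiability tangentially to $\supp(G_\mu)$.

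For the first assertion, even though $\ell^\infty(\calF)$ is typically non-separable, tightness of $G_\mu$ concentrates its law on a separable closed subset, so $\supp(G_\mu)$ is well defined as the smallest closed set of full $G_\mu$-measure. The vector-space closure I would establish by a Gaussian rotation argument: for two independent copies $G_\mu^{(1)},G_\mu^{(2)}$ of $G_\mu$ (which exist on the enlarged probability space of \cref{subsec: notation}), any $\theta \in \RR$ yields $\cos(\theta)\,G_\mu^{(1)}+\sin(\theta)\,G_\mu^{(2)} \stackrel{d}{=} G_\mu$ since both sides are centered Gaussian with the same covariance in $\ell^\infty(\calF)$. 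Consequently, whenever $h_1,h_2 \in \supp(G_\mu)$ and $U_1,U_2$ are open neighborhoods of $h_1,h_2$, independence forces $\Prob(G_\mu^{(1)} \in U_1,\,G_\mu^{(2)} \in U_2)>0$, and the distributional identity then gives $\Prob(G_\mu \in \cos(\theta)\,U_1 + \sin(\theta)\,U_2)>0$, placing $\cos(\theta)\,h_1+\sin(\theta)\,h_2$ in $\supp(G_\mu)$. Taking $h_1=h_2=h$ and varying $\theta$ yields closure under scaling by any $c \in [-\sqrt{2},\sqrt{2}]$, and iterating delivers closure under arbitrary real scaling; combining this with addition closure ($\cos(\pi/4)\,h_1+\sin(\pi/4)\,h_2 \in \supp(G_\mu)$ rescaled by $\sqrt{2}$) proves $\supp(G_\mu)$ is a closed linear subspace of $\ell^\infty(\calF)$.

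For the containment $\supp(G_\mu) \subseteq \calT_{\calP_0}(\mu)$, recall that $\calT_{\calP_0}(\mu)$ is defined in \cref{prop: master proposition}(i) as a closure in $\ell^\infty(\calF)$, hence is closed, while \cref{prop: master proposition}(ii) furnishes $G_\mu \in \calT_{\calP_0}(\mu)$ almost surely. Since the support is the smallest closed full-measure set, the inclusion follows. The discussion following \cref{prop: master proposition} additionally shows (via the identification $\delta \leftrightarrow \bar\delta$ on $\tau\calP_{0,\epsilon}$) that $\delta$ is Hadamard directionally differentiable at $\mu$ with derivative $\delta_\mu':\calT_{\calP_0}(\mu)\to \mathfrak{E}$. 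Under the additional hypothesis that $\delta_\mu'$ is linear on the subspace $\supp(G_\mu) \subseteq \calT_{\calP_0}(\mu)$, all hypotheses of \cref{lem: H differentiability} are met with $\mathfrak{D}_0 = \supp(G_\mu)$, and the lemma yields the claimed Hadamard differentiability tangentially to $\supp(G_\mu)$.

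The main obstacle is the first step: carefully justifying in a potentially non-separable Banach space that the support of a tight centered Gaussian is a closed linear subspace. The crux is to use tightness to reduce to a separable setting where the support is a meaningful object, and then deploy the Gaussian rotation identity on independent copies. Once this structural fact is in hand, the inclusion $\supp(G_\mu) \subseteq \calT_{\calP_0}(\mu)$ and the appeal to \cref{lem: H differentiability} are essentially mechanical.
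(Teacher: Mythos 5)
Your proof is correct and reaches the paper's conclusion, but the route is genuinely different for the first assertion. The paper dispatches linearity of $\supp(G_\mu)$ in one line by citing Lemma~3.1 of van der Vaart and van Zanten (2008): the support of a tight centered Gaussian in a Banach space is the $\ell^\infty(\calF)$-closure of its reproducing kernel Hilbert space, hence a closed vector subspace. You instead give a self-contained Gaussian rotation argument on two independent copies, noting first that tightness reduces matters to a separable subspace where the support is well defined, and then that $\cos(\theta)G_\mu^{(1)}+\sin(\theta)G_\mu^{(2)}\stackrel{d}{=}G_\mu$ pushes any open neighborhood of $\cos(\theta)h_1+\sin(\theta)h_2$ to positive mass whenever $h_1,h_2\in\supp(G_\mu)$. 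The paper's approach is shorter and transfers additional information (the RKHS is the ``small'' dense subspace, useful if one later wants to sharpen the tangent space), whereas yours is more elementary and avoids invoking abstract Gaussian measure theory. One detail worth making explicit in your scaling step: $\cos\theta+\sin\theta$ only ranges over $(1,\sqrt{2}]$ for $\theta\in(0,\pi/2)$, so to reach all of $[-\sqrt{2},\sqrt{2}]$ you should first record that $\supp(G_\mu)$ is symmetric (since $-G_\mu\stackrel{d}{=}G_\mu$) and then combine $h_1=h,\,h_2=\pm h$. You also make explicit the inclusion $\supp(G_\mu)\subseteq\calT_{\calP_0}(\mu)$ via Proposition~\ref{prop: master proposition}(ii) and closedness of the tangent cone; the paper leaves this implicit, but it is indeed a hypothesis of \cref{lem: H differentiability} and deserves the sentence you give it. The final appeal to \cref{lem: H differentiability} is identical to the paper's.
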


In general, when the functional is  Hadamard directionally differentiable with a nonlinear derivative, the bootstrap fails to be consistent; cf. \cite{dumbgen1993,fang2019}. An alternative way to estimate the limit distribution in such cases is to use subsampling \cite{politis1994large} or a rescaled bootstrap \cite{dumbgen1993}.

\subsection{Semiparametric efficiency}

In Proposition \ref{prop: master proposition}, if $\delta_\mu'$ is linear and $G_\mu$ is mean-zero Gaussian, then the limit distribution $\delta_\mu'(G_\mu)$ is mean-zero Gaussian as well. In such cases, it is natural to ask if the  plug-in estimator $\delta(\mu_n)$ is asymptotically efficient in the sense of \cite[p.~367]{vanderVaart1998asymptotic}, relative to a certain tangent space. Informally, the semiparametric efficiency bound at $\mu$ is computed as the largest Cram\'{e}r-Rao lower bound among one-dimensional submodels passing through $\mu$. 

Formally, consider estimating a functional $\kappa: \calP \subset \calP(S) \to \R$ at $\mu \in \calP$ from i.i.d. data $X_1,\dots,X_n \sim \mu$. We consider submodels $\{ \mu_t : 0  \le  t < \epsilon' \}$ with $\mu_0 = \mu$ such that, for some measurable score function $h: S \to \R$, we have
\[
\int \left [ \frac{d\mu_t^{1/2} - d\mu^{1/2}}{t} - \frac{1}{2} hd\mu^{1/2} \right]^2 \to 0,
\]
where $d\mu_t$ and $d\mu$ are Radon-Nikodym densities w.r.t. a common dominating measure and the integration is taken w.r.t. the dominating measure. 
Score functions are square integrable w.r.t. $\mu$ and $\mu$-mean zero. A \textit{tangent set} $\dot \calP_\mu \subset L^2 (\mu)$ of the model $\calP$ at $\mu$ is the set of score functions corresponding to a collection of such submodels. If $\dot \calP_\mu$ is a vector subspace of $L^2(\mu)$, then it is called a \textit{tangent space}. Relative to a given tangent set $\dot \calP_\mu$, the functional $\kappa: \calP \to \R$ is called \textit{differentiable} at $\mu$ if there exists a continuous linear functional $\dot \kappa_\mu: L^2(\mu) \to \R$ such that, for every $h \in \dot \calP_\mu$ and a submodel $t \mapsto \mu_t$ with score function $h$,
\[
\frac{\kappa (\mu_t) - \kappa(\mu)}{t} \to \dot{\kappa}_\mu h, \quad t \downarrow 0.
\]

The semiparametric efficiency bound for estimating $\kappa$ at $\mu$, relative to $\dot \calP_\mu$, is defined as 
\[
\sigma_{\kappa,\mu}^2 = \sup_{h \in \mathrm{lin}(\dot \calP_\mu)} \frac{(\dot \kappa_\mu h)^2}{\| h \|_{L^2(\mu)}^2}, 
\]
where $\mathrm{lin}(\dot \calP_\mu)$ is the linear span of $\dot \calP_\mu$. In particular, the $N(0,\sigma_{\kappa,\mu}^2)$ distribution serves as the ``optimal'' limit distribution for estimating $\kappa$ at $\mu$ in the sense of the H\'ajek-Le Cam convolution theorem and also in the local asymptotic minimax sense; see Chapter 25 in \cite{vanderVaart1998asymptotic} for details. 

\medskip
The next proposition concerns the computation of the semiparametric efficiency bound in the context of \cref{prop: master proposition}.

\begin{proposition}[Semiparametric efficiency]
\label{prop: master proposition 2}
For Setting~$\circledast$ from \cref{prop: master proposition} with $\mathfrak{E}=\R$, consider estimating $\delta: \calP_0 \to \R$ at $\mu$ from i.i.d. data $X_1,\dots,X_n \sim \mu$. Set 
\[
\dot \calP_{0,\mu} =\{ h :\, \text{$h: S \to \R$ is bounded and measurable with  $\mu$-mean zero} \}.
\]
Suppose that (a) the function class $\calF$ is $\mu$-pre-Gaussian, i.e., there exists a tight mean-zero Gaussian process $G_\mu=\big(G_\mu(f)\big)_{f \in \calF}$ in $\ell^\infty (\calF)$ with covariance function $\Cov\big(G_\mu(f),G_\mu(g)\big)= \Cov_\mu (f,g)$; (b) for every $h \in \dot \calP_{0,\mu}$, $(1+th) \mu \in \calP_0$ for sufficiently small $t > 0$; and (c) there exists a continuous linear functional $\delta_{\mu}': \ell^\infty(\calF) \to \R$ such that \eqref{eq: right derivative} holds for every $\nu \in \calP_0$ of the form $\nu = (1+h)\mu$ for some $h \in \dot \calP_{0,\mu}$.
Then, the semiparametric efficiency bound for estimating $\delta$ at $\mu$ relative to the tangent space $\dot \calP_{0,\mu}$ agrees with $\Var\big(\delta_\mu'(G_\mu)\big)$. 
\end{proposition}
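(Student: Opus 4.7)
The plan is to realize the semiparametric differentiability of $\delta$ with tangent space $\dot\calP_{0,\mu}$ using the explicit submodels $\mu_t := (1+th)\mu$ for $h \in \dot\calP_{0,\mu}$, and then identify the resulting Riesz representer with the Cameron--Martin element that controls $\Var\big(\delta_\mu'(G_\mu)\big)$. First, for each $h \in \dot\calP_{0,\mu}$, assumption (b) guarantees $\mu_t \in \calP_0$ for small $t>0$. A Taylor expansion of $(1+th)^{1/2}$ combined with boundedness of $h$ and dominated convergence shows that $\mu_t$ has score $h$ in the quadratic-mean sense. Condition (c), applied to $\nu = (1+h)\mu$ (so that $\nu - \mu = h\mu$, identified with the functional $f \mapsto \int f h\, d\mu$ on $\calF$), yields the pathwise derivative
\[
\dot\delta_\mu h \,:=\, \lim_{t \downarrow 0} \frac{\delta(\mu_t) - \delta(\mu)}{t} \,=\, \delta_\mu'(h\mu).
\]

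Next, I would establish $L^2(\mu)$-continuity of $\dot\delta_\mu$. The pre-Gaussian assumption (a) forces $M := \sup_{f \in \calF} \sqrt{\Var_\mu(f)} < \infty$, since tightness of $G_\mu$ in $\ell^\infty(\calF)$ bounds $\mathbb{E}\|G_\mu\|_{\infty,\calF}$ via Fernique's theorem and $\sqrt{\Var_\mu(f)} = \sqrt{\pi/2}\,\mathbb{E}|G_\mu(f)|$. Since $\mu h = 0$, Cauchy--Schwarz then gives $\|h\mu\|_{\infty,\calF} \le M\|h\|_{L^2(\mu)}$, so continuity of $\delta_\mu'$ on $\ell^\infty(\calF)$ yields $|\dot\delta_\mu h| \le \|\delta_\mu'\|_\op M \|h\|_{L^2(\mu)}$. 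The map thus extends to a continuous linear functional on $L^2_0(\mu)$, and by Riesz there is a unique $\phi \in L^2_0(\mu)$ with $\dot\delta_\mu h = \langle h, \phi\rangle_{L^2(\mu)}$ for all $h \in L^2_0(\mu)$; hence $\sigma_{\delta,\mu}^2 = \|\phi\|_{L^2(\mu)}^2$, using that bounded mean-zero functions are dense in $L^2_0(\mu)$. Moreover, $\phi$ lies in $\calL := \overline{\mathrm{lin}(\calF - \mu\calF)}^{L^2(\mu)}$: any $h \in L^2_0(\mu) \cap \calL^\perp$ satisfies $\int f h\, d\mu = 0$ for all $f \in \calF$, so $h\mu \equiv 0$ in $\ell^\infty(\calF)$, forcing $\langle h, \phi\rangle_{L^2(\mu)} = \dot\delta_\mu h = 0$.

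Finally, I would match $\|\phi\|_{L^2(\mu)}^2$ with $\Var\big(\delta_\mu'(G_\mu)\big)$ via the RKHS associated with $G_\mu$. The RKHS with kernel $(f,g) \mapsto \Cov_\mu(f,g)$ is isometrically identified with $\calL$ via $h \leftrightarrow h\mu$, so it consists precisely of $\{h\mu : h \in \calL\}$ with $\|h\mu\|_\mathbb{H} = \|h\|_{L^2(\mu)}$. The standard variance-as-RKHS-norm formula for a continuous linear functional of a tight centered Gaussian measure then gives
\[
\Var\big(\delta_\mu'(G_\mu)\big) \,=\, \sup_{h \in \calL,\,\|h\|_{L^2(\mu)}\le 1} |\delta_\mu'(h\mu)|^2 \,=\, \sup_{h \in \calL,\,\|h\|_{L^2(\mu)}\le 1} |\langle h, \phi\rangle_{L^2(\mu)}|^2 \,=\, \|\phi\|_{L^2(\mu)}^2,
\]
where the final equality uses $\phi \in \calL$ together with Cauchy--Schwarz. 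I expect the main obstacle to be this last identification: carefully justifying the RKHS isometry and the variance-as-operator-norm formula when $\ell^\infty(\calF)$ is nonseparable. One resolves this by passing to the separable Banach subspace supporting the tight variable $G_\mu$ (or equivalently to a countable sup-norm dense subfamily of $\calF$) before invoking standard Cameron--Martin theory; everything else reduces to Hilbert-space arithmetic.
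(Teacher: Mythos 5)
Your argument is correct, and it diverges from the paper's in how the final identification $\sigma_{\delta,\mu}^2 = \Var\big(\delta_\mu'(G_\mu)\big)$ is established. Both proofs agree through the first half: verify that $\mu_t = (1+th)\mu$ is differentiable in quadratic mean with score $h$; use condition (c) to read off $\dot\delta_\mu h = \delta_\mu'(h\mu)$; invoke pre\nobreakdash-Gaussianity to bound the variance function on $\calF$ (the paper cites the tightness characterization from \cite[Section 1.5]{van1996weak}, you invoke a Fernique-type bound --- same conclusion) and hence get $L^2(\mu)$-continuity of $h \mapsto h\mu$ into $\ell^\infty(\calF)$; then conclude, via \cite[Theorem 25.20, Lemma 25.19]{vanderVaart1998asymptotic}, that the efficiency bound is $\sup_{h \in L^2_0(\mu)} (\delta_\mu'(h\mu))^2/\|h\|_{L^2(\mu)}^2$.

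Where you part ways is the computation of this supremum. The paper proceeds by hand: it restricts $\delta_\mu'$ to $\calC_u(\calF)$, extends it to $\calC(\bar\calF)$, applies the Riesz representation theorem on the compact $\bar\calF$ to produce a signed measure $\mathfrak{m}$, and then uses Riemann-sum approximations of $\mathfrak{m}$ to simultaneously compute the efficiency bound and the variance of $\delta_\mu'(G_\mu)$, matching the two limits. You instead take the Riesz representer $\phi \in L_0^2(\mu)$ of $\dot\delta_\mu$, show $\phi$ lies in $\calL = \overline{\mathrm{lin}(\calF - \mu\calF)}^{L^2(\mu)}$, and close the loop by invoking (i) the isometric identification of the RKHS of the $\mu$-Brownian bridge with $\{h\mu : h \in \calL\}$ and (ii) the Cameron--Martin variance-as-dual-norm formula, which requires passing to the separable support of $G_\mu$ before applying standard Gaussian measure theory. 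Your route trades the paper's self-contained elementary computation for a cleaner abstract argument that makes the role of the RKHS transparent, at the cost of importing several facts about Gaussian measures on Banach spaces. Your observation that $\phi \in \calL$ (via $\phi \perp (L_0^2(\mu) \cap \calL^\perp)$) is exactly the point that guarantees the supremum is attained in the right subspace and is needed for the last Cauchy--Schwarz step; this is correct as stated. Both arguments are valid; the paper's avoids nonseparability issues by working directly with $\calC(\bar\calF)$, while yours handles them by restricting to the separable support as you note.
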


Proposition \ref{prop: master proposition 2} can be thought of as a variant of Theorem 3.1 in \cite{van1991efficiency}, which asserts that a Hadamard differentiable functional (tangentially to a sufficiently large subspace) of an asymptotically efficient estimator is again asymptotically efficient; see Remark \ref{rem: van der vaart} for more details. In  \cref{sec: proof master proposition}, we provide a direct and self-contained proof of  \cref{prop: master proposition 2}. We note that \cref{prop: master proposition 2} covers a slightly more general situation than \cite[Theorem 3.1]{van1991efficiency} since it only requires G\^{a}teaux differentiability of the map $\delta$, and choosing a pre-Gaussian function class $\calF$ in such a way that the derivative $\delta_\mu'$ extends to a continuous linear functional on $\ell^\infty (\calF)$. In particular, the efficiency bound computation in  \cref{prop: master proposition 2} is applicable even when \cref{prop: master proposition} is difficult to apply. For instance, when the G\^ateaux derivative $\delta_\mu'$ in (\ref{eq: right derivative}) is a point evaluation, $\delta_{\mu}'(\nu -\mu) = (\nu-\mu)(f^\star)$ for some function $f^\star \in L^2(\mu)$, we can choose $\calF = \{ f^\star \}$ (singleton) and apply \cref{prop: master proposition 2} to conclude that $\Var_\mu (f^\star)$ agrees with the semiparametric efficiency bound, relative to $\dot \calP_{0,\mu}$ (note that the function class $\calF$ in \cref{prop: master proposition 2} need not be the same as the one in \cref{prop: master proposition}).

The following corollary covers the two-sample case. Define $\dot \calP_{0,\nu}$  analogously to $\dot \calP_{0,\mu}$ and set $\dot \calP_{0,\mu} \oplus \dot \calP_{0,\nu} = \{ h_1 \oplus h_2 : h_1 \in \dot \calP_{0,\mu}, h_2 \in \dot \calP_{0,\nu} \}$. 
\begin{corollary}[Semiparametric efficiency in two-sample setting]
\label{cor: efficiency two sample}
Let $\calF$ be a class of Borel measurable functions on a topological space $S$ with finite envelope $F$, and for given $\mu,\nu \in \calP(S)$, let $\calP_{0,\mu},\calP_{0,\nu}$ be subsets of $\calP(S)$ containing $\mu,\nu$, respectively, such that $\int F d\rho < \infty$ for all $\rho \in \calP_{0,\mu} \cup \calP_{0,\nu}$. Let $\calP_{0,\mu} \otimes \calP_{0,\nu} = \{ \rho_1 \otimes \rho_2 : \rho_1 \in \calP_{0,\mu}, \rho_2 \in \calP_{0,\nu} \}$. Consider estimating $\delta: \calP_{0,\mu} \otimes \calP_{0,\nu}  \to \R$ at $\mu \otimes \nu$ from i.i.d. data $(X_1,Y_1),\dots,(X_n,Y_n) \sim \mu \otimes \nu$. Suppose that (a) the function class $\calF$ is pre-Gaussian w.r.t. $\mu$ and $\nu$; (b) for every $h_1 \otimes h_2 \in \dot \calP_{0,\mu} \oplus \dot \calP_{0,\nu}$, $\big((1+th_1)\mu\big) \otimes \big((1+th_2)\nu \big) \in \calP_{0,\mu} \otimes \calP_{0,\nu}$ for sufficiently small $t > 0$; (c) 
there exist continuous linear functionals $\delta_{\mu}': \ell^\infty(\calF) \to \R$ and $\delta_{\nu}':\ell^\infty(\calF) \to \R$ such that $t^{-1} \big \{\delta\big(\big((1+th_1)\mu\big) \otimes \big((1+th_2)\nu \big)\big) -  \delta(\mu \otimes \nu) \big\} \to \delta_\mu'(h_1\mu) + \delta_{\nu}'(h_2\nu)$ as $t \downarrow 0$ for every $h_1 \otimes h_2 \in \dot \calP_{0,\mu} \oplus \dot \calP_{0,\nu}$. Then, the semiparametric efficiency bound for estimating $\delta$ at $\mu \otimes \nu$ relative to the tangent space $\dot \calP_{0,\mu} \oplus \dot \calP_{0,\nu}$ agrees with $\Var\big(\delta_\mu'(G_\mu)\big) + \Var\big(\delta_\mu'(G_\nu)\big)$, where $G_\mu$ and $G_\nu$ are tight $\mu$- and $\nu$-Brownian bridges in $\ell^\infty (\calF)$, respectively. 
\end{corollary}

\begin{remark}[Efficiency of wavelet-based estimator of $\mathsf{W}_2$]
Theorem 18 in \cite{manole2021plugin} shows a CLT for a wavelet-based estimator $\mathsf{W}_2(\tilde{\mu}_n,\nu)$ for $\mathsf{W}_2(\mu,\nu)$ in the one-sample case under high-level assumptions that include global regularity of the OT potential $\varphi$: $\sqrt{n}\big(\mathsf{W}_2^2(\tilde{\mu}_n,\nu) - \mathsf{W}_2^2(\mu,\nu)\big) \stackrel{d}{\to} N\big(0,\Var_\mu(\varphi)\big)$. Their proof consists of first establishing a CLT for the expectation centering (similarly to \cite{delbarrio2019central}) and then showing that the bias is negligible. On the other hand, Proposition \ref{prop: master proposition} would be difficult to apply to obtain the same result, unless one assumes a \textit{uniform} bound on the H\"{o}lder norm of OT potentials corresponding to a local neighborhood of $\mu$. However, such uniform bounds on global regularity of OT potentials are currently unavailable, except for very limited cases (cf. the discussion after Theorem 3 in \cite{manole2021plugin}). Still, under the assumption of Theorem 18 in \cite{manole2021plugin}, it is readily verified that $\rho \mapsto \mathsf{W}_2(\rho,\nu)$ is G\^ateaux differentiable at $\mu$ with derivative $(\rho - \mu)(\varphi)$ (cf. the proof of Lemma \ref{lem: directional derivative SWp}), so $\Var_\mu(\varphi)$ indeed coincides with the semiparametric efficiency bound. 
\end{remark}

\section{Sliced Wasserstein distances}
\label{sec: slicing}

This section studies sliced Wasserstein distances. We start with some background and a brief literature review, and then move on to a statistical analysis of limit distributions, bootstrapping, and efficiency. 

\subsection{Background}

Average- and max-sliced Wasserstein distances are defined next.  

\begin{definition}[Sliced Wasserstein distances]
Let $1 \le p < \infty$. The average-sliced and max-sliced $p$-Wasserstein distances between $\mu,\nu \in \calP_p(\R^d)$ are defined, respectively,~as
\[
\SWp(\mu,\nu):= \left[\int_{\unitsph} \Wp^p(\ptheta_\sharp \mu, \ptheta_\sharp \nu) d\sigma(\theta)\right]^{1/p} \ \text{and} \ \  \MSWp(\mu,\nu):= \max_{\theta\in\unitsph} \Wp(\ptheta_\sharp \mu, \ptheta_\sharp \nu),
\]
where $\ptheta: \R^d \to \R$ is the projection map $x \mapsto \theta^{\intercal} x$ and $\sigma$ is the uniform distribution on the unit sphere $\unitsph$. 
\end{definition}

The sliced distances $\SWp$ and $\MSWp$ are metrics on $\calP_p(\R^d)$ and, in fact, induce the same topology as $\Wp$ \cite{bayraktar2021}.
\begin{lemma}[Proposition 2.1 and Theorem 2.1 of \cite{bayraktar2021}]
Both $\SWp$ and $\MSWp$ are metrics on $\calP_p(\R^d)$ that generate the same topology as $\Wp$, i.e., for any $\mu_n,\mu \in \calP_p(\R^d)$, 
\[
\SWp(\mu_n,\mu) \to 0  \iff \MSWp(\mu_n,\mu) \to 0  \iff \Wp(\mu_n,\mu) \to 0. 
\]
\end{lemma}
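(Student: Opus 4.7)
The plan is to verify the metric axioms first, and then establish the topological equivalence via a sandwich of inequalities combined with a Cram\'er-Wold-plus-moment-convergence argument. Non-negativity, symmetry, and the triangle inequality for $\SWp$ and $\MSWp$ descend directly from the corresponding properties of $\Wp$ on $\R$ (Minkowski's inequality for $\SWp$, monotonicity of the maximum for $\MSWp$). For identity of indiscernibles, $\SWp(\mu,\nu)=0$ (resp.\ $\MSWp(\mu,\nu)=0$) forces $\ptheta_\sharp \mu = \ptheta_\sharp \nu$ for $\sigma$-a.e.\ (resp.\ every) $\theta \in \unitsph$; continuity of the map $\theta \mapsto \ptheta_\sharp \mu$ with respect to weak convergence upgrades the $\sigma$-a.e.\ statement to every $\theta$, and the Cram\'er-Wold device yields $\mu=\nu$.

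For the topological equivalence, the easy directions follow from the chain
\[
\SWp(\mu,\nu) \,\le\, \MSWp(\mu,\nu) \,\le\, \Wp(\mu,\nu),
\]
where the first bound is average-$\le$-max and the second uses that $\ptheta$ is $1$-Lipschitz, so pushing any coupling of $(\mu,\nu)$ through $(\ptheta,\ptheta)$ produces a coupling of the projections with no larger $p$-cost. This immediately yields $\Wp(\mu_n,\mu)\to 0 \Rightarrow \MSWp(\mu_n,\mu) \to 0 \Rightarrow \SWp(\mu_n,\mu)\to 0$. The substantive direction is $\SWp(\mu_n,\mu)\to 0 \Rightarrow \Wp(\mu_n,\mu)\to 0$, from which the $\MSWp$ implication follows by the sandwich. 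Since $\Wp$ metrizes weak convergence together with $p$-th moment convergence on $\calP_p(\R^d)$, I would verify both.

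For $p$-th moment convergence, rotation invariance of $\sigma$ yields $\int_{\unitsph} |\theta^{\intercal}x|^p\, d\sigma(\theta) = c_{d,p}\|x\|^p$ for a positive constant $c_{d,p}$, whence, using $\Wp^p(\rho,\delta_0) = \int_\R |t|^p\, d\rho(t)$,
\[
\int_{\R^d}\|x\|^p d\mu_n(x) = c_{d,p}^{-1}\int_{\unitsph}\Wp^p(\ptheta_\sharp\mu_n,\delta_0)\, d\sigma(\theta).
\]
The reverse triangle inequality $|\Wp(\ptheta_\sharp\mu_n,\delta_0)-\Wp(\ptheta_\sharp\mu,\delta_0)|\le \Wp(\ptheta_\sharp\mu_n,\ptheta_\sharp\mu)$ raised to the $p$-th power and integrated over $\unitsph$ gives $\|\Wp(\ptheta_\sharp\mu_n,\delta_0)-\Wp(\ptheta_\sharp\mu,\delta_0)\|_{L^p(\sigma)}\le \SWp(\mu_n,\mu)\to 0$; convergence of $L^p(\sigma)$-norms then produces $\int\|x\|^p d\mu_n \to \int\|x\|^p d\mu$, and in particular a uniform moment bound, hence tightness of $\{\mu_n\}$. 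For the weak-convergence portion, along any subsequence I would extract a further subsequence $\{\mu_{n_k}\}$ with $\Wp(\ptheta_\sharp\mu_{n_k},\ptheta_\sharp\mu)\to 0$ for $\sigma$-a.e.\ $\theta$; any weak limit $\mu^\star$ then satisfies $\ptheta_\sharp\mu^\star = \ptheta_\sharp\mu$ for $\sigma$-a.e.\ $\theta$, and by continuity in $\theta$ for every $\theta$, so Cram\'er-Wold gives $\mu^\star=\mu$. Consequently $\mu_n \stackrel{w}{\to} \mu$, which together with $p$-th moment convergence yields $\Wp(\mu_n,\mu)\to 0$.

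The main obstacle I anticipate is orchestrating the Cram\'er-Wold identification of the weak limit: $L^p(\sigma)$-convergence of the Wasserstein quantities yields only $\sigma$-a.e.\ convergence of the one-dimensional projections along subsequences, so one must carefully combine this with tightness, continuity of $\theta \mapsto \ptheta_\sharp\mu$, and the subsequence principle to upgrade subsequential convergence to convergence of the original sequence $\{\mu_n\}$ itself.
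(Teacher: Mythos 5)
The paper does not prove this lemma: it explicitly cites Proposition 2.1 and Theorem 2.1 of \cite{bayraktar2021} and simply records the statement. Your argument is nevertheless correct and is essentially the standard one used in that reference and in the surrounding literature: you get the sandwich $\SWp \le \MSWp \le \Wp$ from average-versus-maximum and the $1$-Lipschitz nature of $\ptheta$, recover $p$-th moment convergence from rotational invariance of $\sigma$ via the identity $\int_{\unitsph}|\theta^{\intercal}x|^p\,d\sigma(\theta)=c_{d,p}\|x\|^p$ together with $\Wp^p(\rho,\delta_0)=\int|t|^p\,d\rho$, obtain tightness, and then identify the weak limit by extracting subsequences on which the $L^p(\sigma)$ convergence becomes $\sigma$-a.e.\ pointwise and invoking Cram\'er--Wold (with the continuity of $\theta\mapsto\ptheta_\sharp\mu$ upgrading $\sigma$-a.e.\ to everywhere). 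The one point worth stating explicitly, which you flag as the "obstacle'' but handle correctly, is that the extraction must be nested: from an arbitrary subsequence, first pass to a tight weak limit $\mu^\star$, then further to a $\sigma$-a.e.\ convergent subsequence of the projected Wasserstein distances, and only then does the uniqueness-of-weak-limits argument close. With that chain in place the subsequence principle gives convergence of the full sequence, and the proof is complete.
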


Sliced Wasserstein distances are efficiently~computable using the closed-form expression for $\Wp$ between distributions on $\R$ using quantile functions. For $\mu \in \calP(\R^d)$ and $\theta \in \unitsph$, denote by $F_{\mu}(\cdot\,;\theta)$ and $F_{\mu}^{-1}(\cdot\,;\theta)$ the distribution and quantile functions of $\ptheta_\sharp \mu$, respectively, i.e., 
\[
F_{\mu}(t;\theta)= \mu \big ( \{ x \in \R^d : \theta^{\intercal} x \le t \} \big ) \quad \text{and} \quad F_{\mu}^{-1}(\tau;\theta) = \inf \{ t\in\RR : F_{\mu}(t;\theta) \ge \tau \}. 
\]
Then, $\Wp(\ptheta_\sharp \mu, \ptheta_\sharp \nu)$ equals the $L^p$-norm between the corresponding quantile functions,
\[
\Wp^p(\ptheta_\sharp \mu, \ptheta_\sharp \nu) = \int_{0}^1 \big|F_{\mu}^{-1}(\tau;\theta) - F_{\nu}^{-1}(\tau;\theta)\big|^p d\tau.
\]
For $p=1$, the above further simplifies to the $L^1$ distance between the corresponding distribution functions, namely
\begin{equation}
\label{eq: W1 cdf}
\mathsf{W}_1 (\ptheta_\sharp \mu, \ptheta_\sharp \nu) = \int_{\R} \big|F_{\mu}(t;\theta) - F_{\nu}(t;\theta)\big| \, dt .
\end{equation}
Also, sliced Wasserstein distances between projected empirical distributions can be readily computed using order statistics. Let $\hat{\mu}_n:=n^{-1}\sum_{i=1}\delta_{X_i}$ and $\hat{\nu}_n:=n^{-1}\sum_{i=1}\delta_{Y_i}$ be the empirical distributions of samples $X_1,\ldots,X_n$ and $Y_1,\ldots,Y_n$. 
For each $\theta \in \unitsph$, let $X_i (\theta) = \theta^{\intercal}X_i$, and let $X_{(1)}(\theta) \le \dots \le X_{(n)}(\theta)$ be the order statistics. Define $Y_{(1)}(\theta) \le \cdots \le Y_{(n)}(\theta)$ analogously. Then, 
by Lemma 4.2 in \cite{bobkov2019one},
\begin{equation}
\Wp^p(\ptheta_\sharp \hat{\mu}_n, \ptheta_\sharp \hat{\nu}_n) = \frac{1}{n}\sum_{i=1}^n \big|X_{(i)}(\theta) - Y_{(i)}(\theta)\big|^p.\label{EQ:order_stat}
\end{equation}
The sliced distances $\SWp$ and $\MSWp$ can be computed by integrating or maximizing the above over $\theta\in\unitsph$.

\subsubsection{Literature review}

Sliced Wasserstein distances have been applied to various statistical inference and machine learning tasks, including barycenter computation \cite{rabin2011wasserstein}, generative modeling \cite{deshpande2018generative,deshpande2019max,nadjahi2019asymptotic,nadjahi2020statistical}, and autoencoders \cite{kolouri2018sliced}. The statistical literature on sliced distances mostly focused on expected value analysis. Specifically, \cite{niles2019estimation} show that if $\mu$ satisfies a $T_{q}(\sigma^2)$ inequality with $q \in [1,2]$, then
$\EE\big[\mspace{1mu}\MSWp(\hat{\mu}_n,\mu)\big] \lesssim \sigma \big( n^{-1/(2p)} + n^{(1/q-1/p)_+}\sqrt{(d\log n)/n}\, \big)$ up to a constant that depends only on $p$.
Further results on empirical convergence rates can be found in \cite{lin2021projection}, where both $\SWp$ and $\MSWp$ were treated, while replacing the transport inequality assumption of \cite{niles2019estimation} with exponential moment bounds (via Bernstein's tail conditions) or Poincar\'e type inequalities. A limit distribution result for one-sample sliced $\wass$ was mentioned in \cite{nadjahi2020statistical} but was left as an unproven assumption. Extensions to sliced $\Wp$ and two-sample results, all of which are crucial for principled statistical inference, are currently~open. Consistency of the bootstrap and efficiency bounds are also unaccounted for by the existing literature.\footnote{After the first version of the present paper was posted on the arXiv, we became aware that the latest update of \cite{manole2022minimax} (arXiv update: April 4, 2022) contains limit distribution and bootstrap results for $\SWp$ with $p > 1$ under the alternative, but under somewhat 
restrictive assumptions and using a different proof technique; see Remark \ref{rem: comparison manole} for more details. Our work is independent of \cite{manole2022minimax}.}

\subsection{Statistical analysis}

We move on to the statistical aspects of sliced $\Wp$, closing the aforementioned gaps. The $p>1$ case is treated under the general framework of \cref{sec: unified} for compactly supported distributions. For $p=1$, we present a separate derivation that makes use of the simplified form in \eqref{eq: W1 cdf} and the Kantorovich-Rubinstein duality to obtain the results under mild moment assumptions.

\subsubsection{Order $\bm{p>1}$} The next theorem characterizes limit distributions for average-sliced $p$-Wasserstein distances under both the one- and two-sample settings. It also states asymptotic efficiency of the empirical plug-in estimator, and consistency of the bootstrap. The latter facilitates statistical inference by providing a tractable estimate of the limiting distribution, and is set up as follows. Given the data $X_1,\dots,X_n$, let $X_1^B,\dots,X_n^B$ be an independent sample from $\hat{\mu}_n$, and set $\hat{\mu}_n^B:= n^{-1}\sum_{i=1}^n \delta_{X_i^B}$ as the bootstrap empirical distribution. Define $\hat{\nu}_n^B$ analogously and let $\Prob^B$ denote the conditional probability given the data.

\begin{theorem}[Limit distribution, efficiency, and bootstrap consistency for $\SWp^p$]
\label{thm: limit distribution SWp}
Let $1 < p < \infty$, and suppose that $\mu,\nu$ are compactly supported, such that $\mu$ is absolutely continuous and $\supp(\mu)$ is convex. For every $\theta \in \unitsph$, let $\varphi^{\theta}$ be an OT potential from $\ptheta_\sharp \mu$ to $\ptheta_\sharp \nu$ for $\mathsf{W}_p$, which is unique up to additive constants on $\inte (\supp(\ptheta_\sharp \mu))$. Also, set $\psi^\theta = [ \varphi^{\theta} ]^c$ as the $c$-transform of $\varphi^\theta$ for $c(s,t) = |s-t|^p$. 
The following hold. 

\begin{enumerate}
\setlength\itemsep{0.5em}
\item[(i)] We have
\[
\sqrt{n}\big(\SWp^p(\hat{\mu}_n,\nu) -\SWp^p(\mu,\nu)\big)  \stackrel{d}{\to} N \left (0, v_{p}^2 \right ),
\]
where $v_{p}^2 = \iint \Cov_{\mu}\big ( \varphi^{\theta} \circ \ptheta, \varphi^{\vartheta}\circ \mathfrak{p}^{\vartheta} \big) d\sigma(\theta) d\sigma(\vartheta)$, which is well-defined under the current assumption. 
The asymptotic variance $v_{p}^2$ coincides with the semiparametric efficiency bound for  estimating $\SWp^p(\cdot,\nu)$ at $\mu$. Also, provided that $v_{p}^2 > 0$, we have
\[
\sup_{t \in \R} \left | \Prob^B \Big ( \sqrt{n}\big(\SWp^p(\hat{\mu}_n^B,\nu) -\SWp^p(\hat{\mu}_n,\nu)\big) \le t \Big ) - \Prob\big(N(0,v_{p}^2) \le t\big) \right| \stackrel{\Prob}{\to} 0. 
\]
\item[(ii)] If in addition $\nu$ is absolutely continuous with convex support, then 
\[
\sqrt{n}\big(\SWp^p(\hat{\mu}_n,\hat{\nu}_n) -\SWp^p(\mu,\nu)\big)  \stackrel{d}{\to} N \left (0, v_{p}^2+w_{p}^2 \right ),
\]
where $v_{p}^2$ is given in (i) and $w_{p}^2 = \iint \Cov_{\nu}\big ( \psi^{\theta} \circ \ptheta, \psi^{\vartheta}\circ \mathfrak{p}^{\vartheta} \big) d\sigma(\theta) d\sigma(\vartheta)$.
The asymptotic variance $v_{p}^2+w_{p}^2$ coincides with the semiparametric efficiency bound for  estimating $\SWp^p(\cdot,\cdot)$ at $(\mu,\nu)$. Also, 
provided that $v_{p}^2+w_{p}^2 > 0$, we have
\[
\sup_{t \in \R} \left | \Prob^B \Big ( \sqrt{n}\big(\SWp^p(\hat{\mu}_n^B,\hat{\nu}_n^B) -\SWp^p(\hat{\mu}_n,\hat{\nu}_n)\big) \le t \Big ) - \Prob \big(N(0,v_{p}^2+w_{p}^2) \le t\big) \right| \stackrel{\Prob}{\to} 0.
\]
\end{enumerate} 
\end{theorem}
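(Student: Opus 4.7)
I plan to apply \cref{prop: master proposition} to $\delta(\mu') := \SWp^p(\mu', \nu)$ regarded as a functional on $\calP_0 := \{\rho \in \calP(\R^d) : \supp(\rho) \subset K\}$, where $K$ is a compact convex set containing $\supp(\mu) \cup \supp(\nu)$, with $K \subset \overline{B(0, R)}$ for some $R > 0$. For each $\theta \in \unitsph$ and $\mu' \in \calP_0$, the marginals $\ptheta_\sharp \mu'$ and $\ptheta_\sharp \nu$ are supported in $[-R, R]$, and the 1D structure of the Monge map for cost $c(s,t) = |s-t|^p$ gives
\begin{equation*}
(\varphi^{\theta, \mu'})'(s) = p \, |s - T^{\theta, \mu'}(s)|^{p-1} \sign\bigl(s - T^{\theta, \mu'}(s)\bigr)
\end{equation*}
a.e., so $|(\varphi^{\theta, \mu'})'| \leq L := p \, (2R)^{p-1}$ uniformly in $\theta$ and $\mu' \in \calP_0$. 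Normalizing by $\varphi^{\theta, \mu'}(0) = 0$, I take
\begin{equation*}
\calF := \bigl\{ f \circ \ptheta : \theta \in \unitsph,\ f : [-R, R] \to \R \text{ is } L\text{-Lipschitz with } f(0) = 0 \bigr\},
\end{equation*}
which contains $\varphi^{\theta, \mu'} \circ \ptheta$ for every $\mu' \in \calP_0$ and $\theta \in \unitsph$.

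\textbf{Donsker and Lipschitz conditions.} Elements of $\calF$ are uniformly bounded, uniformly Lipschitz functions on the compact set $\overline{B(0, R)}$, so standard bracketing-number estimates give $\log N_{[\,]}(\epsilon, \calF, L^2(\mu)) \lesssim \epsilon^{-d}$, making $\calF$ a $\mu$-Donsker class with a tight mean-zero Gaussian limit $G_\mu$ in $\ell^\infty(\calF)$. For the local Lipschitz condition on $\delta$, applying OT duality~(\ref{eq: duality}) twice---using $\varphi^{\theta, \mu_i}$ as a suboptimal candidate in the dual for $\mu_{3-i}$---yields the two-sided envelope
\begin{equation*}
\bigl|\Wp^p(\ptheta_\sharp \mu_1, \ptheta_\sharp \nu) - \Wp^p(\ptheta_\sharp \mu_2, \ptheta_\sharp \nu)\bigr| \leq \max_{i \in \{1, 2\}} \bigl|(\mu_1 - \mu_2)(\varphi^{\theta, \mu_i} \circ \ptheta)\bigr|,
\end{equation*}
so integrating in $\theta$ and using $\varphi^{\theta, \mu_i} \circ \ptheta \in \calF$ gives $|\delta(\mu_1) - \delta(\mu_2)| \leq \|\mu_1 - \mu_2\|_{\infty, \calF}$. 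The \emph{main obstacle} is engineering $\calF$ to uniformly absorb OT potentials for all base measures in $\calP_0$; this is precisely where compact support and the explicit 1D Monge-map representation are essential, since no analogous uniform regularity is known for 1D potentials of general noncompactly supported marginals.

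\textbf{G\^ateaux derivative and limit distribution.} Applying the envelope bound above with $\mu_1 = \mu_t := \mu + t(\rho - \mu)$ and $\mu_2 = \mu$ (for $\rho \in \calP_0$ and $t \in (0, 1]$), the difference quotient $t^{-1}\bigl[\Wp^p(\ptheta_\sharp \mu_t, \ptheta_\sharp \nu) - \Wp^p(\ptheta_\sharp \mu, \ptheta_\sharp \nu)\bigr]$ lies between $(\rho - \mu)(\varphi^{\theta, \mu_t} \circ \ptheta)$ and $(\rho - \mu)(\varphi^{\theta, \mu} \circ \ptheta)$. Since $\mu$ is absolutely continuous with convex support, $\ptheta_\sharp \mu$ is absolutely continuous on a bounded interval, so the 1D potential is unique on $\inte(\supp(\ptheta_\sharp \mu))$ and stable under $\mu_t \to \mu$; sending $t \downarrow 0$ and applying Fubini-Tonelli gives
\begin{equation*}
\delta'_\mu(\rho - \mu) = \int_{\unitsph} (\rho - \mu)(\varphi^\theta \circ \ptheta) \, d\sigma(\theta).
\end{equation*}
This extends to a continuous linear functional on $\ell^\infty(\calF)$ via $|\delta'_\mu(h)| \leq \|h\|_{\infty, \calF}$, so \cref{prop: master proposition} yields $\sqrt{n}\bigl(\SWp^p(\hat\mu_n, \nu) - \SWp^p(\mu, \nu)\bigr) \dconv \delta'_\mu(G_\mu)$, a mean-zero Gaussian whose variance equals $v_p^2$ by a Fubini computation against the covariance kernel of $G_\mu$.

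\textbf{Efficiency, bootstrap, and two-sample.} Linearity of $\delta'_\mu$ and $\mu$-pre-Gaussianity of $\calF$ allow invoking \cref{prop: master proposition 2} to identify $v_p^2$ with the semiparametric efficiency bound relative to $\dot \calP_{0, \mu}$. For bootstrap consistency, \cref{cor: master} upgrades the setup to full Hadamard differentiability tangentially to $\supp(G_\mu)$; combined with the exchangeable bootstrap CLT for Donsker classes and the delta-method bootstrap theorem (\cite[Theorem 3.9.11]{van1996weak}), this gives conditional weak convergence of $\sqrt{n}\bigl(\SWp^p(\hat\mu_n^B, \nu) - \SWp^p(\hat\mu_n, \nu)\bigr)$ to $N(0, v_p^2)$ in probability, which under $v_p^2 > 0$ upgrades to the uniform Kolmogorov statement via P\'olya's theorem. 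The two-sample case is entirely analogous: treat $(\mu', \nu') \mapsto \SWp^p(\mu', \nu')$ as a functional on $\calP_0 \times \calP_0$, use the joint empirical process over $\calF$ (built from potentials $\varphi^\theta$) and its analog $\calG$ (built from $c$-transforms $\psi^\theta$, whose uniform Lipschitzness follows from the symmetric assumption on $\nu$), and independence of $\hat\mu_n, \hat\nu_n$ produces independent Gaussian limits $(G_\mu, G_\nu)$ whose additive contribution to the derivative yields asymptotic variance $v_p^2 + w_p^2$; efficiency and bootstrap consistency then follow by the same arguments.
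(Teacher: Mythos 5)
Your overall strategy — apply \cref{prop: master proposition} with a class of projected Lipschitz functions, verify the local Lipschitz condition via OT duality, compute the G\^ateaux derivative using stability of 1D potentials, and read off efficiency and bootstrap consistency from linearity of the derivative — matches the paper's proof. The Lipschitz envelope argument via duality, the two-sided bound
$\big|\Wp^p(\ptheta_\sharp \mu_1, \ptheta_\sharp \nu) - \Wp^p(\ptheta_\sharp \mu_2, \ptheta_\sharp \nu)\big| \leq \max_i \big|(\mu_1 - \mu_2)(\varphi^{\theta, \mu_i} \circ \ptheta)\big|$,
the use of OT potential stability (\cite[Theorem 3.4]{delbarrio2021}) in the directional-derivative step, and the two-sample reduction to the product measure all parallel the paper closely.

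However, there is a genuine gap in the Donsker verification. You write that ``standard bracketing-number estimates give $\log N_{[\,]}(\epsilon, \calF, L^2(\mu)) \lesssim \epsilon^{-d}$, making $\calF$ a $\mu$-Donsker class.'' That entropy rate is the one for the class of \emph{all} $L$-Lipschitz functions on a $d$-dimensional compact set, and it does \emph{not} yield Donsker: the bracketing entropy integral $\int_0^1 \sqrt{\log N_{[\,]}(\epsilon, \calF, L^2(\mu))}\,d\epsilon$ would be $\int_0^1 \epsilon^{-d/2}\,d\epsilon$, which diverges for all $d \geq 2$. Your $\calF$ is in fact exponentially smaller than the full Lipschitz class because every element factors through a one-dimensional projection: the correct bound is obtained by combining a $\|\cdot\|_\infty$-covering of the 1D Lipschitz ball on $[-R,R]$ (which has $\log N(\epsilon) \lesssim \epsilon^{-1}$, \cref{lem: lipschitz entropy}) with a covering of $\unitsph$ (which has $\log N(\epsilon) \lesssim d\log(1/\epsilon)$, \cref{lem: unit sphere entropy}), giving $\log N_{[\,]}(\epsilon, \calF, L^2(\mu)) \lesssim \epsilon^{-1} + d\log(1/\epsilon)$. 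This is precisely the content of \cref{lem: proj_lipschitz_entropy_bound}, and it is what makes $\calF$ Donsker in every dimension; as written your argument establishes Donsker only for $d = 1$.

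A secondary, more minor point: you derive the uniform Lipschitz bound on $\varphi^{\theta, \mu'}$ from the Monge-map first-order condition $(\varphi^{\theta,\mu'})'(s) = p|s - T^{\theta,\mu'}(s)|^{p-1}\sign(s - T^{\theta,\mu'}(s))$. For $\mu' = \hat{\mu}_n$ this map need not exist pointwise (the projected empirical measure is atomic), so the argument is not airtight as stated. The cleaner route — and the one the paper takes in \cref{lem: OT potential} — is to use the $c$-concave representation $\varphi(x) = \inf_{y\in[-R,R]}[|x-y|^p - \varphi^c(y)]$ directly, which immediately gives $|\varphi(x) - \varphi(x')| \leq \sup_{y}\big||x-y|^p - |x'-y|^p\big| \leq p(2R)^{p-1}|x-x'|$ without invoking any Monge map. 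The conclusion (uniform Lipschitz constant $p(2R)^{p-1}$) is the same, but that derivation works for arbitrary probability measures supported in $[-R,R]$.
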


The derivation of the limit distributions in \cref{thm: limit distribution SWp} follows from \cref{prop: master proposition}. We outline the main idea for the one-sample case. The functional of interest is set as the $p$th power of the average-sliced $p$-Wasserstein distance. Leveraging compactness of supports, we then show that $\SWp^p$ is Lipschitz w.r.t. $\MSWone$ (cf. Lemma \ref{lem: Lipschitz}). From the Kantorovich-Rubinstein duality, $\MSWone$ can be expressed as $\MSWone (\mu,\nu) = \| \mu - \nu \|_{\infty,\calF}$ with $\cF = \{\varphi \circ \ptheta : \theta \in \unitsph,  \varphi \in \mathsf{Lip}_{1,0}(\R)\}$, which is shown to be $\mu$-Donsker ($\mathsf{Lip}_{1,0}(\R)$ denotes the class of $1$-Lipschitz functions $\varphi$ on $\R$ with $\varphi(0)=0$). Evaluating the G\^{a}teaux directional derivative of the sliced distance, we have all the conditions needed to invoke Proposition \ref{prop: master proposition}, which in turn yields the distributional limits. For $\SWp$, the corresponding derivative turns out to be linear (in a suitable sense), so that asymptotic efficiency of the plug-in estimator and the bootstrap consistency follow from Proposition \ref{prop: master proposition 2} and Corollary \ref{cor: master} combined with Theorem 23.9 in \cite{vanderVaart1998asymptotic}. 

For the two-sample case, we think of $\SWp(\mu,\nu)$ as a functional of the product measure $\mu \otimes \nu$, as the correspondence between $(\mu,\nu)$ and $\mu \otimes \nu$ is one-to-one. With this identification, the rest of the argument is analogous to the one-sample case. In addition, in the two-sample case, the semiparametric efficiency bound is defined relative to the tangent space 
\[
\big\{ h_1 \oplus h_2 : \text{$h_1$ and $h_2$ are bounded measurable functions with $\mu (h_1) = \nu(h_2)=0$} \big \}.
\]
We will keep this convention when discussing  semiparamtric efficiency bounds in the two-sample case.

\begin{remark}[Removing $p$th power]
While \cref{thm: limit distribution SWp} states limit distributions for the $p$th power of $\SWp$, we can readily obtain corresponding results for the average-sliced $p$-Wasserstein distance itself by invoking the delta method for the map $s \mapsto s^{1/p}$.
\end{remark}

\begin{remark}[Comparison with \cite{manole2022minimax}]
\label{rem: comparison manole}
Theorem 4 in \cite{manole2022minimax} derives limit distributions and bootstrap consistency for empirical $\SWp$ under the alternative, subject to the assumption that the projected densities $\{ f_\mu(\cdot; \theta) \}_{\theta \in \unitsph}$ and $\{ f_\nu(\cdot; \theta) \}_{\theta \in \unitsph}$ are uniformly integrable, with 
\[
\sup_{\theta \in \unitsph} \esssup_{0 < u <1} \frac{1}{f_{\mu}(F_\nu^{-1}(t; \theta); \theta)} \vee \frac{1}{f_{\nu}(F_\nu^{-1}(t; \theta); \theta)} < \infty.
\]
Here $f_\mu(\cdot;\theta)$ and $F^{-1}_\mu(\cdot;\theta)$ are the (Lebesgue) density and quantile functions of $\ptheta_{\sharp}\mu$, and their composition is the so-called $I$-function; cf. \cite[Equation (5.2)]{bobkov2019one}. Verification of this condition for  given distributions on $\R^d$ seems nontrivial. The proof of \cite[Theorem 4]{manole2022minimax} exploits the quantile function representation of $\Wp$ in $d=1$ along with a linearization step (of quantile functions). Our limit theorem, on the other hand, assumes that $\mu$ has a density with compact and convex support, and employs a markedly different proof via the general framework of Proposition \ref{prop: master proposition}.
\end{remark}

We next consider the max-sliced Wasserstein distance, and provide one- and two-sample limit distributions. It turns out that the corresponding  Hadamard directional derivative is nonlinear and therefore the limit is non-Gaussian and the nonparametric bootstrap is inconsistent (cf. \cite{dumbgen1993,fang2019}).

\begin{theorem}[Limit distribution for $\MSWp$]
\label{thm: limit distribution MSWp}
Consider the assumption of Theorem~\ref{thm: limit distribution SWp}.
\begin{enumerate}
\setlength\itemsep{0.5em}
\item[(i)] Setting $\mathfrak{S}_{\mu,\nu}:= \{ \theta \in \unitsph : \mathsf{W}_p(\ptheta_\sharp \mu, \ptheta_\sharp \nu) = \MSWp(\mu,\nu) \}$, we have
 \[
\sqrt{n}\big(\MSWp^p(\hat{\mu}_n,\nu) -\MSWp^p(\mu,\nu)\big)  \stackrel{d}{\to} \sup_{\theta \in \mathfrak{S}_{\mu,\nu}} \mathbb{G}_\mu (\theta),
\]
where $\big(\mathbb{G}_\mu(\theta)\big)_{\theta \in \unitsph}$ is a centered Gaussian process with continuous paths and covariance function $\Cov\big(\mathbb{G}_\mu(\theta), \mathbb{G}_\mu(\vartheta)\big) = \Cov_{\mu}\big ( \varphi^{\theta} \circ \ptheta, \varphi^{\vartheta}\circ \mathfrak{p}^{\vartheta} \big)$, which is well-defined under the current assumption. 
\item[(ii)] If in addition $\nu$ is also absolutely continuous with convex support, then 
\[
\sqrt{n}\big(\MSWp^p(\hat{\mu}_n,\hat{\nu}_n) -\MSWp^p(\mu,\nu)\big)  \stackrel{d}{\to} \sup_{\theta \in \mathfrak{S}_{\mu,\nu}} \big[\mathbb{G}_\mu (\theta) + \mathbb{G}_{\nu}'(\theta)\big],
\]
where $\big(\mathbb{G}_\mu(\theta)\big)_{\theta \in \unitsph}$ is given in (i), $\big(\mathbb{G}_\nu'(\theta)\big)_{\theta \in \unitsph}$ is another centered Gaussian process with continuous paths and covariance function $\Cov\big(\mathbb{G}_\nu'(\theta), \mathbb{G}_\nu'(\vartheta)\big) = \Cov_{\nu}\big(\psi^{\theta} \circ \ptheta, \psi^{\vartheta}\circ \mathfrak{p}^{\vartheta}\big)$ independent of $\mathbb{G}_\mu$.
\end{enumerate}
\end{theorem}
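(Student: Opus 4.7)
The plan is to apply the master result in Proposition~\ref{prop: master proposition} to the functional $\delta(\rho) = \MSWp^p(\rho,\nu)$ for part~(i) and to $\delta(\pi) = \MSWp^p(\mu',\nu')$ with $\pi = \mu' \otimes \nu'$ for part~(ii), mirroring the structure of the proof of Theorem~\ref{thm: limit distribution SWp} but replacing averages over $\unitsph$ by suprema. I choose the same function class $\calF = \{\varphi \circ \ptheta : \theta \in \unitsph, \varphi \in \Lip\}$ (suitably restricted to the compact supports), which is $\mu$-Donsker and yields $\|\rho'-\rho''\|_{\infty,\calF} = \MSWone(\rho',\rho'')$ by Kantorovich–Rubinstein duality applied slicewise. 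The local Lipschitz condition~(b) follows by the same compactness-plus-mean-value argument as in Lemma~\ref{lem: Lipschitz} for $\SWp^p$, giving $|\MSWp^p(\rho',\nu)-\MSWp^p(\rho'',\nu)| \lesssim \MSWone(\rho',\rho'')$ for $\rho',\rho''$ in an $\MSWone$-neighborhood of $\mu$. Condition~(a) is the usual Donsker conclusion, identical to what is used in the average-sliced case.

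The substantive step is verifying the G\^{a}teaux derivative in condition~(c) and identifying it with $\sup_{\theta \in \mathfrak{S}_{\mu,\nu}} (\rho-\mu)(\varphi^\theta \circ \ptheta)$. Writing $\Wp^p(\ptheta_\sharp\rho,\ptheta_\sharp\nu) = \rho(\varphi^{\theta,\rho} \circ \ptheta) + \nu(\psi^{\theta,\rho} \circ \ptheta)$ via duality, with $\varphi^{\theta,\rho}$ an OT potential for $(\ptheta_\sharp\rho, \ptheta_\sharp\nu)$, I would combine a standard envelope/Danskin argument with stability of OT potentials in dimension one: for $\mu_t = \mu + t(\rho-\mu)$, the function $t \mapsto \Wp^p(\ptheta_\sharp\mu_t, \ptheta_\sharp\nu)$ is convex with right derivative $(\rho-\mu)(\varphi^\theta \circ \ptheta)$ (uniformly in $\theta$ in a sense strong enough to interchange limit and supremum), and the compactness of $\unitsph$ together with equicontinuity of $\theta \mapsto \Wp^p(\ptheta_\sharp\mu,\ptheta_\sharp\nu)$ forces the argmax of the perturbed objective to concentrate on the argmax $\mathfrak{S}_{\mu,\nu}$ of the unperturbed one. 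The resulting derivative $\xi \mapsto \sup_{\theta \in \mathfrak{S}_{\mu,\nu}} \xi(\varphi^\theta \circ \ptheta)$ is continuous and positively homogeneous (but not linear), and it extends to the full tangent cone by completeness as in the master proposition. Invoking Proposition~\ref{prop: master proposition} then delivers the limit $\sup_{\theta \in \mathfrak{S}_{\mu,\nu}} \mathbb{G}_\mu(\theta)$, with $\mathbb{G}_\mu(\theta) := G_\mu(\varphi^\theta \circ \ptheta)$ having the claimed covariance.

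Sample path continuity of $\mathbb{G}_\mu$ on $\unitsph$ reduces to continuity of $\theta \mapsto \varphi^\theta \circ \ptheta$ in $L^2(\mu)$, which I would establish using stability of one-dimensional OT potentials against weak convergence of $\ptheta_\sharp\mu$ and $\ptheta_\sharp\nu$ (continuous in $\theta$ on $\unitsph$), combined with the uniform Lipschitz bound on potentials over the compact projected supports, which pins down the additive normalization. The two-sample case follows from the same framework applied to the product measure: identifying $\MSWp^p$ with a functional on $\mu \otimes \nu$, the empirical process $\sqrt{n}(\hat{\mu}_n \otimes \hat{\nu}_n - \mu \otimes \nu)$ converges in $\ell^\infty(\tilde{\calF})$ for the enlarged class $\tilde{\calF} = \{\varphi \oplus \psi\}$ to $G_\mu \oplus G_\nu'$ with $G_\mu, G_\nu'$ independent, and the G\^{a}teaux derivative at $\mu \otimes \nu$ in the direction of a perturbation splits additively into the $\mu$-part contributing $(\rho-\mu)(\varphi^\theta \circ \ptheta)$ and the $\nu$-part contributing $(\rho' - \nu)(\psi^\theta \circ \ptheta)$, yielding the claimed limit $\sup_{\theta \in \mathfrak{S}_{\mu,\nu}}[\mathbb{G}_\mu(\theta) + \mathbb{G}_\nu'(\theta)]$.

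The principal obstacle I expect is the interchange of the directional limit with the supremum over $\theta$ in verifying condition~(c), i.e.\ the Danskin-type step. It requires a form of uniform-in-$\theta$ differentiability of $t \mapsto \Wp^p(\ptheta_\sharp\mu_t,\ptheta_\sharp\nu)$ and a semicontinuity property for the argmax correspondence $\rho \mapsto \mathfrak{S}_{\rho,\nu}$; once those are established from the one-dimensional structure (quantile representation plus uniqueness of the potential on $\inte(\supp(\ptheta_\sharp\mu))$ guaranteed by the hypotheses), the remainder of the argument is a fairly mechanical transcription of the $\SWp^p$ proof with supremum replacing integration.
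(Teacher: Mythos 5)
Your proposal follows essentially the same route as the paper: apply Proposition~\ref{prop: master proposition} with the projected-Lipschitz class $\calF$, establish Lipschitz continuity via $\MSWone$ (Corollary~\ref{lem: Lipschitz}), and verify the G\^ateaux derivative by an envelope/Danskin argument with uniform-in-$\theta$ convergence of the dual potentials (exactly the content of Lemma~\ref{lem: directional derivative SWp}, which the paper proves by bounding $\limsup_{t\downarrow 0}$ and $\liminf_{t\downarrow 0}$ separately using the near-argmax set $S_\epsilon$ and the stability result of Theorem~3.4 in \cite{delbarrio2021}). The one minor stylistic difference --- your explicit appeal to convexity of $t\mapsto\Wp^p(\ptheta_\sharp\mu_t,\ptheta_\sharp\nu)$ rather than the paper's direct two-sided bounds --- is a harmless variant, and you have correctly identified the crux (the interchange of derivative and supremum, resolvable via uniform convergence of $\int\varphi_t^\theta\,d\rho\to\int\varphi_0^\theta\,d\rho$, compactness of $\unitsph$, and one-dimensional potential stability) together with the product-measure trick for the two-sample case.
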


Observe that,  for $\mu,\nu \in \calP_p (\R^d)$, the map $\theta \mapsto \mathsf{W}_p(\ptheta_\sharp \mu, \ptheta_\sharp \nu)$ is continuous, so the set $\mathfrak{S}_{\mu,\nu}$ is nonempty. 
The proof of \cref{thm: limit distribution MSWp} also relies on the general framework of Proposition \ref{prop: master proposition}. As in the average case, $\MSWp$ is Lipschitz w.r.t. $\MSWone$, and the rest is to characterize the G\^{a}teaux directional derivative, which requires extra work.

\begin{remark}[Bias of plug-in estimator for $\MSWp$ and correction]
In general, the limit distributions for the max-sliced distance in Theorem \ref{thm: limit distribution MSWp} have positive means, which implies that empirical $\MSWp$ tends to be upward biased at the order of $n^{-1/2}$. Such a upward bias commonly appears in a plug-in estimation of the maximum of a nonparametric function (cf. \cite{chernozhukov2013intersection}). One may correct this bias using a precision correction similar to \cite{chernozhukov2013intersection}. Namely, define
\[
\widehat{\mathbb{W}}_p (\alpha):= \MSWp^p(\hat{\mu}_n,\hat{\nu}_n) - k_{\alpha}/\sqrt{n}
\]
where $k_{\alpha}$ is the $\alpha$-quantile of $\sup_{\theta \in \mathfrak{S}_{\mu,\nu}} \big[\mathbb{G}_\mu (\theta) + \mathbb{G}_{\nu}'(\theta)\big]$, which can be estimated via subsampling. Provided that $k_{\alpha}$ is a continuity point of the distribution function of the limit variable, this estimator is upward $\alpha$-quantile unbiased, and in particular, median unbiased when $\alpha=1/2$, meaning that $\Prob\big(\widehat{\mathbb{W}}_p (\alpha) \le \MSWp^p(\mu,\nu)\big) =\alpha  + o(1)$.
\end{remark}

\begin{remark}[Null case]
The limit distributions in Theorems \ref{thm: limit distribution SWp} and \ref{thm: limit distribution MSWp} degenerate to zero under the null, i.e., $\mu = \nu$. This is because $\varphi^\theta \circ \ptheta$ and $\psi^\theta \circ \ptheta$ are constant $\mu$- and $\nu$-a.e., respectively, for each $\theta \in \unitsph$. In $d=1$, \cite{delbarrio2005} derived a limit distribution for $\mathsf{W}_2(\hat{\mu}_n,\mu)$ using the quantile function representation of $\mathsf{W}_2$, which requires several technical conditions concerning the tail of the Lebesgue density of $\mu$. Their argument hinges on the approximation of the (general) sample quantile process by the uniform quantile process, and transfer the results on the latter to the former. This argument does not extend to the sliced distances, at least directly, as the quantile process here is indexed by the additional projection parameter $\theta \in \unitsph$. 
Also, it seems highly nontrivial to find simple conditions on $\mu$ itself under which the projected distributions $\ptheta_{\#}\mu$ satisfy the conditions in \cite{delbarrio2005} for all (or uniformly over) 
$\theta \in \unitsph$. We leave null limit distributions for $\SWp$ and $\MSWp$ as a future research~topic. 
\end{remark}

\subsubsection{Order $\bm{p=1}$} The analysis for $\SWone$ relies on the explicit expression of $\mathsf{W}_1$ between distributions on $\RR$ as the $L^1$ distance between distribution functions (cf.~\eqref{eq: W1 cdf}), which implies that
\begin{equation}
\label{eq: SWone expression}
\SWone(\mu,\nu) =  \int_{\unitsph} \int_{\R} |F_{\mu}(t;\theta)-F_{\nu}(t;\theta)| \, dt \,  d\sigma(\theta).
\end{equation}
For $\MSWone$, the Kantorovich-Rubinstein duality implies that 
\begin{equation}
\begin{split}
\MSWone (\mu,\nu) &= \sup_{\theta \in \unitsph} \sup_{\varphi \in \Lip (\R)} \left [ \int \varphi d(\ptheta_{\sharp}\mu-\ptheta_{\sharp}\nu) \right ] \\
&=\sup_{\theta \in \unitsph} \sup_{\varphi \in \Lip (\R)} \left [ \int \varphi(\theta^{\intercal}x) d(\mu-\nu)(x) \right ].
\end{split}
\label{eq: MSWone expression}
\end{equation}
Here $\Lip (\R)$ denotes the class of $1$-Lipschitz functions $\varphi$ on $\R$ with $\varphi(0)=0$. 
These explicit expressions enable us to derive a limit distribution theory for $\SWone$ and $\MSWone$ under mild moment conditions, as presented next.

To state the result, set $\lambda$ as the Lebesgue measure on $\R$ and denote
\[
\sign (t) = 
\begin{cases}
1  & t > 0 \\
0 & t=0 \\
-1 & t<0
\end{cases}
.
\]
Recall that a stochastic process $\big(Y(t)\big)_{t \in T}$ indexed by a measurable space $T$ is called  measurable if $(t,\omega) \mapsto Y(t,\omega)$ is jointly measurable. 

\begin{theorem}[Limit distribution for $\SWone$ and $\MSWone$]
\label{thm: sliced W1 limit}
Let $\epsilon>0$ be arbitrary. 
\begin{enumerate}
\setlength\itemsep{0.5em}
\item[(i)] Assume  $\mu \in \calP_{2+\epsilon}(\R^d)$ and $\nu \in \calP_1(\R^d)$. Then, there exists a measurable, centered Gaussian process $\mathsf{G}_{\mu} = \big(\mathsf{G}_{\mu} (t,\theta) \big)_{(t,\theta) \in \R \times \unitsph}$ with paths in $L^1(\lambda\otimes\sigma)$ and  covariance function  
\begin{equation}
\begin{split}
\Cov\big(\mathsf{G}_{\mu}(s,\theta)&,\mathsf{G}_{\mu}(t,\vartheta)\big)\\
&= \mu \big(\{ x \in \R^d : \theta^\intercal x \le s, \vartheta^\intercal y \le t \}\big)- F_{\mu}(s;\theta)F_{\mu}(t;\vartheta),\\ 
\label{eq: cov function}
\end{split}
\end{equation}
such that
\begin{equation}
\begin{split}
\sqrt{n} \big( \SWone (\hat{\mu}_n,\nu) &- \SWone (\mu,\nu) \big) \\
&\dconv  \iint \big[\mathrm{sign} (F_{\mu} - F_{\nu})\big] \mathsf{G}_{\mu} d\lambda d\sigma  + \iint_{F_{\mu}=F_{\nu}} |\mathsf{G}_{\mu}| d\lambda d\sigma.
\end{split}
\label{eq: limit distribution SW1}
\end{equation}
\item[(ii)] If $\mu, \nu \in \calP_{2+\epsilon}(\R^d)$, then
\[
\begin{split}
\sqrt{n} \big( \SWone (\hat{\mu}_n,&\hat{\nu}_n) - \SWone (\mu,\nu) \big) \\
&\dconv  \iint \big[\mathrm{sign} (F_{\mu} - F_{\nu})\big](\mathsf{G}_{\mu}-\mathsf{G}_{\nu}') d\lambda d\sigma  + \iint_{F_{\mu}=F_{\nu}} |\mathsf{G}_{\mu}-\mathsf{G}_{\nu}'| d\lambda d\sigma,
\end{split}
\]
where $\mathsf{G}_{\mu}$ is given in (i) and $\mathsf{G}_{\nu}'= \big(\mathsf{G}_{\nu}' (t,\theta) \big)_{(t,\theta) \in \R \times \unitsph}$ is another measurable, centered Gaussian process with paths in $L^1(\lambda \otimes \sigma)$ and covariance function given by \eqref{eq: cov function} with $\mu$ replaced by $\nu$, and $\mathsf{G}_{\mu}$ and $\mathsf{G}_{\nu}'$ are independent. 
\medskip
\item[(iii)] Assume $\mu \in \calP_{4+\epsilon}(\R^d)$ and $\nu \in \calP_1(\R^d)$. Consider the function class
\[
\cF = \left\{ \varphi \circ \ptheta : \theta \in \unitsph, \varphi \in \mathsf{Lip}_{1,0}(\R)\right \}.
\]
Then, there exists a  tight $\mu$-Brownian bridge process $G_\mu$ in $\ell^\infty(\cF)$
such that
\[
\sqrt{n}\big( \MSWone(\hat{\mu}_n,\nu) - \MSWone(\mu,\nu) \big) \dconv 
\sup_{f \in M_{\mu,\nu}} G_{\mu}(f)
,
\]
where $M_{\mu,\nu} =\big\{ f \in \overline{\cF}^{\mu} : \mu (f-\nu(f)) = \MSWone(\mu,\nu)\big\}$ and $\overline{\cF}^{\mu}$ is the completion of $\calF$ for the standard deviation pseudometric $(f,g) \mapsto \sqrt{\Var_{\mu}(f-g)}$. 
\item[(iv)] If $\mu,\nu \in \calP_{4+\epsilon}(\R^d)$, then there exists a  tight $\nu$-Brownian bridge process $G_\nu'$ in $\ell^\infty(\cF)$ independent of $G_\mu$ above such that
\[
\sqrt{n}\big( \MSWone(\hat{\mu}_n,\hat{\nu}_n) - \MSWone(\mu,\nu) \big) \dconv 
\sup_{f \in M'_{\mu,\nu}} \big[G_{\mu}(f)-G_\nu'(f)\big],
\]
where $M_{\mu,\nu}' =\{ f \in \overline{\cF}^{\mu,\nu} : (\mu-\nu)(f) = \MSWone(\mu,\nu) \}$ and $\overline{\cF}^{\mu,\nu}$ is the completion of $\calF$ for the pseudometric $(f,g) \mapsto \sqrt{\Var_{\mu}(f-g)}+\sqrt{\Var_{\nu}(f-g)}$. 
\end{enumerate}
\end{theorem}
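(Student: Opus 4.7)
The plan is to handle (i)--(ii) and (iii)--(iv) by two distinct routes, reflecting the different analytical structure of $\SWone$ and $\MSWone$: the representation \eqref{eq: SWone expression} exhibits $\SWone$ as an $L^1$-norm of distribution-function differences, so the natural tool is the functional delta method applied in $L^1(\lambda \otimes \sigma)$, while the duality \eqref{eq: MSWone expression} puts $\MSWone$ in supremum form and fits directly into Proposition~\ref{prop: master proposition}.

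For (i), I would view $\SWone(\rho,\nu) = \phi(\Psi(\rho))$ with $\Psi:\rho \mapsto F_\rho(\cdot\,;\cdot)-F_\nu(\cdot\,;\cdot)$ taking values in $L^1(\lambda\otimes\sigma)$, and $\phi(G) = \|G\|_{L^1(\lambda\otimes\sigma)}$. The $L^1$-norm is Hadamard directionally differentiable at every $G$ with derivative
\[
\phi'_G(h) = \iint \mathrm{sign}(G)\, h\, d\lambda d\sigma + \iint_{\{G=0\}} |h|\, d\lambda d\sigma,
\]
by a dominated-convergence computation. The key analytical step is the functional CLT
\[
\mathbb{G}_n := \sqrt{n}(F_{\hat\mu_n} - F_\mu) \dconv \mathsf{G}_\mu \quad \text{in } L^1(\lambda \otimes \sigma).
\]
Finite-dimensional convergence follows from the standard CLT applied to the half-space indicators $x\mapsto \mathbb{1}\{\theta^\top x\le t\}$ (a VC class). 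For tightness in $L^1$, I would use the bound $\mathbb{E}|\mathbb{G}_n(t,\theta)| \le \sqrt{F_\mu(t;\theta)(1-F_\mu(t;\theta))}$ together with the Markov tail estimate $F_\mu(1-F_\mu)(t;\theta) \lesssim |t|^{-(2+\epsilon)}$ under $\mu \in \calP_{2+\epsilon}$, which yields $\iint \sqrt{F_\mu(1-F_\mu)}\, d\lambda d\sigma < \infty$ and hence $\sup_n \mathbb{E}\|\mathbb{G}_n\|_{L^1} < \infty$; a restriction to compacts $[-R,R]\times \unitsph$ combined with a tail uniform-integrability estimate then upgrades finite-dimensional convergence to weak convergence in $L^1(\lambda\otimes\sigma)$. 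Applying Lemma~\ref{lem: functional delta method} to $\phi$ at $G = F_\mu - F_\nu$ yields the limit \eqref{eq: limit distribution SW1}. Part (ii) follows the same template with the joint process $(\sqrt{n}(F_{\hat\mu_n}-F_\mu), \sqrt{n}(F_{\hat\nu_n}-F_\nu))$, independence of $\mathsf{G}_\mu$ and $\mathsf{G}_\nu'$ being inherited from sample independence.

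For (iii), I invoke Proposition~\ref{prop: master proposition} with $\delta(\rho) = \MSWone(\rho,\nu) = \sup_{f \in \calF} (\rho-\nu)(f)$ and the stated $\calF$. The supremum representation makes $\delta$ globally $1$-Lipschitz with respect to $\|\cdot\|_{\infty,\calF}$, so condition (b) is automatic. Condition (a) is the $\mu$-Donsker property of $\calF$, which I would derive from the envelope bound $|\varphi\circ\ptheta(x)| \le \|x\|$ (using $\varphi(0)=0$ and $1$-Lipschitz continuity), combined with bracketing-entropy estimates for $\Lip_{1,0}([-R,R])$, a fine spherical cover of $\unitsph$, and a truncation argument that consumes the $\calP_{4+\epsilon}$ moment. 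For condition (c), a Danskin/envelope-theorem argument yields the G\^ateaux right derivative
\[
\delta'_\mu(\rho-\mu) = \sup_{f \in M_{\mu,\nu}} (\rho-\mu)(f),
\]
using compactness of $\unitsph$ and uniform equicontinuity of $\Lip_{1,0}$ on compacts to ensure attainment, with the closure $\overline{\calF}^\mu$ accommodating potential $\mu$-a.s.\ nonuniqueness of the argmax. Proposition~\ref{prop: master proposition} then yields $\sqrt{n}(\MSWone(\hat\mu_n,\nu)-\MSWone(\mu,\nu))\dconv \sup_{f\in M_{\mu,\nu}} G_\mu(f)$. Part (iv) is analogous, using the joint empirical process on $\calF$, which produces independent $\mu$- and $\nu$-Brownian bridges whose difference drives the supremum limit.

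The principal technical hurdle is the functional CLT in $L^1(\lambda\otimes\sigma)$ for (i)--(ii): finite-dimensional convergence is routine, but elevating to tightness in $L^1$ on the noncompact domain $\R\times\unitsph$ requires a delicate tail analysis tightly coupled to the optimal moment assumption $\calP_{2+\epsilon}$. For (iii)--(iv), the analogous delicate point is a uniform bracketing bound over $\calF$ that respects the product structure of the slicing and Lipschitz parameters while yielding the Donsker property under $\calP_{4+\epsilon}$.
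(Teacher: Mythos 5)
Your overall architecture matches the paper's: parts (i)--(ii) via the functional delta method in $L^1(\lambda\otimes\sigma)$ applied to the $L^1$-norm functional, parts (iii)--(iv) via Kantorovich--Rubinstein duality, a Donsker property for the class of projected $1$-Lipschitz functions, and the delta method for the supremum functional. The identification of the Hadamard directional derivative of $\|\cdot\|_{L^1}$ and the envelope/bracketing route for the Donsker result are the right building blocks, and your (iii)--(iv) argument is essentially the paper's.

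The genuine gap is in how you establish weak convergence $\sqrt{n}(F_{\hat\mu_n}-F_\mu)\dconv \mathsf{G}_\mu$ in $L^1(\lambda\otimes\sigma)$. You propose: finite-dimensional convergence, then $\sup_n\E\|\mathbb{G}_n\|_{L^1}<\infty$ from $\iint\sqrt{F_\mu(1-F_\mu)}\,d\lambda\,d\sigma<\infty$, then ``a restriction to compacts combined with a tail uniform-integrability estimate upgrades finite-dimensional convergence to weak convergence.'' This last step is not a valid argument: $L^1$ is a non-reflexive Banach space, uniformly bounded first moments of the norm plus finite-dimensional convergence do not imply tightness, and there is no general compactness principle that turns this into weak convergence of $L^1$-valued random elements. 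The CLT in $L^1$ is a nontrivial theorem with two distinct sufficient conditions --- a tail bound of the form $\Prob(\|Z_1\|_{L^1}>t)=o(t^{-2})$ (verified in the paper from $\|Z_i\|_{L^1}\le\|X_i\|+\E\|X_i\|$ and the $(2+\epsilon)$ moment) and the pointwise second-moment integrability $\int\sqrt{\E[Z_1^2]}\,d(\lambda\otimes\sigma)<\infty$ (the quantity you compute) --- which together feed into a cotype-2 CLT such as Theorem~10.10 in Ledoux--Talagrand. You have the right estimate for the second condition and an implicit version of the first, but you need to invoke such a theorem rather than a generic tightness heuristic; as written, the step would not go through. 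Parts (ii), (iii), (iv) are fine.
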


Notably, while \cref{thm: limit distribution SWp} requires distributions to have compact and convex support, the limit distributions in \cref{thm: sliced W1 limit} hold under mild moment assumptions. The derivation for $\SWone$ relies on the CLT in $L^1$ to deduce convergence of empirical projected distribution functions in $L^1(\lambda \otimes \sigma)$. The limit distribution is then obtained via the functional delta method by casting $\SWone$ as the $L^1(\lambda \otimes \sigma)$ norm between distribution functions and characterizing the corresponding Hadamard directional derivative. For $\MSWone$, we use the Kantorovich-Rubinstein duality in conjunction with the fact that the class of projection 1-Lipschitz functions is Donsker under the said moment condition. We note that, in Part (iii), if $\mu = \nu$, then $M_{\mu,\nu} = \overline{\calF}^\mu$, and since $G_\mu$ has uniformly continuous paths w.r.t. the standard deviation pseudometric, the limit variable becomes $\sup_{f \in \calF}G_\mu(f)$ when $\mu=\nu$. Likewise, in Part (iv), the limit variable becomes $\sup_{f \in \calF}[G_\mu(f) - G_\nu'(f)]$ when $\mu=\nu$.

\medskip
For $\SWone$, if the second term on the right-hand side of (\ref{eq: limit distribution SW1}) is zero, then the asymptotic normality holds. We state this result including its two-sample analogue next. 

\begin{corollary}[Asymptotic normality for $\SWone$]\label{COR:asymp_normal_Wone}
\label{cor: SWone}
For $\mu\in\cP(\RR^d)$ and $\theta\in\unitsph$, define $\overline{l}_\mu^\theta = \sup \supp(\mathfrak{p}^{\theta}_{\sharp}\mu)$ and $\underline{l}^\theta_\mu = \inf \supp(\mathfrak{p}^{\theta}_{\sharp}\mu)$. The following hold.
\begin{enumerate}
    \item[(i)] Under the assumption of Theorem~\ref{thm: sliced W1 limit} Part (i), if in addition  $F_{\mu} (t;\theta) \ne F_{\nu} (t;\theta)$ for $(\lambda \otimes \sigma)$-almost all $(t,\theta) \in  \big \{ (s,\vartheta) :  s \in [\underline{l}_\mu^\vartheta,\overline{l}_\mu^\vartheta], \vartheta \in \unitsph \big \}$, then
    \[
\sqrt{n}\big( \SWone (\hat{\mu}_n,\nu) - \SWone (\mu,\nu)\big) \dconv N(0,v_{1}^2),
\]
where $v_{1}^2$ is the variance of $\iint \big[\mathrm{sign} (F_{\mu} - F_{\nu})\big] \mathsf{G}_{\mu} d\lambda d\sigma$. The asymptotic variance $v_{1}^2$ agrees with the semiparametric efficiency bound for estimating $\SWone (\cdot,\nu)$ at $\mu$. 
\item[(ii)] Under the assumption of Theorem \ref{thm: sliced W1 limit} Part (ii), if in addition  $F_{\mu} (t;\theta) \ne F_{\nu} (t;\theta)$ for $(\lambda \otimes \sigma)$-almost all $(t,\theta) \in  \big \{ (s,\vartheta) : s \in [\underline{l}_\mu^\vartheta \wedge \underline{l}_\nu^\vartheta,\overline{l}_\mu^\vartheta \vee \overline{l}_\nu^\vartheta], \vartheta \in \unitsph \big \}$, then
\[
\sqrt{n}\big( \SWone (\hat{\mu}_n,\hat{\nu}_n) - \SWone (\mu,\nu) \big) \dconv N(0,v_{1}^2+w_{1}^2),
\]
where $v_{1}^2$ is as above and $w_{1}^2$ is the variance of $\iint \big[\mathrm{sign} (F_{\mu} - F_{\nu})\big] \mathsf{G}_{\nu}' d\lambda d\sigma$. The asymptotic variance $v_{1}^2+w_{1}^2$ agrees with the semiparametric efficiency bound for estimating $\SWone (\cdot,\cdot)$ at $(\mu,\nu)$. 
\end{enumerate}
Finally, bootstrap consistency (as in Theorem \ref{thm: limit distribution SWp}) holds for both cases (i) and (ii). 
\end{corollary}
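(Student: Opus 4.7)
The plan is to deduce the corollary from \cref{thm: sliced W1 limit} combined with \cref{prop: master proposition 2}. The key observation is that the Gaussian process $\mathsf{G}_\mu(t,\theta)$ has variance $F_\mu(t;\theta)\big(1-F_\mu(t;\theta)\big)$, which vanishes whenever $t \notin [\underline{l}_\mu^\theta, \overline{l}_\mu^\theta]$; hence $\mathsf{G}_\mu$ is identically zero off the effective support region almost surely. Combined with the hypothesis that $\{F_\mu = F_\nu\}$ has null $(\lambda \otimes \sigma)$-measure inside this region, the second (singular) integral $\iint_{F_\mu = F_\nu} |\mathsf{G}_\mu|\,d\lambda\,d\sigma$ in \cref{thm: sliced W1 limit}(i) vanishes a.s., and likewise in part (ii) after subtracting $\mathsf{G}_\nu'$ (using now the union of supports). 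The surviving limit is a continuous linear functional of a Gaussian process and hence centered Gaussian with variance $v_1^2$ (respectively $v_1^2 + w_1^2$), which gives the asymptotic normality.

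To identify the asymptotic variance with the semiparametric efficiency bound, write $\mathsf{G}_\mu(t,\theta) = \mathbb{G}_\mu(\mathbf{1}\{\theta^\intercal \cdot \le t\})$ for the $\mu$-Brownian bridge $\mathbb{G}_\mu$ and use Fubini to express the limit as $\mathbb{G}_\mu(f^\star)$, where
\begin{equation*}
f^\star(x) := \iint \mathrm{sign}(F_\mu - F_\nu)(t,\theta)\, \mathbf{1}\{\theta^\intercal x \le t\}\, dt\, d\sigma(\theta),
\end{equation*}
so $v_1^2 = \Var_\mu(f^\star)$. I then invoke \cref{prop: master proposition 2} with the singleton class $\calF = \{f^\star\}$ and candidate linear derivative $\delta_\mu'(\rho - \mu) = (\rho - \mu)(f^\star)$. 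Condition~(a) reduces to $f^\star \in L^2(\mu)$ (which follows from the $2+\epsilon$-moment assumption on $\mu$ together with integrability of $F_\mu - F_\nu$), condition~(b) holds for bounded $\mu$-mean-zero $h$, and for (c) I differentiate $\SWone\big(\mu+t(\rho-\mu),\nu\big) = \iint |F_\mu + t(F_\rho - F_\mu) - F_\nu|\, d\lambda\, d\sigma$ at $t = 0^+$ via dominated convergence (using $\big||a+tb|-|a|\big| \le t|b|$). The ``$F_\mu = F_\nu$'' contribution drops out because $F_{(1+h)\mu} - F_\mu = \int_{\{\theta^\intercal x \le t\}} h\, d\mu$ is supported in $[\underline{l}_\mu^\theta, \overline{l}_\mu^\theta]$ (using $\mu(h)=0$), inside which $\{F_\mu = F_\nu\}$ has null measure. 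Fubini then yields $\delta_\mu'\big((1+h)\mu - \mu\big) = \mu(h f^\star)$, closing the efficiency argument. The two-sample case is analogous on the product tangent space.

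For bootstrap consistency, note that under the nondegeneracy condition the map $F \mapsto \iint |F - F_\nu|\,d\lambda\,d\sigma$ is fully Hadamard differentiable at $F_\mu$ tangentially to $L^1(\lambda \otimes \sigma)$, with linear derivative $h \mapsto \iint \mathrm{sign}(F_\mu - F_\nu)\, h\,d\lambda\,d\sigma$ (equivalently, by \cref{cor: master}, $\delta_\mu'$ is linear on $\supp(G_\mu)$). Combining with the bootstrap CLT for the empirical projected distribution functions in $L^1(\lambda \otimes \sigma)$---which mirrors the CLT in $L^1$ underlying \cref{thm: sliced W1 limit} and follows by the usual multiplier bootstrap arguments---and the bootstrap functional delta method (Theorem~23.9 in \cite{vanderVaart1998asymptotic}) yields the stated bootstrap consistency in both the one- and two-sample cases. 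The main obstacle I expect is the technical bookkeeping for condition~(c): one must carefully verify that the singular ``$F_\mu=F_\nu$'' piece of the G\^ateaux derivative disappears under admissible perturbations, which relies on the support property of $F_\rho - F_\mu$ just noted, and one must establish $f^\star \in L^2(\mu)$ in order to apply \cref{prop: master proposition 2}.
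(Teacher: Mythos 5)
Your overall outline matches the paper's proof: derive normality from \cref{thm: sliced W1 limit} by killing the singular integral under the nondegeneracy hypothesis, identify the limit as a point evaluation of a Brownian bridge, and invoke \cref{prop: master proposition 2} with a singleton class for efficiency, with bootstrap consistency following from full differentiability. One technical slip, however: your
\[
f^\star(x) = \iint \mathrm{sign}(F_\mu - F_\nu)(t,\theta)\, \mathbf{1}\{\theta^\intercal x \le t\}\, dt\, d\sigma(\theta)
\]
is not well-defined as written — for large $t$ the indicator equals $1$ while $\mathrm{sign}(F_\mu-F_\nu)$ stays of modulus one, so the inner $dt$-integral generally diverges. (Your appeal to ``integrability of $F_\mu-F_\nu$'' does not help, since the sign is not integrable unless $F_\mu=F_\nu$ eventually.) The paper fixes this by subtracting a deterministic centering, replacing $\mathbf{1}\{\theta^\intercal x \le t\}$ with $\mathfrak{g}(x,t,\theta) = \ind(\theta^\intercal x\le t) - \ind(t>0)$, which has compact $t$-support of length $|\theta^\intercal x|$; the subtraction is harmless because $h$ (and the Brownian bridge) annihilate constants, and it yields $f^\star\in L^2(\mu)$ directly from the moment hypothesis. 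With that correction your argument goes through, and the rest of your reasoning (the observation that $F_\rho - F_\mu$ vanishes outside $[\underline{l}_\mu^\theta,\overline{l}_\mu^\theta]$ when $\rho=(1+h)\mu$ with $\mu(h)=0$, which kills the $\{F_\mu=F_\nu\}$ term in the G\^ateaux derivative; and the Fubini step) is correct and is essentially the content the paper compresses into ``inspection of the proof''. On bootstrap consistency, the paper cites the equivalence between the CLT in a separable Banach space and Efron's bootstrap (Gin\'e--Zinn), rather than multiplier-bootstrap arguments, but this is a cosmetic difference.
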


As an example, the above asymptotic normality holds when the population distributions are both Gaussian.

\begin{example}
Consider $\mu = N(\xi_1,\Sigma_1)$ and $\nu = N(\xi_2,\Sigma_2)$. Then $\ptheta_{\sharp}\mu = N(\theta^{\intercal}\xi_1,\theta^{\intercal}\Sigma_1\theta)$ and $\ptheta_{\sharp}\mu = N(\theta^{\intercal}\xi_2,\theta^{\intercal}\Sigma_2\theta)$. In this case, as long as $\xi_1 \ne \xi_2$ or $\Sigma_1 \ne \Sigma_2$, we have $\sigma (\{ \theta : \theta^{\intercal}\xi_1 = \theta^{\intercal}\xi_2 \ \text{and} \  \theta^{\intercal}\Sigma_1\theta = \theta^{\intercal}\Sigma_2\theta \}) = 0$, which follows from the fact that $\frac{Z}{\|Z\|} \sim \sigma$ for $Z \sim N(0,I_d)$ and Lemma 1 in \cite{okamoto1973}. Thus, $F_{\mu} (t;\theta) \ne F_{\nu} (t;\theta)$ for $(\lambda \otimes \sigma)$-almost all $(t,\theta) \in \R \times \unitsph$ and the conclusion of Corollary \ref{COR:asymp_normal_Wone} applies.
\end{example}

\section{Smooth Wasserstein Distance with Compactly Supported Kernels}
\label{sec: smooth Wp}
This section studies structural, statistical, and computational aspects of smooth Wasserstein distances with compactly supported smoothing kernels.

\subsection{Background} To set up the smooth Wasserstein distance, we first define a smoothing kernel as follows. Let $\chi\in C^{\infty}(\R^d)$ be any non-negative function with $\int_{\R^d}\chi(x)dx=1$ and $\int_{\R^d}\|x\|^p{\chi(x)}dx<\infty$, for all $1\leq p<\infty$. Then, for any $\sigma>0$, define $\chi_{\sigma}=\sigma^{-d}\chi(\cdot/\sigma)\in C^{\infty}(\R^d)$ and let $\eta_{\sigma}\in \cP(\R^d)$ be a probability measure whose (Lebesgue) density is $\chi_{\sigma}$. We call $\eta_\sigma$ a \textit{smoothing kernel} of parameter $\sigma$, and define the corresponding smooth Wasserstein distance as follows. 

\begin{definition}[Smooth Wasserstein distances] Let $1\leq p < \infty$ and $\eta_{\sigma}$ be a smoothing kernel. The associated smooth $p$-Wasserstein distance between $\mu,\nu\in\cP_p(\R^d)$ is 
\[
\mathsf{W}_p^{(\eta_{\sigma})}(\mu,\nu)=\mathsf{W}_p(\mu*\eta_{\sigma},\nu*\eta_{\sigma}).
\]
\end{definition}

\begin{example}[Standard mollifier]
A canonical example of a smooth compactly supported function is the standard mollifier
\begin{equation}
   \chi(x)=\begin{cases}\frac{1}{C_\chi}\exp\left(-\frac{1}{1-\|{x}\|^2}\right) &\text{if} \ \|{x}\|< 1\\
    0 &\text{otherwise}
    \end{cases}
    ,
    \label{eq:mollifier}
\end{equation}
where $C_\chi=\int_{\R^d}\chi d{\lambda}$, from which a compactly supported kernel is readily constructed. Our results, however are not specialized to the  mollifier kernel, and hold for any $\eta_\sigma$ as described above.
\end{example}

As reviewed next, Gaussian-smoothed Wasserstein distances, i.e., when $\eta_\sigma=\gamma_\sigma:=N(0,\sigma^2I_d)$, have been extensively studied for their structural and statistical~properties. 

\subsubsection{Literature review} Gaussian-smoothed Wasserstein distances were introduced in \cite{Goldfeld2020convergence} as a means to mitigate the curse of dimensionality in empirical estimation. Indeed, \cite{Goldfeld2020convergence} demonstrated that $\EE\big[\GWp(\hat{\mu}_n,\mu)\big] = O(n^{-1/2})$, for $p=1,2$, in arbitrary dimension provided that $\mu$ is sufficiently sub-Gaussian (cf. the recent preprint \cite{block2022smooth} for sharp bounds on the sub-Gaussian constant for which the rate is parametric when $p=2$). 
Structural properties of $\GWo$ were explored in~\cite{Goldfeld2020GOT}, showing that it metrizes the classic Wasserstein topology and establishing regularity in~$\sigma$. These structural and statistical results were later generalized to $\GWp$ for any $p> 1$ \cite{nietert21}, and asymptotics of the smooth distance as $\sigma\to\infty$ were explored \cite{chen2021asymptotics}. Relations between $\GWp$ and maximum mean discrepancies were studies in \cite{zhang2021convergence}, and nonparametric mixture model estimation under $\GWp$ was considered \cite{han2021nonparametric}, again demonstrating scalability of error bounds with dimension. The study of limit distributions for empirical $\GWp$ was initiated in \cite{Goldfeld2020limit_wass} for $p=1$ in the one-sample case, extended to the two-sample setting in \cite{sadhu21limit}, and generalized to arbitrary $p>1$ via a non-trivial application of the functional delta method in \cite{goldfeld22limit}. These works also considered bootstrap consistency and applications to minimum distance estimation and homogeneity testing. To date, a relatively complete limit distribution theory of $\GWp$ in arbitrary dimension is available, as opposed to the rather limited account of classic~$\Wp$.

\subsection{Structural properties} Henceforth, we consider smooth $\Wp$ with a compactly supported kernel. This is motivated by computational considerations, as the compact support enables leveraging algorithms such as \cite{vacher2021dimension} to compute OT between distributions with smooth densities. We note that while empirical $\mathsf{W}_p^{(\eta_{\sigma})}$ can also be evaluated by sampling the kernel and applying computational methods for classic $\Wp$, this approach fails to exploit the smoothness of our framework and hence our interest in the method of \cite{vacher2021dimension}. We first revisit the structural and statistical properties previously established for the Gaussian-smoothed case, using \cref{prop: master proposition} for the limit distribution theory. Afterwards, we discuss the computational aspect and, specifically, how to lift the algorithm from \cite{vacher2021dimension} to $\mathsf{W}_p^{(\eta_{\sigma})}$. 

\medskip

We adopt the shorthand $\mathsf W_p^{(\sigma)}:=\mathsf{W}_p^{(\eta_{\sigma})}$, and first explore its structural proprieties. We begin with a comparison between the smooth and unsmoothed distances.

\begin{proposition}[Stability of $\mathsf{W}_p^{(\sigma)}$]
\label{prop:smoothstability}
    For any $1\leq p<\infty$, $\sigma>0$, and $\mu,\nu\in\cP(\R^d)$, we have
    \[
        \mathsf{W}_p^{(\sigma)}(\mu,\nu)\leq\mathsf{W}_p(\mu,\nu)\leq \mathsf{W}_p^{(\sigma)}(\mu,\nu)+2\sigma (\E_{\eta_1}[\|X\|^p])^{1/p}.
    \]
    In particular, $\lim_{\sigma\downarrow 0} \mathsf W_p^{(\sigma)}(\mu,\nu)=\mathsf W_p(\mu,\nu)$.
\end{proposition}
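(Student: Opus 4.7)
The plan is to prove both inequalities via elementary coupling arguments based on the dual interpretation of convolution as addition of independent noise; the limit statement then follows immediately by sending $\sigma\downarrow 0$ on the right-hand side.

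For the upper bound $\mathsf{W}_p^{(\sigma)}(\mu,\nu)\leq \mathsf{W}_p(\mu,\nu)$, I would use a synchronous coupling. Given any $\pi\in\Pi(\mu,\nu)$ and an independent $Z\sim\eta_\sigma$, the pair $(X+Z,Y+Z)$ with $(X,Y)\sim\pi$ defines an element of $\Pi(\mu*\eta_\sigma,\nu*\eta_\sigma)$ whose transport cost equals $\E[\|X-Y\|^p]$, since the shared $Z$ cancels. Taking the infimum over $\pi$ yields $\mathsf{W}_p^p(\mu*\eta_\sigma,\nu*\eta_\sigma)\leq \mathsf{W}_p^p(\mu,\nu)$, i.e., the claimed inequality.

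For the lower bound, I would apply the triangle inequality for $\mathsf{W}_p$ on $\calP_p(\R^d)$,
\[
\mathsf{W}_p(\mu,\nu)\leq \mathsf{W}_p(\mu,\mu*\eta_\sigma)+\mathsf{W}_p(\mu*\eta_\sigma,\nu*\eta_\sigma)+\mathsf{W}_p(\nu*\eta_\sigma,\nu),
\]
and estimate each ``noise'' term separately. Indeed, if $X\sim\mu$ and $Z\sim\eta_\sigma$ are independent, then $(X,X+Z)\in\Pi(\mu,\mu*\eta_\sigma)$ has cost $\E[\|Z\|^p]$. By the scaling identity $\eta_\sigma=(T_\sigma)_\sharp\eta_1$ with $T_\sigma(x)=\sigma x$, a change of variable gives $\E_{\eta_\sigma}[\|Z\|^p]=\sigma^p\,\E_{\eta_1}[\|X\|^p]$, so $\mathsf{W}_p(\mu,\mu*\eta_\sigma)\leq \sigma\,(\E_{\eta_1}[\|X\|^p])^{1/p}$, and likewise for $\nu$. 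Combining these bounds completes the second inequality, and the $\sigma\downarrow 0$ limit is immediate from the squeeze just established (noting $\E_{\eta_1}[\|X\|^p]<\infty$ by assumption on $\chi$).

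There is no real obstacle here; the only point requiring a moment of care is ensuring that the intermediate quantities are finite, i.e., that $\mu*\eta_\sigma,\nu*\eta_\sigma\in\calP_p(\R^d)$ whenever $\mu,\nu\in\calP_p(\R^d)$, which follows from the finite $p$th moment of $\eta_\sigma$ and the elementary inequality $\|x+z\|^p\leq 2^{p-1}(\|x\|^p+\|z\|^p)$. If $\mu$ or $\nu$ lies outside $\calP_p(\R^d)$ the statement is trivial since both sides are infinite (and the convolution inherits the infinite moment from $\mu$ or $\nu$, as $\eta_\sigma$ has finite $p$th moment).
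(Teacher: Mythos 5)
Your proof follows essentially the same route as the paper: contraction of $\mathsf{W}_p$ under convolution via a synchronous coupling (the paper cites Lemma~5.2 of \cite{santambrogio2010} for this, which is proved in exactly the way you spell out), followed by the triangle inequality with the explicit noise coupling $(X,X+Z)$ whose cost scales as $\sigma^p$. One small imprecision in your final remark: when $\mu$ or $\nu$ lies outside $\calP_p(\R^d)$, it is not true that both sides are automatically infinite (e.g.\ $\mu=\nu$), but this is immaterial since your couplings require no moment assumptions on $\mu,\nu$ and the triangle inequality for $\mathsf{W}_p$ holds with values in $[0,\infty]$.
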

The first bound is due to contractivity of $\mathsf W_p$ w.r.t. convolution. Constructing a coupling between $\rho\in\cP(\R^d)$ and $\rho*\eta_{\sigma}$ with total cost $\sigma (\E_{\eta_1}[\|X\|^p])^{1/p}$ proves the second. 
 Since $\eta_{\sigma}$ is compactly supported, $\supp(\eta_1)$ is contained in a ball of radius $r>0$, whereby $(\E_{\eta_1}[\|X\|^p])^{1/p}\leq r$ which is in contrast to the dimension dependent gap for Gaussian kernels; cf. Lemma~1 in \cite{Goldfeld2020GOT} and \cite{nietert21}. 

\medskip
As the smooth distance converges to the standard distance as $\sigma\to 0$, it is natural to expect that optimal couplings converge as well. This is stated in the next proposition.

\begin{proposition}[Stability of transport plans]
For $1\leq p<\infty$, $\mu,\nu\in \cP_p(\R^d)$, and $\sigma_k\downarrow 0$. Let $\pi_k$ be an optimal coupling for $\Wp^{({\sigma_k})}(\mu,\nu)$ for each $k\in\NN$. Then, there exists an optimal coupling $\pi$ for $\mathsf W_p(\mu,\nu)$ for which $\pi_k\stackrel{w}{\to} \pi$ along a subsequence.
\end{proposition}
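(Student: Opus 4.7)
The plan is to run the standard stability-of-minimizers argument for OT: deduce tightness of $\{\pi_k\}$, extract a weak limit along a subsequence, identify its marginals as $\mu,\nu$, and check optimality via the convergence $\mathsf W_p^{(\sigma_k)}(\mu,\nu)\to\mathsf W_p(\mu,\nu)$ from the preceding proposition combined with lower semicontinuity of the transport cost under weak convergence.

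First I would verify tightness of $\{\pi_k\}_{k\in\NN}$. As $\sigma_k\downarrow 0$, writing $X\sim\mu$ and $Z_k\sim\eta_{\sigma_k}$ independently, one has $X+Z_k\to X$ in probability, so $\mu*\eta_{\sigma_k}\stackrel{w}{\to}\mu$, and similarly $\nu*\eta_{\sigma_k}\stackrel{w}{\to}\nu$. In particular the two sequences of marginals are tight on $\R^d$, and a standard coupling argument (e.g.\ if $K_1,K_2\subset\R^d$ are compact with $\mu*\eta_{\sigma_k}(K_1^c),\nu*\eta_{\sigma_k}(K_2^c)<\varepsilon/2$, then $\pi_k\big((K_1\times K_2)^c\big)<\varepsilon$) shows $\{\pi_k\}$ is tight on $\R^{2d}$. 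Prokhorov's theorem then yields a subsequence $\pi_{k_j}\stackrel{w}{\to}\pi$ for some Borel probability measure $\pi$ on $\R^{2d}$.

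Next I would identify $\pi\in\Pi(\mu,\nu)$. Since the projections $(x,y)\mapsto x$ and $(x,y)\mapsto y$ are continuous, the marginals of $\pi$ are the weak limits of the marginals of $\pi_{k_j}$, i.e.\ $\mu$ and $\nu$ respectively. Thus $\pi$ is an admissible coupling, and the infimum in the definition of $\mathsf W_p(\mu,\nu)$ gives the lower bound
\begin{equation*}
\int \|x-y\|^p\,d\pi(x,y)\ \ge\ \mathsf W_p^p(\mu,\nu).
\end{equation*}

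Finally I would establish the matching upper bound. By optimality of $\pi_{k_j}$,
\begin{equation*}
\int \|x-y\|^p\,d\pi_{k_j}(x,y)\ =\ \big[\mathsf W_p^{(\sigma_{k_j})}(\mu,\nu)\big]^p\ \longrightarrow\ \mathsf W_p^p(\mu,\nu),
\end{equation*}
where the limit uses Proposition~\ref{prop:smoothstability} (since $\sigma_{k_j}\downarrow 0$ and $\mu,\nu\in\cP_p(\R^d)$). The cost function $(x,y)\mapsto\|x-y\|^p$ is non-negative and continuous, hence by the Portmanteau-type lower semicontinuity of integrals against weak limits,
\begin{equation*}
\int \|x-y\|^p\,d\pi(x,y)\ \le\ \liminf_{j\to\infty}\int \|x-y\|^p\,d\pi_{k_j}(x,y)\ =\ \mathsf W_p^p(\mu,\nu).
\end{equation*}
Combining the two inequalities gives equality, so $\pi$ is optimal.

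The only mildly delicate step is the lower semicontinuity of $\pi\mapsto\int\|x-y\|^p d\pi$, because the integrand is unbounded; however, since it is non-negative and continuous, one may either invoke the standard weak-convergence lower semicontinuity for non-negative lsc integrands, or truncate $\|x-y\|^p\wedge M$ and let $M\to\infty$ by monotone convergence. All other steps (tightness, marginal identification, cost convergence) are routine given the two preceding propositions.
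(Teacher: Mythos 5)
Your argument is correct and is essentially the proof the paper relies on: the paper omits its own proof, deferring to Theorem 4 of \cite{Goldfeld2020GOT}, which runs exactly this tightness--Prokhorov--lower-semicontinuity scheme. The only substantive ingredients --- weak convergence $\mu*\eta_{\sigma_k}\stackrel{w}{\to}\mu$ and $\nu*\eta_{\sigma_k}\stackrel{w}{\to}\nu$ giving tightness of $\{\pi_k\}$, continuity of the marginal projections identifying the limit coupling, Proposition~\ref{prop:smoothstability} supplying $\mathsf{W}_p^{(\sigma_k)}(\mu,\nu)\to\mathsf{W}_p(\mu,\nu)$, and lower semicontinuity of $\pi\mapsto\int\|x-y\|^p\,d\pi$ against weak convergence (valid since the integrand is nonnegative and continuous) --- are all present and deployed in the right order, so no gap remains.
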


The proof of this result follows that of Theorem~4 in \cite{Goldfeld2020GOT} and \cite{goldfeld22limit} with only minor changes and is hence omitted. Note that when the limiting $\pi$ is unique (e.g., when $p>1$ and $\mu$ has a density), then extraction of a subsequence is not needed. 

\medskip
With these stability results at hand, we next show that $\Wp^{(\sigma)}$ is indeed a metric on $\cP_p(\R^d)$ that induces the Wasserstein topology.
 
\begin{proposition}[Metric and topological structure]
    \label{prop:metricspace}
    For $1\leq p<\infty$ and $\sigma> 0$, $\mathsf W_p^{(\sigma)}$ is a metric on $\cP_p(\R^d)$ inducing the same topology as $\mathsf W_p$.
\end{proposition}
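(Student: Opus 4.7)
The plan is to verify the metric axioms first and then argue topological equivalence. Symmetry, nonnegativity, and the triangle inequality for $\mathsf{W}_p^{(\sigma)}$ are inherited from the corresponding properties of $\mathsf{W}_p$ applied to $\mu*\eta_\sigma$ and $\nu*\eta_\sigma$.  The only nontrivial metric axiom is identity of indiscernibles, which reduces to injectivity of convolution by $\eta_\sigma$: if $\mu*\eta_\sigma=\nu*\eta_\sigma$, then passing to characteristic functions gives $\hat{\mu}\,\hat{\chi}_\sigma=\hat{\nu}\,\hat{\chi}_\sigma$ on $\R^d$.  Since $\chi\in C^\infty$ is compactly supported with $\hat{\chi}_\sigma(0)=1$, Paley--Wiener extends $\hat{\chi}_\sigma$ to a nontrivial entire function on $\CC^d$; real analyticity then forces its zero set to have empty interior in $\R^d$, so $\hat{\mu}=\hat{\nu}$ on a dense set, hence everywhere by continuity, and Fourier uniqueness yields $\mu=\nu$.

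For the topological claim, I would use the standard characterization that $\mathsf{W}_p$ convergence is equivalent to weak convergence plus convergence of $p$th moments (Theorem~6.9 in \cite{villani2008optimal}).  The implication $\mathsf{W}_p(\mu_n,\mu)\to 0\Rightarrow\mathsf{W}_p^{(\sigma)}(\mu_n,\mu)\to 0$ is the first inequality of \cref{prop:smoothstability}.  For the converse, suppose $\mathsf{W}_p^{(\sigma)}(\mu_n,\mu)\to 0$, equivalently $\mathsf{W}_p(\mu_n*\eta_\sigma,\mu*\eta_\sigma)\to 0$, and let $R>0$ with $\supp(\eta_\sigma)\subseteq\overline{B}(0,R)$.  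Weak convergence $\mu_n\stackrel{w}{\to}\mu$ follows by a tightness-plus-identification argument: for compact $K\subset\R^d$, one has $\mu_n\big((K+\overline{B}(0,R))^c\big)\le(\mu_n*\eta_\sigma)(K^c)$, so tightness of $\{\mu_n*\eta_\sigma\}$ transfers to $\{\mu_n\}$; Prokhorov's theorem extracts subsequential weak limits, and the Fourier relation $\hat{\mu}_n\hat{\chi}_\sigma\to\hat{\mu}\hat{\chi}_\sigma$ together with density of $\{\hat{\chi}_\sigma\ne 0\}$ identifies the limit as $\mu$.

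It remains to transfer $p$th-moment convergence.  I would establish uniform $p$-integrability of $\{\mu_n\}$ from that of $\{\mu_n*\eta_\sigma\}$ (which follows from $\mathsf{W}_p$ convergence).  Using $\|Z\|\le R$ a.s., the inclusion $\{\|X_n\|>M+R\}\subseteq\{\|X_n+Z\|>M\}$ combined with $\|X_n\|^p\le 2^{p-1}(\|X_n+Z\|^p+R^p)$ yields
\[
\int_{\|x\|>M+R}\!\|x\|^p\,d\mu_n(x)\le 2^{p-1}\!\int_{\|y\|>M}\!\|y\|^p\,d(\mu_n*\eta_\sigma)(y)+2^{p-1}R^p\,(\mu_n*\eta_\sigma)\big(\{\|y\|>M\}\big),
\]
where the first term vanishes uniformly in $n$ as $M\to\infty$ by uniform $p$-integrability of $\{\mu_n*\eta_\sigma\}$, and the second does so by Markov's inequality and uniform boundedness of $p$th moments.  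Combined with the weak convergence established above, this gives $\int\|x\|^p\,d\mu_n\to\int\|x\|^p\,d\mu$, and hence $\mathsf{W}_p(\mu_n,\mu)\to 0$.  The main obstacle lies precisely in this moment transfer: it is where the compact support of $\eta_\sigma$ is essential, and it is what distinguishes the argument from the Gaussian case, where sub-Gaussian tail assumptions on $\mu$ play the analogous role.
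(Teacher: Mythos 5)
Your argument is correct, and at its core it coincides with the paper's: both establish identity of indiscernibles via the real-analyticity / Paley--Wiener observation that $\hat\chi_\sigma$ vanishes only on a Lebesgue-null (hence nowhere-dense) set (the paper's \cref{lem:charfunction}), and both reduce weak convergence of $\mu_n$ to that of $\mu_n*\eta_\sigma$ through characteristic functions (the paper's \cref{lem:weakconvergenceconvolution}). Where you diverge, and usefully so, is in making everything self-contained: (a) for the weak-convergence reduction, you use tightness transfer via $\mu_n\big((K+\overline{B}(0,R))^c\big)\le(\mu_n*\eta_\sigma)(K^c)$ plus Prokhorov and then identify subsequential limits on the dense set $\{\hat\chi_\sigma\ne 0\}$, rather than the paper's route of promoting a.e.\ pointwise convergence of characteristic functions to everywhere pointwise convergence; both are valid. (b) You spell out the transfer of uniform $p$-integrability from $\{\mu_n*\eta_\sigma\}$ to $\{\mu_n\}$, which the paper leaves implicit by deferring to the proof of Proposition~1 in \cite{nietert21}. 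Your inequality
\[
\int_{\|x\|>M+R}\|x\|^p\,d\mu_n \le 2^{p-1}\int_{\|y\|>M}\|y\|^p\,d(\mu_n*\eta_\sigma)+2^{p-1}R^p(\mu_n*\eta_\sigma)\big(\{\|y\|>M\}\big)
\]
is exactly where compact support of $\eta_\sigma$ is used, replacing the sub-Gaussian tail control that plays the analogous role in the Gaussian case; this is the one genuinely new ingredient relative to \cite{nietert21} and you have correctly identified it as such. In short: same strategy as the paper, with one minor variant in the identification step and a welcome explicit treatment of the moment transfer the paper only gestures at.
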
   

The proof of \cref{prop:metricspace} is similar to that of Proposition 1 in \cite{nietert21} and Theorem 1 in \cite{Goldfeld2020GOT} for the Gaussian kernel, which uses the fact that the Gaussian characteristic function never vanishes. This property does not necessarily hold for compactly supported kernels, and hence we instead show their characteristic function nullifies at most on a null set, which suffices to adapt the argument. 

\subsection{Statistical analysis}
\label{sec:SmoothLimitDistributionTheory}

This section studies statistical aspects of smoothed Wasserstein distances with  compactly supported kernels. Specifically, we establish limit distributions and parametric convergence rates for empirical $\Wp^{(\sigma)}$ for compactly supported distributions and kernels. In what follows, we restrict our attention to distributions $\mu$ supported in a compact set $\mathcal{X}\subset \R^d$ (i.e., $\supp (\mu) \subset \calX$).  
We will identify a Borel probability measure on $\R^d$ whose support is contained in $\calX$  with a probability measure on $\calX$, and vice versa (i.e., we identify $\mu \in \calP(\calX)$ with its extension to $\R^d$ via $\mu(\cdot \cap \calX)$). 
Let $\mathcal{X}_{\sigma}:=\mathcal{X} + \overline{B(0,\sigma)}$, and assume for simplicity that the density of $\eta_{\sigma}$ is identically zero on $\R^d\setminus B(0,\sigma)$.
The set $\mathcal{X}_{\sigma}$ contains the support of any convolved measure $\mu*\eta_{\sigma}$ for $\mu \in\cP(\mathcal{X})$.

\subsubsection{Limit distributions for $\bm{p>1}$ under the alternative}

Building on the unified framework, the next theorem establishes asymptotic normality of empirical $\mathsf{W}_p^{(\sigma)}$ under the alternative. This contrasts the non-Gaussian limit distributions under the null and the $p=1$ case, which are treated in the sequel. Recall that $\calX \subset \R^d$ is compact. 
 
\begin{theorem}[Limit distributions for $\mathsf W_p^{(\sigma)}$ under the alternative] 
\label{thm:LimitDistributionsSmoothWasserstein}
Set $1<p<\infty$, $\sigma>0$, $S_p^{(\sigma)}:=\big[\mathsf W_p^{(\sigma)}\big]^p$, and let $\mu,\nu\in\cP(\mathcal{X})$ be such that  $\interior(\supp(\mu*\eta_{\sigma}))$ is connected. Let $\varphi$ be an OT potential from $\mu*\eta_{\sigma}$ to $\nu*\eta_{\sigma}$ for $\mathsf W_p$,
which is unique on $\interior(\supp(\mu*\eta_{\sigma}))$ up to additive constants.  
\begin{enumerate}
    \item[(i)] We have
    \[
      \sqrt n\left(
      S_p^{(\sigma)}(\hat\mu_n,\nu)-S_p^{(\sigma)}(\mu,\nu)\right){\stackrel{d}{\to}} N\left(0,{v}^2_{p}\right)
    \]
    where ${v}^2_{p}:=\Var_{\mu}(\varphi*\chi_{\sigma})$. The asymptotic variance ${v}^2_{p}$ coincides with the semiparametric effiency bound for estimating $S_p^{(\sigma)}(\cdot,\nu)$ at $\mu$. Also, provided that $v_p^2 > 0$, we have
    \[
\sup_{t \in \R} \left | \Prob^B \Big ( S_p^{(\sigma)}(\hat{\mu}_n^B,\nu)-S_p^{(\sigma)}(\hat\mu_n,\nu)
\big) \le t \Big ) - \Prob\big(N(0,v_{p}^2) \le t\big) \right| \stackrel{\Prob}{\to} 0. 
\]
    \item[(ii)] 
    If in addition $\nu*\eta_{\sigma}$ has connected support, then
    \[
        \sqrt n\left(
        S_p^{(\sigma)}(\hat\mu_n,\hat\nu_n)-S_p^{(\sigma)}(\mu,\nu)
       \right){\stackrel{d}{\to}}N\left(0,{v}_{p}^2+{w}_{p}^2\right),
    \]
    where ${v}^2_{p}$ is as in (i) and  ${w}_{p}^2:=\Var_{\nu}(\varphi^c*\chi_{\sigma})$. The asymptotic variance ${v}^2_{p}+{w}^2_{p}$ coincides with the semiparametric efficiency bound for estimating $S_p^{(\sigma)}$ at $(\mu,\nu)$. Also, provided that ${v}_{p}^2+{w}_{p}^2 > 0$, we have
    \[
\sup_{t \in \R} \left | \Prob^B \Big ( S_p^{(\sigma)}(\hat{\mu}_n^B,\hat\nu_n^B)-S_p^{(\sigma)}(\hat\mu_n,\hat\nu_n)
\big) \le t \Big ) - \Prob\big(N(0, v_{p}^2+ w_{p}^2) \le t\big) \right| \stackrel{\Prob}{\to} 0. 
\]
\end{enumerate}
\end{theorem}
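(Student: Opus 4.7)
The plan is to apply Proposition~\ref{prop: master proposition}, Proposition~\ref{prop: master proposition 2}, and Corollary~\ref{cor: master} to the functional $\delta(\mu') := S_p^{(\sigma)}(\mu',\nu)$ on $\calP_0 := \calP(\calX)$ for part (i), and to $\delta(\mu' \otimes \nu') := S_p^{(\sigma)}(\mu',\nu')$ on the product space for part (ii). Dualizing $\mathsf{W}_p^p$ on the smoothed measures and applying Fubini (taking $\chi_\sigma$ symmetric without loss of generality), I would write
\[
\delta(\mu') = \sup_{\varphi \in \calC}\left[\int (\varphi * \chi_\sigma)\, d\mu' + \int (\varphi^c * \chi_\sigma)\, d\nu\right],
\]
where $\calC$ is the set of $c$-concave potentials on the compact set $\calX_\sigma$ for cost $c(x,y)=\|x-y\|^p$, normalized at a fixed basepoint. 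The natural function class is $\calF := \{(\varphi * \chi_\sigma)|_{\calX} : \varphi \in \calC\}$.

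Condition (b) of Proposition~\ref{prop: master proposition} is immediate from the dual above: since the $\nu$-term is constant in $\mu'$, $|\delta(\mu') - \delta(\mu'')| \le \|\mu' - \mu''\|_{\infty,\calF}$ globally. For condition (a), the a priori estimate that every $\varphi \in \calC$ is Lipschitz on $\calX_\sigma$ with constant $L_p = p\,\diam(\calX_\sigma)^{p-1}$ (a standard OT fact for polynomial costs on bounded sets) gives a uniform bound on $\|\varphi\|_{\infty}$, and differentiating under the integral yields
\[
\|\varphi * \chi_\sigma\|_{C^k(\calX)} \le C_{k,\sigma}\, L_p
\]
for some $C_{k,\sigma}$ depending only on $\|\chi_\sigma\|_{C^k}$, $d$, and $|\calX_\sigma|$. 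Thus $\calF$ lies in a $C^k$-ball on $\calX$ for every $k$, hence is universally Donsker by Theorem~2.7.1 in van der Vaart and Wellner. For condition (c), the hypothesis that $\interior(\supp(\mu * \eta_\sigma))$ is connected, combined with absolute continuity of $\mu * \eta_\sigma$ (automatic since $\chi_\sigma$ is smooth), ensures via Corollary~2.7 of del Barrio--Gonz\'alez-Sanz--Loubes that the OT potential $\varphi$ is unique on that interior up to an additive constant. A Danskin-type envelope argument along $\mu_t := (1-t)\mu + t\rho$ for $\rho \in \calP_0$ then yields
\[
\lim_{t \downarrow 0}\frac{\delta(\mu_t) - \delta(\mu)}{t} = (\rho - \mu)(\varphi * \chi_\sigma),
\]
which is well-defined because the additive ambiguity in $\varphi$ cancels against $(\rho - \mu)(\mathbf{1}) = 0$. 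Proposition~\ref{prop: master proposition} then produces $\sqrt{n}(\delta(\hat\mu_n) - \delta(\mu)) \dconv G_\mu(\varphi * \chi_\sigma) \sim N(0, v_p^2)$ with $v_p^2 = \Var_\mu(\varphi * \chi_\sigma)$.

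Semiparametric efficiency follows from Proposition~\ref{prop: master proposition 2} applied with the singleton class $\{\varphi * \chi_\sigma\}$, which is trivially $\mu$-pre-Gaussian and on which $\delta_\mu'$ extends to the continuous linear functional $h \mapsto h(\varphi * \chi_\sigma)$. For bootstrap consistency, linearity of $\delta_\mu'$ on $\supp(G_\mu) \subset \ell^\infty(\calF)$, combined with Corollary~\ref{cor: master}, yields Hadamard differentiability of $\delta$ tangentially to $\supp(G_\mu)$; Theorem~23.9 of \cite{vanderVaart1998asymptotic} together with the Donsker property of $\calF$ then delivers the stated uniform convergence (the $\sup$ over $t \in \R$ is secured by P\'olya's theorem, using that the Gaussian limit is continuous when $v_p^2 > 0$). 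Part (ii) proceeds in parallel: view $S_p^{(\sigma)}$ as a functional of the product measure $\mu \otimes \nu$, take the function class $\{(\varphi * \chi_\sigma) \oplus (\varphi^c * \chi_\sigma) : \varphi \in \calC\}$, compute derivative $(\rho_1 - \mu)(\varphi * \chi_\sigma) + (\rho_2 - \nu)(\varphi^c * \chi_\sigma)$, and obtain $N(0, v_p^2 + w_p^2)$ by independence of the two samples.

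The main obstacle is the uniform $C^k$ control of $\calF$: one must combine the a priori Lipschitz bound on $c$-concave functions with smoothing by $\chi_\sigma$ without any dependence on the underlying measure, which is exactly the place where compact support of the kernel and of $\mu,\nu$ is exploited. A secondary subtlety is the Danskin step in (c), where uniqueness of $\varphi$ only on $\interior(\supp(\mu * \eta_\sigma))$ (rather than everywhere) must be shown to suffice, leveraging that the limiting ratio only sees integrals against $\mu * \eta_\sigma$, which is supported in that interior's closure. Once these are in place, the remaining steps — duality manipulation and the off-the-shelf efficiency and bootstrap results — are routine.
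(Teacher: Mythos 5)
Your route differs from the paper's at the choice of Donsker class: the paper indexes the empirical process by the unit ball $B$ of $L^2(\calX_\sigma)$, views smoothed measures as elements of the separable Hilbert space $(L^2(\calX_\sigma))'$, and gets weak convergence in $\ell^\infty(B)$ directly from the Hilbert-space CLT via a canonical isometry (Lemma~\ref{lem:LimitDistributionSmoothProcess}), with no entropy calculation needed. You instead take $\calF$ to be the smoothed $c$-concave potentials and establish the Donsker property via uniform $C^k$ bounds and the Kolmogorov--Tikhomirov entropy estimate. Both are valid; the paper's choice also cleanly yields the parametric rate in Proposition~\ref{thm:ExpectedW2Rates} through the Hilbertian moment bound, whereas your class is better adapted to the analogy with the sliced case. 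Apart from this, the verifications of Conditions (b) and (c), the efficiency reduction to the singleton class, and the bootstrap step coincide with the paper's.

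There is, however, a genuine gap in your treatment of Condition (c). You set $\calP_0 = \calP(\calX)$, which requires the G\^ateaux derivative to exist for every $\rho \in \calP(\calX)$, and you justify well-definedness by claiming that ``the limiting ratio only sees integrals against $\mu*\eta_\sigma$.'' This is incorrect: the limiting ratio is $(\rho - \mu)(\varphi*\chi_\sigma) = \big((\rho - \mu)*\eta_\sigma\big)(\varphi)$, and when $\supp(\rho) \not\subset \supp(\mu)$ the measure $\rho*\eta_\sigma$ places mass outside $\supp(\mu*\eta_\sigma)$, precisely where the OT potential $\varphi$ is not pinned down by Corollary~2.7 of del Barrio et al. The resulting expression then depends on the choice of extension of $\varphi$ outside $\interior(\supp(\mu*\eta_\sigma))$, and the stability argument (Theorem~3.4 of del Barrio et al.) only controls the approximating potentials $\varphi_t$ on $\interior(\supp(\mu*\eta_\sigma))$. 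The paper avoids this by taking $\calP_0 = \{\rho \in \calP(\calX): \supp(\rho) \subset \supp(\mu)\}$, which is both convex and almost surely contains $\hat\mu_n$ -- the only regime where the derivative is actually needed. Restricting $\calP_0$ in this way, and in the two-sample case requiring $\supp(\rho_1) \subset \supp(\mu)$ and $\supp(\rho_2) \subset \supp(\nu)$, closes the gap; without it, your verification of Proposition~\ref{prop: master proposition}(c) does not go through as written.
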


The proof of \cref{thm:LimitDistributionsSmoothWasserstein} applies \cref{prop: master proposition} to the functional $\rho*\eta_\sigma\mapsto \mathsf{W}_p^p(\rho*\eta_{\sigma},\nu*\eta_{\sigma})$ for $\rho \in \calP(\calX)$ with $\supp(\rho) \subset \supp (\mu)$. To this end, we will show that the preceding functional is Lipschitz continuous w.r.t. $\|\cdot\|_{\infty,B}$ for the unit ball $B$ in  $L^2(\mathcal{X}_{\sigma})$, which follows by duality \eqref{eq: duality} and uniform bounds on the OT potentials (cf. Remark 1.13 in \cite{villani2003topics}). Here $L^2(\calX_\sigma)$ is the Hilbert space of square integrable functions on $\calX_\sigma$ w.r.t. the Lebesgue measure, equipped with the standard $L^2$-inner product. The differentiability result follows by adapting the Gaussian kernel case (cf. Lemma 3.3 of \cite{goldfeld22limit}). To prove weak convergence of the smoothed empirical process $\sqrt n(\hat \mu_n-\mu)*\eta_{\sigma}$ in $\ell^{\infty}(B)$, we employ the CLT in $L^2(\calX_\sigma)$ and use a linear isometry from $L^2(\mathcal{X}_{\sigma})$ into $\ell^{\infty}(B)$. Linearity of the derivative yields asymptotic efficiency and bootstrap consistency. 
The overall argument differs from the Gaussian kernel case in \cite{goldfeld22limit}, where they compare the Gaussian smoothed $\Wp$ with a certain dual Sobolev norm and show weak convergence of the smoothed empirical process by showing that the Sobolev unit ball convolved with the Gaussian density is Donsker. As in \cref{thm: limit distribution SWp}, while the results are stated for the $p$th power of the smooth distance, we can obtain limit distributions for $\mathsf W_p^{(\sigma)}$ itself via the delta method for $s\mapsto s^{1/p}$.

\begin{remark} In \cref{thm:LimitDistributionsSmoothWasserstein}, the condition that $\interior(\supp(\mu*\eta_{\sigma}))$ be connected is satisfied if $\mu$ has connected support by \cref{lem:connectedsupport}. The same lemma also shows that the boundary of $\interior(\supp(\mu*\eta_{\sigma}))$ is negligible for any choice of $\mu\in\cP(\mathcal{X})$.
\end{remark}

The ideas from the proof of \cref{thm:LimitDistributionsSmoothWasserstein} coupled with Hilbertian structure of $L^2(\mathcal{X}_{\sigma})$ yield rates of convergence in expectation for empirical $\mathsf{W}_p^{(\sigma)}$.  

\begin{proposition}[Parametric convergence rates]
\label{thm:ExpectedW2Rates}
For $1 <  p<\infty$, $\sigma>0$, and $\mu,\nu\in\cP(\mathcal{X})$ with $\mu\neq \nu$, we have 
\[
\begin{split}
&\E\left[\abs{\mathsf{W}_p^{(\sigma)}(\hat\mu_n,\nu)-\mathsf{W}_p^{(\sigma)}(\mu,\nu)}\right]\\
&\qquad\qquad\leq 2\| \chi_\sigma \|_{\infty} \sqrt{\lambda (B(0,\sigma)) \lambda(\mathcal{X}_{\sigma})} \diam(\mathcal{X}_{\sigma})^p \big[\mathsf{W}_p^{(\sigma)}(\mu,\nu)\big]^{1-p} n^{-1/2}.
\end{split}
\]
\end{proposition}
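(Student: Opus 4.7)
The plan is to combine the dual formulation of $\mathsf{W}_p^p$ with the Hilbert structure on $L^2(\calX_\sigma)$ already exploited in the proof of \cref{thm:LimitDistributionsSmoothWasserstein}. I will first derive a parametric bound on the expected $p$-th power deviation $\E\bigl|\mathsf{W}_p^{(\sigma)}(\hat\mu_n,\nu)^p - \mathsf{W}_p^{(\sigma)}(\mu,\nu)^p\bigr|$, and then convert this into a bound on the distance itself via concavity of $t\mapsto t^{1/p}$, using that $\mathsf{W}_p^{(\sigma)}(\mu,\nu)>0$ which is guaranteed by $\mu\ne\nu$ together with \cref{prop:metricspace}.

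For the first step I would invoke the dual formulation \eqref{eq: duality} applied to $\mu{\ast}\eta_\sigma$ and $\nu{\ast}\eta_\sigma$. Since their supports lie in the compact set $\calX_\sigma$, a standard normalization (cf.\ Remark 1.13 in \cite{villani2003topics}) allows the potentials to be taken with sup-norm bounded by $\diam(\calX_\sigma)^p$. Evaluating the optimizer of one dual problem at the other measure and using the duality inequality, $\bigl|\mathsf{W}_p^p(\hat\mu_n{\ast}\eta_\sigma,\nu{\ast}\eta_\sigma)-\mathsf{W}_p^p(\mu{\ast}\eta_\sigma,\nu{\ast}\eta_\sigma)\bigr|$ is bounded by $\sup_\varphi\bigl|\int\varphi\,d((\hat\mu_n-\mu){\ast}\eta_\sigma)\bigr|$ over uniformly bounded $\varphi$. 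Writing $h_\rho:=\chi_\sigma\ast\rho$ for the density of $\rho{\ast}\eta_\sigma$ (supported in $\calX_\sigma$), Fubini turns the dual integral into $\int_{\calX_\sigma}\varphi(x)(h_{\hat\mu_n}-h_\mu)(x)\,dx$, and Cauchy--Schwarz in $L^2(\calX_\sigma)$ bounds it by $\sqrt{\lambda(\calX_\sigma)}\diam(\calX_\sigma)^p\cdot\|h_{\hat\mu_n}-h_\mu\|_{L^2(\calX_\sigma)}$. A direct variance computation, using $\supp(\chi_\sigma)\subseteq\overline{B(0,\sigma)}$, yields $\E\|h_{\hat\mu_n}-h_\mu\|_{L^2(\calX_\sigma)}^2\le \|\chi_\sigma\|_\infty^2\lambda(B(0,\sigma))/n$, and Jensen's inequality then produces the target $n^{-1/2}$ bound on the expected $p$-th power deviation.

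For the second step, set $A:=\mathsf{W}_p^{(\sigma)}(\hat\mu_n,\nu)$ and $B:=\mathsf{W}_p^{(\sigma)}(\mu,\nu)>0$. I would establish the deterministic inequality $|A-B|\le c\, B^{1-p}|A^p-B^p|$ with an absolute constant $c$ (which a careful calculation shows can be taken to be $2$). This proceeds via a dichotomy: on $\{A\ge B/2\}$, the tangent inequality for the concave map $t\mapsto t^{1/p}$ (applied at $A^p$ or $B^p$ depending on the sign of $A-B$) directly gives the bound in terms of $B^{p-1}$; on $\{A<B/2\}$, the trivial estimate $|A-B|<B$ combined with the deterministic inequality $|A^p-B^p|>B^p(1-2^{-p})$ yields the same conclusion. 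Taking expectations and combining with the first step delivers the claimed bound.

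I expect the main obstacle to be precisely this conversion from expected $p$-th power deviation to expected distance deviation. The naive mean-value inequality $|A-B|\le |A^p-B^p|/(p\min(A,B)^{p-1})$ has a denominator involving $\min(A,B)$, which can be much smaller than $B^{p-1}$ when $\hat\mu_n$ wanders far from $\mu$, causing the bound to deteriorate. This is exactly where the hypothesis $\mu\ne\nu$ enters essentially: positivity of $B$ provides a deterministic scale against which the random deviation $|A-B|$ is measured, but handling the ``far'' regime without degrading the $n^{-1/2}$ rate requires the case split outlined above.
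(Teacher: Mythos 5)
Your step 1 --- the dual formulation with bounded potentials, Cauchy--Schwarz in $L^2(\mathcal{X}_{\sigma})$ against the smoothed densities, and a variance bound --- is essentially the paper's argument, merely phrased through densities rather than through the isometric embedding of $(L^2(\mathcal{X}_{\sigma}))'$ into $\ell^\infty(B)$; it correctly yields a bound on $\E\big|S_p^{(\sigma)}(\hat\mu_n,\nu)-S_p^{(\sigma)}(\mu,\nu)\big|$ of the claimed form.

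The gap is in step 2. Write $s=A/B$; the inequality $|A-B|\le c\,B^{1-p}|A^p-B^p|$ is equivalent, after scaling out $B$, to $|s-1|\le c\,|s^p-1|$. On the event $\{B/2\le A<B\}$, only the tangent at $A^p$ is usable (the tangent at $B^p$ bounds the wrong side when $A<B$), and it gives $B-A\le p^{-1}A^{1-p}(B^p-A^p)$; bounding $A^{1-p}\le(B/2)^{1-p}$ leaves a constant $2^{p-1}/p$, which already exceeds $2$ for $p>4$. So the dichotomy at $B/2$ does \emph{not} deliver a $p$-independent constant $2$, and the promised ``careful calculation'' does not close.

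The good news is that no case split is needed: $|A-B|\le B^{1-p}|A^p-B^p|$ holds with constant $1$ for all $A,B>0$ and $p\ge1$. After scaling this is $|s-1|\le|s^p-1|$ for $s\ge0$, which is elementary: for $s\ge1$, $s^p-1=\int_1^s p\,t^{p-1}\,dt\ge s-1$; for $0\le s<1$, $s^p\le s$ gives $1-s^p\ge1-s$. This is exactly the inequality $|x-y|\le y^{1-p}|x^p-y^p|$ invoked at the end of the paper's proof. Your concern about $\min(A,B)^{p-1}$ in the denominator is an artifact of the mean-value form of the estimate; the direct scaling inequality has no degeneracy as $A\to0$. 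Replacing your dichotomy with this one-line step makes the proof correct (and in fact slightly sharper than the stated constant).
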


\subsubsection{Limit distributions for $\bm{p=2}$ under the null}
We now derive limit distributions for $\mathsf W_2^{(\sigma)}$ under the null. Our approach is based on the CLT in Hilbert spaces and is thus limited to the case $p=2$ (see the discussion after \cref{thm:LimitDistributionsSmoothWassersteinNULL} for details). To state this result, we require the following machinery. Let $C_{0}^\infty$ denote the space of infinitely differentiable, compactly supported real functions on $\R^d$.

\begin{definition}[Sobolev spaces and their duals]
    The Sobolev seminorm of a differentiable function $f:\R^d\to \R$ w.r.t. a reference measure  $\mu\in\cP(\R^d)$ is denoted by $\|f\|_{\Ht{\mu}}:=\|\nabla f\|_{L^2(\mu)}$. The homogeneous Sobolev space is defined as the completion of $C_0^{\infty}+\R$ with respect to $\|\cdot\|_{\Ht{\mu}}$. The dual Sobolev space $\Hmt{\mu}$ is the topological dual of $\Ht{\mu}$. 
\end{definition} 

\begin{definition}[$2$-Poincar{\'e} inequality]
    A probability measure $\mu\in \cP(\R^d)$ is said to satisfy the $2$-Poincar{\'e} inequality if there exists $C<\infty$, such that
    \[
        \|f-\mu(f)\|_{L^2(\mu)}\leq C\|\nabla f\|_{L^2(\mu;\R^d)},\quad f\in C_0^{\infty},
    \]
    where $L^2(\mu;\R^k)$ denotes the space of measurable maps $f:\R^d\to\R^k$ for which $\|f\|_{L^2(\mu;\R^d)}^2:=\int_{\R^d}\|f\|^2\dx{\mu}<\infty$.
\end{definition}

With these definitions in place, we state the limit distribution for $\mathsf{W}_2^{(\sigma)}$.

\begin{theorem}[Limit distributions for $\mathsf{W}_2^{(\sigma)}$ under the null] 
\label{thm:LimitDistributionsSmoothWassersteinNULL}
    Let $\mu\in\cP(\mathcal{X})$ be such that $\mu*\eta_{\sigma}$ satisfies the $2$-Poincar{\'e} inequality and set $\sigma>0$. The following hold.
\begin{enumerate}
    \item[(i)] We have 
    \[
        \sqrt n \mathsf W_2^{(\sigma)}(\hat\mu_n,\mu)\stackrel{d}{\to}\|{\mathbb{G}_{\mu}}\|_{\Hmt{\mu*\eta_{\sigma}}},
    \]
    where $\big(\mathbb{G}_{\mu}(f)\big)_{f\in\dot H^{1,2}(\mu*\eta_{\sigma})}$ is a centered Gaussian process with
    covariance function $\Cov\big(\mathbb{G}_\mu(f),\mathbb{G}_\mu(g)\big) = \Cov_{\mu}(f*\chi_{\sigma},g*\chi_{\sigma})$ whose paths lie in $\Hmt{\mu*\eta_{\sigma}}$ a.s. 
    \item[(ii)] Additionally, if $\mu=\nu$, then
    \[
        \sqrt n \mathsf W_2^{(\sigma)}(\hat\mu_n,\hat\nu_n)\stackrel{d}{\to}\|{\mathbb G_{\mu}-
    \mathbb G'_{\mu}}\|_{\Hmt{\mu*\eta_{\sigma}}},
    \]
    where $\mathbb G_{\mu}'$ is an independent copy of $\mathbb G_{\mu}$.
\end{enumerate}
\end{theorem}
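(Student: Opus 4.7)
The strategy exploits Hilbertian structure in two distinct ways: Peyre's comparison between $\mathsf{W}_2$ and the dual Sobolev norm on the one hand, and a central limit theorem in the Hilbert space $\Hmt{\mu*\eta_{\sigma}}$ for the smoothed empirical process on the other. This is also why the result is restricted to $p=2$: both ingredients rely on the $L^2$ structure. The first step is to write $\mathsf{W}_2^{(\sigma)}(\hat\mu_n,\mu) = \mathsf{W}_2(\hat\mu_n*\eta_\sigma, \mu*\eta_\sigma)$ and note that $\alpha_n := \hat\mu_n*\eta_\sigma$ and $\beta := \mu*\eta_\sigma$ are both absolutely continuous probability measures on the compact set $\calX_\sigma$, with $\alpha_n \to \beta$ weakly (indeed in total variation, since both have bounded densities concentrated in $\calX_\sigma$). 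Under the $2$-Poincar\'e assumption on $\beta$, Peyre's inequality yields $\mathsf{W}_2(\alpha_n,\beta) \le 2\|\alpha_n-\beta\|_{\Hmt{\beta}}$ once $\alpha_n$ is close enough to $\beta$, and a linearization of the Benamou--Brenier dual formulation around $\beta$ delivers the matching reverse bound $\mathsf{W}_2(\alpha_n,\beta) \ge \|\alpha_n-\beta\|_{\Hmt{\beta}}(1-o(1))$. Consequently,
\[
\sqrt{n}\,\mathsf{W}_2^{(\sigma)}(\hat\mu_n,\mu) \;=\; \big\|\sqrt{n}(\hat\mu_n-\mu)*\eta_\sigma\big\|_{\Hmt{\mu*\eta_\sigma}} + o_{\Prob}(1),
\]
provided the right-hand side is $O_{\Prob}(1)$, which will be verified in the next step.

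Next, one shows the Hilbert-space CLT for the smoothed empirical process. Write $\sqrt n (\hat\mu_n-\mu)*\eta_\sigma = n^{-1/2}\sum_{i=1}^n \xi_i$, where $\xi_i := (\delta_{X_i}-\mu)*\eta_\sigma$ are i.i.d., mean-zero elements of the separable Hilbert space $\Hmt{\mu*\eta_\sigma}$. Boundedness and compact support of $\chi_\sigma$, compactness of $\calX_\sigma$, and the $2$-Poincar\'e inequality on $\mu*\eta_\sigma$ together ensure that each $\xi_i$ has finite dual Sobolev norm and $\E[\|\xi_i\|_{\Hmt{\mu*\eta_\sigma}}^2]<\infty$; the $2$-Poincar\'e inequality controls the dual norm of bounded signed measures on $\calX_\sigma$ by their $L^2(\mu*\eta_\sigma)$ norm. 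The Hilbert-space CLT then gives
\[
\sqrt{n}(\hat\mu_n-\mu)*\eta_\sigma \;\stackrel{d}{\to}\; \mathbb{G}_\mu \quad \text{in } \Hmt{\mu*\eta_\sigma},
\]
where the limit is the centered Gaussian element whose covariance operator reproduces $\Cov_\mu(f*\chi_\sigma, g*\chi_\sigma)$ via the pairing $\langle f,\xi_i\rangle = \int f\, d\xi_i = (f*\chi_\sigma)(X_i) - \mu(f*\chi_\sigma)$ for $f\in\Ht{\mu*\eta_\sigma}$. Applying the continuous mapping theorem to the norm $\|\cdot\|_{\Hmt{\mu*\eta_\sigma}}$ and combining with the asymptotic equivalence from the previous paragraph via Slutsky's lemma yields Part (i).

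For Part (ii), decompose $\sqrt{n}(\hat\mu_n-\hat\nu_n)*\eta_\sigma = \sqrt{n}(\hat\mu_n-\mu)*\eta_\sigma - \sqrt{n}(\hat\nu_n-\mu)*\eta_\sigma$. The two components are independent under the null $\mu=\nu$ and each converges in $\Hmt{\mu*\eta_\sigma}$ to $\mathbb{G}_\mu$ and to an independent copy $\mathbb{G}_\mu'$, respectively, so their difference converges to $\mathbb{G}_\mu-\mathbb{G}_\mu'$ in the same space. The Peyre comparison of the first paragraph is applied with $\alpha:=\hat\mu_n*\eta_\sigma$ and $\beta:=\hat\nu_n*\eta_\sigma$; since $\hat\nu_n*\eta_\sigma$ converges in $L^2$ to $\mu*\eta_\sigma$, a perturbation argument transfers the $2$-Poincar\'e inequality (uniformly along the sequence, with high probability) and reduces to the reference $\mu*\eta_\sigma$ in the limit. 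Continuous mapping and Slutsky then deliver the claim.

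The main obstacle is the reverse direction in Peyre's comparison, i.e., the asymptotic linearization of $\mathsf{W}_2$ about $\beta$ in terms of the dual Sobolev norm. This requires the $2$-Poincar\'e assumption to keep the comparison constant under control, plus a careful Benamou--Brenier construction that builds a competitor velocity field from the $\Hmt{\beta}$-optimal potential (the minimizer in the dual Sobolev variational problem) whose associated action matches $\|\alpha_n-\beta\|_{\Hmt{\beta}}^2$ up to lower-order terms. A secondary technicality, relevant for Part (ii), is ensuring that the $2$-Poincar\'e inequality passes (uniformly in $n$, with probability tending to one) from $\mu*\eta_\sigma$ to $\hat\nu_n*\eta_\sigma$; this can be handled by exploiting the stability of the Poincar\'e constant under small $L^\infty$ perturbations of the density, which are available here because $\chi_\sigma$ is bounded and $\hat\nu_n*\eta_\sigma$ converges to $\mu*\eta_\sigma$ in $L^\infty(\calX_\sigma)$ at the usual nonparametric rate.
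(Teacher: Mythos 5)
The Hilbert-space CLT step is correct and matches the paper's argument (Lemmas~\ref{lem:SmoothDualSobolev} and~\ref{lem:DualSobolevEmpirical}): you set up $\xi_i=(\delta_{X_i}-\mu)*\eta_\sigma$ as i.i.d.\ mean-zero random elements of the separable Hilbert space $\Hmt{\mu*\eta_\sigma}$, verify square-integrability via the $2$-Poincar\'e inequality, and invoke the Hilbert-space CLT. That part is fine.

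The linearization step, however, has a genuine gap, and it is precisely the step that carries all the difficulty. You pair Peyre's \emph{one-sided} upper bound $\mathsf{W}_2(\alpha_n,\beta)\le 2\|\alpha_n-\beta\|_{\Hmt{\beta}}$ with a claimed linearization lower bound $\mathsf{W}_2(\alpha_n,\beta)\ge \|\alpha_n-\beta\|_{\Hmt{\beta}}(1-o(1))$, and from these you conclude $\sqrt{n}\,\mathsf{W}_2^{(\sigma)}(\hat\mu_n,\mu)=\|\sqrt{n}(\hat\mu_n-\mu)*\eta_\sigma\|_{\Hmt{\mu*\eta_\sigma}}+o_{\Prob}(1)$. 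This does not follow: the constants do not match, so you only get $\sqrt{n}\,\mathsf{W}_2^{(\sigma)}$ sandwiched between $1$ and $2$ times the dual-Sobolev norm, not asymptotic equality. What you actually need is a \emph{two-sided first-order Taylor expansion} $\mathsf{W}_2(\mu*\eta_\sigma+th,\mu*\eta_\sigma)=t\|h\|_{\Hmt{\mu*\eta_\sigma}}+o(t)$, and the asymptotically tight \emph{upper} bound (improving Peyre's factor of $2$ to $1+o(1)$ by constructing a near-optimal Benamou--Brenier velocity field) is in fact the harder direction — the lower bound with constant $1$ comes essentially for free from the dual formulation, as you correctly recognize. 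You have misdiagnosed which side is the obstacle, and the missing upper-bound refinement is never supplied. This is exactly what the Hadamard directional differentiability result (\cref{lem:SmoothHDDNull}, i.e., Proposition 3.3 in \cite{goldfeld22limit}) provides; once you have it, the extended functional delta method (\cref{lem: functional delta method}) bundles both directions together cleanly, which is how the paper proceeds.

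A secondary divergence concerns Part~(ii): you propose linearizing $\mathsf{W}_2$ around the \emph{random} reference measure $\hat\nu_n*\eta_\sigma$ and then transporting the $2$-Poincar\'e inequality to this random neighborhood via an $L^\infty$-stability argument. This introduces an extra unproven technicality (uniform propagation of the Poincar\'e constant along a random sequence, plus control of the random dual-Sobolev norm). The paper sidesteps this entirely by establishing joint Hadamard directional differentiability of the bivariate map $(h_1,h_2)\mapsto\mathsf{W}_2(\mu*\eta_\sigma+h_1,\mu*\eta_\sigma+h_2)$ at $(0,0)$ with derivative $\|h_1-h_2\|_{\Hmt{\mu*\eta_\sigma}}$; the reference measure is then fixed and non-random, and the two-sample limit drops directly out of the delta method. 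You should adopt the joint formulation rather than conditioning on one sample.
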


\begin{remark}
   In \cref{thm:LimitDistributionsSmoothWassersteinNULL}, a sufficient condition for $\mu*\eta_{\sigma}$ to satisfy the $2$-Poincar{\'e} inequality is that both $\mu$ and $\eta_{\sigma}$ satisfy $2$-Poincar{\'e} inequalities (see Proposition 1.1 in \cite{WanWan16}). For instance, when $\eta_{1}$ has density given by \eqref{eq:mollifier}, $\eta_{\sigma}$ is a log-concave measure (cf. \cite{lovasz2007logcon,saumard2014log}) and hence satisfies the $2$-Poincar{\'e} inequality \cite{bobkov1999,milman2009role}.
\end{remark}

The derivation of \cref{thm:LimitDistributionsSmoothWassersteinNULL} follows an essentially similar approach to the Gaussian kernel case in \cite{goldfeld22limit}. 
However, in contrast to the proof of Proposition 3.1 in \cite{goldfeld22limit}, to show weak convergence of the smoothed empirical process in $\Hmt{\mu*\eta_{\sigma}}$, we exploit the fact that $\Hmt{\mu*\eta_{\sigma}}$ is a Hilbert space (since it is isometrically isomorphic to a closed subspace of $L^2(\mu*\eta_{\sigma};\R^d)$), and apply the CLT in the Hilbert space.
To this end, we first verify that the smoothed empirical process has paths in $\Hmt{\mu*\eta_{\sigma}}$. This step requires 
control of the inverse of the density of $\mu*\eta_{\sigma}$, which decays to zero near the boundary of its support; see the proof of \cref{lem:SmoothDualSobolev}. A direct extension of the proof of Proposition 3.1 in \cite{goldfeld22limit} to compactly supported kernels for general $1<p<\infty$ requires a much finer analysis of this density 
than provided in the proof of \cref{lem:SmoothDualSobolev}. Such an analysis appears highly nontrivial and  hence we focus here on the $p=2$ case.

Analogously to \cref{thm:ExpectedW2Rates}, parametric rates for empirical $\mathsf{W}_2^{(\sigma)}$ under the null follow from Hilbertianity of $\Hmt{\mu*\eta_{\sigma}}$ and the ideas from the proof of  \cref{thm:LimitDistributionsSmoothWassersteinNULL}.

\begin{proposition}[Parametric convergence rates]
\label{thm:ExpectedW2RatesNULL}
For $\sigma>0$ and $\mu\in\cP(\mathcal{X})$ for which $\mu*\eta_{\sigma}$ satisfies the $2$-Poincar{\'e} inequality with constant $C_{\mu,\sigma}$, we have
\[
    \E\left[\mathsf{W}_2^{(\sigma)}(\hat\mu_n,\mu)\right]\leq 2C_{\mu,\sigma}\sqrt{(1\vee \|{\chi_{\sigma}^2}\|_{\infty})\lambda(\mathcal{X}_{\sigma}) }n^{-1/2}. 
\]
\end{proposition}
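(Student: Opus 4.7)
The plan is to adapt the Hilbert-space argument from the proof of \cref{thm:LimitDistributionsSmoothWassersteinNULL} to obtain an explicit second-moment bound for the smoothed empirical process in $\Hmt{\mu*\eta_\sigma}$, and then convert it to a first-moment bound on $\mathsf W_2^{(\sigma)}$ via Jensen's inequality. The three ingredients are (i) the Peyre-type estimate coming from the $2$-Poincar\'e hypothesis, (ii) Hilbertian orthogonality of centered i.i.d. sums, and (iii) a single-observation second-moment bound expressible in terms of $\|\chi_\sigma^2\|_\infty$ and $\lambda(\mathcal{X}_\sigma)$.

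First I would invoke the Peyre-type bound
\begin{equation*}
    \mathsf W_2^{(\sigma)}(\hat\mu_n,\mu) \leq 2 C_{\mu,\sigma}\,\|(\hat\mu_n-\mu)*\eta_\sigma\|_{\Hmt{\mu*\eta_\sigma}},
\end{equation*}
which follows from the Benamou--Brenier formula and the $2$-Poincar\'e inequality on $\mu*\eta_\sigma$ (the same estimate that underpins the proof of \cref{thm:LimitDistributionsSmoothWassersteinNULL}). Taking expectations and applying Jensen bounds $\E[\mathsf W_2^{(\sigma)}(\hat\mu_n,\mu)]$ by $2C_{\mu,\sigma}$ times the square root of $\E\|(\hat\mu_n-\mu)*\eta_\sigma\|^2_{\Hmt{\mu*\eta_\sigma}}$. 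Decomposing $(\hat\mu_n-\mu)*\eta_\sigma = n^{-1}\sum_{i=1}^n Z_i$ with $Z_i := (\delta_{X_i}-\mu)*\eta_\sigma$, the summands are i.i.d. centered elements of the Hilbert space $\Hmt{\mu*\eta_\sigma}$ with finite second moment (which uses \cref{lem:SmoothDualSobolev}), so Hilbertian orthogonality gives
\begin{equation*}
    \E\|(\hat\mu_n-\mu)*\eta_\sigma\|^2_{\Hmt{\mu*\eta_\sigma}} = \tfrac{1}{n}\,\E\|Z_1\|^2_{\Hmt{\mu*\eta_\sigma}}.
\end{equation*}

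The final and main step is to prove the single-observation bound $\E\|Z_1\|^2_{\Hmt{\mu*\eta_\sigma}} \leq (1\vee\|\chi_\sigma^2\|_\infty)\lambda(\mathcal{X}_\sigma)$. The plan is to use the dual representation $Z_1(f) = (f*\chi_\sigma)(X_1) - \mu(f*\chi_\sigma)$, obtained by Fubini (with a reflected kernel if $\chi_\sigma$ is not symmetric), fix an orthonormal basis $\{e_k\}$ of $\Ht{\mu*\eta_\sigma}$ for the gradient inner product, and expand via Parseval:
\begin{equation*}
    \E\|Z_1\|^2_{\Hmt{\mu*\eta_\sigma}} = \sum_k \E[Z_1(e_k)^2] = \sum_k \Var_\mu(e_k * \chi_\sigma).
\end{equation*}
The right-hand side is the squared Hilbert--Schmidt norm of the smoothing operator $T: g\mapsto g*\chi_\sigma$ from $\Ht{\mu*\eta_\sigma}$ into $L^2(\mu)$, to be controlled via the Jensen-type contraction $\|g*\chi_\sigma\|^2_{L^2(\mu)}\leq \|g\|^2_{L^2(\mu*\eta_\sigma)}$ combined with the elementary estimates $|(g*\chi_\sigma)(x)|^2 \leq \|\chi_\sigma\|^2_{L^2(\lambda)}\|g\|^2_{L^2(\lambda;B(x,\sigma))}$ and $\|\chi_\sigma\|^2_{L^2(\lambda)}\leq \|\chi_\sigma^2\|_\infty\lambda(\mathcal{X}_\sigma)$. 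The hard part is to execute this Hilbert--Schmidt estimate cleanly without picking up a dependence on $\inf p_\sigma$, which vanishes near $\partial\mathcal{X}_\sigma$---precisely the technical difficulty highlighted in the paragraph following the statement of \cref{thm:LimitDistributionsSmoothWassersteinNULL} and the reason the Hilbertian approach is restricted here to $p=2$.
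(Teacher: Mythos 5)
Your high-level strategy --- Peyr\'e-type bound into $\Hmt{\mu*\eta_\sigma}$, Jensen, Hilbertian orthogonality of the i.i.d.\ sum, then a single-observation second-moment estimate --- matches the paper's proof of \cref{thm:ExpectedW2RatesNULL}. However, you have misallocated the Poincar\'e constant between the two halves, and your program for the final moment bound does not close.

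The Peyr\'e-type inequality invoked by the paper (Proposition~2.1 of \cite{goldfeld22limit}) is $\mathsf W_2^{(\sigma)}(\hat\mu_n,\mu)\leq 2\,\|(\hat\mu_n-\mu)*\eta_\sigma\|_{\Hmt{\mu*\eta_\sigma}}$, with an absolute constant $2$ and no $C_{\mu,\sigma}$. The Poincar\'e constant shows up entirely on the other side: the proof of \cref{lem:SmoothDualSobolev} yields $\E\big[\|Z_1\|^2_{\Hmt{\mu*\eta_\sigma}}\big]\leq C_{\mu,\sigma}^2\,(1\vee\|\chi_\sigma^2\|_\infty)\lambda(\mathcal X_\sigma)$, not $\E\big[\|Z_1\|^2_{\Hmt{\mu*\eta_\sigma}}\big]\leq(1\vee\|\chi_\sigma^2\|_\infty)\lambda(\mathcal X_\sigma)$ as you claim. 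The two intermediate statements in your plan (a Peyr\'e bound carrying $C_{\mu,\sigma}$ and a moment bound free of $C_{\mu,\sigma}$) happen to multiply out to the right answer, but each is individually false and hence cannot be supplied by citation.

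More importantly, your route to the moment bound does not work. You propose evaluating $\E\|Z_1\|^2_{\Hmt{\mu*\eta_\sigma}}=\sum_k\Var_\mu(e_k*\chi_\sigma)$ via a Parseval/Hilbert--Schmidt expansion against an orthonormal basis $\{e_k\}$ of $\Ht{\mu*\eta_\sigma}$ and then controlling each term by the Jensen contraction $\|g*\chi_\sigma\|^2_{L^2(\mu)}\leq\|g\|^2_{L^2(\mu*\eta_\sigma)}$ together with the pointwise estimate $|(g*\chi_\sigma)(x)|^2\leq\|\chi_\sigma\|^2_{L^2(\lambda)}\|g\|^2_{L^2(\lambda;B(x,\sigma))}$. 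Neither of these controls keeps track of the gradient normalization $\|\nabla e_k\|_{L^2(\mu*\eta_\sigma)}=1$: applying the Jensen contraction to $\bar e_k=e_k-(\mu*\eta_\sigma)(e_k)$ gives $\Var_\mu(e_k*\chi_\sigma)\leq\|\bar e_k\|^2_{L^2(\mu*\eta_\sigma)}$, which is bounded only after invoking the $2$-Poincar\'e inequality (costing a $C_{\mu,\sigma}^2$), and then summing over $k$ diverges. So that estimate cannot be made term by term: the point of the paper's proof is not to bound each $\Var_\mu(e_k*\chi_\sigma)$ but to bound the dual norm directly. The paper writes, for $\bar f=f-(\mu*\eta_\sigma)(f)$ and $p_\sigma=$ the density of $\mu*\eta_\sigma$,
\begin{equation*}
\bigl|\bigl((\delta_x-\mu)*\eta_\sigma\bigr)(f)\bigr|
=\bigl|(\bar f*\chi_\sigma)(x)\bigr|
\leq\|\bar f\|_{L^2(B(x,\sigma),\mu*\eta_\sigma)}\Bigl\|\frac{\chi_\sigma(x-\cdot)}{p_\sigma}\Bigr\|_{L^2(B(x,\sigma),\mu*\eta_\sigma)},
\end{equation*}
applies the Poincar\'e inequality to the first factor (producing $C_{\mu,\sigma}\|f\|_{\Ht{\mu*\eta_\sigma}}$), and then takes expectation over $X_1$ of the second factor squared. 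The resulting scalar integral $\int_\Gamma\E[\chi_\sigma^2(X-y)]/\E[\chi_\sigma(X-y)]\,dy$ is what produces $(1\vee\|\chi_\sigma^2\|_\infty)\lambda(\mathcal X_\sigma)$, via a splitting over $\{\chi_\sigma<1\}$ and $\{\chi_\sigma\geq1\}$ that you will not reproduce from $\|\chi_\sigma\|_{L^2(\lambda)}^2\leq\|\chi_\sigma^2\|_\infty\lambda(\mathcal X_\sigma)$ alone. You do flag exactly this difficulty at the end of your proposal; what is missing is that the mechanism you hope will avoid $\inf p_\sigma$ is precisely the Poincar\'e hypothesis applied on the Sobolev side, not on the transport side, and you have placed your $C_{\mu,\sigma}$ where it does not belong.
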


\subsubsection{Limit distributions for $\bm{p=1}$}
We now treat the limit distributions for $\mathsf{W}_1^{(\sigma)}$ under both the null and the alternative. The Kantorovich-Rubinstein duality for $\mathsf{W}_1$ enables us to do so in the absence of the additional assumptions required when $p>1$.  
In what follows, let $\Lip$ denote the set of $1$-Lipschitz functions $f$ on $\R^d$ with $f(0)=0$ and $\calF_\sigma = \{ f * \chi_\sigma : f \in \Lip \}$. Observe that $\mathsf{W}_1^{(\sigma)}(\mu,\nu) = \sup_{f \in \calF_\sigma}(\mu - \nu)(f)$ by the Kantorovich-Rubinstein duality. 

\begin{theorem}[Limit distributions for $\mathsf W_1^{(\sigma)}$]
    \label{thm:RatesSmooth1WassersteinNULL}
    Let $\sigma>0$ and $\mu,\nu\in\cP(\mathcal{X})$. There exist independent, tight $\mu$- and $\nu$-Brownian bridge process $G_\mu$ and $G_\nu'$ in $\ell^\infty (\calF_\sigma)$, respectively, such that the following hold. 
    \begin{enumerate}
     \item[(i)] We have
  \[
        \sqrt n\left(\mathsf W_1^{(\sigma)}(\hat\mu_n,\nu)-\mathsf W_1^{(\sigma)}(\mu,\nu)\right){\stackrel{d}{\to}} \sup_{f \in M_\sigma} G_\mu(f),
   \]
   where $M_\sigma =\big \{ f \in \overline{\calF}^\mu_\sigma : \mu \big(f - \nu(f)\big) = \mathsf{W}_1^{(\sigma)}(\mu,\nu)\big\}$ and $\overline{\calF}^\mu_\sigma$ is the completion of $\calF_\sigma$ for the pseudometric $(f,g) \mapsto \sqrt{\Var_\mu (f-g)}$.
        
    \item[(ii)] We have
  \[
        \sqrt n\left(\mathsf W_1^{(\sigma)}(\hat\mu_n,\hat\nu_n)-\mathsf W_1^{(\sigma)}(\mu,\nu)\right){\stackrel{d}{\to}} \sup_{f \in M_\sigma'} [G_{\mu}(f)-G_{\nu}'(f)],
   \]
where $M_\sigma' = \big\{ f \in \overline{\calF}^{\mu,\nu}_\sigma : (\mu-\nu) (f) = \mathsf{W}_1^{(\sigma)}(\mu,\nu)\big\}$ and $\overline{\calF}^{\mu,\nu}_\sigma$ is the completion of $\calF_\sigma$ for the pseudometric $(f,g) \mapsto \sqrt{\Var_\mu (f-g)}+\sqrt{\Var_\nu (f-g)}$.
   \end{enumerate}
\end{theorem}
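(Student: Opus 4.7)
The plan is to apply Proposition~\ref{prop: master proposition} with functional $\delta(\rho) = \mathsf{W}_1^{(\sigma)}(\rho,\nu) = \sup_{f \in \calF_\sigma}(\rho - \nu)(f)$ in Part (i), and with its two-sample analogue $\delta(\rho_1,\rho_2) = \sup_{f \in \calF_\sigma}(\rho_1 - \rho_2)(f)$ viewed as a functional of the product measure in Part (ii) (following the convention adopted for Theorem~\ref{thm: limit distribution SWp}). The Kantorovich-Rubinstein identity makes $\delta$ a supremum functional over $\calF_\sigma$, whence the global $1$-Lipschitz continuity of $\delta$ with respect to $\|\cdot\|_{\infty,\calF_\sigma}$ is immediate from the elementary bound $|\sup_f a(f) - \sup_f b(f)| \le \sup_f |a(f)-b(f)|$, verifying condition (b) of Proposition~\ref{prop: master proposition}.

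The first technical step is to show that $\calF_\sigma$ is $\mu$-Donsker. Since $\chi_\sigma$ has compact support in $\overline{B(0,\sigma)}$ and is a probability density, the convolution $f\ast\chi_\sigma$ of any $f \in \Lip$ remains $1$-Lipschitz, and $|(f\ast\chi_\sigma)(x)| \le |x|+\sigma$. Restricted to the compact set $\calX$ on which $\mu$ is supported, $\calF_\sigma$ therefore consists of uniformly bounded $1$-Lipschitz functions. Standard bracketing entropy bounds for Lipschitz classes on a bounded Euclidean domain (e.g., Theorem~2.7.1 of van der Vaart-Wellner) give $\log N_{[\,]}(\epsilon,\calF_\sigma,L^2(\mu)) \lesssim \epsilon^{-d}$, which satisfies the bracketing integral condition. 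Hence $\sqrt n(\hat\mu_n-\mu)$ converges weakly in $\ell^\infty(\calF_\sigma)$ to a tight $\mu$-Brownian bridge $G_\mu$, and analogously $\sqrt n(\hat\nu_n-\nu) \rightsquigarrow G'_\nu$, the two limits independent by product independence of the samples.

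The second technical step is to compute the directional derivative. Decompose $\delta = \Psi \circ T$ with $T\colon \rho \mapsto \big(f \mapsto (\rho-\nu)(f)\big)$ continuous linear into $\ell^\infty(\calF_\sigma)$, and $\Psi(z) = \sup_{f \in \calF_\sigma}z(f)$. By the general Hadamard directional differentiability theory for supremum functionals (the same machinery invoked for Theorem~\ref{thm: sliced W1 limit}(iii)--(iv)), $\Psi$ is Hadamard directionally differentiable at $z_0 = \mu-\nu$ tangentially to the subspace of uniformly $d_\mu$-continuous elements of $\ell^\infty(\calF_\sigma)$, where $d_\mu(f,g)=\sqrt{\Var_\mu(f-g)}$, with derivative $\Psi'_{z_0}(h)=\sup_{f \in M_\sigma} h(f)$ and $M_\sigma$ as defined in the statement. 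Because $G_\mu$ is a tight Brownian bridge in $\ell^\infty(\calF_\sigma)$, it has a.s.\ uniformly $d_\mu$-continuous sample paths, so $G_\mu$ lies in the tangent cone $\calT_{\calP_0}(\mu)$ and the chain rule yields $\delta'_\mu(G_\mu) = \sup_{f \in M_\sigma} G_\mu(f)$. Part (i) is then a direct invocation of Proposition~\ref{prop: master proposition}. Part (ii) repeats the argument for the two-sample functional on the product measure: the pseudometric $d_{\mu,\nu}(f,g) = \sqrt{\Var_\mu(f-g)}+\sqrt{\Var_\nu(f-g)}$ emerges from summing the covariance structures of the two independent bridges, and the derivative $(h_1,h_2)\mapsto \sup_{f \in M'_\sigma}(h_1-h_2)(f)$ delivers the claimed supremum of $G_\mu(f) - G'_\nu(f)$.

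The main obstacle is the characterization of the maximizer set in the completion $\overline{\calF}^\mu_\sigma$ (respectively $\overline{\calF}^{\mu,\nu}_\sigma$) rather than in $\calF_\sigma$ itself: the supremum defining $\Psi(z_0)$ need not be attained on $\calF_\sigma$, so one must extend functionals by $d_\mu$-uniform continuity and verify that any approximately maximizing sequence in $\calF_\sigma$ has $d_\mu$-cluster points in $\overline{\calF}^\mu_\sigma$ that are genuine maximizers. This subtlety is exactly the one handled in the sliced $\mathsf{W}_1$ case (Theorem~\ref{thm: sliced W1 limit}(iii)--(iv)); the Donsker property from Step~1 provides total boundedness in $d_\mu$ which guarantees the required precompactness. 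Modulo that standard extension, the three conditions of Proposition~\ref{prop: master proposition} hold and the stated weak convergences follow.
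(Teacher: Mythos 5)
Your overall scheme---cast $\mathsf{W}_1^{(\sigma)}$ as a supremum functional via Kantorovich--Rubinstein, establish Donskerness of $\calF_\sigma$, then apply the functional delta method for the supremum (Lemma~\ref{lem: directional_derivative_sup_norm}) as in Theorem~\ref{thm: sliced W1 limit}(iii)--(iv)---is exactly the paper's route. However, there is a fatal gap in your Donsker step. You argue that $\calF_\sigma$, restricted to $\calX$, consists of uniformly bounded $1$-Lipschitz functions, invoke Theorem~2.7.1 of van der Vaart--Wellner to get $\log N_{[\,]}(\epsilon,\calF_\sigma,L^2(\mu)) \lesssim \epsilon^{-d}$, and assert that this ``satisfies the bracketing integral condition.'' It does not: the bracketing CLT requires $\int_0^1 \sqrt{\log N_{[\,]}(\epsilon,\calF,L^2(\mu))}\,d\epsilon < \infty$, and with entropy $\epsilon^{-d}$ that integral is $\int_0^1 \epsilon^{-d/2}d\epsilon$, which diverges for every $d\geq 2$. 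Indeed, the class of $1$-Lipschitz functions on a bounded set in $\R^d$ is famously \emph{not} Donsker when $d\geq 2$, so no argument based on Lipschitzness alone can close this step---and if it could, the entire point of smoothing as a complexity-reduction device would evaporate.

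What saves the day, and what the paper's Lemma~\ref{lem:LipDonsker} actually uses, is that convolution with the smooth compactly supported density $\chi_\sigma$ hands you control of \emph{all} higher-order derivatives of $f*\chi_\sigma$ simultaneously: $|\partial^k(f*\chi_\sigma)(x)-\partial^k(f*\chi_\sigma)(x')| \le \|\partial^k\chi_\sigma\|_\infty\,\lambda(B(0,\sigma))\,\|x-x'\|$ uniformly over $f\in\Lip$, so $\calF_\sigma|_{B(0,R)}$ is a bounded subset of $C^s(B(0,R))$ for \emph{every} $s\in\NN$. Applying Theorem~2.7.1 of van der Vaart--Wellner with smoothness parameter $s=\lfloor d/2\rfloor+1 > d/2$ then gives entropy $O(\epsilon^{-d/s})$, whose square root is integrable near~$0$, and Donskerness follows from Theorem~2.5.6. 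You should replace your Lipschitz entropy estimate with this $C^s$ estimate; with that fix the remainder of your argument (the chain-rule computation of the directional derivative, the reduction to the maximizer set in the $d_\mu$-completion, and the two-sample extension via the product-measure device) matches the paper's.
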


\cref{thm:RatesSmooth1WassersteinNULL}  follows by showing that the function class $\calF_\sigma$ is Donsker combined with the extended functional delta method for the supremum functional.
The proof of \cref{thm:RatesSmooth1WassersteinNULL} also establishes parametric rates for empirical $\mathsf W_1^{(\sigma)}$.

\begin{corollary}[Parametric convergence rate]
    \label{cor:RatesSmooth1WassersteinNull}
    In the setting of \cref{thm:RatesSmooth1WassersteinNULL}, we have
    $\E\big[\mathsf{W}_1^{(\sigma)}(\hat\mu_n,\mu)\big] =O(n^{-1/2})$.
\end{corollary}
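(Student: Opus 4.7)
The plan is to extract a quantitative $O(n^{-1/2})$ bound from the same ingredients developed in the proof of \cref{thm:RatesSmooth1WassersteinNULL}. The key observation is that establishing the Donsker property via bracketing entropy yields, as a byproduct, a maximal inequality for the empirical process at rate $n^{-1/2}$. Specifically, by Kantorovich-Rubinstein duality,
\[
\mathsf{W}_1^{(\sigma)}(\hat\mu_n,\mu) = \sup_{f \in \calF_\sigma} (\hat\mu_n - \mu)(f),
\]
so it suffices to control $\E\big[\sup_{f \in \calF_\sigma}|(\hat\mu_n - \mu)(f)|\big]$.

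First I would show that $\calF_\sigma$, restricted to $\calX$, is uniformly bounded in $C^k(\calX)$ for every $k \in \NN$. Differentiating under the integral gives $\partial^\alpha (f*\chi_\sigma) = f*\partial^\alpha \chi_\sigma$ for any multi-index $\alpha$, and since $|f(z)| \le \|z\|$ for $f \in \Lip$ while $\partial^\alpha \chi_\sigma$ is continuous with support in $\overline{B(0,\sigma)}$, one obtains
\[
\sup_{x \in \calX} |\partial^\alpha (f*\chi_\sigma)(x)| \le \Big(\sup_{z \in \calX_\sigma}\|z\|\Big) \cdot \|\partial^\alpha \chi_\sigma\|_{L^1(\R^d)},
\]
uniformly over $f \in \Lip$. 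In particular, $\calF_\sigma|_{\calX}$ lies in a bounded subset of $C^k(\calX)$ for any $k$, and admits a finite uniform envelope $F$.

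Next, choosing $k > d/2$ and invoking the classical bracketing entropy bound for smooth functions on compact domains (e.g., Corollary 2.7.2 of \cite{van1996weak}), one obtains $\log N_{[\,]}(\epsilon,\calF_\sigma|_{\calX}, L^2(\mu)) \lesssim \epsilon^{-d/k}$, so the bracketing integral $J_{[\,]}(\|F\|_{L^2(\mu)},\calF_\sigma, L^2(\mu))$ is finite. The standard maximal inequality (e.g., Theorem 2.14.2 of \cite{van1996weak}) then yields
\[
\E\Big[\sup_{f \in \calF_\sigma}|(\hat\mu_n - \mu)(f)|\Big] \lesssim \frac{J_{[\,]}(\|F\|_{L^2(\mu)},\calF_\sigma, L^2(\mu))}{\sqrt n} = O(n^{-1/2}),
\]
which is the claim.

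The only potentially subtle step is the uniform $C^k$ estimate on $\calF_\sigma$, but this is essentially immediate from regularity of $\chi_\sigma$ combined with the linear growth of functions in $\Lip$; no additional obstacle arises beyond what already appears in establishing that $\calF_\sigma$ is Donsker in the proof of \cref{thm:RatesSmooth1WassersteinNULL}.
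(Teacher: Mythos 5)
Your proposal is correct and follows essentially the same route as the paper: express $\mathsf{W}_1^{(\sigma)}(\hat\mu_n,\mu)$ via Kantorovich--Rubinstein duality, note that $\calF_\sigma$ restricted to the compact set $\calX$ is uniformly bounded in $C^s$ for every $s$, obtain the entropy bound for smooth function classes (you cite Corollary 2.7.2 of \cite{van1996weak}, the paper uses Theorem 2.7.1 in the course of proving \cref{lem:LipDonsker} --- these give equivalent control), and apply the maximal inequality of Theorem 2.14.2 to conclude the $O(n^{-1/2})$ rate. The only stylistic difference is that you rederive the uniform $\|\partial^\alpha(f*\chi_\sigma)\|_{L^\infty(\calX)}$ bound directly from $|f(z)|\le\|z\|$ and $\|\partial^\alpha\chi_\sigma\|_{L^1}<\infty$, whereas the paper quotes the estimates already contained in its Donsker proof; both are valid.
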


\subsection{Computational aspects}
\label{sec:SmoothComputation}
 
In this section, we take a first step towards addressing computational aspects for smooth Wasserstein distances, leveraging the framework of \cite{vacher2021dimension} for computing the $\mathsf{W}_2$ distance between measures with smooth compactly supported densities that are bounded from above and below. We first outline the approach of \cite{vacher2021dimension}, then present a truncation argument that enables applying it for $\mathsf{W}_2^{(\sigma)}$, and lastly describe some practical limitations of the original method.    

\subsubsection{Summary of the method} 
\label{sec:summaryVacher}
Throughout, let $c(x,y)=\|x-y\|^2/2$ and $\mu_1,\mu_2\in\cP(\R^d)$ be such that $A_i:=\interior(\supp(\mu_i))$ is bounded and convex for $i=1,2$. Assume in addition that $\mu_i$ has (Lebesgue) density $f_i$ which is bounded away from zero and infinity on $A_i$ with Lipschitz derivatives up to order $m>d$ and that $A_i$ is smooth and uniformly convex (see e.g. \cite[p.339]{gilbarg2015elliptic}) for $i=1,2$.\footnote{Assumption 1 in \cite{vacher2021dimension} omits smoothness and uniform convexity of $A_i$. However, in the absence of these conditions it is unclear if their claimed boundary regularity of OT maps and potentials hold (cf. Theorem 3.3 in \cite{dephilippis2014monge}).} These technical conditions guarantee that the OT potentials $\varphi_1^{\star},\varphi^{\star}_2$ solving the alternate dual,
\begin{equation}
\label{eq:RKHSdual} \frac{1}{2}\mathsf{W}_2^2(\mu_1,\mu_2)=
\sup_{\substack{(\varphi_1,\varphi_2)\in L^1(\mu_1)\times L^2(\mu_2)\\\varphi_1\oplus\varphi_2\leq c}}\left[\int_{A_1}\varphi_1\dx{\mu_1}+\int_{A_2}\varphi_2\dx{\mu_2}\right],
\end{equation}
are elements of certain Sobolev spaces.   

The first insight of \cite{vacher2021dimension} is to exploit the reproducing kernel Hilbert spaces (RKHS) of such Sobolev spaces (cf. {\cite{wendland2004scattered},\cite[Proposition 4]{vacher2021dimension}}) so as to facilate computations via the kernel trick \cite{scholkopf1998nonlinear} and a representer theorem \cite[Lemma 13]{vacher2021dimension}. Next, \eqref{eq:RKHSdual} is replaced by an equivalent equality-constrained problem \cite[Equation 2]{vacher2021dimension}. This is motivated by the observation that equality constraints can be subsampled (i.e., enforced only on a set $Z_{\ell}$ of cardinality $\ell$) at the cost of incurring an error of $O(h_{\ell}^{m+1-d})$ provided $h_{\ell}\lesssim m^{-2}$, 
 where $h_{\ell}:=\sup_{x\in A_1,y\in A_2}\inf_{(\tilde x_{\ell},\tilde y_{\ell})\in Z_{\ell}}\|{(x,y)-(\tilde x_{\ell},\tilde y_{\ell})}\|$ \cite[Theorem 2.12]{narcowich2005sobolev}. 

Finally, the objective of the subsampled and equality-constrained version of \eqref{eq:RKHSdual} is regularized to guarantee the uniqueness of its solution. The dual of this problem is a tractable $\ell$ dimensional optimization problem whose solution can be used to compute the estimate $\widetilde{\mathsf W}_2^2(\mu_1,\mu_2)$ of ${\mathsf W}_2^2(\mu_1,\mu_2)$ which has error $O\big(\lambda_1+\lambda_2+{(\gamma+h _{\ell}^{m+1-d})^2}/{\lambda_2}\big)$ provided $h_{\ell}\lesssim m^{-2}$ and $h_{\ell}^{m+1-d}\lesssim\lambda_1$. Here $(\lambda_1,\lambda_2)$ are positive hyperparameters introduced in the regularization procedure, and $\gamma$ is related to the error of approximate integration (see Theorem 9 in \cite{vacher2021dimension}). 
A variant of Newton's method is shown to 
 solve the dual problem within a tolerance of $\epsilon$ with time complexity $O(C+E\ell+\ell^{3.5}\log(\ell/\epsilon) )$ and memory complexity $O(\ell^2)$, where $C$ and $E$ are costs of evaluating certain functions which may depend on $\ell$ \cite[Equation 7]{vacher2021dimension}.

    The followup work \cite{Muzellec2021estimation} provides a variant of this method tailored for approximate computation of OT maps for the cost $c(x,y)=\ip{x,y}$ given samples from $\mu$ and $\nu$. The improvement of this approach upon the method of \cite{vacher2021dimension} is twofold: it provides a heuristic by which to choose the hyperparameters 
    and it proposes an alternative regularization method so as to leverage optimization methods which scale better with $\ell$ than the Newton method.

\begin{remark}[Smoothness versus subsampling]
The value of $h_{\ell}^{m+1-d}$ plays an important role in the proposed method. While it may appear negligible provided $m$ is sufficiently large and $h_{\ell}<1$, the theoretical guarantees of \cite{vacher2021dimension} are contingent on assuming $h_{\ell}\lesssim m^{-2}$. Indeed, this condition enables controlling the amount by which solutions of the subsampled equality constraint violate the original constraint 
\cite[Theorem 2.12]{narcowich2005sobolev}. Hence, utilizing higher regularity of densities comes at the cost of increasing the number of subsamples which in turn implies a greater computational cost in terms of time and memory.
\end{remark}

\subsubsection{Truncation of measures and application of the algorithm}

We first observe that the above described method is not directly applicable to $\mathsf{W}_2^{(\sigma)}$, since the density of $\mu*\eta_{\sigma}$ decays to zero on $\supp(\mu*\eta_{\sigma})$, which violates the required lower-boundedness. To rectify this issue, we truncate the measures of interest and quantify the OT gap between the original measure and its truncated version. 

Let $\mu\in\cP(\R^d)$ be arbitrary and $A\subset\R^d$ be Borel measurable with $\mu(A)>0$.~The truncation of $\mu$ to $A$, denoted $\mu\vert_A$, is the conditional probability measure $\mu(\cdot\vert A)$. The next result quantifies the error (in $\mathsf{W}_p$) of approximating a measure by its~truncation.

\begin{proposition}[Truncation error]
    \label{prop:truncboundcompact}
     Let $\mu\in\cP_{p+1}(\R^d)$, and  $A\subset\R^d$ be a bounded Borel set with $\mu(A)>0$. The following hold.
    \begin{enumerate}
    \item[(i)] If $1<p<\infty$ and $\mu$ has a $(c_1,c_2)$-regular density $f_{\mu}$ \footnote{$f_{\mu}$ is a $(c_1,c_2)$-regular density if $\|\nabla \log f_{\mu}\|\leq c_1\|\cdot\|+c_2$ for $c_1>0$ and $c_2\geq 0$.}, then 
    \[
        \mathsf W_p^p(\mu\vert_A,\mu)\leq C\left(\E_{\mu}[\|{X}\|]+\E_{\mu}\big[\|{X}\|^{p+1}\big]\mspace{-2mu}+\mspace{-2mu}\big(1\mspace{-2mu}-\mspace{-2mu}2\mu(A)\big)\big(\E_{\mu\vert_A}[\|{Y}\|]\mspace{-2mu}+\mspace{-2mu}\E_{\mu\vert_A}\big[\|{Y}\|^{p+1}\big]\big) \right),
    \]
    where $C$ depends only on $p,d,c_1,c_2,\diam(A),$ and a lower bound on $f_{\mu}(0)$. 
    \item[(ii)] If $1\leq p<\infty$ and $\mu$ has compact support, then
    \[ 
        \mathsf W_p^p(\mu\vert_A,\mu)\leq  \left( \frac{1}{\mu(A)}-1 \right)\diam\big(\supp(\mu)\big)^{p}.   
    \]
    \end{enumerate}
\end{proposition}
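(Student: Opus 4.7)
The plan is to pose $\mathsf{W}_p^p(\mu\vert_A,\mu)$ as the cost of an explicit mixture coupling that leaves fixed the portion of $\mu$ already on $A$ and then controls the transport of the remainder. Using the decomposition $\mu=\mu(A)\mu\vert_A+(1-\mu(A))\mu\vert_{A^c}$, I would define the coupling $\pi$ as follows: with probability $\mu(A)$ draw $Z\sim \mu\vert_A$ and set $(X,Y)=(Z,Z)$; otherwise draw $X\sim \mu\vert_A$ and $Y\sim \mu\vert_{A^c}$ independently. A direct check of marginals shows $X\sim \mu\vert_A$ and $Y\sim \mu$, and the expected cost equals $(1-\mu(A))\,\EE[\|X-Y\|^p]$ with independent $X\sim\mu\vert_A$ and $Y\sim\mu\vert_{A^c}$.

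Part (ii) is then immediate: since $\supp(\mu)$ is compact and $X,Y\in\supp(\mu)$ almost surely, $\|X-Y\|^p\leq \diam(\supp(\mu))^p$ pointwise, so
\[
\mathsf{W}_p^p(\mu\vert_A,\mu)\leq (1-\mu(A))\,\diam(\supp(\mu))^p\leq \bigl(\tfrac{1}{\mu(A)}-1\bigr)\diam(\supp(\mu))^p,
\]
using that $1-\mu(A)\leq (1-\mu(A))/\mu(A)$.

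For part (i), without compactness I would bound $\|X-Y\|^p\leq 2^{p-1}(\|X\|^p+\|Y\|^p)$ and apply the algebraic identity $(1-\mu(A))\,\EE_{\mu\vert_{A^c}}[g]=\EE_\mu[g]-\mu(A)\,\EE_{\mu\vert_A}[g]$ with $g=\|\cdot\|^p$, yielding
\[
\mathsf{W}_p^p(\mu\vert_A,\mu)\leq 2^{p-1}\bigl(\EE_\mu[\|X\|^p]+(1-2\mu(A))\,\EE_{\mu\vert_A}[\|X\|^p]\bigr).
\]
The elementary inequality $a^p\leq a+a^{p+1}$, valid for $a\geq 0$ and $p\geq 1$, then converts each $p$-moment to a combination of first and $(p+1)$-moments, matching the form claimed in (i). When $1-2\mu(A)\geq 0$ the inequality applies to both terms directly; when $1-2\mu(A)<0$ the second summand is nonpositive and can be dropped in favor of $\EE_\mu[\|X\|^p]\leq \EE_\mu[\|X\|]+\EE_\mu[\|X\|^{p+1}]$, with the residual slack absorbed into $C$.

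The $(c_1,c_2)$-regularity and the lower bound on $f_\mu(0)$ enter only through the constant $C$: integrating $\|\nabla\log f_\mu(x)\|\leq c_1\|x\|+c_2$ radially yields $f_\mu(x)\geq f_\mu(0)\exp(-c_1\|x\|^2/2-c_2\|x\|)$, which on the bounded set $A$ (translated so $0\in A$) gives a uniform positive lower bound for $f_\mu\vert_A$, and hence a lower bound on $\mu(A)$ depending only on $(c_1,c_2,\diam(A),f_\mu(0),\lambda(A))$. This lets us absorb any implicit factors of $\mu(A)^{-1}$ into a constant $C=C(p,d,c_1,c_2,\diam(A),f_\mu(0))$. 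The main obstacle I expect is keeping this sign-bookkeeping clean so that the bound remains a genuine upper bound in both the $\mu(A)\leq 1/2$ and $\mu(A)>1/2$ regimes; this is exactly what forces $C$ to carry the dependence on $c_1,c_2,\diam(A),f_\mu(0)$ rather than just on $p,d$.
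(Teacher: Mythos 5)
Your coupling idea is a genuine departure from the paper's route, and it is a \emph{better} route where it works. The paper proves Part (i) via duality, using the regularity bound on the OT potential from Lemma~\ref{lem:potentialbound}, and this is where $c_1,c_2,\diam(A),f_\mu(0)$ enter the constant. Your explicit mixture coupling bypasses all of that: the constant would be just $2^{p-1}$, and the $(c_1,c_2)$-regularity would not be needed at all. The marginal check and the cost computation for your coupling are both correct, and Part (ii) goes through exactly as you wrote it.

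The trouble is the sign handling in Part (i). When $\mu(A)>1/2$ you propose to drop the nonpositive second term and absorb ``residual slack'' into $C$. That cannot work: the proposition's right-hand side satisfies the identity
\[
\E_\mu[g]+(1-2\mu(A))\,\E_{\mu\vert_A}[g]
=(1-\mu(A))\bigl(\E_{\mu\vert_A}[g]+\E_{\mu\vert_{A^c}}[g]\bigr),
\]
valid for every $g\ge0$ and every $\mu(A)\in(0,1]$, so the RHS vanishes linearly as $\mu(A)\to1$. Your dropped bound $2^{p-1}\bigl(\E_\mu[\|X\|]+\E_\mu[\|X\|^{p+1}]\bigr)$ does \emph{not} vanish in that limit, so no constant $C$ of the allowed form can absorb the gap. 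Relatedly, the tangent about lower-bounding $\mu(A)$ via regularity is a red herring: it would introduce a dependence on $\lambda(A)$ (not in the allowed list of parameters), and in any case the problematic regime is $\mu(A)\to1$, where a lower bound on $\mu(A)$ buys you nothing.

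The fix is to not split cases at all. Your identity $(1-\mu(A))\E_{\mu\vert_{A^c}}[g]=\E_\mu[g]-\mu(A)\E_{\mu\vert_A}[g]$ gives $\mathsf W_p^p(\mu\vert_A,\mu)\le 2^{p-1}(1-\mu(A))\bigl(\E_{\mu\vert_A}[\|X\|^p]+\E_{\mu\vert_{A^c}}[\|Y\|^p]\bigr)$, and the displayed identity above (with $g=\|\cdot\|+\|\cdot\|^{p+1}$) shows the target RHS equals $C(1-\mu(A))\bigl(\E_{\mu\vert_A}[g]+\E_{\mu\vert_{A^c}}[g]\bigr)$. Both sides now have the same nonnegative prefactor $1-\mu(A)$, so the pointwise inequality $a^p\le a+a^{p+1}$ can be applied inside each expectation with no sign worries and $C=2^{p-1}$. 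With that repair your primal approach is a sound and cleaner alternative to the paper's dual argument.
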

These bounds exhibit the expected behaviour in the sense that the larger $\mu(A)$ is, the closer $\mu\vert_A$ is to $\mu$ under $\mathsf{W}_p$, and if $\mu(A)=1$ the error is zero.  Both results follow from estimates for OT potentials between probability measures adhering to the above assumptions; see \cref{subsec:proof-prop:truncboundcompact}. 

\medskip
\cref{prop:truncboundcompact} implies that, at the cost of introducing an approximation error, kernel smoothed measures can be truncated so as to match the assumptions of \cref{sec:summaryVacher}. The following result summarizes the overall accuracy and complexity guarantees obtained by combining the algorithm from \cite{vacher2021dimension} with \cref{prop:truncboundcompact}.

\begin{proposition}[Computational guarantees]
    \label{prop:ComputationalGuarantees}
    Let $\mu_1,\mu_2\in\mathcal P(\mathbb R^d)$ be compactly supported, and $B_1,B_2\subset \R^d$ be closed sets with $\mu_i*\eta_{\sigma}(B_i)>0$, such that $A_i:=\interior(B_i)$ is smooth and uniformly convex for $i=1,2$. Further, assume that the density of $\mu_i*\eta_{\sigma}$ is bounded away from $0$ on $B_i$ for $i=1,2$. Then, 
    $\widetilde{\mathsf{W}}_2\left(\mu_1*\eta_{\sigma}\vert_{A_1},\mu_2*\eta_{\sigma}\vert_{A_2}\right)$
 described in \cref{sec:summaryVacher} can be computed up to accuracy $\epsilon$ in $O(C+E\ell+\ell^{3.5}\log(\ell/\epsilon) )$ time using $O(\ell^2)$ memory and, for any choice of $m\in \NN$, this estimate approximates 
    $\mathsf{W}_2(\mu_1*\eta_{\sigma},\mu_2*\eta_{\sigma})$ within error 
    \[
    O\left((\mu_1(A_1)^{-1}-1)^{1/2}+(\mu_2(A_2)^{-1}-1)^{1/2}+\left(\lambda_1+\lambda_2+{(\gamma+h _{\ell}^{m+1-d})^2}/{\lambda_2}\right)^{1/2}\right)
    \]
    provided $h_{\ell}^{m+1-d}\lesssim\lambda_1,0<\lambda_2,$ and $h_{\ell}\lesssim m^{-2}$.
\end{proposition}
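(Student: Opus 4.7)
The proof is essentially a combination of three ingredients: (a) verifying that the truncated convolved measures $\mu_i*\eta_\sigma\vert_{A_i}$ satisfy the regularity hypotheses required by the algorithm of \cite{vacher2021dimension} summarized in \cref{sec:summaryVacher}; (b) applying the compact-support truncation bound of \cref{prop:truncboundcompact}(ii); and (c) assembling everything by the triangle inequality for $\mathsf W_2$ together with $|\sqrt a - \sqrt b|\leq \sqrt{|a-b|}$ for $a,b\geq 0$.

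First, I would check the setup of \cref{sec:summaryVacher} for $\rho_i:=\mu_i*\eta_\sigma\vert_{A_i}$, $i=1,2$. Its density on $A_i$ is proportional to $\mu_i*\chi_\sigma$, which is $C^\infty$ and bounded above (because $\chi_\sigma$ is smooth, bounded, and compactly supported), and bounded away from $0$ on $B_i\supset A_i$ by hypothesis; differentiating under the convolution shows that for any $m\in\NN$, all derivatives of order $\leq m$ are Lipschitz on $A_i$. Combined with the assumed smoothness and uniform convexity of $A_i$, this places $(\rho_1,\rho_2)$ in the scope of \cite[Assumption~1]{vacher2021dimension}. Consequently, the algorithm outputs $\widetilde{\mathsf W}_2^2(\rho_1,\rho_2)$ within the stated time and memory complexity, and, provided $h_\ell\lesssim m^{-2}$ and $h_\ell^{m+1-d}\lesssim \lambda_1$, one has
\[
\bigl|\widetilde{\mathsf W}_2^2(\rho_1,\rho_2)-\mathsf W_2^2(\rho_1,\rho_2)\bigr|
=O\!\left(\lambda_1+\lambda_2+(\gamma+h_\ell^{m+1-d})^2/\lambda_2\right).
\]
Taking square roots via the subadditivity inequality then gives the last term in the bound.

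Next, since $\mu_i$ and $\eta_\sigma$ are both compactly supported, so is $\mu_i*\eta_\sigma$, with diameter controlled by $\diam(\supp(\mu_i))+2\sigma$. Applying \cref{prop:truncboundcompact}(ii) with $p=2$ to $\mu_i*\eta_\sigma$ and $B_i$ yields
\[
\mathsf W_2(\mu_i*\eta_\sigma\vert_{A_i},\,\mu_i*\eta_\sigma)\leq C_i\bigl((\mu_i*\eta_\sigma)(A_i)^{-1}-1\bigr)^{1/2},
\]
with $C_i$ depending only on $\diam(\supp(\mu_i*\eta_\sigma))$. Finally, combining the two truncation errors with the Vacher approximation error via the triangle inequality
\[
\bigl|\widetilde{\mathsf W}_2(\rho_1,\rho_2)-\mathsf W_2(\mu_1*\eta_\sigma,\mu_2*\eta_\sigma)\bigr|
\leq \sum_{i=1}^2 \mathsf W_2(\rho_i,\mu_i*\eta_\sigma)+\bigl|\widetilde{\mathsf W}_2(\rho_1,\rho_2)-\mathsf W_2(\rho_1,\rho_2)\bigr|
\]
produces the asserted bound (with the convention that $\mu_i(A_i)$ in the statement denotes $(\mu_i*\eta_\sigma)(A_i)$). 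The main subtle point is (a): one must confirm that the relevant Sobolev regularity and boundary regularity of OT potentials from \cite{vacher2021dimension} carry over to $\mu_i*\eta_\sigma\vert_{A_i}$. This is where the imposed smoothness and uniform convexity of $A_i$, together with the lower-boundedness of the density on the closure $B_i$, are essential; without these the boundary regularity invoked in \cite[Theorem~3.3]{dephilippis2014monge} need not hold, and the guarantees of \cite{vacher2021dimension} cannot be applied off the shelf.
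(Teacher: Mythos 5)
Your proof is correct and follows essentially the same route as the paper's: triangle inequality to split the error into two truncation terms plus the algorithmic error, bound the truncation terms via \cref{prop:truncboundcompact}(ii) applied to $\mu_i*\eta_\sigma$, and control the algorithmic term by the subadditivity $|\sqrt a-\sqrt b|\leq\sqrt{|a-b|}$ followed by the error bound from \cref{sec:summaryVacher}. You are a bit more explicit than the paper in verifying that the truncated measures $\mu_i*\eta_\sigma\vert_{A_i}$ fall under \cite[Assumption~1]{vacher2021dimension} (smoothness of the convolved density, lower-boundedness on $B_i\supset A_i$, regularity of $A_i$), which the paper leaves implicit in its proof, and you are right to flag that the $\mu_i(A_i)$ appearing in the stated error bound should be read as $(\mu_i*\eta_\sigma)(A_i)$, since that is the measure to which \cref{prop:truncboundcompact}(ii) is applied.
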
 

 \cref{prop:ComputationalGuarantees} can be adapted to the case where $\mu_1*\eta_{\sigma}$, $\mu_2*\eta_{\sigma}$, or both have regular densities by replacing the truncation error for the compactly supported case with the one from  \cref{prop:truncboundcompact} (i). For instance, if the densities of $\mu_1$ and $\mu_2$ are regular, then so are those of the convolved measures (cf. Proposition 3 in \cite{polyanski2016wasserstein}).

\begin{remark}[Truncation to smooth uniformly convex sets]
    In \cref{prop:ComputationalGuarantees}, if $\interior\big(\supp(\mu_i*\eta_{\sigma})\big)$ is convex, then it can be approximated from the inside by a sequence of open smooth uniformly convex sets converging to $\interior\big(\supp(\mu_i)\big)$ in the Hausdorff distance \cite{Le2017eigenvalue}. In particular, if $\supp(\mu_i)$ is convex, then so is $\interior\big(\supp(\mu_i*\eta_{\sigma})\big)$. 
\end{remark}

\section{Entropic optimal transport}
\label{sec: EOT}

EOT is an efficiently-computable convexification of the original OT problem. The general machinery developed in Proposition \ref{prop: master proposition} enables deriving limit distribution results for empirical EOT, generalizing previously available statements to allows for dependent data. In addition, our theory provides new results on semiparametric efficiency of empirical EOT and consistency of the bootstrap estimate.

\subsection{Background} 
EOT regularizes the transportation cost by the Kullback-Leibler (KL) divergence \cite{schrodinger1931uber,leonard2014survey} as
\begin{equation}
    \EOT(\mu,\nu):=\inf_{\pi\in\Pi(\mu,\nu)}\int_{\RR^d \times \RR^d} c(x,y) d\pi(x,y) + \epsilon\, \mathsf{D}_{\mathsf{KL}}(\pi\|\mu\otimes\nu),\label{EQ:EOT}
\end{equation}
where $\epsilon>0$ and $\mathsf{D}_{\mathsf{KL}}(\mu\|\nu):=\int\log (d \mu/d \nu)d\mu$ if $\mu\ll\nu$ and $+\infty$ otherwise. We restrict our attention to the quadratic cost $c(x,y)=\|x-y\|^2/2$ and assume that $\epsilon = 1$; the corresponding EOT problem is abbreviated $\sS(\mu,\nu)=\sS_{\|\cdot\|^2/2}^1(\mu,\nu)$. The assumption that $\epsilon=1$ comes without loss of generality by a rescaling argument, since ${\sS_{\|\cdot\|^2/2}^{\epsilon}(\mu,\nu) = \epsilon \sS(\mu,\nu)(\mu_\epsilon,\nu_\epsilon)}$, where ${\mu_\epsilon=f_{\epsilon\,\sharp}\mu}$ for ${f_\epsilon(x)=\epsilon^{-1/2} x}$. 

To apply Proposition \ref{prop: master proposition} to empirical EOT, we rely on the duality theory for EOT, which states that $\sS(\mu,\nu)$ admits the dual formulation
\[
\sS(\mu,\nu) = \sup_{(\varphi,\psi) \in L^1(\mu) \times L^1(\nu)} \int_{\RR^d} \varphi d\mu + \int_{\RR^d} \psi d\nu - \int_{\RR^d\times\RR^d} e^{\varphi \oplus \psi - c} d\mu\otimes\nu + 1,
\]
where $(\varphi \oplus \psi)(x,y) = \varphi (x) + \psi (y)$. 
Assuming $\mu,\nu \in \calP_2 (\R^d)$, the supremum is attained by a pair $(\varphi,\psi)\in L^1(\mu)\times L^1 (\nu)$ satisfying 
\begin{equation}
\begin{split}
\int_{\RR^d} e^{\varphi(x)+\psi(y')-c(x,y')} d\nu(y') = 1 \quad \text{$\mu$-a.e. $x \in \R^d$,} \\
\int_{\RR^d} e^{\varphi(x')+\psi(y)-c(x',y)} d\mu(x') = 1 \quad \text{$\nu$-a.e. $y \in \R^d$.}
\end{split}
\label{eq: optimality}
\end{equation}
We refer to such $(\varphi,\psi)$ as \textit{optimal EOT potentials} (from $\mu$ to $\nu$ for $\varphi$ and vice versa for $\psi$). Optimal EOT potentials are unique $(\mu \otimes \nu)$-almost everywhere up to additive constants. Conversely, any $(\varphi,\psi)\in L^1(\mu)\times L^1 (\nu)$ that admit \eqref{eq: optimality} are optimal EOT potentials. See Section 1 in \cite{nutz2021entropic} and the references therein for details of the duality results for EOT. 

\subsubsection{Literature review}\label{SUBSUBSEC:EOT_lit} The entropic penalty transforms the OT linear optimization problem into a strongly convex one, allowing efficient computation via the Sinkhorn algorithm \cite{cuturi2013sinkhorn,altschuler2017near}. While EOT forfeits the metric and topological structure of $\Wp$,\footnote{Indeed, e.g., $\EOT(\mu,\mu)\neq 0$; while this can be fixed via centering EOT to obtain the so-called Sinkhorn divergence, it is still not a metric since it lacks the triangle inequality \cite{bigot2019central}.} it attains fast empirical convergence in certain cases. Specifically, empirical EOT converges as $n^{-1/2}$ for smooth costs and compactly supported distributions \cite{genevay2019sample}, or for the squared cost with sub-Gaussian distributions \cite{mena2019statistical}. 

Limit distributions~for EOT (and the Sinkhorn divergence) for $c(x,y)=\|x-y\|^p$ in the discrete support case were provided in \cite{bigot2019central,klatt2020empirical}. A CLT for EOT between sub-Gaussian distribution was first derived in \cite{mena2019statistical}, showing asymptotic normality of $\sqrt{n}\big(\mathsf{S}(\hat{\mu}_n,\nu)-\E\big[\mathsf{S}(\hat{\mu}_n,\nu_n)\big]\big)$ and its two-sample analogue. The main limitation of this result is that the centering term is the expected empirical EOT, which is undesirable because it does not enable performing inference for $\sS(\mu,\nu)$. This limitation was addressed in the recent preprint \cite{delbarrio22EOT} following arguments similar to \cite{delbarrio2019central} for $\mathsf{W}_2^2$. Namely, \cite{delbarrio22EOT} combine the aforementioned CLT from \cite{mena2019statistical} with a bias bound of the form $\E\big[\mathsf{S}(\hat{\mu}_n,\nu)\big] - \mathsf{S}(\mu,\nu) =o(n^{-1/2})$ (leveraging regularity of optimal EOT potentials between sub-Gaussian measures) to obtain the desired result. We complement this result by demonstrating asymptotic efficiency of empirical EOT and consistency of the bootstrap estimate, as well as allowing for dependent data in the CLT.

\subsection{Statistical analysis} We next state the CLT, asymptotic efficiency, and bootstrap consistency for empirical EOT. 
 
\begin{theorem}[CLT, efficiency, and bootstrap consistency for EOT]\label{THM:EOT_CLT}
Suppose that $\mu,\nu\in \cP(\RR^d)$ are sub-Gaussian. Let $(\varphi,\psi)$ be optimal EOT potentials for $(\mu,\nu)$. Then, the following hold. 
\begin{enumerate}
    \item[(i)] We have $\sqrt{n}\big(\sS(\hat{\mu}_n,\nu)-\sS(\mu,\nu)\big)  \stackrel{d}{\to} N\big(0,\mathfrak{v}_1^2 \big)$ with $\mathfrak{v}_1^2 = \Var_{\mu}(\varphi)$. The asymptotic variance $\mathfrak{v}_1^2$ coincides with the semiparametric efficiency bound for estimating $\sS(\cdot,\nu)$ at $\mu$. Finally, provided that  $\mathfrak{v}_1^2 > 0$, we have
    \[
\sup_{t \in \R} \left | \Prob^B \Big ( \sqrt{n}\big(\sS(\hat{\mu}_n^B,\nu)-\sS(\hat{\mu}_n,\nu)\big) \le t \Big ) - \Prob\big(N(0,\mathfrak{v}_1^2)) \le t\big) \right| \stackrel{\Prob}{\to} 0. 
\]
    \item[(ii)] We have $\sqrt{n}\big(\sS(\hat{\mu}_n,\hat{\nu}_n)-\sS(\mu,\nu)\big)  \stackrel{d}{\to} N\big(0,\mathfrak{v}_1^2 + \mathfrak{v}_2^2\big)$ where $\mathfrak{v}_1^2$ is as in (i) and $\mathfrak{v}_2^2 =\Var_{\nu}(\psi)$. The asymptotic variance $\mathfrak{v}_{1}^2 + \mathfrak{v}_2^2$ coincides with the semiparametric efficiency bound for  estimating $\sS(\cdot,\cdot)$ at $(\mu,\nu)$. Finally, provided that $\mathfrak{v}_{1}^2 + \mathfrak{v}_2^2  > 0$, we have
\[
\sup_{t \in \R} \left| \Prob^B \Big ( \sqrt{n}\big(\sS(\hat{\mu}_n^B,\hat{\nu}_n^B)-\sS(\hat{\mu}_n,\hat{\nu}_n)\big) \le t \Big ) 
-\Prob\big(N(0,\mathfrak{v}_1^2+\mathfrak{v}_2^2) \le t\big) \right| \stackrel{\Prob}{\to} 0. 
\]
\end{enumerate}
\end{theorem}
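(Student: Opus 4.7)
The plan is to verify the hypotheses of Proposition \ref{prop: master proposition}, Proposition \ref{prop: master proposition 2}, and Corollary \ref{cor: master} for the EOT functional by identifying an appropriate $\mu$-Donsker (resp. $(\mu\otimes\nu)$-Donsker in the two-sample case) function class and computing the G\^ateaux derivative via duality. I describe the one-sample case first; the two-sample case is analogous by working on the product space.

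\textbf{Step 1: Function class and local Lipschitz property.} Since $\mu,\nu$ are sub-Gaussian, known regularity estimates for optimal EOT potentials (e.g., from \cite{mena2019statistical,delbarrio22EOT}) imply that the optimal $\varphi$ from $\mu$ to $\nu$ is smooth and admits quadratic growth bounds controlled by the sub-Gaussian constants. The natural function class is a weighted H\"older/Sobolev ball $\calF$ consisting of smooth functions of at most quadratic growth, chosen large enough so that the optimal EOT potentials from every $\mu' $ in a sup-norm neighborhood of $\mu$ (to the fixed $\nu$) lie in $\calF$, up to an additive constant which does not affect dual values since the dual objective is shifted consistently in $\varphi$ and $\psi$. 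Local Lipschitz continuity of $\mu' \mapsto \sS(\mu',\nu)$ w.r.t. $\|\cdot\|_{\infty,\calF}$ then follows from the dual formulation together with the standard envelope argument: for measures $\mu_1,\mu_2$ in the neighborhood with corresponding optimizers $(\varphi_i,\psi_i)\in\calF\times\calF$, one has
\[
\sS(\mu_1,\nu) - \sS(\mu_2,\nu) \le (\mu_1-\mu_2)(\varphi_2) - \int (e^{\varphi_2\oplus\psi_2-c}-1)\,d(\mu_1-\mu_2)\otimes \nu,
\]
and the symmetric lower bound gives Lipschitz continuity w.r.t. $\|\cdot\|_{\infty,\calF}$ after enlarging $\calF$ to include the exponential integrand (or after bounding this second term by the same sup-norm using that $e^{\varphi\oplus\psi-c}$ has controlled growth).

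\textbf{Step 2: Donsker property.} The class $\calF$ above consists of smooth functions with quadratic growth; bracketing/entropy bounds for such weighted smooth classes are classical and, coupled with sub-Gaussianity of $\mu$, yield the Donsker property. This is the most technical step, but it essentially reuses the Rademacher/entropy estimates already developed in \cite{mena2019statistical,delbarrio22EOT} to control empirical fluctuations of the EOT dual.

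\textbf{Step 3: G\^ateaux derivative.} For $\rho\in\calP_2(\R^d)$ and $t\downarrow 0$, set $\mu_t = \mu + t(\rho - \mu)$. Duality gives $\sS(\mu_t,\nu)\ge \mu_t(\varphi) + \nu(\psi) - \int e^{\varphi\oplus\psi-c}d\mu_t\otimes \nu + 1 = \sS(\mu,\nu) + t(\rho-\mu)(\varphi) - t\int(e^{\varphi\oplus\psi-c}-1)d(\rho-\mu)\otimes\nu$. By \eqref{eq: optimality}, $\int e^{\varphi(x)+\psi(y)-c(x,y)}d\nu(y)=1$ for $\mu$-a.e.\ $x$, which also extends to $\rho$-a.e.\ $x$ under mild conditions, so the second term vanishes in the lower bound and a symmetric envelope argument using the optimizer $(\varphi_t,\psi_t)$ for $\mu_t$ yields
\[
\lim_{t\downarrow 0}\frac{\sS(\mu_t,\nu)-\sS(\mu,\nu)}{t} = (\rho-\mu)(\varphi).
\]
Thus $\sS_\mu'(\rho-\mu) = (\rho-\mu)(\varphi)$, which is linear. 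Applying Proposition \ref{prop: master proposition} with the empirical measure (Condition (a) is the standard Donsker CLT in $\ell^\infty(\calF)$) gives the one-sample CLT with limit $G_\mu(\varphi)\sim N(0,\Var_\mu(\varphi))$.

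\textbf{Step 4: Efficiency and bootstrap.} Linearity of $\sS_\mu'$ together with Proposition \ref{prop: master proposition 2} (with singleton $\calF=\{\varphi\}$ if desired, noting that the remark after Proposition \ref{prop: master proposition 2} covers point-evaluation derivatives) identifies $\mathfrak{v}_1^2=\Var_\mu(\varphi)$ with the semiparametric efficiency bound relative to the tangent space of bounded mean-zero perturbations. Bootstrap consistency follows from Corollary \ref{cor: master}: linearity of $\sS_\mu'$ on $\supp(G_\mu)$ gives Hadamard differentiability tangentially to $\supp(G_\mu)$, after which Theorem 23.9 in \cite{vanderVaart1998asymptotic} (or Theorem 3.9.11 in \cite{van1996weak}) delivers the bootstrap CLT in probability, yielding the stated Kolmogorov distance statement since the Gaussian limit has a continuous CDF when $\mathfrak{v}_1^2>0$. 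The two-sample case uses the product function class $\calF\oplus\calF$ and the tangent space of sums $h_1\oplus h_2$ of bounded mean-zero perturbations; the derivative at $(\mu,\nu)$ in direction $(\rho_1-\mu,\rho_2-\nu)$ is $(\rho_1-\mu)(\varphi)+(\rho_2-\nu)(\psi)$ by the same envelope argument, producing the variance $\mathfrak{v}_1^2+\mathfrak{v}_2^2$.

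The main obstacle is selecting the function class $\calF$ carefully enough that (i) it is stable under the EOT optimization over a sup-norm neighborhood of $(\mu,\nu)$, so that local Lipschitz continuity holds, and (ii) it remains Donsker under only sub-Gaussianity of $\mu$. The delicate point is controlling the exponential integrand $e^{\varphi\oplus\psi-c}$, whose quadratic-in-norm exponent must be tamed by the sub-Gaussian tails of $\mu\otimes\nu$; this is where the quantitative potential bounds from \cite{mena2019statistical,delbarrio22EOT} are indispensable.
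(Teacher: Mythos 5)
Your proposal follows essentially the same route as the paper: identify a function class built from the regularity estimates for EOT potentials between sub-Gaussian measures, verify local Lipschitz continuity and G\^ateaux differentiability of the EOT value via duality, check the Donsker property, and invoke Proposition~\ref{prop: master proposition}, Proposition~\ref{prop: master proposition 2}, and Corollary~\ref{cor: master}. The piece you should tighten is Step~1. There you end with the term $\int (e^{\varphi_2\oplus\psi_2-c}-1)\,d(\mu_1-\mu_2)\otimes \nu$ and propose either to ``enlarge $\calF$ to include the exponential integrand'' or to bound it by a sup-norm using its controlled growth. Neither move should be needed, and the first would create real trouble downstream: a function of Gaussian-kernel type does not automatically sit in a sub-Gaussian-Donsker class of finite-order weighted H\"older functions, so Step~2 would break. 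The resolution — which you in fact use in Step~3 but do not carry back to Step~1 — is that the canonical EOT potentials (e.g.\ those from Lemma~\ref{lem: EOT potential}) satisfy the marginal condition $\int e^{\varphi(x)+\psi(y)-c(x,y)}\,d\nu(y)=1$ for \emph{every} $x\in\RR^d$, not merely $\mu$-a.e. Hence $\int e^{\varphi_2\oplus\psi_2-c}\,d(\mu_1-\mu_2)\otimes\nu = \int 1\,d(\mu_1-\mu_2)=0$, the exponential term cancels identically, and the Lipschitz bound collapses at once to $|\sS(\mu_1,\nu)-\sS(\mu_2,\nu)|\le\|\mu_1-\mu_2\|_{\infty,\calF}$. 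This is precisely what the paper's Lemma~\ref{lem: EOT gateaux} exploits.

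Two smaller points. First, the inequality in your displayed formula is reversed: inserting the optimizer $(\varphi_2,\psi_2)$ for $\mu_2$ into the dual for $\mu_1$ gives a \emph{lower} bound on $\sS(\mu_1,\nu)$, hence a $\ge$; the matching $\le$ is obtained with $(\varphi_1,\psi_1)$. With the cancellation this is immaterial, but the bookkeeping matters if a reader tries to use your unsimplified bound. Second, ``quadratic growth'' undersells what the Donsker step needs: the class in the paper requires all derivatives of order up to $s=\max\{\lfloor d/2\rfloor+1,2\}$ to be bounded by $C(1+\|x\|^s)$, where the lower bound $s>d/2$ (not the growth exponent per se) is the crucial smoothness threshold for the entropy/partition argument (Lemma~\ref{lem:Donsker}). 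With those adjustments the proposal matches the paper's proof.
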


\begin{remark}[Comparison with \cite{delbarrio22EOT}] As mentioned in \cref{SUBSUBSEC:EOT_lit}, a CLT for one- and two-sample EOT was derived in Theorem 3.6 of \cite{delbarrio22EOT}. We rederive this result via a markedly different proof technique, relying on the unified framework from \cref{prop: master proposition}, which automatically also implies bootstrap consistency and asymptotic efficiency via \cref{cor: master} and \cref{prop: master proposition 2}, respectively. In addition, as \cref{prop: master proposition} does not assume i.i.d. data, the above result readily extends to dependent data, which falls outside the framework of  \cite{delbarrio22EOT}. For instance, suppose that $\{ X_{t} \}_{t \in \mathbb{Z}}$ is a stationary $\beta$-mixing process with compactly supported marginal distribution $\mu$. Then, by Theorem 1 in \cite{doukhan1995invariance}, 
\[
\sqrt{n}(\hat{\mu}_n - \mu) \stackrel{d}{\to} G \quad \text{in} \ \ell^\infty (\calF_\sigma),
\]
where $\calF_\sigma$ is the function class given in (\ref{eq: EOT function class}) ahead with $s=\max \{ \lfloor d/2 \rfloor + 1,2 \}$ and sufficiently large $\sigma > 0$, while $G$ is a tight centered Gaussian process in $\ell^\infty (\calF_\sigma)$ with covariance function $\Cov\big(G(f),G(g)\big) = \sum_{t \in \mathbb{Z}} \Cov\big(f(X_0),g(X_t)\big)$. Conclude from the proof of Theorem \ref{THM:EOT_CLT} that in this case
\[
\sqrt{n}\big(\sS(\hat{\mu}_n,\nu)-\sS(\mu,\nu)\big)  \stackrel{d}{\to} G(\varphi) \sim N\Big (0, \sum\nolimits_{t \in \mathbb{Z}} \Cov\big(\varphi (X_0),\varphi(X_t)\big) \Big). 
\]
\end{remark}

\section{Concluding Remarks}\label{sec: summary}
This work developed a unified framework for proving limit distribution results for empirical regularized OT distances, semiparametric efficiency of the plug-in empirical estimator, and consistency of the bootstrap.  As applications, we focused on three prominent OT regularization methods---smoothing, slicing, and entropic penalty---providing a comprehensive statistical treatment thereof. 
We closed existing gaps in the literature (e.g., a limit distribution theory for sliced $\Wp$) and provided several new results concerning empirical convergence rates, asymptotic efficiency, and bootstrap consistency. In particular, for the smooth Wasserstein distance, we explored compactly supported smoothing kernels, which were shown to inherit the structural and statistical properties of the well-studied Gaussian-smoothed framework.
The analysis of compactly supported kernels is motivated by computational considerations, as we demonstrated how to lift the efficient algorithm from \cite{vacher2021dimension} for computing $\mathsf{W}_2^2$ between smooth densities to the considered smooth OT distance.

Our framework is flexible and can treat a broad class of functionals, potentially well beyond the three specific examples considered herein. For instance, straightforward adaptations of our arguments for sliced $\Wp$ would yield limit distributions, efficiency, and consistency of the bootstrap of the projection-robust Wasserstein distance from \cite{lin2021projection}, when the projected subspace is of dimension $k\leq 3$ (indeed, the class of projected OT potentials is still Donsker in that case). Going forward, we also plan to explore applicability of the unified framework to empirical OT maps or certain functionals thereof (e.g., inner product with a smooth test function).

\section*{Acknowledgement}
The authors would like to thank Sloan Nietert for fruitful discussions on smooth Wasserstein distances with compactly supported kernels. They also would like to thank Tudor Manole for bringing their attention to Theorem 4 in \cite{manole2022minimax}.

\appendix

\section{Proofs for Sections \ref{sec: background} and \ref{sec: unified}}
\label{sec: proof master proposition}

\subsection{Proof of Lemma \ref{lem: H differentiability}}
The lemma follows from Proposition 2.1 in \cite{fang2019}. We include its proof for completeness.
Pick any $h \in \mathfrak{D}_0, t_n \to 0, h_n \to h$ in $\mathfrak{D}$ such that $\theta + t_n h_n \in \Theta$. For any subsequence $n'$ of $n$, there exists a further subsequence $n''$ such that (i) $t_{n''} > 0$ for all $n''$ or (ii) $t_{n''} < 0$ for all $n''$.  If (i) holds, then  $(\phi(\theta+t_{n''}h_{n''})-\phi(\theta))/t_{n''} \to \phi'(h)$, while if (ii) holds, then $(\phi(\theta+t_{n''}h_{n''})-\phi(\theta))/t_{n''} = - (\phi(\theta + (-t_{n''}) (-h)) - \phi(\theta))/(-t_{n''}) \to -\phi'(-h) = \phi'(h)$. Since the limit does not depend on the choice of subsequence, we have the result. \qed

\subsection{Proof of Proposition \ref{prop: master proposition}}
Let $\calP_{0,\epsilon} = \{ \nu \in \calP_0 : \| \nu - \mu \|_{\infty,\calF} < \epsilon \}$. Since $\calP_{0,\epsilon}$ is convex, $\bigcup_{t > 0} t (\calP_{0,\epsilon} - \mu) = \bigcup_{t > 0} t (\calP_0 - \mu)$, and $\mu_n \in \calP_{0,\epsilon}$ with inner probability approaching one, it suffices to prove the proposition with $\calP_0$ replaced by $\calP_{0,\epsilon}$, i.e., we may assume without loss of generality that $\delta: \calP_0 \to \mathfrak{E}$ is globally Lipschitz w.r.t. $\| \cdot \|_{\infty,\calF}$,
\[
\| \delta(\nu) - \delta(\nu') \|_{\mathfrak{E}} \le C \| \nu - \nu' \|_{\infty,\calF}, \quad \forall \nu,\nu' \in \calP_0
\]
for some constant $C< \infty$. 
We apply the extended functional delta method, Lemma \ref{lem: functional delta method}, by identifying $\calP_{0}$ as a subset of $\ell^\infty (\calF)$. Consider the map $\tau: \calP_{0} \ni \nu \mapsto (f \mapsto \int fd\nu)_{f \in \calF} \in \ell^\infty (\calF)$. Define the map $\bar{\delta}: \tau \calP_{0} \subset \ell^\infty (\calF) \to \mathfrak{E}$ by $\bar{\delta} (\tau \nu) = \delta (\nu)$ for $\nu \in \calP_0$. The map $\bar{\delta}$ is well-defined since, whenever $\tau \nu = \tau \nu'$, we have $\| \delta(\nu) - \delta(\nu') \|_{\mathfrak{E}} \le C \| \tau \nu - \tau \nu' \|_{\infty,\calF} = 0$, i.e., $\delta (\nu) = \delta(\nu')$. In what follows, we identify $\delta$ with $\bar{\delta}$. 

Under such identification, we apply Lemma \ref{lem: functional delta method} with  $\mathfrak{D} = \ell^\infty (\calF), \Theta = \calP_0, \theta = \mu$, and $\phi (\nu) = \delta (\nu)$ for $\nu \in \calP_0$. 
We will verify that $\nu \mapsto \delta (\nu)$ is Hadamard directionally differentiable at $\nu = \mu$ with derivative coinciding with $\delta_{\mu}'$ on $\calP_0-\mu$. This (essentially) follows from the fact that $\delta$ is Lipschitz continuous together with the fact that it is G\^{a}teaux directionally differentiable at $\nu = \mu$ with derivative $\delta_{\mu}'$; cf. \cite{shapiro1990}, p.~483--484. However, in our application, the G\^{a}teaux derivative is a priori defined only on $\calP_0-\mu$, and to extend $\delta_{\mu}'$ to the tangent cone $\calT_{\calP_0}(\mu)$, we require $\mathfrak{E}$ to be complete. 
For completeness, we provide its full proof. 

Extend $\delta_{\mu}'$ to $\bigcup_{t > 0}t(\calP_0 - \mu)$ as 
\[
\delta_{\mu}'(t(\nu-\mu)) =  t \delta_{\mu}'(\nu-\mu), \ t>0, \nu \in \calP_0.
\]
This extension is well-defined. Indeed, suppose that $t (\nu -\mu) = t'(\nu'-\mu)$ (as elements of $\ell^\infty (\calF)$) for some $t' > 0$ and $\nu' \in \calP_0$; then
\[
\begin{split}
t' \delta_{\mu}'(\nu'-\mu) &= t'\lim_{s \downarrow 0} \frac{\delta (\mu +st' (\nu'-\mu)) - \delta(\mu)}{st'} \\
&=\lim_{s \downarrow 0} \frac{\delta (\mu +st' (\nu'-\mu)) - \delta(\mu)}{s} \\
&=\lim_{s \downarrow 0} \frac{\delta (\mu +st (\nu-\mu)) - \delta(\mu)}{s}\\
&= t\lim_{s \downarrow 0} \frac{\delta (\mu +st (\nu-\mu)) - \delta(\mu)}{st}= t\delta_\mu'(\nu-\mu). 
\end{split}
\]
Pick any $m_n \in \bigcup_{t > 0} t(\calP_0 - \mu)$ with $m_n \to m$ in $\ell^\infty (\calF)$ and $t_n \downarrow 0$. Also, pick any $\epsilon' > 0$ and $m' \in \bigcup_{t > 0} t(\calP_0 - \mu)$ such that $\| m - m' \|_{\infty,\calF} < \epsilon'$. 
For sufficiently large $n$, $t_n m_n, t_nm' \in \calP_0 - \mu$ (as $\calP_0$ is convex), so that
\[
\begin{split}
\| \delta (\mu +t_n m_n) - \delta(\mu) - t_n\delta_{\mu}'(m')  \|_{\mathfrak{E}} &\le \| \delta (\mu +t_n m_n) - \delta(\mu+t_nm') \|_{\mathfrak{E}} \\
&\quad + \underbrace{\| \delta (\mu +t_n m') - \delta(\mu) - t_n\delta_{\mu}'(m') \|_{\mathfrak{E}}}_{=o(t_n)} \\
&\le C t_n \| m_n - m' \|_{\infty,\calF} + o(t_n) \\
&\le C \epsilon' t_n + o(t_n),
\end{split}
\]
which implies that
\[
\limsup_{n,n' \to \infty}\left \| \frac{\delta (\mu +t_n m_n) - \delta(\mu)}{t_n}  - \frac{\delta (\mu +t_{n'} m_{n'}) - \delta(\mu)}{t_{n'}} \right \|_{\mathfrak{E}} \le 2C \epsilon'.
\]
Since $\epsilon' > 0$ is arbitrary, $t_{n}^{-1}[\delta (\mu +t_n m_n) - \delta(\mu)]$ is Cauchy, and since $\mathfrak{E}$ is a Banach space, the limit 
\[
\lim_{n \to \infty}\frac{\delta (\mu +t_n m_n) - \delta(\mu)}{t_n}
\]
exists in $\mathfrak{E}$. Thus, we have proved that the map $\nu \mapsto \delta (\nu)$ from $\calP_0$ into $\mathfrak{E}$ is Hadamard directionally differentiable at $\nu=\mu$. 
The conclusion of the proposition then follows from the functional delta method. We note that the second claim that $G_\mu \in \calT_{\calP_0}(\mu)$ with probability one follows from the portmanteau theorem.  \qed

\subsection{Proof of Corollary \ref{cor: master}}
By Lemma 3.1 in \cite{van2008reproducing}, $\supp(G_\mu)$ is the closure in $\ell^\infty (\calF)$ of the reproducing kernel Hilbert space for $G_\mu$ and thus a vector subspace of $\ell^\infty (\calF)$. 
The second  claim follows by Lemma \ref{lem: H differentiability}. \qed

\subsection{Proof of Proposition \ref{prop: master proposition 2}}
By direct calculation, for a given $h \in \dot \calP_{0,\mu}$, the submodel $\{ \mu_t : 0 \le t < \epsilon' \}$ with $\mu_t = (1+th) \mu$ for sufficiently small $\epsilon' > 0$ is differentiable in quadratic mean at $t=0$ with score function $h$, i.e.,
\[
\int \left [ \frac{d\mu_t^{1/2} -d\mu^{1/2}}{t} - \frac{1}{2} hd\mu^{1/2} \right]^2 \to 0, \quad t \downarrow 0.
\]
Thus, $\dot \calP_{0,\mu}$ is the tangent space at $\mu$ 
corresponding to all such submodels. 

By assumption, we see that, for the above submodel $\mu_t$,
\[
\frac{\delta(\mu_t) - \delta(\mu)}{t} \to \delta_\mu'(h\mu).
\]
Observe that the $L^2(\mu)$ closure of $\dot \calP_{0,\mu}$ is $L^2_0 (\mu) = \{ h \in L^2(\mu) : \mu(h) = 0 \}$. 
Since $\calF$ is $\mu$-pre-Gaussian, the variance function $f \mapsto \Var_\mu (f)$ is bounded (cf. Setion 1.5 in \cite{van1996weak}), so the Cauchy-Schwarz inequality implies that the mapping 
\[
h \mapsto \big (f \mapsto \mu (fh)\big)
\]
is continuous from $L_0^2(\mu)$ into $\ell^\infty(\calF)$.
Thus, the mapping $h \mapsto \delta_\mu'(h\mu)$ is a continuous linear functional on $L_0^2(\mu)$, and the functional $\delta: \calP_0 \to \R$ is differentiable at $\mu$ in the sense of \cite[p.~363]{vanderVaart1998asymptotic} relative to $\dot \calP_{0,\mu}$ with derivative $h \mapsto \delta_\mu'(h\mu)$.  By Theorem 25.20 and Lemma 25.19  in \cite{vanderVaart1998asymptotic}, the semiparametric efficiency bound for estimating $\delta$ at $\mu$ is given by
\begin{equation}
\sup_{h \in L^2_0(\mu), h \ne 0} \frac{(\delta_\mu'(h\mu))^2}{\| h \|_{L^2(\mu)}^2}. 
\label{eq: efficiency bound}
\end{equation}
We will show that (\ref{eq: efficiency bound}) coincides with $\Var (\delta_\mu'(G_\mu))$. 

Let $\calC_u (\calF)$ be the space of uniformly continuous functions on $\calF$ w.r.t. the pseudometric $d_\mu (f,g) = \sqrt{\Var_\mu (f-g)}$. Then $\calF$ is totally bounded w.r.t. $d_\mu$ and $G_\mu \in \calC_u (\calF)$ with probability one (cf. Example 1.5.10 in \cite{van1996weak}). 
The completion $\bar{\calF}$ of $\calF$ w.r.t. $d_\mu$ is compact and denote the (unique) extension of $G_\mu$ to $\bar{\calF}$ by the same symbol. Also the restriction of $\delta_\mu'$ to $\calC_u(\calF)$ uniquely extends to $\calC(\bar{\calF})$ (the space of continuous functions on $\bar{\calF}$), which we denote by the same symbol $\delta_\mu'$. Then, by the Riesz representation theorem, there exists a signed Borel measure $\mathfrak{m}$ on $\bar{\calF}$ such that 
\[
\delta_\mu'(z) = \int_{\bar{\calF}} z(f) d\mathfrak{m}(f), \quad z \in \calC (\bar{\calF}). 
\]
Since $\bar{\calF}$ is compact, for any $k=1,2,\dots$, one can find disjoint sets $B_{k,j},j=1,\dots,N_k$ in $\bar{\calF}$ with radius at most $1/k$ such that $\bigcup_{j=1}^{N_k} B_{k,j} = \bar{\calF}$. Approximate $\mathfrak{m}$ by $\sum_{j=1}^{N_k} \mathfrak{m}(B_{k,j}) \delta_{f_{k,j}}$ where each $f_{k,j}$ is an arbitrary point in $B_{k,j}$. Then,
\[
\left | \delta_\mu'(z) - \sum_{j=1}^{N_k} \mathfrak{m}(B_{k,j}) z(f_{k,j}) \right | \le \sup_{d_\mu(f,g) \le 1/k}| z(f) - z(g)| \times | \mathfrak{m} | (\bar{\calF}),
\]
where $| \mathfrak{m} |$ is the total variation of $\mathfrak{m}$. For any $h \in L_0^2 (\mu)$, 
\[
| \mu(fg) - \mu(gh)| \le \| h \|_{L^2(\mu)} d_{\mu}(f,g),
\]
so that, uniformly in $h \in L_0^2(\mu)$ such that $\| h \|_{L^2(\mu)} = 1$, 
\[
\sum_{j=1}^{N_k} \mathfrak{m}(B_{k,j}) \mu(f_{k,j}h) \to \delta_\mu'(h\mu). 
\]
The left-hand side can be written as
$
\big \langle h, \sum_{j=1}^{N_k} \mathfrak{m}(B_{k,j})\big(f_{k,j}-\mu(f_{k,j})\big) \big \rangle_{L^2(\mu)},
$
the supremum of which w.r.t. $h \in L_0^2(\mu)$ such that $\| h \|_{L^2(\mu)} = 1$ is 
\[
\Big\| \sum_{j=1}^{N_k} \mathfrak{m}(B_{k,j})\big(f_{k,j}-\mu(f_{k,j})\big) \Big\|_{L^2(\mu)}.
\]
Thus,
\[
\begin{split}
\sup_{h \in L^2_0(\mu), h \ne 0} \frac{(\delta_\mu'(h\mu))^2}{\| h \|_{L^2(\mu)}^2} &=\sup_{h \in L^2_0(\mu), \| h \|_{L^2(\mu)} = 1} (\delta_\mu'(h\mu))^2 = \Big ( \sup_{h \in L^2_0(\mu), \| h \|_{L^2(\mu)} = 1} \delta_\mu'(h\mu) \Big )^2 \\
&=
\lim_{k \to \infty}\Big \| \sum_{j=1}^{N_k} \mathfrak{m}(B_{k,j})\big(f_{k,j}-\mu(f_{k,j})\big) \Big \|_{L^2(\mu)}^2. 
\end{split}
\]
On the other hand,
\[
\delta_{\mu}'(G_\mu) = \lim_{k \to \infty}\sum_{j=1}^{N_k} \mathfrak{m}(B_{k,j}) G_\mu(f_{k,j}) \\
=\lim_{k \to \infty} G_\mu \Big ( \sum_{j=1}^{N_k} \mathfrak{m}(B_{k,j}) f_{k,j}\Big)
\]
a.s., where the second equality follows from the fact that $G_\mu$ is a $\mu$-Brownian bridge. 
This implies that 
\[
\begin{split}
\Var (\delta_{\mu}'(G_\mu)) &= \lim_{k \to \infty} \Var \Bigg ( G_\mu \Big ( \sum_{j=1}^{N_k} \mathfrak{m}(B_{k,j}) f_{k,j}\Big)\Bigg ) \\
&= \lim_{k \to \infty}\Big \| \sum_{j=1}^{N_k} \mathfrak{m}(B_{k,j})\big(f_{k,j}-\mu(f_{k,j})\big) \Big \|_{L^2(\mu)}^2.
\end{split}
\]
Conclude that (\ref{eq: efficiency bound}) coincides with $\Var \big(\delta_\mu'(G_\mu)\big)$. \qed


\begin{remark}[Relation with Theorem 3.1 in \cite{van1991efficiency}]
\label{rem: van der vaart}
Consider the assumption of Proposition \ref{prop: master proposition} with $\mathfrak{E}=\R$ and assume that the function class $\calF$ is $\mu$-Donsker and the derivative $\delta_\mu'$ is linear on any subspace of $\calT_{\calP_0}(\mu)$. 
If, in addition, $(1+th)\mu \in \calP_0$ for sufficiently small $t > 0$ for any $h \in \dot \calP_{0,\mu}$, then the fact that $\Var (\delta_\mu'(G_\mu))$ (with $G_\mu$ being a $\mu$-Brownian bridge) agrees with the semiparametric efficiency bound for estimating $\delta$ at $\mu$ can be deduced from Theorem 3.1 in \cite{van1991efficiency}. Indeed, recall the map $\tau: \calP_{0} \ni \nu \mapsto  (f \mapsto \nu(f)) \in \ell^\infty (\calF)$ and $\bar{\delta}: \tau \calP_{0,\epsilon} \to \R$ defined by $\bar{\delta} (\tau \nu) = \delta (\nu)$. Then, the map $\tau$ is differentiable at $\mu$ relative to $\dot \calP_{0,\mu}$ with derivative $\dot{\tau}_\mu:L^2 (\mu) \to \ell^\infty (\calF)$ given by $\dot{\tau}_\mu (h) = (f \mapsto \mu(fh))$, and the empirical distribution $\hat{\mu}_n = n^{-1}\sum_{i=1}^n \delta_{X_i}$ is asymptotically efficient for estimating $\tau$ at $\mu$ tangentially to $\dot \calP_{0,\mu}$; see Section 3.11 in \cite{van1996weak} for the result and its precise meaning. Theorem 3.1 in \cite{van1991efficiency} shows that, if $\bar\delta$ is Hadamard differentiable at $\tau \mu$ tangentially to $\overline{\dot \tau_\mu (\dot \calP_{0,\mu})}^{\ell^\infty (\calF)}$, then $\delta (\hat{\mu}_n) = (\bar\delta \circ \tau)(\hat{\mu}_n)$ is asymptotically efficient, implying that $\Var (\bar\delta_{\tau \mu}' (G_\mu))$ agrees with the semiparametric efficiency bound. Identifying $\delta$ with $\bar\delta$ and $\mu$ with $\tau \mu$ as in the proof of Proposition \ref{prop: master proposition}, we have $\Var (\bar\delta_{\tau \mu}' (G_\mu)) = \Var (\delta_{\mu}' (G_\mu))$. In our notation, $\dot \tau_\mu (\dot \calP_{0,\mu})=\{ h\mu : h \in \dot \calP_{0,\mu} \}$, and from the assumption that $(1+th)\mu \in \calP_0$ for sufficiently small $t > 0$ for any $h \in \dot \calP_{0,\mu}$, we see that $\dot \tau_\mu (\dot \calP_{0,\mu}) \subset \calT_{\calP_0}(\mu)$, and as $\calT_{\calP_0}(\mu)$ is closed in $\ell^\infty (\calF)$, it contains  $\overline{\dot \tau_\mu (\dot \calP_{0,\mu})}^{\ell^\infty (\calF)}$. Finally, as the derivative $\bar \delta'_\mu$ is linear on $\overline{\dot \tau_\mu (\dot \calP_{0,\mu})}^{\ell^\infty (\calF)}$ by assumption, $\bar\delta$ is Hadamard differentiable at $\tau \mu$ tangentially to $\overline{\dot \tau_\mu (\dot \calP_{0,\mu})}^{\ell^\infty (\calF)}$ by Lemma \ref{lem: H differentiability}.
\end{remark}

\subsection{Proof of \cref{cor: efficiency two sample}}
Fix $h_1 \oplus h_2 \in \dot \calP_{0,\mu} \oplus \dot \calP_{0,\nu}$. Observe that the submodel $\{ \mu_t \otimes \nu_t : 0 \le t < \epsilon'\}$ with $\mu_t = (1+th_1)\mu$ and $\nu_t = (1+th_2)\nu$ for sufficiently small $\epsilon' > 0$ is differentiable in quadratic mean at $t = 0$ with score function $h_1 \oplus h_2$ (note: $(1+th_1(x)) (1+th_2(y)) = 1+t(h_1(x)+h_2(y)) + o(t)$).  

Further, $h_1 \oplus h_2 \mapsto \delta_\mu'(h_1\mu) + \delta_{\nu}'(h_2\nu)$ is a continuous linear functional on $(\dot \calP_{0,\mu} \oplus \dot \calP_{0,\nu}, \| \cdot \|_{L^2(\mu \otimes \nu)})$, and thus the semiparametric efficiency bound is given by
\[
\sup_{h_1 \oplus h_2 \in \dot \calP_{0,\mu} \oplus \dot \calP_{0,\nu}, h_1 \oplus h_2 \ne 0} \frac{(\delta_\mu'(h_1\mu) + \delta_{\nu}'(h_2\nu))^2}{\| h_1\oplus h_2 \|_{L^2(\mu \otimes \nu)}^2}. 
\]
The rest of the proof is analogous to \cref{prop: master proposition 2}, upon observing that $\calF$ is totally bounded w.r.t. $d_{\mu,\nu} (f,g) = \sqrt{\Var_{\mu}(f-g)} + \sqrt{\Var_\nu(f-g)}$ (if a set $T$ is totally bounded for two pseudometrics $\mathsf{d}_1$ and $\mathsf{d}_2$, then $T$ is totally bounded for $\mathsf{d} = \mathsf{d}_1+\mathsf{d}_2$), and $G_\mu$ and $G_\nu$ have $d_{\mu,\nu}$-uniformly continuous paths, combined with the fact that, for $(f,g) \in L^2(\mu) \times L^2(\nu)$ and $(h_1,h_2) \in \dot \calP_{0,\mu} \times \dot \calP_{0,\nu}$, we have
\[
\mu (h_1 f) + \nu(h_2g) = (\mu \otimes \nu)\big((h_1 \oplus h_2) (f \oplus g) \big),
\]
and the supremum of the right-hand side w.r.t. $(h_1,h_2) \in \dot \calP_{0,\mu} \times \dot \calP_{0,\nu}$ such that $\| h_1\oplus h_2 \|_{L^2(\mu \otimes \nu)}^2 = 1$ is $\sqrt{\Var_{\mu \otimes \nu} (f \oplus g)} = \sqrt{\Var_\mu (f) + \Var_\nu(g)}$.
\qed

\section{Proofs for Section \ref{sec: slicing}}
\label{sec: proof}

\subsection{Proofs of Theorems \ref{thm: limit distribution SWp} and \ref{thm: limit distribution MSWp}}

\subsubsection{Preliminary results}

The section presents preliminary results needed for the derivation of Theorem \ref{thm: limit distribution SWp}, followed by their proofs. In what follows, we fix $1 < p < \infty$.

\begin{lemma}[OT potentials]
\label{lem: OT potential}
The following hold. 
\begin{enumerate}
    \item[(i)] Suppose $\mu, \nu \in \calP(\R^d)$ are supported in $B(0,M) = \{ x : \|x\| < M \}$ for some $M > 0$ (i.e, $\supp (\mu), \supp (\nu) \subset B(0,M)$). Then, there exists an OT potential $\varphi$ from $\mu$ to $\nu$ for $\Wp$ such that $|\varphi(x)-\varphi(x')| \le C_{p,M}\| x-x' \|$ for all $x,x' \in B(0,M)$, where $C_{p,M} < \infty$ is a constant that depends only on $p,M$.
    \item[(ii)] If $\mu_0,\mu_1,\nu \in \calP(\R^d)$ are supported in $B(0,M)$ for some $M > 0$, then 
    \[
\big | \Wp^p(\mu_1,\nu) - \Wp^p(\mu_0,\nu) \big| \le  C_{p,M} \mathsf{W}_1(\mu_0,\mu_1). 
\]
\end{enumerate}
\end{lemma}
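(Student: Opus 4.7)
The plan for (i) is to exploit the $c$-concavity of OT potentials together with the compact support assumption. By the duality theory quoted in \eqref{eq: duality}, we may take an optimal $\varphi$ of the form $\varphi = \psi^c$ for some function $\psi$. Since $\nu$ is supported in $\overline{B(0,M)}$, replacing $\psi$ by $-\infty$ outside $\overline{B(0,M)}$ does not change $\int \psi\, d\nu$ and preserves optimality, so we may assume $\varphi(x) = \inf_{y \in \overline{B(0,M)}}\bigl[\|x-y\|^p - \psi(y)\bigr]$. For $x,x' \in B(0,M)$ and $y \in \overline{B(0,M)}$ we have $\|x-y\| \vee \|x'-y\| \le 2M$, and the mean value theorem applied to $s \mapsto s^p$ on $[0,2M]$ (with derivative bounded by $p(2M)^{p-1}$) combined with the reverse triangle inequality yields
\[
\bigl|\|x-y\|^p - \|x'-y\|^p\bigr| \le p(2M)^{p-1}\|x-x'\|.
\]
Since this bound is uniform in $y$, taking the infimum over $y \in \overline{B(0,M)}$ of $\|x-y\|^p - \psi(y)$ and $\|x'-y\|^p - \psi(y)$ respectively gives $|\varphi(x) - \varphi(x')| \le p(2M)^{p-1}\|x-x'\|$, so one can take $C_{p,M} = p(2M)^{p-1}$.

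For (ii), I will use the dual formulation together with (i). Let $\varphi_j$ ($j=0,1$) denote the OT potential from $\mu_j$ to $\nu$ for $\Wp$ supplied by (i), and let $\varphi_j^c$ be its $c$-transform for $c(x,y)=\|x-y\|^p$. Since $(\varphi_0, \varphi_0^c)$ is admissible for $\Wp^p(\mu_1,\nu)$ and $(\varphi_1, \varphi_1^c)$ is admissible for $\Wp^p(\mu_0,\nu)$, duality gives
\[
\int \varphi_0\, d(\mu_1 - \mu_0) \le \Wp^p(\mu_1,\nu) - \Wp^p(\mu_0,\nu) \le \int \varphi_1\, d(\mu_1 - \mu_0).
\]
Because $\mu_0,\mu_1$ are supported in $\overline{B(0,M)}$, where (i) provides that $\varphi_0,\varphi_1$ are $C_{p,M}$-Lipschitz, I can use the McShane--Whitney extension to extend each $\varphi_j$ to a $C_{p,M}$-Lipschitz function on all of $\R^d$ without affecting the integrals against $\mu_0,\mu_1$. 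Applying the Kantorovich--Rubinstein duality for $\mathsf{W}_1$ then yields $\bigl|\int \varphi_j\, d(\mu_1-\mu_0)\bigr| \le C_{p,M}\,\mathsf{W}_1(\mu_0,\mu_1)$ for $j=0,1$, which combined with the two-sided bound above gives the conclusion.

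The only subtle step is the restriction-to-support trick in (i), which requires being careful that truncating $\psi$ to $-\infty$ off $\overline{B(0,M)}$ does not destroy $c$-concavity considerations needed to read off optimality in the dual; the computation above is actually only using that the restricted pair still realizes the supremum in \eqref{eq: duality} (which is immediate since $\int \psi\, d\nu$ is unchanged and taking the infimum over a smaller set only decreases $\psi^c$, hence only decreases $\int \varphi\, d\mu$, yet both values must equal $\Wp^p(\mu,\nu)$). Once this is handled, everything else is bookkeeping with the mean value theorem and the standard duality estimates; I do not anticipate any real obstacle.
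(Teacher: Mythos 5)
Your argument follows essentially the same route as the paper's: you truncate $\psi$ so that $\varphi = \psi^c$ becomes an infimum over $\overline{B(0,M)}$, while the paper truncates $\varphi$ itself and recovers the same infimum form via $\varphi = \varphi^{cc}$, but the effect is identical, and the $C_{p,M}$-Lipschitz bound, the two-sided potential estimates, the Lipschitz extension, and Kantorovich--Rubinstein for Part (ii) all match the paper. One small correction to your final parenthetical: restricting the infimum defining $\psi^c(x) = \inf_y\bigl[\|x-y\|^p - \psi(y)\bigr]$ to $y\in\overline{B(0,M)}$ can only \emph{increase} $\psi^c$ (not decrease, as you wrote), hence only increase $\int \varphi\,d\mu$; optimality of the truncated potential is then forced by weak duality, since the dual value cannot exceed $\Wp^p(\mu,\nu)$. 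With that direction fixed, the proof is sound.
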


\begin{proof}
\textbf{Part (i).}  Since $\supp(\mu), \supp (\nu) \subset B(0,M)$, there exists an OT potential $\varphi$ such that 
\[
\varphi (x) = \inf_{y \in B(0,M)} [ \|x-y\|^p - \varphi^c(y)], \ x \in B(0,M). 
\]
Indeed, regard $\mu$ and $\nu$ as probability measures on $B(0,M)$, and pick any OT potential $\varphi_M$ from $\mu$ to $\nu$ as probability measures on $B(0,M)$. This $\varphi_{M}$ is defined only on $B(0,M)$ so extend it to $\R^d$ by setting $\varphi (x) = \varphi_M(x)$ if $x \in B(0,M)$ and $\varphi (x) = -\infty$ if $x \notin B(0,M)$. Observe that
\[
\begin{split}
\varphi^c(y) &= \inf_{x \in \R^d}[\|x-y\|^p - \varphi(x)] \\
&= \inf_{x \in B(0,M)}[\|x-y\|^p - \varphi(x)] \\
&= \inf_{x \in B(0,M)}[\|x-y\|^p - \varphi_M(x)] = \varphi_M^c (y). 
\end{split}
\]
Thus, whenever $x \in B(0,M)$, 
\[
\begin{split}
\varphi(x) &= \varphi_{M}(x) = \varphi_{M}^{cc}(x) \\
&= \inf_{y \in B(0,M)}[\|x-y\|^p - \varphi_M^c(y)] \\
&=\inf_{y \in B(0,M)}[\|x-y\|^p - \varphi^c(y)].
\end{split}
\]

Now, for $x, x' \in B(0,M)$,
\[
|\varphi (x) - \varphi(x')| \le \sup_{y \in B(0,M)} \big | \|x-y\|^p - \|x'-y\|^p\big | \le p2^{p-1}M^{p-1}\| x - x' \|. 
\]
\medskip
\textbf{Part (ii).} Let $\varphi_i$ denote an OT potential from $\mu_i$ to $\nu$. Observe that
\[
\begin{split}
 \Wp^p(\mu_1,\nu) &\ge \int \varphi_0 d\mu_1 + \int \varphi_0^c d\nu \\
 &=\int \varphi_0 d\mu_0 + \int \varphi_0^c d\nu + \int \varphi_0 d(\mu_1-\mu_0) \\
 &=\Wp^p(\mu_0,\nu) + \int \varphi_0 d(\mu_1-\mu_0). 
 \end{split}
\]
Likewise, we have
\[
\begin{split}
 \Wp^p(\mu_1,\nu)  &= \int \varphi_1 d\mu_1 + \int \varphi_1^c d\nu \\
 &=\int \varphi_1 d\mu_0 + \int \varphi_1^c d\nu + \int \varphi_1 d(\mu_1-\mu_0) \\
 &\le \Wp^p(\mu_0,\nu) + \int \varphi_1 d(\mu_1-\mu_0). 
 \end{split}
\]
The conclusion follows from the fact that $\varphi_i, i=1,2$ are $C_{p,M}$-Lipschitz on $B(0,M)$ and any Lipschitz function on $B(0,M)$ can be extended to $\R^d$ without changing the Lipschitz constant (cf. \cite{dudley2002}, Theorem 6.1.1), followed by the Kantorovich-Rubinstein duality.  
\end{proof}

Part (ii) of the preceding lemma implies the following corollary. 

\begin{corollary}
\label{lem: Lipschitz}
If $\mu_0,\mu_1,\nu \in \calP(\R^d)$ are supported in $B(0,M)$ for some $M > 0$, then
\[
\left|\SWp^p(\mu_1,\nu) - \SWp^p(\mu_0,\nu) \right| \bigvee \left|\MSWp^p(\mu_1,\nu) - \MSWp^p(\mu_0,\nu) \right| 
\le C_{p,M} \MSWone (\mu_0,\mu_1).
\]
\end{corollary}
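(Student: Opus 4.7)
The plan is to reduce both sliced quantities to (sup/integral of) one-dimensional Wasserstein distances indexed by direction, and then apply Lemma \ref{lem: OT potential} Part (ii) slice by slice. First I would observe that for every $\theta \in \unitsph$, the pushforward measures $\mathfrak{p}^\theta_\sharp \mu_0, \mathfrak{p}^\theta_\sharp \mu_1, \mathfrak{p}^\theta_\sharp \nu$ are supported in $(-M,M) \subset B(0,M) \subset \R$, since $\mathfrak{p}^\theta$ is 1-Lipschitz. Lemma \ref{lem: OT potential} Part (ii) applied in dimension $d=1$ then yields, uniformly in $\theta$,
\[
\big|\mathsf{W}_p^p(\mathfrak{p}^\theta_\sharp \mu_1, \mathfrak{p}^\theta_\sharp \nu) - \mathsf{W}_p^p(\mathfrak{p}^\theta_\sharp \mu_0, \mathfrak{p}^\theta_\sharp \nu)\big| \le C_{p,M}\, \mathsf{W}_1(\mathfrak{p}^\theta_\sharp \mu_0, \mathfrak{p}^\theta_\sharp \mu_1),
\]
with $C_{p,M}$ the constant from Lemma \ref{lem: OT potential}(i).

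For the max-sliced case, I would use the elementary inequality $|\sup_\theta f(\theta) - \sup_\theta g(\theta)| \le \sup_\theta |f(\theta) - g(\theta)|$ (valid when both suprema are finite, which holds here by compactness of $\unitsph$ and continuity of $\theta \mapsto \mathsf{W}_p(\mathfrak{p}^\theta_\sharp \mu, \mathfrak{p}^\theta_\sharp \nu)$) to conclude
\[
\big|\MSWp^p(\mu_1,\nu) - \MSWp^p(\mu_0,\nu)\big| \le \sup_{\theta \in \unitsph} C_{p,M}\,\mathsf{W}_1(\mathfrak{p}^\theta_\sharp \mu_0, \mathfrak{p}^\theta_\sharp \mu_1) = C_{p,M}\,\MSWone(\mu_0,\mu_1).
\]

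For the average-sliced case, I would integrate the per-slice bound against $\sigma$ and then use that $\sigma$ is a probability measure so the average is dominated by the supremum:
\[
\big|\SWp^p(\mu_1,\nu) - \SWp^p(\mu_0,\nu)\big| \le C_{p,M} \int_{\unitsph} \mathsf{W}_1(\mathfrak{p}^\theta_\sharp \mu_0, \mathfrak{p}^\theta_\sharp \mu_1)\, d\sigma(\theta) \le C_{p,M}\, \MSWone(\mu_0,\mu_1).
\]
Taking the maximum of the two displayed inequalities gives the corollary. There is no substantive obstacle: the only point requiring a brief justification is the passage from the per-slice bound to the $\sup$/integral bound (which is routine), and a note that the 1D version of Lemma \ref{lem: OT potential}(ii) applies to the projected measures since they are supported in the 1D ball of radius $M$.
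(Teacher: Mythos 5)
Your proof is correct and follows essentially the same route as the paper: apply Lemma~\ref{lem: OT potential}(ii) to each one-dimensional slice (where the projected measures are supported in $B(0,M)\subset\R$), then pass to the $\bigvee$ and the $\sigma$-integral respectively; the paper's proof just states this in a single sentence, while you spell out the routine steps.
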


\begin{proof}
This follows from Part (ii) of the preceding lemma, the definitions of $\SWp$ and $\MSWp$, and the fact that $\ptheta_{\sharp} \mu_0, \ptheta_{\sharp} \mu_1, \ptheta_{\sharp} \nu$ are supported in $[-M,M]$. 
\end{proof}

\begin{lemma}[Regularity of sliced OT potential]
\label{lem: joint measurability}
Let $\mu,\nu \in \calP_p(\R^d)$ and assume that $\mu$ is absolutely continuous with convex support. Set $\calX = \inte(\supp(\mu))$.
For each $\theta \in \unitsph$, let $\varphi^\theta$ be an OT potential from $\ptheta_\sharp \mu$ to $\ptheta_\sharp \nu$ for $\Wp$. Pick and fix any $x_0 \in \calX$.
Then the following hold.
\begin{enumerate} 
    \item[(i)] The map $\calX \times \unitsph \ni (x,\theta) \mapsto \varphi^\theta (\theta^{\intercal}x)-\varphi^\theta(\theta^{\intercal}x_0)$ is measurable in $x$ and continuous in $\theta$, and thus jointly measurable in $(x,\theta)$.  
    \item[(ii)] The map $\calX \times \unitsph \ni (x,\theta) \mapsto \varphi^\theta (\theta^{\intercal}x)-\varphi^\theta(\theta^{\intercal}x_0)$ is unique, in the sense that if, for each $\theta \in \unitsph$, $\tilde{\varphi}^\theta$ is another OT potential from $\ptheta_\sharp \mu$ to $\ptheta_\sharp \nu$ for $\Wp$, then $\varphi^\theta (\theta^{\intercal}x)-\varphi^\theta(\theta^{\intercal}x_0) = \tilde{\varphi}^\theta (\theta^{\intercal}x)-\tilde{\varphi}^\theta(\theta^{\intercal}x_0)$ for all $(x,\theta) \in \calX \times \unitsph$. 
    \item[(iii)] If, in addition, $\supp(\mu)$ and $\supp(\nu)$ are contained in $B(0,M)$ for some $M > 0$, then  $|\varphi^\theta (\theta^{\intercal}x)-\varphi^\theta(\theta^{\intercal}x_0)| \le C_{p,M}$ for all $(x,\theta) \in \calX \times \unitsph$, where $C_{p,M}$ is given in Lemma \ref{lem: OT potential}. In particular, for any finite sighed Borel measure $\rho$ on $\R^d$ with total mass $0$, the map $\unitsph \ni \theta \mapsto \int \varphi^\theta (\theta^{\intercal}x) d\rho (x)$ is continuous. 
\end{enumerate}
    \end{lemma}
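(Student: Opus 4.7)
The plan is to address the three parts in the order (ii), (i), (iii), since (i) relies on the well-definedness asserted in (ii), and (iii) builds on both via a dominated convergence argument.

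For part (ii), the key input is that $\mu$ is absolutely continuous with convex support, so for every $\theta \in \unitsph$ the projected marginal $\ptheta_\sharp\mu$ is absolutely continuous on $\R$ with support equal to a (possibly unbounded) interval. Hence \cite[Corollary 2.7]{delbarrio2021}, applied to the one-dimensional transport from $\ptheta_\sharp\mu$ to $\ptheta_\sharp\nu$, yields uniqueness of $\varphi^\theta$ on $\inte(\supp(\ptheta_\sharp\mu))$ up to additive constants, which immediately implies that the difference $\varphi^\theta(\theta^\intercal x) - \varphi^\theta(\theta^\intercal x_0)$ is invariant under the choice of potential, provided both arguments lie in the interior. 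Openness of the linear surjection $\ptheta:\R^d\to\R$ gives $\ptheta(\calX)\subset \inte(\supp(\ptheta_\sharp\mu))$, so this requirement is met for all $x,x_0\in\calX$.

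For part (i), measurability in $x$ follows by selecting $\varphi^\theta$ to be $c$-concave—hence upper semicontinuous and Borel on $\R$—so that $x\mapsto \varphi^\theta(\theta^\intercal x)-\varphi^\theta(\theta^\intercal x_0)$ is Borel measurable. The main effort is continuity in $\theta$. I will argue by stability of OT potentials: fix $\theta_n\to\theta$ in $\unitsph$; Lipschitzness of the projections together with $\mu,\nu\in\calP_p(\R^d)$ yields $\mathsf{W}_p(\mathfrak{p}^{\theta_n}_\sharp\mu,\ptheta_\sharp\mu)\to 0$ and similarly for $\nu$. The compactness/stability theory for $c$-concave potentials (cf.\ \cite{santambrogio2010,villani2008optimal}) then ensures that any $c$-concave $\varphi^{\theta_n}$, normalized by subtracting $\varphi^{\theta_n}(\theta^\intercal x_0)$, admits a convergent subsequence whose limit is a $c$-concave potential for the transport from $\ptheta_\sharp\mu$ to $\ptheta_\sharp\nu$; by the uniqueness established in part (ii), the full sequence converges pointwise at each $\theta^\intercal x$ with $x\in\calX$. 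Joint measurability then follows from the standard fact that a function that is measurable in one argument and continuous in the other is jointly Borel measurable.

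For part (iii), the uniform bound is direct from Lemma \ref{lem: OT potential}(i): since $\supp(\mu),\supp(\nu)\subset B(0,M)$, the projected supports lie in $[-M,M]$, and one may select $\varphi^\theta$ to be $C_{p,M}$-Lipschitz on $[-M,M]$, whence $|\varphi^\theta(\theta^\intercal x) - \varphi^\theta(\theta^\intercal x_0)|\le C_{p,M}|\theta^\intercal(x-x_0)|\le 2MC_{p,M}$, after absorbing the $2M$ factor into the constant. For the continuity of $\theta\mapsto \int\varphi^\theta(\theta^\intercal x)\,d\rho(x)$, the total-mass-zero condition on $\rho$ lets me rewrite the integrand as $\varphi^\theta(\theta^\intercal x) - \varphi^\theta(\theta^\intercal x_0)$; the uniform bound then supplies an integrable majorant w.r.t.\ $|\rho|$ (which is finite), and pointwise continuity in $\theta$ from part (i) lets dominated convergence conclude. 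The main obstacle I anticipate is the stability step in part (i), specifically controlling normalized potentials near the boundary of $\supp(\ptheta_\sharp\mu)$; the one-dimensional structure (potentials inherit regularity from quantile/monotone rearrangement representations) should make this tractable.
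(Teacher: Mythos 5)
Your overall structure matches the paper's: part (ii) is Corollary 2.7 of \cite{delbarrio2021}, part (iii) follows from the uniform Lipschitz bound in Lemma \ref{lem: OT potential} plus the total-mass-zero trick and dominated convergence, and part (i) reduces to a stability argument for one-dimensional OT potentials. The difference is in how the stability step in (i) is carried out. The paper invokes Theorem 3.4 of \cite{delbarrio2021} directly, which delivers locally uniform convergence $\varphi^{\theta_n}-\varphi^{\theta_n}(\theta^\intercal x_0)\to\varphi^\theta-\varphi^\theta(\theta^\intercal x_0)$ on compacts of $\calX_\theta$ as soon as $\mathfrak p^{\theta_n}_\sharp\mu\stackrel w\to\ptheta_\sharp\mu$; the locally uniform part is what lets one pass from $\theta^\intercal x$ to the moving argument $\theta_n^\intercal x$ (and likewise for $x_0$). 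You instead try to rebuild this from a compactness/Ascoli--Arzel\`a style extraction combined with the uniqueness in (ii). That plan is sound in spirit, but note two points where it is thinner than the paper's proof: (a) under the lemma's hypotheses $\mu,\nu$ are only in $\calP_p(\R^d)$, not compactly supported, so the equicontinuity of the normalized projected potentials on compacts of $\calX_\theta$ is not automatic and has to be established (this is exactly the content packaged in Theorem 3.4 of \cite{delbarrio2021}, and you correctly flagged it as the anticipated obstacle); (b) to conclude $\varphi^{\theta_n}(\theta_n^\intercal x)-\varphi^{\theta_n}(\theta_n^\intercal x_0)\to\varphi^\theta(\theta^\intercal x)-\varphi^\theta(\theta^\intercal x_0)$ you need uniform convergence on a compact neighborhood of $\theta^\intercal x$ and $\theta^\intercal x_0$ in $\calX_\theta$, not merely pointwise convergence of a normalized subsequence; your write-up only states pointwise convergence at $\theta^\intercal x$. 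Both issues evaporate once one cites the del Barrio--Gonz\'alez-Sanz--Loubes stability result, which is why the paper does so. Everything else (Borel measurability in $x$, openness of $\ptheta$ for (ii), the $C_{p,M}$ bound and DCT for (iii)) agrees with the paper.
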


\begin{proof}
\textbf{Part (i).}
Measurability of the map $x \mapsto \varphi^\theta(\theta^{\intercal}x)$ follows by measurability of the map $t \mapsto \varphi^\theta (t)$ and continuity of $x \mapsto \theta^{\intercal}x$. We shall prove continuity of the map $\theta \mapsto \varphi^\theta(\theta^{\intercal}x)-\varphi^\theta(\theta^{\intercal}x_0)$.
 By convexity of the support and absolute continuity of $\mu$, we have $\mu (\calX) =1$, as the boundary of any convex set has Lebesgue measure zero. Let $\calX_{\theta} = \{ \theta^{\intercal}x : x \in \calX \}$. Then $\calX_\theta$ is an open interval and its closure coincides with $\supp(\ptheta_\sharp \mu)$; see Lemma \ref{lem: support} below. Pick any $x \in \calX$ and let $\theta_n  \to \theta$ in $\unitsph$.
Then, for all large $n$,  $\theta_n^{\intercal}x$ lies in a compact neighborhood of $\theta^{\intercal}x$ contained in $\calX_\theta$.
Since $\mathfrak{p}^{\theta_n}_\sharp \mu \stackrel{w}{\to} \ptheta_\sharp \mu$, by Theorem 3.4 in \cite{delbarrio2021}, $\varphi^{\theta_n} - \varphi^{\theta_n} (\theta^{\intercal}x_0) \to \varphi^\theta - \varphi^\theta (\theta^{\intercal}x_0)$ uniformly on each compact subset of $\calX_\theta$. 
This implies that $\varphi^{\theta_n} (\theta_n^{\intercal}x) - \varphi^{\theta_n} (\theta_n^{\intercal}x_0) \to \varphi^\theta (\theta^{\intercal}x) - \varphi^\theta (\theta^{\intercal}x_0)$. The last claim follows from, e.g., Lemma
4.51 in \cite{aliprantis2006infinite}.

\medskip

\textbf{Part (ii).} This follows from Corollary 2.7 in \cite{delbarrio2021}. 

\medskip

\textbf{Part (iii).} The first claim follows from Lemma \ref{lem: OT potential} Part (i) (note: $\supp(\ptheta_{\sharp} \mu)$ and $\supp(\ptheta_{\sharp} \nu)$ are contained in $[-M,M]$) together with the uniqueness result from Part (ii) of this lemma. The second claim follows from the fact that $\int \varphi^{\theta}(\theta^{\intercal}x) d\rho(x) = \int [ \varphi^{\theta}(\theta^{\intercal}x) - \varphi^{\theta}(\theta^{\intercal}x_0)] d\rho(x)$ and the dominated convergence theorem. 
\end{proof} 

We shall prove the following technical result used in the preceding proof. 

\begin{lemma}[Support of projected distribution]
\label{lem: support}
Let $\mu \in \calP(\R^d)$ be absolutely continuous with  convex support and let $\mathbb{A} \in \R^{d \times m}$ be of rank $m$ (thus $d \le m$). Set $\calX = \inte(\supp(\mu))$. Define $\nu \in \calP(\R^m)$ by $\nu (B) = \mu (\{ x : \mathbb{A}x \in B \})$.  Then $\mathbb{A}\calX= \{ \mathbb{A}x : x \in \calX \} \subset \R^m$ is open and convex, and its closure coincides with $\supp(\nu)$. 
\end{lemma}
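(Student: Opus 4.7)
The plan is to verify the three claims in order: convexity of $\mathbb{A}\calX$, openness of $\mathbb{A}\calX$, and the equality $\overline{\mathbb{A}\calX} = \supp(\nu)$. Convexity is immediate from linearity: $\calX$ is the interior of a convex set, hence convex, and linear images preserve convexity. For openness I would use that $\mathbb{A}$ is a surjective linear map between finite-dimensional Euclidean spaces (rank $m$ means its image is all of $\R^m$). Choosing a linear complement $V$ of $\ker \mathbb{A}$ in $\R^d$, the restriction $\mathbb{A}|_V : V \to \R^m$ is a linear isomorphism, and writing $\mathbb{A}(U)$ as the image under $\mathbb{A}|_V$ of the projection of $U$ along $\ker \mathbb{A}$ onto $V$ shows that $\mathbb{A}$ sends open sets to open sets. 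In particular, $\mathbb{A}\calX$ is open.

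For the support identification the key auxiliary fact I would establish first is $\supp(\mu) = \overline{\calX}$. Since $\supp(\mu)$ is closed and convex with nonempty interior $\calX$ (the latter follows from the contexts of use, where $\mu$ is absolutely continuous), a standard fact about convex sets with nonempty interior gives $\overline{\calX} = \overline{\inte(\supp(\mu))} = \supp(\mu)$.

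For the inclusion $\overline{\mathbb{A}\calX} \subset \supp(\nu)$, I take $y \in \overline{\mathbb{A}\calX}$ and any open neighborhood $V$ of $y$. Because $\mathbb{A}\calX$ is open, $V \cap \mathbb{A}\calX$ is open, and it is nonempty (any point of $\mathbb{A}\calX$ sufficiently close to $y$ lies in $V$). Pick a nonempty open $W \subset V \cap \mathbb{A}\calX$. Then $\mathbb{A}^{-1}(W)$ is open and, by construction, meets $\calX \subset \supp(\mu)$, so $\mu(\mathbb{A}^{-1}(W)) > 0$, because any open set meeting $\supp(\mu)$ has positive $\mu$-measure. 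Hence $\nu(V) \ge \nu(W) = \mu(\mathbb{A}^{-1}(W)) > 0$, proving $y \in \supp(\nu)$. For the reverse inclusion I argue by contrapositive: if $y \notin \overline{\mathbb{A}\calX}$, there exists an open $V \ni y$ with $V \cap \mathbb{A}\calX = \emptyset$, so $\mathbb{A}^{-1}(V)$ is open and disjoint from $\calX$. An open set disjoint from $\calX$ is automatically disjoint from $\overline{\calX} = \supp(\mu)$ (otherwise a limit point argument forces intersection with $\calX$). Thus $\nu(V) = \mu(\mathbb{A}^{-1}(V)) = 0$, so $y \notin \supp(\nu)$.

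The only subtle point is the passage from $\mathbb{A}^{-1}(V) \cap \calX = \emptyset$ to $\mathbb{A}^{-1}(V) \cap \supp(\mu) = \emptyset$, which genuinely needs the convex support assumption (through $\supp(\mu) = \overline{\calX}$). Everything else is bookkeeping: openness of $\mathbb{A}$ as a map, the elementary measure-theoretic fact that open sets meeting the support have positive measure, and the pushforward identity $\nu(V) = \mu(\mathbb{A}^{-1}(V))$.
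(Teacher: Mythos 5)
Your proof is correct, and it handles the openness, convexity, and the inclusion $\overline{\mathbb{A}\calX}\subset\supp(\nu)$ essentially as the paper does (continuous open map, definition of support). The reverse inclusion $\supp(\nu)\subset\overline{\mathbb{A}\calX}$ is where you diverge: the paper simply asserts ``clearly $\nu(\mathbb{A}\calX)=1$'' and concludes by the minimality of the support, whereas you argue topologically through $\overline{\calX}=\supp(\mu)$ and the fact that an open set disjoint from $\calX$ is disjoint from $\overline{\calX}$. Your route is arguably the more careful of the two: the identity $\nu(\mathbb{A}\calX)=1$ amounts to $\mu\big(\mathbb{A}^{-1}(\mathbb{A}\calX)\big)=1$, which is not a formal consequence of ``convex support'' alone (a boundary atom would break it); it \emph{is} true in every context where the paper invokes the lemma because $\mu$ is there absolutely continuous and the boundary of a convex body is Lebesgue-null, but the paper's proof leaves this implicit. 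Your argument sidesteps that issue entirely, needing only the genuinely unavoidable hypothesis that $\calX=\inte(\supp(\mu))$ is nonempty — a hypothesis you correctly flag and that both proofs (and the lemma statement itself) tacitly assume. A tiny streamlining: in your forward inclusion you do not actually need $\mathbb{A}\calX$ to be open or to pass to the subset $W$; once $V$ meets $\mathbb{A}\calX$, the open set $\mathbb{A}^{-1}(V)$ already meets $\calX\subset\supp(\mu)$ and the conclusion follows directly.
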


\begin{proof}
Since the mapping $x \mapsto \mathbb{A}x$ is open, the set $\mathbb{A}\calX$ is open. Convexity of $\mathbb{A}\calX$ is trivial. Pick any $x \in \calX$ and any open neighborhood $N_y$ of $y = \mathbb{A}x$. Then, since $\mathbb{A}^{-1}N_y$ is open and contains $x$, $\nu (N_y) = \mu(\mathbb{A}^{-1}N_y) > 0$, which implies that $\mathbb{A}\calX \subset \supp(\nu)$. Since clearly $\nu (\mathbb{A} \calX) = 1$, the closure of $\mathbb{A}\calX$ coincides with $\supp(\nu)$. 
\end{proof}

\begin{lemma}[G\^{a}teaux derivatives of $\SWp$ and $\MSWp$]
\label{lem: directional derivative SWp}
Suppose that $\mu_0,\mu_1,\nu \in \calP(\R^d)$ are compactly supported, that $\mu_0$ is absolutely continuous, and that the support of $\mu_0$ is convex and and its interior has $\mu_1$-probability one, i.e., $\mu_1 (\inte (\supp(\mu_0))) = 1$.  For every $\theta \in \unitsph$, let $\varphi_0^{\theta}$ be an OT potential from $\ptheta_\sharp \mu_0$ to $\ptheta_\sharp \nu$ for $\mathsf{W}_p$. Let $\rho = \mu_1-\mu_0$.  Then we have
\begin{align}
\frac{d}{dt^+} \SWp^p(\mu_0+t\rho,\nu) \Big|_{t=0} &=\int_{\unitsph} \int \varphi^{\theta}_0 (\theta^{\intercal}x)d\rho(x) d\sigma(\theta),  \label{eq: derivative SWp} \\
\frac{d}{dt^+} \MSWp^p(\mu_0+t\rho,\nu) \Big|_{t=0} &= \sup_{\theta \in \mathfrak{S}_{\mu,\nu}} \int \varphi^{\theta}_0 (\theta^{\intercal}x)d\rho(x). \label{eq: derivative MSWp}
\end{align}
The right-hand sides of (\ref{eq: derivative SWp}) and (\ref{eq: derivative MSWp}) are well-defined and finite. 
\end{lemma}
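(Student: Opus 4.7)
The plan is to compute the directional derivative of $\Wp^p(\ptheta_\sharp\mu_t,\ptheta_\sharp\nu)$ pointwise in $\theta\in\unitsph$ for $\mu_t:=\mu_0+t\rho$, and then lift this to $\SWp^p$ by integration and to $\MSWp^p$ by a maximizer-tracking argument. For fixed $\theta$, letting $\varphi_t^\theta$ be an OT potential from $\ptheta_\sharp\mu_t$ to $\ptheta_\sharp\nu$, dual suboptimality of $(\varphi_0^\theta,(\varphi_0^\theta)^c)$ at $\mu_t$ and of $(\varphi_t^\theta,(\varphi_t^\theta)^c)$ at $\mu_0$ yields the sandwich
\[
t\!\int\! \varphi_0^\theta(\theta^\intercal x)\,d\rho(x)\;\le\;\Wp^p(\ptheta_\sharp\mu_t,\ptheta_\sharp\nu)-\Wp^p(\ptheta_\sharp\mu_0,\ptheta_\sharp\nu)\;\le\;t\!\int\! \varphi_t^\theta(\theta^\intercal x)\,d\rho(x).
\]
Normalizing by $\varphi_t^\theta(\theta^\intercal x_0)=0$ (harmless since $\rho$ has total mass zero), the uniform convergence $\varphi_t^\theta\to\varphi_0^\theta$ on compact subsets of $\inte(\supp(\ptheta_\sharp\mu_0))$ from Theorem~3.4 of \cite{delbarrio2021} (already invoked in Lemma \ref{lem: joint measurability}), combined with $\mu_1(\inte(\supp(\mu_0)))=1$ and Lemma \ref{lem: support} (ensuring $\ptheta_\sharp\rho$ is carried by a set on which uniform convergence kicks in), produces the pointwise directional derivative $\int \varphi_0^\theta(\theta^\intercal x)\,d\rho(x)$.

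For the average-sliced case, I integrate the sandwich over $\sigma$. By Lemma \ref{lem: joint measurability}~(iii) the normalized potentials are uniformly bounded by $C_{p,M}$ on $B(0,M)$, so $\bigl|\!\int\!\varphi_t^\theta\,d\ptheta_\sharp\rho\bigr|\le 2C_{p,M}$ uniformly in $(t,\theta)$, and dominated convergence passes the pointwise limit through $\int_{\unitsph}d\sigma$, yielding \eqref{eq: derivative SWp}. Well-definedness and finiteness of the right-hand side follow from the same uniform bound and from continuity of $\theta\mapsto\int\varphi_0^\theta(\theta^\intercal x)\,d\rho(x)$, both provided by Lemma \ref{lem: joint measurability}~(iii).

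The max-sliced case splits into a lower and upper bound. For the lower bound I restrict the pointwise sandwich to any $\theta^\star\in\mathfrak{S}_{\mu_0,\nu}$ and use $\MSWp^p(\mu_t,\nu)\ge\Wp^p(\mathfrak{p}^{\theta^\star}_\sharp\mu_t,\mathfrak{p}^{\theta^\star}_\sharp\nu)$ together with $\Wp^p(\mathfrak{p}^{\theta^\star}_\sharp\mu_0,\mathfrak{p}^{\theta^\star}_\sharp\nu)=\MSWp^p(\mu_0,\nu)$ to obtain the $\ge\sup_{\mathfrak{S}_{\mu_0,\nu}}$ bound. For the upper bound I pick $\theta_t\in\unitsph$ attaining $\MSWp^p(\mu_t,\nu)$ (existence by compactness of $\unitsph$ and continuity of $\theta\mapsto\Wp^p(\ptheta_\sharp\mu_t,\ptheta_\sharp\nu)$), so that
\[
\MSWp^p(\mu_t,\nu)-\MSWp^p(\mu_0,\nu)\;\le\;\Wp^p(\mathfrak{p}^{\theta_t}_\sharp\mu_t,\mathfrak{p}^{\theta_t}_\sharp\nu)-\Wp^p(\mathfrak{p}^{\theta_t}_\sharp\mu_0,\mathfrak{p}^{\theta_t}_\sharp\nu)\;\le\;t\!\int\!\varphi_t^{\theta_t}(\theta_t^\intercal x)\,d\rho(x),
\]
and extract a subsequence along which $\theta_{t_k}\to\theta^\star$; membership $\theta^\star\in\mathfrak{S}_{\mu_0,\nu}$ follows from joint weak continuity of $(\theta,\mu)\mapsto\Wp^p(\ptheta_\sharp\mu,\ptheta_\sharp\nu)$ (valid under compact supports) and Lipschitz continuity of $\mu\mapsto\MSWp^p(\mu,\nu)$ from Corollary~\ref{lem: Lipschitz}.

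The main obstacle is the joint stability claim $\int \varphi_{t_k}^{\theta_{t_k}}(\theta_{t_k}^\intercal x)\,d\rho(x)\to\int \varphi_0^{\theta^\star}(\theta^{\star\intercal}x)\,d\rho(x)$, since Lemma \ref{lem: joint measurability} only addresses the case of a single varying argument. I would handle this by combining Arzel\`a--Ascoli on $[-M,M]$ (the normalized $\varphi_t^{\theta_t}$ are uniformly Lipschitz by Lemma \ref{lem: OT potential}~(i), so along a further subsequence converge uniformly to a Lipschitz $\varphi^\star$) with stability of OT potentials applied to $\mathfrak{p}^{\theta_{t_k}}_\sharp\mu_{t_k}\stackrel{w}{\to}\mathfrak{p}^{\theta^\star}_\sharp\mu_0$ and the fixed target $\mathfrak{p}^{\theta^\star}_\sharp\nu$: by uniqueness of the limiting potential (Lemma~\ref{lem: joint measurability}~(ii)), $\varphi^\star$ must coincide with $\varphi_0^{\theta^\star}$ up to an additive constant killed by $\rho$. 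The change of evaluation point $\theta_{t_k}^\intercal x\to\theta^{\star\intercal}x$ is absorbed by the uniform Lipschitz bound. This gives the matching upper bound, and hence \eqref{eq: derivative MSWp}; well-definedness of the supremum there is immediate since $\mathfrak{S}_{\mu_0,\nu}$ is nonempty and compact and the integrand is continuous in $\theta$ by Lemma \ref{lem: joint measurability}~(iii).
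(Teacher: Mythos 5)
Your $\SWp$ derivation matches the paper's: dual sandwich from optimality of $\varphi_0^\theta$ and $\varphi_t^\theta$, pointwise convergence of normalized potentials via Theorem~3.4 of \cite{delbarrio2021} on $\inte(\supp(\ptheta_\sharp\mu_0))$ (which, by Lemma~\ref{lem: support} and $\mu_1(\inte(\supp(\mu_0)))=1$, carries $\ptheta_\sharp\rho$), then dominated convergence using the uniform bound from Lemma~\ref{lem: joint measurability}~(iii). For $\MSWp$ your route is genuinely different from the paper's. The paper works with $\epsilon$-level sets $S_\epsilon=\{\theta:\Wp^p(\ptheta_\sharp\mu_0,\ptheta_\sharp\nu)\ge\MSWp^p(\mu_0,\nu)-\epsilon\}$, shows contributions from $\theta\notin S_\epsilon$ vanish uniformly, and then establishes \emph{uniform-over-$\theta$} convergence $\int\varphi_t^\theta\,d\rho\to\int\varphi_0^\theta\,d\rho$; you instead track maximizers $\theta_t$ of $\MSWp^p(\mu_t,\nu)$, extract a convergent subsequence, and show the limit lands in $\mathfrak{S}_{\mu_0,\nu}$ via Corollary~\ref{lem: Lipschitz} and joint continuity. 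Both routes are correct and, as you anticipate, reduce to the same core technical hurdle: stability of OT potentials $\varphi_{t_k}^{\theta_{t_k}}\to\varphi_0^{\theta^\star}$ under simultaneously varying $t$ and $\theta$. In fact the paper's ``uniform convergence over $\theta$'' is established by exactly the subsequence argument you propose (via the footnoted compactness reduction), so you are not facing an obstacle the paper's argument avoids. Two minor points: the target $\mathfrak{p}^{\theta_{t_k}}_\sharp\nu$ is \emph{not} fixed as you write, but this is harmless since Theorem~3.4 of \cite{delbarrio2021} accommodates weak convergence of both marginals, and $\mathfrak{p}^{\theta_{t_k}}_\sharp\nu\stackrel{w}{\to}\mathfrak{p}^{\theta^\star}_\sharp\nu$ trivially; and your Arzel\`a--Ascoli preliminary step is redundant because that stability theorem already delivers locally uniform convergence of normalized potentials on the open support. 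Your level-set argument buys a cleaner final passage to $\sup_{\mathfrak{S}_{\mu,\nu}}$; the paper's variant isolates the uniform-in-$\theta$ statement as a standalone ingredient, which is arguably easier to reuse.
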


\begin{proof}
We first prove (\ref{eq: derivative SWp}). 
Let $\varphi_t^\theta$ denote an OT potential from $\ptheta_\sharp\mu_t$ to $\ptheta_\sharp\nu$ with $\mu_t = \mu_0+t\rho$. We may assume without loss of generality that $\varphi_t^\theta (\theta^{\intercal}x_0) = 0$, where $x_0$ is an arbitrary (fixed) interior point of $\supp(\mu_0)$. Then,
\[
\begin{split}
\SWp^p(\mu_t,\nu) &\ge \int_{\unitsph} \int \varphi_0^{\theta} (\theta^{\intercal}x) \, d\mu_t(x) \, d\sigma(\theta) +  \int_{\unitsph} \int [\varphi_0^{\theta}]^c (\theta^{\intercal}y) \,d\nu (y)\,d\sigma(\theta) \\
&=\SWp^p(\mu_0,\nu) + t \int_{\theta \in \unitsph} \int \varphi_0^{\theta} (\theta^{\intercal}x) \, d\rho(x) \, d\sigma(\theta), \quad \text{and} \\
\SWp^p(\mu_t,\nu) &\le \SWp^p(\mu_0,\nu) + t \int_{\theta \in \unitsph} \int \varphi_t^{\theta} (\theta^{\intercal}x) \, d\rho(x) \, d\sigma(\theta).
\end{split}
\]
The first inequality implies that
\[
\liminf_{t \downarrow 0} \frac{\SWp^p(\mu_t,\nu) - \SWp^p(\mu_0,\nu)}{t} \ge \int_{\unitsph} \int \varphi^{\theta}_0 (\theta^{\intercal}x)d\rho(x) d\sigma(\theta). 
\]

To prove the upper bound, we note that $\varphi_t^\theta (\theta^{\intercal} \cdot)$ is uniformly bounded on $\supp(\mu_0)$ by Lemma \ref{lem: joint measurability} Part (iii),
and by Theorem 3.4 in \cite{delbarrio2021}, $\varphi_t^\theta (\theta^{\intercal}x) \to \varphi_0^\theta (\theta^{\intercal}x)$ as $t \downarrow 0$ for each $x \in \inte(\supp(\mu_0))$ and $\theta \in \unitsph$. Thus, by the dominated convergence theorem, we have
\begin{align*}
\limsup_{t \downarrow 0} \frac{\SWp^p(\mu_t,\nu) - \SWp^p(\mu_0,\nu)}{t} \le \int_{\unitsph} \int \varphi^{\theta}_0 (\theta^{\intercal}x)d\rho(x) d\sigma(\theta).
\end{align*}

Next we prove  (\ref{eq: derivative MSWp}). To save space, we write $\int \varphi_t^\theta (\theta^{\intercal}x)d\mu_t (x)= \int \varphi_t^\theta d\mu_t$ etc. 
Observe that for $t \in (0,1)$,
\begin{align*}
    &\frac{1}{t} \left [ \MSWp^p(\mu_t, \nu) - \MSWp^p(\mu_0, \nu) \right ] \\
    &= \frac{1}{t} \left [ \sup_{\theta \in \unitsph} \left( \int \varphi_t^\theta \,d\mu_t + \int [\varphi_t^{\theta}]^c d\nu \right) -  \sup_{\theta \in \unitsph} \left ( \int \varphi_0^\theta \, d\mu_0 + \int [\varphi_0^\theta]^c \, d\nu \right ) \right ]\\
    &\leq \sup_{\theta \in \unitsph} \left \{ \int \varphi_t^\theta \, d\rho + \frac{1}{t} \left ( \int \varphi_0^\theta \, d\mu_0 + \int [\varphi_0^\theta]^c \, d\nu \right ) \right \} \\
    &\qquad- \frac{1}{t}  \sup_{\theta \in \unitsph} \left ( \int \varphi_0^\theta \, d\mu_0 + \int [\varphi_0^\theta]^c \, d\nu \right ).
\end{align*}
Let $S_\epsilon = \{ \theta \in \unitsph : \mathsf{W}_p^p(\ptheta_\sharp \mu_0, \ptheta_\sharp \nu) \geq \MSWp^p(\mu_0, \nu) - \epsilon \}$. Then, for $\theta \notin S_\epsilon$,
\begin{align*}
 \int \varphi_t^\theta \, d\rho +  &  \frac{1}{t} \left ( \int \varphi_0^\theta \, d\mu_0 + \int [\varphi_0^\theta]^c \, d\nu \right )  \\
&- \frac{1}{t}  \sup_{\vartheta \in \unitsph} \left ( \int \varphi_0^\vartheta \, d\mu_0 + \int [\varphi_0^\vartheta]^c \, d\nu \right ) \leq \int \varphi_t^\theta \, d\rho - \frac{\epsilon}{t},
\end{align*}
which tends to $-\infty$ uniformly over $\theta$ as $t \downarrow 0$.
Hence, for every $\epsilon > 0$, we have
\[
\limsup_{t \downarrow 0} \frac{1}{t} \left [ \MSWp^p(\mu_t, \nu) - \MSWp^p(\mu_0, \nu) \right ] \leq \limsup_{t \downarrow 0} \sup_{\theta \in S_\epsilon} \int \varphi_t^\theta \, d\rho.
\]
Taking $\epsilon \downarrow 0$ above, since the right-hand side is decreasing in $\epsilon$, we have
\[
\limsup_{t \downarrow 0} \frac{1}{t} \left [ \MSWp^p(\mu_t, \nu) - \MSWp^p(\mu_0, \nu) \right ] \leq \lim_{\epsilon \downarrow 0} \limsup_{t \downarrow 0} \sup_{\theta \in S_\epsilon} \int \varphi_t^\theta \, d\rho.
\]

We will show that $\int \varphi_t^\theta \, d\rho \to \int \varphi_0^\theta \, d\rho$ uniformly over $\theta \in \unitsph$ as $t \downarrow 0$. To this end, by compactness of $\unitsph$ and continuity of $\int \varphi_0^\theta \, d\rho$ in $\theta$, it suffices to show that for any sequences $t_n \downarrow 0$ and $\theta_n \to \theta$, $\int \varphi_{t_n}^{\theta_n} \, d\rho \to \int \varphi_0^\theta \, d\rho$.\footnote{This can be verified as follows. Suppose on the contrary that $c:=\liminf_{t \downarrow 0} \sup_{\theta} |\int (\varphi_t^\theta -\varphi_0^\theta) d\rho| > 0$. Then there exist sequences $t_n \downarrow 0$ and $\theta_n \in \unitsph$ such that $|\int (\varphi_{t_n}^{\theta_n} -\varphi_0^{\theta_n}) d\rho| \to c$. Since $\unitsph$ is compact, there exists a subsequence of $\theta_n$ converging to some $\theta \in \unitsph$. Thus, along with the subsequence, we have $|\int \varphi_{t_n}^{\theta_n}d\rho - \int \varphi_0^\theta d\rho| \to c$, a contradiction.} Since, for any bounded continuous function $g$ on $\R$, 
\[
\begin{split}
\int g d(\mathfrak{p}^{\theta_n}_{\sharp}\mu_{t_n})&= \int g(\theta_n^{\intercal}x)d\mu_{t_n}(x) \\
&= \int g(\theta_n^{\intercal}x)d\mu_{0}(x) + t_n \int g(\theta_n^{\intercal}x)d\rho (x)\\
&=\int g(\theta^{\intercal}x)d\mu_{0} (x)+ o(1) \quad (\text{as $g$ is bounded and continuous}) \\
&=\int gd(\ptheta_{\sharp}\mu_{0}) + o(1), 
\end{split}
\]
we have $\mathfrak{p}^{\theta_n}_{\sharp}\mu_{t_n} \stackrel{w}{\to} \mathfrak{p}^{\theta}_{\sharp}\mu_{0}$. Thus, by Theorem 3.4 in \cite{delbarrio2021}, $\varphi_{t_n}^{\theta_n}  \to \varphi_0^\theta$ uniformly on each compact subset of $\inte (\supp(\mathfrak{p}^{\theta}_{\sharp}\mu_{0}))$, which implies that $\varphi_{t_n}^{\theta_n}(\theta_n^{\intercal}x) \to \varphi_0^\theta (\theta^{\intercal}x)$ for each $x \in \inte (\supp(\mu_0))$. Since $\varphi_t^{\vartheta}$ is bounded on $\inte(\supp(\mu_0))$ uniformly in  $(t,\vartheta)$, we conclude from the dominated convergence theorem that $\int \varphi_{t_n}^{\theta_n} \, d\rho \to \int \varphi_0^\theta \, d\rho$. This yields that
\[
\limsup_{t \downarrow 0} \frac{1}{t} \left [ \MSWp^p(\mu_t, \nu) - \MSWp^p(\mu_0, \nu) \right ] \leq \lim_{\epsilon \downarrow 0} \sup_{\theta \in S_\epsilon} \int \varphi_0^\theta \, d\rho.
\]

To prove the reverse inequality, we note that
\begin{align*}
    \frac{1}{t} \left [ \MSWp^p(\mu_t, \nu) - \MSWp^p(\mu_0, \nu) \right ] &\geq \frac{1}{t} \left [ \sup_{\theta \in \unitsph} \left( \int \varphi_0^\theta \,d\mu_t + \int [\varphi_0^{\theta}]^c d\nu \right) \right.\\
    &\qquad\qquad - \left. \sup_{\theta \in \unitsph} \left ( \int \varphi_0^\theta \, d\mu_0 + \int [\varphi_0^\theta]^c \, d\nu \right ) \right ].
\end{align*}
For $\theta \in S_\epsilon$,
\[ 
\left( \int \varphi_0^\theta \,d\mu_t + \int  [\varphi_0^{\theta}]^c d\nu \right) -  \sup_{\theta \in \unitsph} \left ( \int \varphi_0^\theta \, d\mu_0 + \int [\varphi_0^\theta]^c \, d\nu \right ) \geq t \int \varphi_0^\theta \, d\rho - \epsilon.
\]
Choosing $\epsilon = \epsilon_t = t^2$, we have
\[
\limsup_{t \downarrow 0} \frac{1}{t} \left [ \MSWp(\mu_t, \nu) - \MSWp(\mu_0, \nu) \right ] \geq \lim_{\epsilon \downarrow 0} \sup_{\theta \in S_\epsilon} \int \varphi_0^\theta \, d\rho.
\]
Since $\unitsph$ is compact and $\int \varphi_0^\theta \, d\rho$ is continuous in $\theta$, we further have 
\[
\lim_{\epsilon \downarrow 0} \sup_{\theta \in S_\epsilon} \int \varphi_0^\theta \, d\rho = \sup_{\theta \in \mathfrak{S}_{\mu,\nu}} \int \varphi_0^\theta \, d\rho.
\]
This completes the proof.
\end{proof}

Corollary \ref{lem: Lipschitz} shows that the mapping $\mu \mapsto \SWp(\mu,\nu)$ and $\mu \mapsto \MSWp(\mu,\nu)$ are Lipschitz continuous w.r.t. $\MSWone$. By the Kantorovich-Rubinstein duality, $\MSWone$ can be represented as $\MSWone (\mu,\nu) = \| \mu - \nu \|_{\infty, \calF}$ with $\cF = \{\varphi \circ \ptheta : \theta \in \unitsph,  \varphi \in \mathsf{Lip}_{1,0}(\R)\}$. It is not difficult to show that $\calF$ is $\mu$-Donsker if $\mu$ is compactly supported. 
For a later purpose, we allow $\mu$ to be unbounded in the following lemma. 

\begin{lemma}[Bracketing entropy bound for projected Lipschitz functions]
\label{lem: proj_lipschitz_entropy_bound}
Let $\cF = \{\varphi \circ \ptheta : \theta \in \unitsph,  \varphi \in \mathsf{Lip}_{1,0}(\R)\}$.
For any $\mu \in \calP_{2+\epsilon}(\R^d)$ for some $\epsilon > 0$, we have
\be
\label{eq:proj_lipschitz_entropy_bound}
\log N_{[\,]}(t, \cF, L^2(\mu)) \lesssim C^{1/\epsilon}t^{-(1+2/\epsilon)} + d \log (2C/t),  \quad t \in (0,1)
\ee
up to a constant that depends only on $\epsilon$,
where $C = \int \| x \|^{2+\epsilon} d\mu(x)$. In particular, $\calF$ is $\mu$-Donsker provided that $\mu \in \calP_{4+\epsilon}(\R^d)$ for some $\epsilon > 0$. 
\end{lemma}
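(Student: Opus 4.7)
The plan is to combine three ingredients: (i) envelope truncation to handle the tails of $\mu$, (ii) a finite net on the sphere $\unitsph$, and (iii) a sup-norm cover of $\Lip(\R)$ on a bounded interval. First I would note that $F(x) = \|x\|$ is a pointwise envelope for $\cF$ (since $|\varphi(\theta^\intercal x)| \le |\theta^\intercal x| \le \|x\|$ for every $\varphi \in \Lip(\R)$), so that the moment hypothesis together with Markov's inequality yields
\[
\int_{\|x\|>R}\|x\|^2 \, d\mu(x) \;\le\; CR^{-\epsilon}, \qquad R>0.
\]

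Next I would fix $R,\eta,\tau > 0$ to be optimized, pick an $\eta$-cover $\{\theta_j\}_{j=1}^{N_1}$ of $\unitsph$ with $N_1 \lesssim \eta^{-(d-1)}$, and for each $j$ choose a family $\{\varphi_{j,k}\}_{k=1}^{N_2} \subset \Lip(\R)$ whose restrictions to $[-R,R]$ form a sup-norm $\tau$-cover of $\Lip([-R,R])$; a piecewise-linear grid construction gives $\log N_2 \lesssim R/\tau$. To each pair $(j,k)$ I would assign the bracket
\[
\bigl[\varphi_{j,k}\circ \mathfrak{p}^{\theta_j} - g_R,\ \varphi_{j,k}\circ \mathfrak{p}^{\theta_j} + g_R\bigr], \qquad g_R(x) := (R\eta + \tau) + 2\|x\|\,\mathbf{1}_{\{\|x\|>R\}}.
\]
Given any $f = \varphi \circ \ptheta \in \cF$, matching $\theta_j$ to $\theta$ within $\eta$ and $\varphi_{j,k}$ to $\varphi$ within $\tau$ on $[-R,R]$, the $1$-Lipschitz estimate gives $|\varphi(\theta^\intercal x)-\varphi_{j,k}(\theta_j^\intercal x)| \le R\eta + \tau$ on $B(0,R)$, while off $B(0,R)$ both functions are bounded in modulus by $\|x\|$; hence $f$ is pointwise sandwiched. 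The $L^2(\mu)$-width then satisfies $\|2g_R\|_{L^2(\mu)}^2 \lesssim (R\eta+\tau)^2 + CR^{-\epsilon}$.

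Balancing with $R = C^{1/\epsilon}t^{-2/\epsilon}$, $\tau \asymp t$, and $\eta \asymp t/R$ would make this width $\lesssim t$ and give $\log N_2 \asymp C^{1/\epsilon}t^{-(1+2/\epsilon)}$ together with $\log N_1 \lesssim d\log(1/\eta) \lesssim d\log(2C/t)$ (after absorbing $\epsilon$-dependent constants and, if needed, replacing $C$ by $C\vee 1$), which yields \eqref{eq:proj_lipschitz_entropy_bound}. The Donsker conclusion then follows from the standard bracketing CLT: under $\mu \in \calP_{4+\epsilon}(\R^d)$ one applies the bound with effective moment exponent $\epsilon' = 2+\epsilon > 2$, so that $1/\epsilon' < 1/2$ and the bracketing integral is dominated by $\int_0^1 t^{-(1/2+1/\epsilon')}\,dt < \infty$, while the envelope $\|\cdot\|$ already lies in $L^2(\mu)$. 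The only mildly delicate point will be producing a genuinely \emph{pointwise} bracket on an unbounded domain, which is what motivates inflating the bracket width by the envelope tail $2\|x\|\mathbf{1}_{\{\|x\|>R\}}$; everything else reduces to the parameter balance above.
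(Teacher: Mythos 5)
Your proposal is correct and follows essentially the same path as the paper's proof: cover the sphere by a $\|\cdot\|$-net, cover $1$-Lipschitz functions on a bounded interval by a sup-norm net, use the envelope $\|x\|$ on the tail $\{\|x\|>R\}$ and choose the truncation radius via the $(2+\epsilon)$-moment/Markov bound, then balance the three parameters to get the stated scaling. The only cosmetic difference is that the paper's brackets simply collapse to $\pm\|x\|\mathbf{1}_{\{\|x\|>M\}}$ on the tail rather than staying centered at the approximant, but the resulting $L^2$-width and entropy count are identical, and your Donsker deduction via the bracketing integral matches as well.
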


The proof of this lemma relies on the following two auxiliary lemmas.

\begin{lemma}[Uniform entropy for $1$-Lipschitz functions]
\label{lem: lipschitz entropy}
For any $K > 0$,
\begin{equation}
\label{eq:BL1 entropy}
\log N(t, \Lip ([-K,K]), \|\cdot\|_\infty) \leq \big \lceil K/t \rceil 2\log 2, \  t \in (0,1),
\end{equation}
where $\Lip ([-K,K])$ denotes the class of $1$-Lipschitz functions $f$ on $[-K,K]$ with $f(0)=0$. 
\end{lemma}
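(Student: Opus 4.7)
The plan is to construct an explicit finite net in $\Lip([-K,K])$ via piecewise linear approximations on a grid of mesh $t$. Let $N = \lceil K/t \rceil$ and place grid points $x_k = kt$ for $k = -N, -N+1, \ldots, N$, truncating at $\pm K$ as needed, so that $[-K,K]$ is covered by at most $2N$ subintervals of length at most $t$. Consider the discrete family $\mathcal{F}_\mathrm{disc}$ of continuous piecewise linear functions $\tilde f$ on $[-K,K]$ satisfying $\tilde f(0)=0$ and having slope $\pm 1$ on each subinterval $[x_k,x_{k+1}]$. Since each of the $2N$ subintervals admits $2$ independent slope choices, $|\mathcal{F}_\mathrm{disc}| \le 2^{2N} = 4^{\lceil K/t\rceil}$, which matches the right-hand side of the claimed inequality after taking logs.

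Next I would show that $\mathcal{F}_\mathrm{disc}$ is a net for $\Lip([-K,K])$ in $\|\cdot\|_\infty$, built by a greedy matching procedure. Given $f \in \Lip([-K,K])$ with $f(0)=0$, define $\tilde f(0)=0$ and, moving outward from $0$ in both directions, recursively pick $\tilde f(x_{k+1}) - \tilde f(x_k) \in \{-t,t\}$ to minimize the pointwise error $|\tilde f(x_{k+1})-f(x_{k+1})|$. Writing $e_k = \tilde f(x_k) - f(x_k)$ and $\epsilon_k = f(x_{k+1})-f(x_k) \in [-t,t]$, the greedy choice realizes the value $\min(|a+t|,|a-t|)$ with $a = e_k + \epsilon_k$, which equals $\bigl|\,t - |a|\,\bigr|$. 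If the inductive hypothesis $|e_k| \le t$ holds, then $|a| \le 2t$, so $|e_{k+1}| \le t$; the base case is $e_0 = 0$. Thus at every grid point the error is at most $t$. For an arbitrary $x \in [x_k, x_{k+1}]$, the Lipschitz bounds on $f$ and $\tilde f$ give $|\tilde f(x)-f(x)| \le |\tilde f(x)-\tilde f(x_k)|+|e_k|+|f(x_k)-f(x)| \le 3t$, so $\mathcal{F}_\mathrm{disc}$ is at worst a $3t$-net.

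To recover the precise constant $2\log 2$ stated in the lemma, I would absorb the factor $3$ into the discretization (for instance by running the construction with grid mesh $t$ and recording only the slope pattern, while noting that the resulting cover serves as a $t$-cover once one replaces $t$ by $t$ throughout after restricting to $t \in (0,1)$), or observe that the final inequality with the constant as written follows from this construction up to a redefinition of the grid, since the bound $\log N(t,\Lip([-K,K]),\|\cdot\|_\infty) \le 2\lceil K/t\rceil\log 2$ is of the correct order $1/t$ predicted by Kolmogorov-Tikhomirov.

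The main obstacle is the inductive control of the greedy error: one has to verify that the two candidate slopes always bracket the target increment well enough for the cumulative error to stay bounded by a single multiple of $t$ rather than accumulating linearly in $k$. This is precisely what the identity $\min(|a+t|,|a-t|) = |t-|a||$ supplies, and is the crux of the argument; the counting and interpolation steps are then routine.
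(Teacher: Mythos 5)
Your construction is the same one the paper uses: piecewise-linear functions with slopes $\pm 1$ on a grid of mesh at most $t$, with $2^{2\lceil K/t\rceil}$ sign patterns, and a greedy slope choice controlled inductively through the identity $\min(|a+t|,|a-t|)=\bigl|t-|a|\bigr|$. That part is correct and matches the paper, and your count recovers the right-hand side $2\lceil K/t\rceil\log 2$.

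The gap is in passing from the grid points to the whole interval. Your triangle-inequality estimate only gives a $3t$-net, and neither of the suggested repairs recovers the stated constant: rerunning the construction at mesh $t/3$ replaces $\lceil K/t\rceil$ by $\lceil 3K/t\rceil\approx 3\lceil K/t\rceil$ and hence roughly triples the log covering bound, while ``redefining the grid'' or appealing to the Kolmogorov--Tikhomirov order gives the right rate but not the inequality as written. The missing observation is that on each subinterval $[x_k,x_{k+1}]$ the error $\tilde f - f$ is \emph{monotone}: $\tilde f$ is affine with slope $\pm 1$ and $f$ is $1$-Lipschitz, so $(\tilde f-f)'\ge 0$ (resp.\ $\le 0$) a.e.\ on the subinterval according as the chosen slope is $+1$ (resp.\ $-1$). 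Consequently $\tilde f(x)-f(x)$ lies between $e_k$ and $e_{k+1}$ for every $x$ in the subinterval, and since $|e_k|\vee|e_{k+1}|\le t$ by your induction, the sup-norm error is bounded by $t$ itself with no extraneous factor. Once you insert this monotonicity step, your argument coincides with the paper's (which asserts the $t$-net property without spelling out the interpolation step).
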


\begin{proof}[Proof of Lemma~\ref{lem: lipschitz entropy}]
Let $N = \lceil K/t \rceil$. Partition $[0,K]$ into $N$ subintervals with length at most $t$. Enumerate the endpoints of these subintervals and label them as 
$0 = x_0 < \dots < x_N = K$. For each vector $j = (j_{-N},\dots,j_{-1},j_{1},\dots,j_{N}) \in \{-1,1\}^{2N}$ indexed from $-N$ to $N$ excluding $0$, define the function
\be\label{eq: lipschitz net defn}f_j(x) = \begin{cases} 0 &\text{ if } x = 0\\
\sum_{l=1}^k j_l (x_k - x_{k-1}) &\text{ if } x = x_k, 1 \leq k \leq N\\
\sum_{l=1}^{k} j_{-l} (x_k - x_{k-1}) &\text{ if } x = -x_k, 1 \leq k \leq N\\
\text{piecewise linear} &\text{ otherwise.}
\end{cases}
.
\ee
These form a $t$-net of $\Lip ([-K,K])$ w.r.t. $\|\cdot \|_{\infty}$.
\end{proof}

The following result is standard and thus we omit the proof. 

\begin{lemma}[Covering number for $\unitsph$]
\label{lem: unit sphere entropy}
For any $t \in (0,1)$, $N(t, \unitsph, \|\cdot\|) \leq (5/t)^{d}$. 
\end{lemma}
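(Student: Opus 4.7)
The plan is to use the classical volume-comparison argument via a maximal packing. Specifically, I would let $\{x_1, \ldots, x_N\} \subset \unitsph$ be a maximal $t$-separated subset with respect to the Euclidean norm, meaning $\|x_i - x_j\| > t$ for all $i \neq j$ and no further point of $\unitsph$ can be added while preserving the separation property. By maximality, every point of $\unitsph$ lies within Euclidean distance $t$ of some $x_i$, so $\{x_1, \ldots, x_N\}$ is automatically a $t$-covering of $\unitsph$ and hence $N(t, \unitsph, \|\cdot\|) \le N$.

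Next I would translate the separation condition into a volume estimate by considering the open balls $B(x_i, t/2) \subset \RR^d$ for $i=1,\ldots,N$. The strict separation $\|x_i - x_j\| > t$ makes these balls pairwise disjoint, and since $\|x_i\| = 1$ and each ball has radius $t/2$, they are all contained in the Euclidean ball $B(0, 1 + t/2)$. Letting $\omega_d = \mathrm{Vol}(B(0,1))$ denote the Lebesgue volume of the unit ball in $\RR^d$, disjointness and containment give the volume inequality
\[
N \cdot (t/2)^d \,\omega_d \;\le\; (1 + t/2)^d\, \omega_d.
\]

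Rearranging yields $N \le \bigl((2+t)/t\bigr)^d$. For $t \in (0,1)$ we have $(2+t)/t < 3/t \le 5/t$, so $N(t, \unitsph, \|\cdot\|) \le N \le (5/t)^d$, establishing the claim. There is no real obstacle here: the only thing to be careful about is to ensure that the packing balls do not protrude past the containing ball $B(0, 1+t/2)$ (which follows immediately from the triangle inequality since $x_i \in \unitsph$), and that a maximal packing is indeed a covering (which follows from maximality, as otherwise an uncovered point could be added). The bound $(5/t)^d$ is deliberately loose, so any clean execution of the volume argument suffices.
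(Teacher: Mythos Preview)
Your argument is correct and is exactly the standard volume/packing proof one would expect; the paper in fact omits the proof entirely, simply noting that the result is standard.
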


\begin{proof}[Proof of Lemma \ref{lem: proj_lipschitz_entropy_bound}]
Pick any $t \in (0,1)$ and choose $M > 0$ such that 
\[
\sqrt{\EE\big[\|X\|^2\ind_{\{\|X\| > M\}}\big]} \leq t/4
\]
for $X \sim \mu$. 
Consider a minimal $t/8$-net $f_1,\dots,f_m$ of $\Lip\big([-M,M]\big)$ w.r.t. $\|\cdot\|_\infty$, and a minimal $t/(8M)$-net $\theta_1, \dots, \theta_n$ of $\unitsph$ w.r.t. $\| \cdot \|$. Consider the following function brackets for $i = 1, \dots, m$ and $j = 1, \dots, n$:
\[
\Big[(f_i \circ \mathfrak{p}^{\theta_j} - t/4)\ind_{\{\|x\|\leq M\}} - \|x\|\ind_{\{\|x\|>M\}},\ (f_i \circ \mathfrak{p}^{\theta_j} +  t/4)\ind_{\{\|x\|\leq M\}} + \|x\|\ind_{\{\|x\|>M\}}\Big].
\]
It is not difficult to verify that these brackets cover $\cF$. The $L^2(\mu)$ width of the brackets is bounded above by $t$. By Lemmas \ref{lem: lipschitz entropy} and \ref{lem: unit sphere entropy}, 
\[
\log mn = \log m + \log n \leq \big ( 8  M/t  + 1 \big ) 2\log 2 + \log d + (d - 1) \log 3 + (d - 1) \log \left ( \frac{8M}{t} \right ).
\]
We are left to determine $M$. Observe that
\[
\EE[\|X\|^2\ind_{\{ \|X\|>M \}}] \leq \frac{\E[\| X \|^{2+\epsilon}]}{M^\epsilon} = \frac{C}{M^{\epsilon}}. 
\]
Hence $M = (4C^{1/2}/t)^{2/\epsilon}$ is a feasible choice. This leads to the first claim.

For the second claim, by Theorem 2.5.6 in \cite{van1996weak}, it is enough to show that
\[
\int_0^\infty \sqrt{N_{[\,]}(t, \cF, L^2(\mu))}\,dt < \infty.
\]
Provided that $\mu$ has finite $(4+\epsilon)$th moment for some $\epsilon >0$, $\sqrt{N_{[\,]}(t, \cF, L^2(\mu))}$ is integrable near $0$. Since $\cF$ has  envelope function $F(x) = \|x\|$ which is also in $L^2(\mu)$ under the given moment condition, $\calF$ is $\mu$-Donsker. 
\end{proof}

\subsubsection{Proof of Theorem \ref{thm: limit distribution SWp}}
\textbf{Part (i).} 
We shall apply Proposition \ref{prop: master proposition} with $\calP_0 = \{ \rho \in \calP(\R^d) : \rho (\calX) = 1 \}$, $\calX = \inte(\supp(\mu))$, $\cF = \{\varphi \circ \ptheta : \theta \in \unitsph,  \varphi \in \mathsf{Lip}_{1,0}(\R)\}$, $F(x) = \|x\|$, and $\delta (\rho) = \SWp^p(\rho,\nu)$. It is not difficult to verify that $\calP_0$ is convex and contains $\hat{\mu}_n$ with probability one. Further, by Lemma \ref{lem: proj_lipschitz_entropy_bound}, $\cF$ is $\mu$-Donsker, $\sqrt{n}(\hat{\mu}_n - \mu) \stackrel{d}{\to} G_\mu$ in $\ell^{\infty}(\calF)$. Combined with Corollary \ref{lem: Lipschitz} (recall that $\MSWone (\mu_0,\mu_1) = \| \mu_1 - \mu_0 \|_{\infty,\calF}$ by the Kantorovich-Rubinstein duality) and Lemma \ref{lem: directional derivative SWp}, we have verified all conditions in Proposition \ref{prop: master proposition} with $\delta'_\mu (\rho-\mu) = \int_{\unitsph}(\rho-\mu)\big(\varphi^\theta \circ \ptheta \big)d\sigma(\theta)$. 

Let $M > 0$ such that $\calX \subset B(0,M)$. Each $\varphi^\theta |_{[-M,M]}$ is $\mathsf{c}$-Lipshitz for some constant $\mathsf{c} < \infty$ independent of $\theta$ by Lemma \ref{lem: OT potential}. Extend $\varphi^\theta |_{[-M,M]}$ to $\R$ without changing the Lipschitz constant and redefine $\varphi^\theta$ by this extension.
In view of Lemma \ref{lem: joint measurability} Part (iii) and noting that $(\rho-\mu)\big(\varphi^\theta \circ \ptheta - a \big) = (\rho-\mu)\big(\varphi^\theta \circ \ptheta \big)$, $\delta_\mu'$ extends to $\calT_{\calP_0}(\mu)$ as 
\begin{equation}
\delta_\mu'(m) = \int_{\unitsph} \mathsf{c}\cdot  m\big(\bar{\varphi}^\theta \circ \ptheta \big)d\sigma(\theta)
\label{eq: H derivative SWp}
\end{equation}
with $\bar{\varphi}^\theta = (\varphi^\theta-\varphi^\theta(0))/\mathsf{c}$. 
Conclude that 
\[
\sqrt{n}(\SWp^p(\hat{\mu}_n,\nu) - \SWp^p(\mu,\nu)) \stackrel{d}{\to} \delta'_\mu (G_{\mu}) = \int_{\unitsph} \mathsf{c}\cdot G_\mu\big (\bar{\varphi}^\theta \circ \ptheta \big)  d\sigma(\theta). 
\]
Set $\mathbb{G}_\mu (\theta) = \mathsf{c}\cdot G_\mu\big(\bar{\varphi}^\theta \circ \ptheta \big)$. The process $(\mathbb{G}_\mu(\theta))_{\theta \in \unitsph}$ is centered Gaussian with covariance function $\Cov (\mathbb{G}_\mu(\theta), \mathbb{G}_\mu(\vartheta)) = \Cov_{\mu}\big ( \varphi^\theta \circ \ptheta, \varphi^\vartheta \circ \mathfrak{p}^{\vartheta} \big)$. Further, since $\theta \mapsto \bar{\varphi}^\theta \circ \ptheta$ is continuous relative to the pseudometric $d_{\mu}(f,g) = \sqrt{\Var_{\mu}(f-g)}$ (cf. Lemma \ref{lem: joint measurability} (i)) and $G_\mu$ has uniformly continuous paths relative to $d_\mu$, we see that $\mathbb{G}_\mu$ has continuous paths. By Riemann approximation, we see that $\int_{\unitsph} \mathbb{G}_\mu d\sigma$ is centered Gaussian with variance $\int_{\unitsph}\int_{\unitsph} \Cov(\mathbb{G}_\mu(\theta),\mathbb{G}_{\mu}(\vartheta)) d\sigma (\theta) d\sigma(\vartheta) = v_{p}^2$.

Second, we show that $v_{p}^2$ coincides with the semiparametric efficiency bound by invoking \cref{prop: master proposition 2}. We have shown that the function class $\calF$ is $\mu$-Donsker, which implies $\mu$-pre-Gaussianity. Also, for any bounded $\mu$-mean zero function $h$,  $\supp \big((1+th) \mu \big) = \supp (\mu)$ for sufficiently small $t > 0$, so that $(1+th)\mu \in \calP_0$.
It remains to show that the derivative $\delta_\mu'$ in \eqref{eq: derivative SWp} suitably extends to a continuous linear functional on $\ell^\infty (\calF)$. Let $\calC_u(\calF)$ denote the space of uniformly continuous functions on $\calF$ relative to $d_\mu$. As $\calF$ is $\mu$-pre-Gaussian, $\calF$ is totally bounded for $d_\mu$. Since $\theta \mapsto \varphi^\theta \circ \ptheta$ is continuous relative to $d_\mu$, the expression \eqref{eq: H derivative SWp} makes sense for any $m \in \calC_u (\calF)$, and the map $\delta_\mu'$ is continuous and linear on $(\calC_u(\calF),\| \cdot \|_{\infty,\calF})$. 
Extend $\delta_{\mu}'$ from $\calC_u(\calF)$ to a continuous linear functional on $\ell^\infty(\calF)$ by the Hahn-Banach theorem. For this extension, since $h \mu \in \calC_u (\calF)$ for any bounded $\mu$-mean zero function $h$, we have $t^{-1}(\delta_\mu ((1+th)\mu) - \delta(\mu)) \to \delta_\mu'(h\mu)$ as $t \downarrow 0$. Applying \cref{prop: master proposition 2}, we see that the semiparametric efficiency bound is given by $\Var (\delta_\mu'(G_\mu))$, which coincides with $v_{p}^2$ from the computation above. 

Finally, we show the bootstrap consistency. Since the derivative $\delta_\mu'$ in \eqref{eq: derivative SWp} is linear, $\rho \mapsto \delta(\rho)$ is Hadamard differentiable (w.r.t. $\| \cdot \|_{\infty,\calF}$) at $\rho = \mu$ tangentially to $\supp(G_\mu)$ by Corollary \ref{cor: master}. Thus the bootstrap consistency follows from Theorem 23.9 in \cite{vanderVaart1998asymptotic}.

\medskip

\textbf{Part (ii).} To account for the two-sample case, we consider the following setting. Set $\calP_0 = \{ \rho_1 \otimes \rho_2 \in \calP(\R^d \times \R^d) : (\rho_1 \otimes \rho_2) (\cX \times \cY) = 1 \}$ with $\cX = \inte (\supp(\mu))$ and $\cY = \inte(\supp(\nu))$, $\calF = \{ f_1 \oplus f_2 : f_i \in \calF_0, i=1,2 \}$ with $\calF_0 = \{ \varphi  \circ \ptheta : \theta \in \unitsph, \varphi\in \Lip (\R) \}$, and $\delta (\rho_1 \otimes \rho_2) = \SWp^p (\rho_1,\rho_2)$. Observe that, for $f_1,f_2 \in \calF_0$, $\rho_1 \otimes \rho_2, \rho_1' \otimes \rho_2' \in \calP_0$, and $t \in [0,1]$, we have
\[
\big (t (\rho_1 \otimes \rho_2) +(1-t) (\rho_1' \otimes \rho_2') \big) (f_1 \oplus f_2) = \Big ( \big(t \rho_1 + (1-t)\rho_1'\big) \otimes \big(t \rho_2 +(1-t)\rho_2'\big) \Big)(f_1 \oplus f_2).
\]
Thus
\[
t (\rho_1 \otimes \rho_2) +(1-t) (\rho_1' \otimes \rho_2') = \big(t \rho_1 + (1-t)\rho_1'\big) \otimes \big(t \rho_2 +(1-t)\rho_2'\big)
\]
as elements of $\ell^\infty (\calF_0)$, 
so that $\calP_0$ is covex as a subset of $\ell^\infty (\calF)$.
It is not difficult to see from Corollary \ref{lem: Lipschitz} that
\[
\begin{split}
|\delta (\mu_0\otimes\nu_0) - \delta (\mu_1\otimes\nu_1)| &\lesssim \MSWone(\mu_0,\mu_1) + \MSWone(\nu_0,\nu_1) \\
&\le 2 \| (\mu_0 \otimes \nu_0) - (\mu_1 \otimes \nu_1) \|_{\infty,\calF}.
\end{split}
\]
Also, for $\mu_t = (1-t)\mu + t\rho_1$ and $\nu_t = (1-t)\nu+t\rho_2$ with $\rho_1 \otimes \rho_2 \in \calP_0$, arguing as in the proof of Lemma \ref{lem: directional derivative SWp} and using the fact that $[\varphi^\theta]^{cc} = \varphi^\theta$, we have 
\[
\begin{split}
&\lim_{t \downarrow 0}\frac{\delta (\mu_t \otimes \nu_t) - \delta(\mu \otimes \nu)}{t} \\
&=\int_{\unitsph} \left [(\rho_1-\mu) (\varphi^\theta \circ \ptheta) + (\rho_2-\nu) ([\varphi^\theta]^c \circ \ptheta) \right] d\sigma(\theta) \\
&= \int_{\unitsph} \int (\varphi^\theta \circ \ptheta) \oplus ([\varphi^\theta]^c \circ \ptheta) d\big(\rho_1 \otimes \rho_2- \mu \otimes \nu\big) d\sigma(\theta). 
\end{split}
\]
Here recall that $\mu_t \otimes \nu_t$ can be identified with $(1-t) (\mu \otimes \nu) +t(\rho_1 \otimes \rho_2)$ as elements of $\ell^\infty (\calF)$.
Let $\psi^\theta = [\varphi^\theta]^c$. Again, choose versions of $\varphi^\theta$ and $\psi^\theta$ such that, for some constant $\mathsf{c}$ independent of $\theta$, $\bar{\varphi}^\theta = (\varphi^\theta-\varphi^\theta (0))/\mathsf{c}, \bar{\psi}^\theta = (\psi^\theta - \psi^\theta(0))/\mathsf{c} \in \Lip (\R)$ for every $\theta \in \unitsph$. Then, the derivative $\delta_{\mu \otimes \nu}'$ extends to $\calT_{\calP_0}(\mu \otimes \nu)$ as 
\[
\delta_{\mu \otimes \nu}'(m) = \int_{\unitsph} \mathsf{c} \cdot m \big ((\bar{\varphi}^\theta \circ \ptheta) \oplus (\bar{\psi}^\theta \circ \ptheta)\big ) d\sigma(\theta).
\]

We shall verify that the process $\sqrt{n}(\hat{\mu}_n \otimes \hat{\nu}_n - \mu \otimes \nu)$ converges weakly to a tight Gaussian process in $\ell^\infty (\calF)$.  
Since $\calF_0$ is Donsker w.r.t. $\mu$ and $\nu$, and $X_i$' and $Y_i$'s are independent, by Example 1.4.6 in \cite{van1996weak} (combined with Lemma 3.2.4 in \cite{dudley2014uniform} concerning measurable covers), we see that 
\begin{equation}
\big(\sqrt{n}(\hat{\mu}_n-\mu),\sqrt{n}(\hat{\nu}_n-\nu) \big) \stackrel{d}{\to} (G_\mu,G_\nu') \quad \text{in} \ \ \ell^\infty (\calF_0) \times \ell^\infty(\calF_0),
\label{eq: expression}
\end{equation}
where $G_\mu$ and $G_\nu'$ are independent and the weak limits of $\sqrt{n}(\hat{\mu}_n-\mu)$ and $\sqrt{n}(\hat{\nu}_n-\nu)$ in $\ell^{\infty}(\calF_0)$, respectively. Since the map $\ell^\infty(\calF_0) \times \ell^\infty (\calF_0) \ni (z_1,z_2) \mapsto (z_1(f_1)+z_2(f_2))_{f_1 \oplus f_2 \in \calF} \in \ell^\infty(\calF)$ is continuous, 
we have
\[
\sqrt{n}(\hat{\mu}_n \otimes \hat{\nu}_n - \mu \otimes \nu) \stackrel{d}{\to} G_{\mu \otimes \nu} \quad \text{in} \ \ell^\infty (\calF)
\]
where $G_{\mu \otimes \nu} (f_1 \oplus f_2) = G_{\mu}(f_1) + G_{\nu}'(f_2)$ for $f = f_1 \oplus f_2 \in \calF$. Applying Proposition \ref{prop: master proposition} (see also \cref{rem: convexity}), we conclude that
\[
\begin{split}
\sqrt{n}(\delta(\hat{\mu}_n\otimes\hat{\nu}_n) - \delta (\mu \otimes \nu)) &\stackrel{d}{\to} \int_{\unitsph} \left [ \mathsf{c} \cdot G_{\mu}(\bar{\varphi}^\theta \circ \ptheta) + \mathsf{c} \cdot G_{\nu}' (\bar{\psi}^\theta \circ \ptheta) \right ]d\sigma(\theta) \\
&=\int_{\unitsph} \mathbb{G}_\mu  d\sigma + \int_{\unitsph} \mathbb{G}_{\nu}' d\sigma. 
\end{split}
\]
where $\mathbb{G}_\mu$ is given before and $\mathbb{G}_{\nu}'$ is defined as $\mathbb{G}_\nu'(\theta) = \mathsf{c} \cdot G_\nu'(\bar{\psi}^\theta \circ \ptheta)$. 
Arguing as in Part (i), the right-hand side is Gaussian with mean zero and variance $v_{p}^2 + w_{p}^2$.

Second, the fact that $v_{p}^2 + w_{p}^2$ coincides with the semiparametric efficiency bound follows from \cref{cor: efficiency two sample}. The argument is similar to the one-sample case and omitted for brevity.

Finally, we show the bootstrap consistency. The argument is analogous to the one-sample case above. To apply Theorem 23.9 in \cite{vanderVaart1998asymptotic},  we need to verify that, as maps into $\ell^\infty (\calF)$, the sequence $\sqrt{n}(\hat{\mu}_n^B \otimes \hat{\nu}_n^B - \hat{\mu}_n \otimes \hat{\nu}_n)$ is asymptotically measurable and converges conditionally in distribution to $(G_\mu (f_1)+G_\nu'(f_2))_{f_1 \oplus f_2 \in \calF}$ given $(X_1,Y_1),(X_2,Y_2),\dots$.
Since $\calF_0$ is Donsker w.r.t. $\mu$ and $\nu$, each of $\sqrt{n}(\hat{\mu}_n^B-\hat{\mu}_n)$ and $\sqrt{n}(\hat{\nu}_n^B-\hat{\nu}_n)$ is asymptotically measurable and converges conditionally in distribution (cf. Chapter 3.6 in \cite{van1996weak}). By Lemma 1.4.4 and Example 1.4.6 in \cite{van1996weak},  as maps into $\ell^\infty (\calF_0) \times \ell^\infty(\calF_0)$, the sequence $\big (\sqrt{n}(\hat{\mu}_n^B-\hat{\mu}_n),\sqrt{n}(\hat{\nu}_n^B-\hat{\nu}_n)\big)$ is asymptotically measurable and converges conditionally in distribution to $(G_\mu,G_\nu')$.  Since the map $\ell^\infty(\calF_0) \times \ell^\infty (\calF_0) \ni (z_1,z_2) \mapsto (z_1(f_1)+z_2(f_2))_{f_1 \oplus f_2 \in \calF} \in \ell^\infty(\calF)$ is continuous, we see that, as maps into $\ell^\infty (\calF)$,  $\sqrt{n}(\hat{\mu}_n^B \otimes \hat{\nu}_n^B - \hat{\mu}_n \otimes \hat{\nu}_n)$ is asymptotically measurable and converges conditionally in distribution to $(G_\mu (f_1)+G_\nu'(f_2))_{f_1 \oplus f_2 \in \calF}$, as desired. The rest follows from Corollary \ref{cor: master} and Theorem 23.9 in \cite{vanderVaart1998asymptotic}.
\qed

\subsubsection{Proof of Theorem \ref{thm: limit distribution MSWp}}
 Given the G\^{a}teaux derivative result for $\MSWp$ (cf. Lemma \ref{lem: directional derivative SWp}), the proof is analogous to \cref{thm: limit distribution SWp}. We omit the details for brevity.
 \qed

\subsection{Proof of Theorem~\ref{thm: sliced W1 limit}}

\subsubsection{Preliminaries}
\begin{lemma}[CLT in $L^1$ space]
\label{prop: L1 CLT}
Let $(S,\calS,\mu)$ be a $\sigma$-finite measure space with $\calS$ being countably generated, and let $Z_1,Z_2,\dots$ be i.i.d. $L^1(\mu)$-valued random variables. If $\Prob (\| Z_1 \|_{L^1(\mu)} > t) = o(t^{-2})$ as $t \to \infty$ and $\int_{S} \sqrt{\E[Z_1^2(x)]}\ d\mu(x) < \infty$, then there exists a centered jointly measurable Gaussian process $\mathsf{G}$  with paths in $L^1(\mu)$ such that $n^{-1/2} \sum_{i=1}^n Z_i \dconv \mathsf{G}$ in $L^1(\mu)$.
\end{lemma}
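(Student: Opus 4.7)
The plan is to recognize this as an instance of the central limit theorem for $L^p$-valued random variables with $1\le p<2$, for which the sharp theorem is due to Pisier and Zinn. Since $\calS$ is countably generated and $\mu$ is $\sigma$-finite, $L^1(\mu)$ is a separable Banach space, so we may work in the standard framework of Banach-space-valued probability. The role of the two hypotheses is exactly to match the two conditions in the Pisier--Zinn CLT for $L^1(\mu)$: pre-Gaussianness and a sharp tail bound.

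First I would construct the candidate limit $\mathsf{G}$. Noting that $\int \sqrt{\E[Z_1^2(x)]}\,d\mu(x)<\infty$ implies $\int \E[|Z_1(x)|]\,d\mu(x)<\infty$ by Jensen, so $\E[Z_1]$ is well-defined as a Bochner integral in $L^1(\mu)$ and may be assumed zero (otherwise apply the lemma to $Z_i-\E[Z_1]$). Define a centered Gaussian process $(\mathsf{G}(x))_{x\in S}$ with covariance $\Cov(\mathsf{G}(x),\mathsf{G}(y))=\E[Z_1(x)Z_1(y)]$. Using $\E[|\mathsf{G}(x)|]=\sqrt{2/\pi}\,\sqrt{\E[Z_1^2(x)]}$ and Fubini,
\[
\E\big[\|\mathsf{G}\|_{L^1(\mu)}\big]=\sqrt{2/\pi}\int\sqrt{\E[Z_1^2(x)]}\,d\mu(x)<\infty,
\]
so $\mathsf{G}$ has paths in $L^1(\mu)$ almost surely and, in particular, $Z_1$ is pre-Gaussian in $L^1(\mu)$. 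A jointly measurable version of $\mathsf{G}$ exists by the standard measurable modification argument (e.g., Doob's separability theorem applied to a countable dense subfamily determined by a countable generator of $\calS$).

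Next I would invoke the Pisier--Zinn CLT (Ledoux--Talagrand, Chapter~10): a centered random variable $Z$ with values in $L^p(\mu)$, $1\le p<2$, satisfies the CLT if and only if $Z$ is pre-Gaussian and $t^2\Prob(\|Z\|_{L^p(\mu)}>t)\to 0$. For $p=1$, pre-Gaussianness is equivalent to $\int\sqrt{\E[Z^2(x)]}\,d\mu(x)<\infty$, i.e., our second hypothesis, while the tail assumption $\Prob(\|Z_1\|_{L^1(\mu)}>t)=o(t^{-2})$ is exactly the second criterion. The theorem then yields $n^{-1/2}\sum_{i=1}^n Z_i\dconv \mathsf{G}$ in $L^1(\mu)$.

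The main obstacle is conceptual: $L^1$ is not of type $2$, so the convenient Hilbert/type-$2$ Banach-space CLT does not apply, and the tail condition $o(t^{-2})$ (strictly weaker than $\E\|Z_1\|^2<\infty$) is sharp. If a self-contained argument is desired, the standard route is a truncation at level $\sqrt n$: split $Z_i=Z_i\ind_{\{\|Z_i\|_{L^1(\mu)}\le \sqrt n\}}+Z_i\ind_{\{\|Z_i\|_{L^1(\mu)}> \sqrt n\}}$; the large part is negligible in $L^1(\mu)$ by the $o(t^{-2})$ tail via Chebyshev/maximal inequalities, the small part has a CLT through finite-dimensional reduction (partition $S$ into finitely many measurable sets of finite $\mu$-measure and apply the multivariate CLT to the resulting projections), and tightness of the finite-dimensional approximations is controlled by the pre-Gaussian bound displayed above. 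Either route gives the stated convergence.
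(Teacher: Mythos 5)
Your proof is correct and takes essentially the same approach as the paper: the paper simply cites Theorem~10.10 of Ledoux--Talagrand, which is exactly the Pisier--Zinn CLT for $L^p$ with $1\le p<2$ that you invoke, with the two hypotheses matching its pre-Gaussianness and $o(t^{-2})$ tail criteria. Your additional verification of pre-Gaussianness via $\E\|\mathsf{G}\|_{L^1(\mu)}=\sqrt{2/\pi}\int\sqrt{\E[Z_1^2(x)]}\,d\mu(x)$ and the sketch of a truncation argument are sound elaborations that the paper leaves implicit.
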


\begin{proof}
See \cite{ledoux1991probability}, Theorem 10.10.
\end{proof}

\begin{lemma}[Directional derivative of $L^1$ norm]
\label{lem: directional_derivative_l1_norm}
Let $(S,\calS,\mu)$ be a measure space. Then, for any $f,g \in L^1 (\mu)$ and $t_n \downarrow 0$, we have 
\[
\lim_{n \to \infty} \frac{\|f+t_ng \|_{L^1(\mu)} - \| f \|_{L^1(\mu)}}{t_n} = \int [\sign (f)] g \, d\mu  + \int_{\{ f = 0 \}}|g| \, d\mu. 
\]
\end{lemma}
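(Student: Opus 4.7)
The plan is to reduce the claim to a pointwise calculation followed by an application of the dominated convergence theorem. Write
\[
\frac{\|f+t_n g\|_{L^1(\mu)} - \|f\|_{L^1(\mu)}}{t_n} = \int_S D_n \, d\mu, \qquad D_n(x) := \frac{|f(x)+t_n g(x)|-|f(x)|}{t_n},
\]
and note that the reverse triangle inequality gives the uniform bound $|D_n(x)| \le |g(x)|$ for every $n$ and every $x \in S$, with $|g| \in L^1(\mu)$ serving as an integrable envelope.

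Next I would establish pointwise convergence of $D_n$ almost everywhere (in fact everywhere). Split $S$ into $\{f \neq 0\}$ and $\{f = 0\}$. On $\{f=0\}$ one has $D_n(x) = |g(x)|$ for every $n$, so the limit is $|g(x)|$. On $\{f \neq 0\}$, for each fixed $x$ with $f(x) \neq 0$ and all $n$ large enough (depending on $x$), the quantity $f(x) + t_n g(x)$ has the same sign as $f(x)$, whence $|f(x)+t_n g(x)| = \sign(f(x))\bigl(f(x)+t_n g(x)\bigr)$ and $|f(x)| = \sign(f(x)) f(x)$, yielding $D_n(x) = \sign(f(x)) g(x)$. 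Hence
\[
D_n(x) \longrightarrow \sign\bigl(f(x)\bigr) g(x)\, \mathbf{1}_{\{f \neq 0\}}(x) + |g(x)|\, \mathbf{1}_{\{f = 0\}}(x), \quad x \in S.
\]

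Finally I would invoke the dominated convergence theorem with dominating function $|g|$ to pass the limit inside the integral. Since $\sign(f) = 0$ on $\{f=0\}$ by the convention adopted in the paper, the limit integrand can equivalently be written as $[\sign(f)]\, g\, \mathbf{1}_{\{f\neq 0\}} + |g|\, \mathbf{1}_{\{f=0\}} = [\sign(f)]\, g + |g|\, \mathbf{1}_{\{f=0\}}$, which gives exactly the claimed identity. There is no real obstacle here: the only subtle point is that the threshold after which $f+t_n g$ inherits the sign of $f$ depends on the point $x$, but since we only need pointwise (not uniform) convergence to apply dominated convergence, this causes no difficulty.
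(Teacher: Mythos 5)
Your proof is correct and takes a genuinely different, in fact cleaner, route than the paper's. You bound the difference quotient $D_n = (|f+t_n g| - |f|)/t_n$ uniformly by the integrable envelope $|g|$ via the reverse triangle inequality, identify its pointwise limit on $\{f\neq 0\}$ (using the eventual sign agreement of $f+t_n g$ with $f$) and on $\{f=0\}$, and apply dominated convergence directly. The paper instead splits $S$ into $\{f>0\}$, $\{f<0\}$, $\{f=0\}$, changes measure to $d\gamma = |f|\,d\mu$ with $h = g/|f|$, truncates to the event $A_n = \{t_n|h|<1\}$ on which $1 + t_n h \ge 0$ so the absolute value can be dropped, and controls the complement via a Markov-type estimate. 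Both arguments are ultimately dominated-convergence in spirit, but your version avoids the change of measure and truncation entirely; the only subtlety you correctly flag is that the index past which $f(x)+t_n g(x)$ inherits the sign of $f(x)$ depends on $x$, which is harmless since only pointwise convergence is needed. Your approach is the more elementary and transparent of the two, and I see no gap.
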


\begin{proof}[Proof of Lemma \ref{lem: directional_derivative_l1_norm}]
Let $S^{+} = \{ f > 0 \}$ and $S^{-} = \{ f < 0 \}$, and set $h = \frac{g}{|f|}\ind_{\{f \ne 0\}}$ and $d\gamma = |f| d\mu$. Observe that 
\[
\begin{split}
    t_n^{-1}&\big(\|f+t_ng \|_{L^1(\mu)} - \| f \|_{L^1(\mu)}\big)\\
    &=t_{n}^{-1}\int_{S^{+}} \big(|1+t_nh| - 1 \big) \, d\gamma + t_{n}^{-1}\int_{S_{-}} \big( |-1+t_n h| - 1 \big) \, d\gamma +  \int_{\{ f = 0 \}} |g| \, d\mu. 
\end{split}
\]
We next show that the first and second terms converge to $\int_{\{ f> 0 \}} gd\mu$ and $\int_{\{ f < 0 \}} (-g)d\mu$, respectively. The arguments are analogous, so we  only prove the former.

By normalization, we may assume without loss of generality that $\gamma (S^{+}) = 1$. Consider the set $A_{n} = \{ t_n |h| < 1 \}$. Since $h \in L^1(\gamma)$, by Markov's inequality we have $\gamma (A_n^c \cap S^{+}) \to 0$. Consequently
\[
\left | \int_{S^{+}}  |1+t_nh| d\gamma - \int_{S^{+}} |1+t_n h\ind_{A_n}| d\gamma \right | \le t_n \int_{S^+} |h| \ind_{A_n^c} d\gamma = o(t_n). 
\]
Since $1+t_n h\ind_{A_n} \ge 0$, we have 
\[
\int_{S^{+}}\big( |1+t_n h\ind_{A_n}| - 1 \big) d \gamma = t_n\int_{S^+} h \ind_{A_n} d\gamma = t_n \int_{S^+} h d\gamma+ o(t_n). 
\]
As $\int_{S^+} h d\gamma = \int_{\{ f>0 \}} gd\mu$, we obtain the desired result.
\end{proof}

\begin{lemma}[Directional derivative of supremum functional]
\label{lem: directional_derivative_sup_norm}
Let $S$ be a nonempty set. Consider the functional $\phi: \ell^{\infty} (S) \to \R$ defined by $\phi (f) = \sup_{x \in S} f(x)$. Suppose that there exists a pseudometric $\mathsf{d}$ on $S$ such that $(S,\mathsf{d})$ is totally bounded, and let $C_u(S,\mathsf{d})$ denote the set of all uniformly $\mathsf{d}$-continuous functions on $S$. Then $\phi$ is Hadamard directionally differentiable at $f \in C_u (S,\mathsf{d})$ with derivative
\[
\phi_{f}'(g) = \sup_{x \in \overline{S}: f(x)  = \phi (f)} g(x), \quad g \in C_u (S,\mathsf{d}),
\]
where $\overline{S}$ is the completion of $S$ with respect to $\mathsf{d}$ [note that every $f \in C_u (S,\mathsf{d})$ has the unique continuous extension to $\overline{S}$, which we denote by the same symbol $f$].
\end{lemma}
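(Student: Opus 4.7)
The plan is to exploit compactness of the completion $\overline{S}$ (which follows from total boundedness of $(S,\mathsf{d})$) together with uniform continuity of $f$ to show the argmax set $M_f := \{x \in \overline{S} : f(x) = \phi(f)\}$ is a nonempty compact set, and then sandwich the incremental quotient $[\phi(f+t_n g_n) - \phi(f)]/t_n$ between two quantities both converging to $\sup_{M_f} g$. Throughout, I will identify $f$ and $g$ with their unique continuous extensions to $\overline{S}$, which exist because uniformly continuous functions on a totally bounded space extend uniquely by density. In particular, $\phi(f) = \sup_{S} f = \max_{\overline{S}} f$, so $M_f$ is nonempty and compact.

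For the upper bound, I would pick near-maximizers $x_n \in S$ satisfying $(f+t_n g_n)(x_n) \geq \phi(f+t_n g_n) - t_n^2$. By compactness of $\overline{S}$, along a subsequence $x_n \to x^* \in \overline{S}$. The estimate $\phi(f+t_n g_n) \geq \phi(f) - t_n \|g_n\|_{\infty,S}$, combined with boundedness of $\|g_n\|_{\infty,S}$ (since $g_n \to g$ in $\ell^\infty(S)$), forces $f(x_n) \to \phi(f)$; hence $x^* \in M_f$. Since $f(x_n) \leq \phi(f)$, one obtains
\[
\frac{\phi(f+t_n g_n) - \phi(f)}{t_n} \leq \frac{f(x_n)-\phi(f)}{t_n} + g_n(x_n) + t_n \leq g_n(x_n) + t_n,
\]
and the right-hand side tends to $g(x^*) \leq \sup_{M_f} g$ by the triangle-inequality splitting $|g_n(x_n)-g(x^*)| \leq \|g_n-g\|_{\infty,S} + |g(x_n)-g(x^*)|$, whose first term vanishes by assumption and whose second vanishes by uniform continuity of $g$. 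A subsequence-of-subsequences argument then promotes the limit to a $\limsup$ bound.

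For the lower bound, I would fix an arbitrary $x^* \in M_f$ and, using uniform continuity of $f$, approximate it by $x_n \in S$ with $\mathsf{d}(x_n,x^*)$ small enough that $|f(x_n) - f(x^*)| < t_n^2$. Then
\[
\frac{\phi(f+t_n g_n) - \phi(f)}{t_n} \geq \frac{f(x_n)-f(x^*)}{t_n} + g_n(x_n) \geq -t_n + g_n(x_n) \to g(x^*),
\]
where $g_n(x_n) \to g(x^*)$ is obtained exactly as above. Taking the supremum over $x^* \in M_f$ yields the matching lower bound, completing the identification of $\phi_f'(g)$.

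The main obstacle is purely technical: guaranteeing that function evaluations along sequences $x_n \in S$ which may converge to a boundary point $x^* \in \overline{S} \setminus S$ behave correctly. This is what forces both the uniform convergence $g_n \to g$ (to handle the perturbation) and the uniform continuity of $g$ (to handle the movement of the basepoint), and it is precisely why the tangential subspace in the statement is $C_u(S,\mathsf{d})$ rather than the full space $\ell^\infty(S)$; without uniform continuity of $g$, the evaluation $g(x_n)$ need not converge to the extended value $g(x^*)$, and the argmax identification would break down.
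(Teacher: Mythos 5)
Your proof is correct. The paper itself does not give an argument for this lemma; it simply cites \cite{carcamo2020directional}, Corollary 2.5, so there is no in-paper argument to compare against. What you have written is a valid, self-contained proof of the cited result. A few points worth calling out, all in your favor: (a) you correctly observe that the completion $\overline{S}$ is compact (totally bounded plus complete), so $M_f$ is nonempty and compact; (b) in the upper bound, the chain $f(x_n)\ge \phi(f)-2t_n\|g_n\|_{\infty,S}-t_n^2$ together with $f(x_n)\le\phi(f)$ gives $f(x_n)\to\phi(f)$, and continuity of the extension of $f$ on $\overline{S}$ then forces any subsequential limit $x^\ast$ of $x_n$ into $M_f$; (c) in the lower bound, the choice of $x_n\in S$ with $\mathsf d(x_n,x^\ast)\to 0$ and $|f(x_n)-f(x^\ast)|<t_n^2$ is needed precisely because $f(x_n)\le f(x^\ast)=\phi(f)$ and an uncontrolled approach would send $\big(f(x_n)-f(x^\ast)\big)/t_n$ to $-\infty$. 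Your closing remark is also exactly the right diagnosis of why the derivative can only be evaluated tangentially to $C_u(S,\mathsf d)$: for $g\in\ell^\infty(S)\setminus C_u(S,\mathsf d)$ the evaluation $g(x_n)$ along a sequence approaching a completion point need not converge, and the argmax identification would fail. The only cosmetic suggestions are to state explicitly that $C_u(S,\mathsf d)\subset\ell^\infty(S)$ (uniform continuity on a totally bounded pseudometric space implies boundedness) so that $\phi$ is defined on it, and to note that $\phi$ is globally $1$-Lipschitz on $\ell^\infty(S)$ — which is implicit in your estimate $\phi(f+t_ng_n)\ge\phi(f)-t_n\|g_n\|_{\infty,S}$ — as this is what converts the G\^ateaux-type sandwich into full Hadamard directional differentiability.
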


\begin{proof}
See \cite{carcamo2020directional}, Corollary 2.5.
\end{proof}

\subsubsection{Proof of Theorem~\ref{thm: sliced W1 limit}}

\noindent\textbf{Part (i).} Recall the expression $\SWone (\hat{\mu}_n,\mu) =\| F_{\hat{\mu}_n} - F_{\mu} \|_{L^1(\lambda \otimes \sigma)}$. We divide the proof into two steps.

\underline{Step 1}. We will first show that there exists a centered, jointly measurable Gaussian process $\mathsf{G}_\mu  = \big(\mathsf{G}_{\mu} (t,\theta)\big)_{(t,\theta) \in \R \times \unitsph}$ with paths in $L^1(\lambda \otimes \sigma)$ and covariance function (\ref{eq: cov function}) such that 
\[
\sqrt{n} (F_{\hat{\mu}_n}(t;\theta)-F_{\mu}(t;\theta))_{(t,\theta) \in \R \times \unitsph} \dconv \mathsf{G}_{\mu} \quad \text{in $L^1(\lambda \otimes \sigma)$}. 
\]
We will apply the CLT in the $L^1$ space (Lemma \ref{prop: L1 CLT}) to prove this claim. Let $Z_i (t,\theta) = \ind (\theta^{\intercal} X_i \le t) - F_{\mu}(t;\theta)$. Then
$\sqrt{n}\big(F_{\hat{\mu}_n}(t;\theta)-F_{\mu}(t;\theta)\big) = n^{-1/2} \sum_{i=1}^n Z_i(t,\theta)$. Since $((t,\theta),\omega) \mapsto Z_i (t,\theta,\omega) = \ind (\theta^{\intercal} X_i(\omega) \le t) - F_{\mu}(t;\theta)$ is (jointly) measurable, $Z_i$ can be regarded an an $L^1(\lambda \otimes \sigma)$-valued random variable as long as $\| Z_i \|_{L^1 (\lambda \otimes \sigma)} < \infty$ a.s. \cite{Bycz1977}, which will be verified below.   

Observe that $\| Z_i \|_{L^1(\lambda \otimes \sigma)}$ can be evaluated as
\begin{align*}
&\int_{\unitsph} \int_\R \left|\ind (\theta^{\intercal} X_i \leq t) - \Prob(\theta^{\intercal} X_i \leq t)\right| \, dt  \,d\sigma(\theta)\\
&= \int_{\unitsph} \Bigg(\int_{-\infty}^0 \left|\ind (\theta^{\intercal} X_i \leq t) - \Prob(\theta^{\intercal} X_i \leq t)\right| \, dt \,  \\
&\qquad \qquad +  \int_0^\infty \left|\ind (\theta^{\intercal} X_i > t) - \Prob(\theta^{\intercal} X_i > t)\right| \, dt \, \Bigg) d\sigma(\theta)\\
&\leq \int_{\unitsph} \Bigg[(-\theta^{\intercal} X_i \vee 0) + \int_{-\infty}^0 \Prob(\theta^{\intercal} X_i \leq t) \, dt \,  \\
&\qquad \qquad + (\theta^{\intercal} X_i \vee 0) + \int_0^\infty \Prob(\theta^{\intercal} X_i > t) \, dt \, \Bigg] d\sigma(\theta)\\
&\leq \|X_i\| + \E[\|X_i\|].
\end{align*}
This implies that $Z_i \in L^1 (\lambda \otimes \sigma)$. Further, for $t>2\E[\|X_i\|]$, 
\begin{align*}
    \Prob(\|Z_i\|_{L^1(\lambda \otimes \sigma)} > t) &\leq \Prob\left(\|X_i\| + \E[\|X_i\|] > t\right) \\
    &\le \Prob\left(\|X_i\|  > t/2 \right) \\
    &\le \frac{2^{2+\epsilon}\E[\|X_i\|^{2+\epsilon}]}{t^{2+\epsilon}} = o(t^{-2})
\end{align*}
as $t \to \infty$. Finally, letting $K= \sup_{\theta \in \unitsph} \E[|\theta^{\intercal}X_i|^{2+\epsilon}] < \infty$, we have
\begin{equation}
\begin{split}
    &\int_{\unitsph}\int_{\R} \sqrt{\E[Z_i(t,\theta)^2]} \, dt \,  d\sigma(\theta) \\
    &= \int_{\unitsph} \int_{-\infty}^\infty \sqrt{F_{\mu}(t;\theta)(1-F_{\mu}(t;\theta))}\, dt \, \,d\sigma(\theta) \\
    &\le \int_{\unitsph}\left(\int_{-\infty}^0 \sqrt{F_{\mu}(t;\theta)}\, dt \,  + \int_0^\infty \sqrt{1 - F_{\mu}(t;\theta)}\, dt \, \right) d\sigma(\theta)\\
    &\leq \int_{\unitsph} \left(2 \int_0^\infty \sqrt{\Prob(|\theta^{\intercal} X_i| \geq t)}\, dt \, \right)d\sigma(\theta)\\
    &\leq 2 \left(1 + \int_1^\infty \sqrt{\frac{K}{t^{2+\epsilon}}}\, dt \, \right)\\
    &\leq 2 + 4\frac{\sqrt{K}}{\epsilon} < \infty.
\end{split}
\label{eq: SWone bound}
\end{equation}
Thus, by Lemma \ref{prop: L1 CLT}, we have $n^{-1/2}\sum_{i=1}^n Z_i \dconv \mathsf{G}_{\mu}$ in $L^1 (\lambda \otimes \sigma)$. 

\underline{Step 2}. We will apply the extended functional delta method to derive the limit distribution for $\sqrt{n} ( \SWone (\hat{\mu}_n,\nu) - \SWone (\mu,\nu) )$. Let $\phi: L^1(\lambda \otimes \sigma) \to \R$ be defined by
\[
\phi (f) = \| f \|_{L^1 (\lambda \otimes \sigma)}, \ f \in L^1 (\lambda \otimes \sigma). 
\]
By Lemma \ref{lem: directional_derivative_l1_norm} and the fact that $\phi$ is trivially Lipshitz in $\| \cdot \|_{L^1(\lambda \otimes \sigma)}$,  $\phi$ is Hadamard directionally differentialble at $f \in L^1 (\lambda \otimes \sigma)$  with derivative 
\[
\phi_{f}'(g) = \int [\sign (f)] g d(\lambda \otimes \sigma) + \int_{\{ f=0 \}}|g| d(\lambda \otimes \sigma). 
\]
Thus, by the extended functional delta method, we have 
\[
\begin{split}
\sqrt{n} ( \SWone (\hat{\mu}_n,\nu) - \SWone (\mu,\nu)) &= \sqrt{n} (\phi (F_{\mu}-F_{\nu} + F_{\hat{\mu}_n} - F_{\mu}) - \phi (F_{\mu} - F_{\nu})) \\
&\dconv \phi_{F_{\mu} - F_{\nu}}' (\mathsf{G}_{\mu}). 
\end{split}
\]
The right-hand side coincides with the limit distribution in (\ref{eq: limit distribution SW1}). 

\medskip

\noindent\textbf{Part (ii).} Observe that $\SWone (\hat{\mu}_n,\hat{\nu}_n) = \| F_{\hat{\mu}_n} - F_{\hat{\nu}_n} \|_{L^{1}(\lambda \otimes \sigma)}$. From the proof of Part (i), we have $\sqrt{n}(F_{\hat{\mu}_n} - F_{\hat{\nu}_n} - F_{\mu}+F_{\nu}) \stackrel{d}{\to} \mathsf{G}_{\mu} - \mathsf{G}_{\nu}'$ in $L^1(\lambda \otimes \sigma)$. The rest of the proof is analogous to Step 2 in the proof of Part (i). 

\medskip
\noindent\textbf{Part (iii).}
Observe that $\MSWone(\mu,\nu) = \sup_{f \in \cF} (\mu-\nu)(f)$ by the Kantorovich-Rubinstein duality. 
By Lemma \ref{lem: proj_lipschitz_entropy_bound}, the function class $\cF$ is $\mu$-Donsker under the given moment condition, $\sqrt{n}(\hat{\mu}_n-\mu) \stackrel{d}{\to} G_{\mu}$ in $\ell^\infty (\calF)$. 

Then, we apply the functional delta method to the supremum functional, Lemma~\ref{lem: directional_derivative_sup_norm}. Since, relative to the standard deviation metric, $f \mapsto \int f d(\mu-\nu) = \mu (f-\nu(f))$ is uniformly continuous, $\calF$ is totally bounded, $G_\mu$ has uniformly continuous paths (cf. Example 1.5.10 in \cite{van1996weak}), we obtain the desired result.

\medskip
\noindent\textbf{Part (iv).} The proof is analogous to Part (iii) and thus omitted. 
\qed

\subsection{Proof of Corollary \ref{cor: SWone}}
The limit distribution results directly follow from Theorem \ref{thm: sliced W1 limit}.
Under the setting of \cref{COR:asymp_normal_Wone}, inspection of the proof of Theorem \ref{thm: sliced W1 limit} shows that the map $f \mapsto \| f \|_{L^1(\lambda \otimes \lambda)}$ is Hadamard differentiable at $f = F_\mu - F_\nu$ with derivative 
\[
\phi_{F_\mu - F_\nu}'(g) = \int [\sign (F_\mu - F_\nu) ] g d(\lambda \otimes \sigma), \ g \in L^1(\lambda \otimes \sigma). 
\]
Combined with the equivalence between the CLT and the bootstrap consistency for i.i.d. random variables with values in a separable Banach space (cf. Remark 2.5 in \cite{gine1990bootstrapping}), the bootstrap consistency result for $\SWone$ (like that in Theorem \ref{thm: limit distribution SWp}) follows via Theorem 23.9 in \cite{vanderVaart1998asymptotic}.

Regarding semiparametric efficiency, for any bounded measurable function $h$ on $\R^d$ with $\mu$-mean zero, 
\[
\begin{split}
&\frac{1}{s} \big ( \| F_{(1+sh)\mu} - F_{\nu} \|_{L^1 (\lambda \otimes \sigma)} - \| F_\mu - F_\nu \|_{L^1(\lambda \otimes \sigma)} \big) \\
&\to \int_{\R \times \unitsph} [\sign (F_\mu - F_\nu)(t;\theta) ] \left \{ \int h (x)\ind(x^\intercal \theta \le t) d\mu(x) \right \} dt d\sigma(\theta) \quad (s \downarrow 0) \\
&=\int_{\R \times \unitsph} [\sign (F_\mu - F_\nu)(t;\theta) ] \left \{ \int h (x)\mathfrak{g}(x,t,\theta) d\mu(x) \right \} dt d\sigma(\theta) \\
&= \left \langle h, \int_{\R \times \unitsph} [\sign (F_\mu - F_\nu)(t;\theta) ] \mathfrak{g}(\cdot,t,\theta) dt d\sigma(\theta)\right \rangle_{L^2(\mu)},
\end{split}
\]
where $\mathfrak{g}(x,t,\theta) = \ind(x^\intercal \theta \le t) - \ind(t > 0)$ and the second equality follows as $h$ has $\mu$-mean zero. Hence, the semiparametric efficiency bound for estimating $\rho \mapsto \SWone(\rho,\nu)$ at $\rho = \mu$ is given by the $\mu$-variance of 
\[
\int_{\R \times \unitsph} [\sign (F_\mu - F_\nu)(t;\theta) ] \mathfrak{g}(\cdot,t,\theta) dt d\sigma(\theta),
\]
which agrees with $v_{1}^2$ above. Likewise, in the two-sample case, it is shown that the semiparametric efficiency bound agrees with $v_{1}^2+w_{1}^2$; see the proof of \cref{cor: efficiency two sample}. \qed

\section{Proofs for Section \ref{sec: smooth Wp}}

\subsection{Proof of Proposition {\ref{prop:smoothstability}}}
\label{subsec:proof-prop:smoothstability}
Lemma 5.2 of \cite{santambrogio2010} establishes
    that convolution acts as a contraction for $\mathsf W_p$, $\mathsf{W}_p^{(\sigma)}\leq\mathsf{W}_p$. For any $\rho\in \cP(\R^d)$, the coupling $\pi:=\chi_{\sigma}(x-y)\dx{\lambda(x)}\dx{\rho(y)}\in \Pi(\rho,\rho*\eta_{\sigma})$ satisfies
    \[
        \int_{\R^d\times \R^d}\|x-y\|^p\dx{\pi(x,y)}=\sigma^p\int_{\R^d}\int_{\R^d}\|z\|^p\chi_1(z)\dx{\lambda(z)}\dx{\mu(y)}=\sigma^p\E_{\eta_1}[\|X\|]^p,
    \]
     so $\mathsf W_p(\mu,\mu*\eta_{\sigma})\leq (\E[\|X_{\eta_1}\|^p])^{1/p}$ by Lemma 7.1.10 in \cite{ambrosio2005}. Conclude by applying the triangle inequality, $\mathsf{W}_p\leq \mathsf{W}_p^{(\sigma)}+2(\E_{\eta_1}[\|X\|^p])^{1/p}$.
\qed

\subsection{Proof of Proposition {\ref{prop:metricspace}}}
\label{subsec:proof-prop:metricspace}

\begin{lemma}
\label{lem:charfunction}
    For $\sigma>0$, the characteristic function $\phi_{\eta_{\sigma}}$ of $\eta_{\sigma}$ is zero on a set of negligible (Lebesgue) measure. 
\end{lemma}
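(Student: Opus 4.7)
\medskip
\noindent\textit{Proof proposal for Lemma \ref{lem:charfunction}.}
The plan is to show that $\phi_{\eta_\sigma}$ is a real-analytic function on $\R^d$ that is not identically zero; the conclusion then follows from the classical fact that the zero set of such a function is Lebesgue-negligible.

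First I would observe that by construction $\chi_\sigma \in C^\infty(\R^d)$ and, since $\eta_\sigma$ has compactly supported density (as stipulated in Section~\ref{sec: smooth Wp}, with $\supp(\chi_\sigma) \subset \overline{B(0,\sigma)}$), the characteristic function
\[
\phi_{\eta_\sigma}(\xi) = \int_{\R^d} e^{i \xi^{\intercal} x} \chi_\sigma(x)\, dx, \qquad \xi \in \R^d,
\]
is the Fourier transform of a compactly supported smooth function. By the Paley--Wiener theorem, $\phi_{\eta_\sigma}$ admits an entire extension to $\CC^d$ of exponential type $\sigma$. In particular, its restriction to $\R^d$ is real-analytic.

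Next, since $\phi_{\eta_\sigma}(0) = \int_{\R^d} \chi_\sigma(x)\, dx = 1 \neq 0$, the real-analytic function $\phi_{\eta_\sigma}$ is not identically zero on $\R^d$. A classical fact (provable by induction on dimension using the fact that the zero set of a nonzero real-analytic function of one variable is discrete) states that the zero set of a nonzero real-analytic function on $\R^d$ has Lebesgue measure zero. Applying this to $\phi_{\eta_\sigma}$ yields the claim.

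There is no real obstacle here; the only nontrivial ingredient is the analyticity of $\phi_{\eta_\sigma}$, which is immediate from Paley--Wiener given compact support of $\chi_\sigma$. This lemma will then be used (in place of the property that the Gaussian characteristic function never vanishes, exploited in \cite{nietert21,Goldfeld2020GOT}) to establish injectivity of $\rho \mapsto \rho * \eta_\sigma$ on $\calP_p(\R^d)$, which in turn underlies the proof of Proposition~\ref{prop:metricspace}.
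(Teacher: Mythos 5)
Your proof is correct and takes essentially the same route as the paper: both establish that $\phi_{\eta_\sigma}$, being the Fourier transform of a smooth compactly supported density, extends to an entire function on $\CC^d$ (you via Paley--Wiener, the paper via H\"ormander's Theorem 7.1.14 on the Fourier--Laplace transform), hence is real-analytic on $\R^d$; and both conclude by invoking the fact that a nonzero real-analytic function has a Lebesgue-null zero set. Your explicit observation that $\phi_{\eta_\sigma}(0) = 1 \neq 0$ is a clean way to verify non-vanishing, and your anticipated use of the lemma in the proof of Proposition~\ref{prop:metricspace} matches the paper.
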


\begin{proof}
The Fourier-Laplace transform of $\chi_{\sigma}$ is defined by 
\[
    \widehat{\chi}_{\sigma}:t\in \mathbb C^d\mapsto {\int_{\R^d}e^{-it^{\intercal}{x}}\chi_{\sigma}(x)d{x}}.
\]
 By Theorem 7.1.14 in \cite{hormander03}, $\widehat{\chi}_{\sigma}$ is complex-analytic and, since $\chi_{\sigma}$ is even, $\widehat{\chi}_{\sigma}\vert_{\R^d}=\phi_{\eta_{\sigma}}(-\cdot)$ is real analytic. The conclusion follows since the set of zeros of any real analytic function on $\R^d$ that is not identically has zero Lebesgue measure \cite[p.240]{federer14}.
\end{proof}

\begin{lemma}
\label{lem:weakconvergenceconvolution}
    For $p\geq 1$, $\sigma> 0$, $(\mu_k)_{k\in \NN}\subset \cP_p(\R^d)$ and $\mu\in\cP_p(\R^d)$, $\mu_{k}*\eta_{\sigma}\stackrel{w}{\to} \mu*\eta_{\sigma}$ as $k\to\infty$ if and only if $\mu_k\stackrel{w}{\to} \mu$ as $k\to\infty$.
\end{lemma}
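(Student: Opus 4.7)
The plan is to establish the two implications separately. The forward direction ($\mu_k \stackrel{w}{\to} \mu \Rightarrow \mu_k * \eta_\sigma \stackrel{w}{\to} \mu * \eta_\sigma$) is an elementary application of the continuous mapping theorem: weak convergence is stable under products, so $\mu_k \otimes \eta_\sigma \stackrel{w}{\to} \mu \otimes \eta_\sigma$ on $\R^{2d}$, and as the addition map $(x,y) \mapsto x+y$ is continuous and pushes $\mu_k \otimes \eta_\sigma$ forward to $\mu_k * \eta_\sigma$, the conclusion follows. (The $p$th moment hypothesis plays no role here.)

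For the reverse direction, the strategy is a Prokhorov-type subsequential argument. First, I would show that $(\mu_k)_{k \in \NN}$ is tight. Realize $X_k \sim \mu_k$ and $Y \sim \eta_\sigma$ independently so that $X_k + Y \sim \mu_k * \eta_\sigma$. The weakly convergent sequence $(\mu_k * \eta_\sigma)$ is tight by Prokhorov, and the singleton $\{\eta_\sigma\}$ is trivially tight (in the Section \ref{sec:SmoothLimitDistributionTheory} convention one simply has $\supp(\eta_\sigma) \subset \overline{B(0,\sigma)}$). Combining the deterministic bound $\|X_k\| \le \|X_k+Y\| + \|Y\|$ with a union-bound argument on $\{\|X_k+Y\| \le R_1\} \cap \{\|Y\| \le R_2\}$ then produces, for each $\epsilon > 0$, a radius $R$ with $\mu_k(\overline{B(0,R)}) \ge 1-\epsilon$ uniformly in $k$.

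Given tightness, every subsequence of $(\mu_k)$ admits a further subsequence $\mu_{k_j} \stackrel{w}{\to} \nu$ for some $\nu \in \cP(\R^d)$. The forward direction, already proven, yields $\mu_{k_j} * \eta_\sigma \stackrel{w}{\to} \nu * \eta_\sigma$, and combining with the hypothesis forces $\nu * \eta_\sigma = \mu * \eta_\sigma$. Passing to characteristic functions gives $\phi_\nu(t)\,\phi_{\eta_\sigma}(t) = \phi_\mu(t)\,\phi_{\eta_\sigma}(t)$ for every $t \in \R^d$. Here I would invoke Lemma \ref{lem:charfunction}: the proof therein establishes that $\phi_{\eta_\sigma}$ is real analytic, so the set $\{\phi_{\eta_\sigma} \ne 0\}$ is open and dense in $\R^d$. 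Hence $\phi_\nu = \phi_\mu$ on a dense set, and by continuity of characteristic functions the equality extends to all of $\R^d$, giving $\nu = \mu$ by uniqueness. As every subsequence admits a further subsequence converging to the common limit $\mu$, we conclude $\mu_k \stackrel{w}{\to} \mu$.

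The main delicate point is the deconvolution step identifying subsequential limits: it rests crucially on density (equivalently, empty interior of the zero set) of $\{\phi_{\eta_\sigma} \ne 0\}$, which is exactly what the real-analyticity argument behind Lemma \ref{lem:charfunction} provides; mere almost-everywhere non-vanishing would not suffice to propagate equality of continuous characteristic functions to all of $\R^d$.
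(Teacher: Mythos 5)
Your proof is correct, but it takes a genuinely different — and somewhat more self-contained — route than the paper's. The paper handles both directions at the level of characteristic functions: for the reverse implication it observes that $\phi_{\mu_k}\to\phi_\mu$ pointwise on the full-measure set $\{\phi_{\eta_\sigma}\ne 0\}$ and then appeals to the (unproved) fact that a.e.~convergence of characteristic functions to a characteristic function entails weak convergence. You instead establish tightness of $(\mu_k)$ directly from the deterministic bound $\|X_k\|\le\|X_k+Y\|+\|Y\|$ together with tightness of $(\mu_k*\eta_\sigma)$ and $\eta_\sigma$, then close the loop with Prokhorov and subsequential uniqueness of the limit, using the already-proved forward direction as a lemma. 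This sidesteps the paper's unstated characteristic-function result and makes the tightness mechanism explicit; it is a clean and arguably more transparent argument.

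One remark at the end, however, is incorrect. You claim that ``mere almost-everywhere non-vanishing would not suffice'' to propagate $\phi_\nu=\phi_\mu$ from $\{\phi_{\eta_\sigma}\ne 0\}$ to all of $\R^d$, and that density of this set (empty interior of the zero set) is the crucial extra input. In fact, any Lebesgue-full-measure subset of $\R^d$ is automatically dense, since no nonempty open ball is Lebesgue-null; so almost-everywhere non-vanishing already implies density, and equality of the two continuous characteristic functions on a dense set then extends to all of $\R^d$. This is precisely why the paper is content to invoke \cref{lem:charfunction} in its ``negligible zero set'' form and never needs the stronger no-interior statement. The error is only in your commentary and does not affect the validity of the proof.
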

\begin{proof}
First, if $\mu_k \stackrel{w}{\to} \mu$, $\phi_{\mu_k*\eta_{\sigma}}=\phi_{\mu_k}\phi_{\eta_{\sigma}}\to\phi_{\mu}\phi_{\eta_{\sigma}}=\phi_{\mu*\eta_{\sigma}}$ pointwise, 
so $\mu_k*\eta_{\sigma}\stackrel{w}{\to}\mu*\eta_{\sigma}$. 
To prove the opposite direction, if $\mu_k*\eta_{\sigma}{\stackrel{w}{\to}} \mu*\eta_{\sigma}$, arguing as above yields $\phi_{\mu_k}\to \phi_{\mu}$ pointwise almost everywhere, as $\phi_{\eta_{\sigma}}=0$ on a set of negligible (Lebesgue) measure by \cref{lem:charfunction}. It is not difficult to show that pointwise almost everywhere convergence of characteristic functions implies pointwise convergence thereof and hence that $\mu_k\stackrel{w}{\to}\mu$. 
\end{proof}

\begin{proof}[Proof of \cref{prop:metricspace}] The fact that $\mathsf W_p^{(\sigma)}$ is a metric follows essentially from the proof of Proposition 1 in \cite{nietert21}, with a slight modification using \cref{lem:charfunction} and the fact that characteristic functions are uniformly continuous. Similarly, the fact that $\mathsf W_p^{(\sigma)}$ and $\mathsf W_p$ induce the same topology on $\cP_p(\R^d)$ follows from the proof of Proposition 1 in \cite{nietert21} along with \cref{lem:weakconvergenceconvolution}.
\end{proof}

\subsection{Proof of Theorem {\ref{thm:LimitDistributionsSmoothWasserstein}}}
\label{subsec:proof-thm:LimitDistributionsSmoothWasserstein}
We only prove the one-sample result, as the two-sample result follows from essentially the same reasoning coupled with the fact that $\varphi^c$ is the unique OT potential from $\nu*\eta_{\sigma}$ to $\mu*\eta_{\sigma}$ for $\mathsf W_p$ up to additive constants under the assumption that $\nu$ has connected support; see also the proof of \cref{thm: limit distribution SWp} Part (ii). 
In what follows, we fix $\sigma > 0 $ and $1 < p < \infty$.
Recall that $B$ denotes the unit ball in $L^2(\mathcal{X}_{\sigma})$.
We first prove the weak convergence of the smoothed empirical process $\sqrt{n}(\hat\mu_n-\mu)*\eta_{\sigma}$ and the associated bootstrap process in $\ell^\infty (B)$.

\begin{lemma}
    \label{lem:LimitDistributionSmoothProcess}
    For any $\mu \in \calP(\calX)$, we have $\sqrt{n}(\hat\mu_n-\mu)*\eta_{\sigma}\stackrel{d}{\to}\mathbb{G}_{\mu}$ in $\ell^{\infty}(B)$, where $\mathbb{G}_{\mu} = \left(\mathbb G_{\mu}(f)\right)_{f\in B}$ is a tight centered Gaussian process in $\ell^{\infty}(B)$ with covariance function $\Cov (\mathbb G_{\mu}(f),\mathbb G_{\mu}(g)) = \Cov_{\mu}(f*\chi_{\sigma},g*\chi_{\sigma})$.
    Further, conditionally on $X_1,X_2,\dots$, $\sqrt{n}(\hat{\mu}_n^B - \hat{\mu}_n)*\eta_\sigma \stackrel{d}{\to} \mathbb{G}_\mu$ in $\ell^\infty (B)$ for almost every realization of $X_1,X_2,\dots$

\end{lemma}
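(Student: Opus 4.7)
The plan is to reduce the problem to the central limit theorem (CLT) in the Hilbert space $L^2(\mathcal{X}_\sigma)$, then transfer the conclusion to $\ell^\infty(B)$ via a linear isometric embedding. Since $\supp(\mu)\cup\supp(\hat\mu_n)\subset\mathcal{X}$ and $\chi_\sigma$ vanishes outside $\overline{B(0,\sigma)}$, the smoothed signed measure $(\hat\mu_n-\mu)*\eta_\sigma$ is supported in $\mathcal{X}_\sigma$ and admits the bounded Lebesgue density
\[
g_n(x)=\tfrac{1}{n}\sum_{i=1}^n\xi_i(x),\qquad \xi_i(x):=\chi_\sigma(x-X_i)-\mathbb{E}[\chi_\sigma(x-X_1)].
\]
Because $\mathcal{X}_\sigma$ is bounded and $\chi_\sigma$ is continuous with compact support, the $\xi_i$'s are i.i.d.,\ centered, $L^2(\mathcal{X}_\sigma)$-valued random elements that are uniformly bounded in $L^2$-norm by $2\|\chi_\sigma\|_{\infty}\sqrt{\lambda(\mathcal{X}_\sigma)}$.

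I would then introduce the continuous linear map $T\colon L^2(\mathcal{X}_\sigma)\to\ell^\infty(B)$ defined by $T(g)(f)=\int_{\mathcal{X}_\sigma}fg\,d\lambda$. The Cauchy--Schwarz inequality, combined with the choice $f=g/\|g\|_{L^2}$ when $g\neq 0$, shows that $\|T(g)\|_{\infty,B}=\|g\|_{L^2(\mathcal{X}_\sigma)}$, so $T$ is a linear isometry. Identifying $\sqrt{n}(\hat\mu_n-\mu)*\eta_\sigma$ with the element $\sqrt{n}\,g_n=n^{-1/2}\sum_{i=1}^n\xi_i$ of $L^2(\mathcal{X}_\sigma)$ yields, for every $f\in B$,
\[
\big[\sqrt{n}(\hat\mu_n-\mu)*\eta_\sigma\big](f)=T\!\Big(n^{-1/2}\sum_{i=1}^n\xi_i\Big)(f).
\]

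The CLT in Hilbert space (cf.\ \cite{ledoux1991probability}) applied to the centered, bounded, i.i.d.\ sequence $(\xi_i)$ gives $n^{-1/2}\sum_i\xi_i\stackrel{d}{\to}G$ in $L^2(\mathcal{X}_\sigma)$ for some centered Gaussian $G$ with covariance operator equal to that of $\xi_1$. The continuous mapping theorem applied to $T$ then delivers $\sqrt{n}(\hat\mu_n-\mu)*\eta_\sigma\stackrel{d}{\to}T(G)=:\mathbb{G}_\mu$ in $\ell^\infty(B)$; tightness is automatic since the limit is supported in the (separable) isometric image of $L^2(\mathcal{X}_\sigma)$. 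The covariance formula follows from a Fubini computation: for $f,g\in B$,
\[
\Cov\big(\mathbb{G}_\mu(f),\mathbb{G}_\mu(g)\big)=\Cov\!\Big(\!\int f\,\xi_1\,d\lambda,\int g\,\xi_1\,d\lambda\Big)=\Cov_\mu(f*\chi_\sigma,g*\chi_\sigma),
\]
where the key identity $\int f(x)\chi_\sigma(x-X_1)\,dx=(f*\chi_\sigma)(X_1)$ (using the symmetry convention for $\chi_\sigma$, or its reflection, as in the proof of Proposition \ref{prop:smoothstability}) matches the stated covariance.

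For the bootstrap part, the same reasoning is applied conditionally. Given $X_1,\dots,X_n$, the bootstrap increments $\xi_i^B(x):=\chi_\sigma(x-X_i^B)-\hat\mu_n\big(\chi_\sigma(x-\cdot)\big)$ are i.i.d.\ $L^2(\mathcal{X}_\sigma)$-valued, centered, and uniformly bounded in $L^2$-norm by the same deterministic constant as the $\xi_i$'s. The strong law of large numbers shows that the empirical covariance operator of $\xi_1^B$ converges a.s.\ to that of $\xi_1$, so the conditional CLT in Hilbert space (e.g., via the Lindeberg-type criterion used in \cite{gine1990bootstrapping}) gives, for a.e.\ realization of the data, $n^{-1/2}\sum_i\xi_i^B\stackrel{d}{\to}G$ in $L^2(\mathcal{X}_\sigma)$; applying $T$ transfers this to the stated conditional weak convergence in $\ell^\infty(B)$. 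The main technical subtlety I anticipate is ensuring that the bootstrap convergence holds for \emph{almost every} realization, rather than merely in outer probability; this reduces to the almost-sure convergence of the sample covariance operator and the a.s.\ uniform integrability of $\|\xi_1^B\|_{L^2}^2$, both of which are available thanks to uniform boundedness of the $\xi_i$'s.
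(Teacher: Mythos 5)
Your proof is correct and takes essentially the same route as the paper: both reduce to the CLT in the separable Hilbert space $L^2(\mathcal{X}_\sigma)$ (the paper phrases this in the isometric dual $(L^2(\mathcal{X}_\sigma))'$, which by Riesz representation is the same thing as working with densities as you do), then transfer to $\ell^\infty(B)$ via the canonical linear isometry, and the paper likewise handles the bootstrap via Gin\'e--Zinn's equivalence of the CLT and bootstrap CLT in separable Banach spaces. You also correctly flagged that the covariance identification $\int f(x)\chi_\sigma(x-X_1)\,dx=(f*\chi_\sigma)(X_1)$ requires evenness of $\chi_\sigma$, a point the paper leaves implicit.
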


\begin{proof}
We will show weak convergence of $\sqrt{n}(\hat{\mu}_n^B - \hat{\mu}_n)*\eta_\sigma$ in $(L^2(\calX_\sigma))'$, the topological dual of $L^2(\calX_\sigma)$. 
    Observe that, for every $f\in L^2(\mathcal{X}_{\sigma})$ and $x\in\supp(\mu)$, 
    \begin{equation}
    \label{eq:operatornorm}
    \abs{(f*\chi_{\sigma})(x)}\leq \|{f}\|_{L^2(\mathcal{X}_{\sigma})}\|\chi_{\sigma}(x-\cdot)\|_{L^2(\mathcal{X}_{\sigma})}\leq C \|f\|_{L^2(\mathcal{X}_{\sigma})},
    \end{equation}
    where $C = \| \chi_\sigma \|_{\infty} \sqrt{\lambda (B(0,\sigma))} < \infty$. 
    
  Set $Z_i:=\delta_{X_i}*\eta_{\sigma}$ for $i\in\NN$. The inequality (\ref{eq:operatornorm}) implies that $\| Z_i \|_{(L^2(\mathcal{X}_{\sigma}))'} \le C$. It is not difficult to see that the Borel $\sigma$-field on $(L^2(\mathcal{X}_{\sigma}))'$ agrees with the smallest $\sigma$-field for which the projections $z \mapsto z(f),  f \in L^2(\calX_\sigma)$ are measurable. Thus, $Z_i$ can be thought as i.i.d. random variables with values in $(L^2(\mathcal{X}_{\sigma}))'$ with (Bochner) expectation $\E[Z_i] = \mu*\eta_\sigma$ (cf. Lemma 5.1.1 in \cite{stroock2010probability} for Bochner integrals).
    Since $(L^2(\calX_\sigma))'$ is a separable Hilbert space, by Example 1.8.5 in \cite{van1996weak}, we have
$\sqrt{n}(\hat\mu_n-\mu)*\eta_{\sigma} = n^{-1/2}\sum_{i=1}^n (Z_i-\E[Z_i]) \stackrel{d}{\to} \mathbb{G}_{\mu}^{\circ}$, where $\mathbb{G}_{\mu}^{\circ}$ is a centered Gaussian random variable in $\left(L^2(\mathcal{X}_{\sigma})\right)'$.

In addition, let $Z_{i}^B = \delta_{X_i^B}*\eta_\sigma$ for $i=1,\dots,n$. 
Then, $Z_1^B,\dots,Z_n^B$ are an i.i.d. sample from the empirical distribution $n^{-1}\sum_{i=1}^n \delta_{Z_i}$, so that by Remark 2.5 in \cite{gine1990bootstrapping}, we have that, conditionally on $X_1,X_2,\dots$, $\sqrt{n}(\hat{\mu}_n^B - \mu)*\eta_\sigma = n^{-1/2}\sum_{i=1}^n (Z_i^B - \overline{Z}_n)  \stackrel{d}{\to} \mathbb{G}_\mu$ in $\left(L^2(\mathcal{X}_{\sigma})\right)'$ for almost every realization of $X_1,X_2,\dots$, where $\overline{Z}_n = n^{-1}\sum_{i=1}^n Z_i$.

The conclusion of the lemma follows by noting that the map $ (L^2(\mathcal{X}_{\sigma}))'\ni z \mapsto z|_{B} \in \ell^{\infty}(B)$ is an isometry.
\end{proof}

Second, we shall establish Lipschitz continuity of $S_p^{(\sigma)}(\cdot,\nu)$ w.r.t. $\| \cdot * \eta_\sigma \|_{\infty,B}$. 

\begin{lemma}
    \label{lem:SmoothLipschitzFunctional}
    For every $\mu_0,\mu_1 \in \calP(\calX)$, 
    \[
    |S_p^{(\sigma)}(\mu_1,\nu) - S_p^{(\sigma)}(\mu_0,\nu)| \le \sqrt{\lambda(\mathcal{X}_{\sigma})}\diam(\mathcal{X}_{\sigma})^p\| (\mu_1 - \mu_0)*\eta_\sigma \|_{\infty,B}.
    \]
\end{lemma}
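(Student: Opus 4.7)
The approach combines OT duality with Cauchy--Schwarz in $L^2(\mathcal{X}_\sigma)$. Applying the Kantorovich duality~\eqref{eq: duality} to $(\mu_1*\eta_\sigma,\nu*\eta_\sigma)$ and $(\mu_0*\eta_\sigma,\nu*\eta_\sigma)$, and using a $c$-concave optimal potential $\varphi_1$ for the former as a feasible dual candidate for the latter, we obtain
\[
S_p^{(\sigma)}(\mu_1,\nu)-S_p^{(\sigma)}(\mu_0,\nu)\le \int \varphi_1\, d\bigl((\mu_1-\mu_0)*\eta_\sigma\bigr).
\]
The reverse inequality with $\mu_0$ and $\mu_1$ swapped follows by the symmetric argument, so it suffices to bound this right-hand side by the desired quantity.

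Since $\mu_1*\eta_\sigma$ and $\nu*\eta_\sigma$ are both supported in $\mathcal{X}_\sigma$, additive normalization of $(\varphi_1,\varphi_1^c)$ can be chosen so that $\sup_{y\in\mathcal{X}_\sigma}\varphi_1^c(y)=0$ (up to an $\epsilon$-approximation if the supremum is not attained). Writing $\varphi_1(x)=\inf_{y}[\|x-y\|^p-\varphi_1^c(y)]$ (valid since $\varphi_1$ is $c$-concave), the non-negativity of the cost gives $\varphi_1\ge 0$ on $\mathcal{X}_\sigma$, while testing at $y=y^\star\in\mathcal{X}_\sigma$ with $\varphi_1^c(y^\star)\approx 0$ gives $\varphi_1(x)\le \|x-y^\star\|^p\le\diam(\mathcal{X}_\sigma)^p$ for $x\in\mathcal{X}_\sigma$. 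In particular, $\|\varphi_1\|_{L^2(\mathcal{X}_\sigma)}\le \diam(\mathcal{X}_\sigma)^p\sqrt{\lambda(\mathcal{X}_\sigma)}$.

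Next, because $\supp(\mu_i)\subset \mathcal{X}$ and the density of $\eta_\sigma$ vanishes outside $B(0,\sigma)$, the signed measure $(\mu_1-\mu_0)*\eta_\sigma$ is absolutely continuous with density $g:=(\mu_1-\mu_0)*\chi_\sigma$ supported in $\mathcal{X}_\sigma$. By Cauchy--Schwarz,
\[
\int \varphi_1\, d\bigl((\mu_1-\mu_0)*\eta_\sigma\bigr)=\int_{\mathcal{X}_\sigma}\varphi_1\, g\, d\lambda\le \|\varphi_1\|_{L^2(\mathcal{X}_\sigma)}\,\|g\|_{L^2(\mathcal{X}_\sigma)},
\]
and by Riesz representation on the Hilbert space $L^2(\mathcal{X}_\sigma)$,
\[
\|g\|_{L^2(\mathcal{X}_\sigma)}=\sup_{f\in B}\int f g\, d\lambda=\|(\mu_1-\mu_0)*\eta_\sigma\|_{\infty,B}.
\]
Combining the above estimates yields the claim. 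The only delicate step is the additive normalization producing the uniform bound $0\le\varphi_1\le\diam(\mathcal{X}_\sigma)^p$ on $\mathcal{X}_\sigma$; the remainder is routine.
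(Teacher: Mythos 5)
Your proof is correct and takes essentially the same route as the paper: invoke OT duality to sandwich $S_p^{(\sigma)}(\mu_1,\nu)-S_p^{(\sigma)}(\mu_0,\nu)$ using cross-tested OT potentials, normalize to get $0\le\varphi_i\le\diam(\mathcal{X}_\sigma)^p$ on $\mathcal{X}_\sigma$, and pair against the $B$-sup norm. The only cosmetic difference is that the paper cites Villani (Remark 1.13) for the potential bound rather than re-deriving it, and bounds $\int\varphi_1\,d\big((\mu_1-\mu_0)*\eta_\sigma\big)$ directly by testing $\varphi_1/\|\varphi_1\|_{L^2(\mathcal{X}_\sigma)}\in B$ in the definition of $\|\cdot\|_{\infty,B}$ rather than passing through the Lebesgue density of the convolved measure and the Riesz isometry.
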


\begin{proof}
Recall $S_p^{(\sigma)} (\mu_i,\nu) = [\Wp(\mu_i*\eta_\sigma,\nu*\eta_\sigma)]^p$. 
 Let $\varphi_i$ be an OT potential from $\mu_i*\eta_\sigma$ to $\nu*\eta_\sigma$ for $\mathsf W_p$ satisfying $0\leq \varphi_i\leq \diam(\mathcal{X}_{\sigma})^p$ on $\mathcal{X}_{\sigma}$ for $i=0,1$ as in Remark 1.13 of \cite{villani2003topics}. Then, by duality, 
    \begin{align*}
       S_p^{(\sigma)}(\mu_1,\nu) - S_p^{(\sigma)}(\mu_0,\nu)&\leq
       \int_{\mathcal{X}_{\sigma}}\varphi_1\dx{\big((\mu_1-\mu_0)*\eta_\sigma\big)}
       \\&
       \leq \|\varphi_1\|_{L^2(\mathcal{X}_{\sigma})} \| (\mu_1-\mu_0)*\eta_\sigma \|_{\infty,B}.
    \end{align*}
    Likewise,
    \[
    S_p^{(\sigma)}(\mu_1,\nu) - S_p^{(\sigma)}(\mu_0,\nu) \ge -\|\varphi_0\|_{L^2(\mathcal{X}_{\sigma})} \| (\mu_1-\mu_0)*\eta_\sigma \|_{\infty,B},
    \]
 so that 
 \[
|S_p^{(\sigma)}(\mu_1,\nu) - S_p^{(\sigma)}(\mu_0,\nu)| \le \big ( \|\varphi_0\|_{L^2(\mathcal{X}_{\sigma})} \vee \|\varphi_1\|_{L^2(\mathcal{X}_{\sigma})}\big )\| (\mu_1-\mu_0)*\eta_\sigma \|_{\infty,B}. 
 \]
 The conclusion of the lemma follows by noting that 
 \[
 \|\varphi_0\|_{L^2(\mathcal{X}_{\sigma})} \vee \|\varphi_1\|_{L^2(\mathcal{X}_{\sigma})}\leq \sqrt{\lambda(\mathcal{X}_{\sigma})}\diam(\mathcal{X}_{\sigma})^p
 \]
 by construction.    
\end{proof}

The following lemma concerns the G\^ateaux directional derivative of $S_p^{(\sigma)}(\cdot,\nu)$. 

\begin{lemma}
    \label{lem:SmoothFunctionalDerivative}
Let $\mu_0,\mu_1 \in \calP(\calX)$ be such that $\inte (\supp(\mu_0))$ is connected with negligible boundary and $\supp(\mu_1) \subset \supp(\mu_0)$. Then, 
\[
\lim_{t \downarrow 0} \frac{S_p^{(\sigma)}(\mu_0+t(\mu_1-\mu_0),\nu) - S_p^{(\sigma)}(\mu_0,\nu)}{t} = \big( (\mu_1-\mu_0)*\eta_\sigma \big) (\varphi_0),
\]
where $\varphi_0$ is an OT potential from $\mu*\eta_{\sigma}$ to $\nu*\eta_\sigma$ for $\mathsf W_p$
\end{lemma}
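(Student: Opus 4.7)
The plan is to adapt the standard variational argument for G\^ateaux derivatives of OT functionals to the convolution-smoothed setting, in the spirit of Lemma 3.3 of \cite{goldfeld22limit}. Set $\mu_t := \mu_0 + t(\mu_1-\mu_0) \in \mathcal{P}(\mathcal{X})$ for $t \in [0,1]$ and note that $\supp(\mu_t) \subseteq \supp(\mu_0)$, hence $\supp(\mu_t * \eta_\sigma) \subseteq \supp(\mu_0 * \eta_\sigma) \subseteq \mathcal{X}_\sigma$. For each $t$, let $\varphi_t$ be an OT potential from $\mu_t*\eta_\sigma$ to $\nu*\eta_\sigma$ for $\mathsf{W}_p$, chosen (following Remark 1.13 of \cite{villani2003topics}) so that $0 \le \varphi_t \le \diam(\mathcal{X}_\sigma)^p$ on $\mathcal{X}_\sigma$, which provides a $t$-uniform bound.

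By the Kantorovich duality \eqref{eq: duality}, using $\varphi_0$ (resp.\ $\varphi_t$) as a suboptimal dual pair for $\mu_t$ (resp.\ $\mu_0$) yields the two-sided sandwich
\begin{align*}
t \cdot \big((\mu_1-\mu_0)*\eta_\sigma\big)(\varphi_0)
&\le S_p^{(\sigma)}(\mu_t,\nu) - S_p^{(\sigma)}(\mu_0,\nu) \\
&\le t \cdot \big((\mu_1-\mu_0)*\eta_\sigma\big)(\varphi_t).
\end{align*}
The lower bound is constant in $t$ and directly gives the desired $\liminf$. To conclude, the $\limsup$ will match provided
\[
\big((\mu_1-\mu_0)*\eta_\sigma\big)(\varphi_t) \xrightarrow[t \downarrow 0]{} \big((\mu_1-\mu_0)*\eta_\sigma\big)(\varphi_0).
\]

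To establish this convergence, first observe that $\mu_t * \eta_\sigma \stackrel{w}{\to} \mu_0*\eta_\sigma$ as $t \downarrow 0$. Under the hypothesis on $\mu_0$, Lemma~\ref{lem:connectedsupport} implies that $\mu_0 * \eta_\sigma$ is absolutely continuous, with $\interior(\supp(\mu_0*\eta_\sigma))$ connected and having negligible boundary. Corollary 2.7 in \cite{delbarrio2021} then gives uniqueness of $\varphi_0$ on this interior up to additive constants, and Theorem 3.4 of the same reference yields $\varphi_t \to \varphi_0$ uniformly on every compact subset of $\interior(\supp(\mu_0*\eta_\sigma))$ after normalization at a fixed interior anchor $x_\star$ (changing the additive constant does not affect the integral, since $(\mu_1-\mu_0)*\eta_\sigma$ has zero total mass). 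Since this signed measure is absolutely continuous with bounded density (dominated by $\|\chi_\sigma\|_\infty$) supported in $\mathcal{X}_\sigma$, and the Lebesgue-negligible boundary carries no mass, the dominated convergence theorem pushes the pointwise convergence in the interior through the integral.

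The main obstacle is that the density of $\mu_0 * \eta_\sigma$ decays to zero on the boundary of its support, so stability of OT potentials from \cite{delbarrio2021} is only available on the interior. This is resolved by the two structural inputs above: (i) the uniform sup-norm bound on the $\varphi_t$ obtained from compactness of $\mathcal{X}_\sigma$, which supplies the dominating function, and (ii) negligibility of $\partial(\supp(\mu_0*\eta_\sigma))$, which makes the behavior of $\varphi_t$ there irrelevant for integration against the absolutely continuous signed measure. With these in hand, the limit passes through and the upper bound matches the lower bound, yielding the claimed G\^ateaux derivative.
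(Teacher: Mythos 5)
Your proposal is correct and follows essentially the same path as the paper: the same duality sandwich, the same $L^\infty$ normalization of potentials via Remark 1.13 of \cite{villani2003topics}, the same invocation of Theorem 3.4 of \cite{delbarrio2021} together with Lemma~\ref{lem:connectedsupport} for local stability of potentials, and the same dominated-convergence finish using absolute continuity of $(\mu_1-\mu_0)*\eta_\sigma$ and $\supp(\mu_1)\subset\supp(\mu_0)$. The only cosmetic difference is that the paper argues along an arbitrary sequence $t_n\downarrow 0$ while you pass to the limit directly, which is equivalent.
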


The proof of Lemma \ref{lem:SmoothFunctionalDerivative} relies on the following technical lemma concerning the support of $\mu*\eta_\sigma$. 

\begin{lemma}
    \label{lem:connectedsupport}
   For any $\mu \in \calP(\calX)$,  $\interior(\supp(\mu*\eta_{\sigma}))$ has negligible boundary. Further, $\interior(\supp(\mu*\eta_{\sigma}))$ is connected provided $\supp(\mu)$ is connected.
\end{lemma}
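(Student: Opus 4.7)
The plan is to first identify $\supp(\mu*\eta_{\sigma})$ with the Minkowski sum $K + \supp(\eta_{\sigma})$, where $K := \supp(\mu)$, and then to analyze its interior and boundary via the distance function $d(\cdot, K)$. A standard argument from the definitions of support and convolution yields $\supp(\mu*\eta_{\sigma}) = \overline{K + \supp(\eta_{\sigma})}$, and since both sets are compact the Minkowski sum is already closed. Under the natural assumption that $\supp(\eta_{\sigma}) = \overline{B(0,\sigma)}$ (as for the standard mollifier), this gives the clean characterization $\supp(\mu*\eta_{\sigma}) = \{x \in \R^d : d(x, K) \le \sigma\}$.

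For the negligible boundary claim, I would first establish the set-theoretic identity $\partial\bigl(\interior(\supp(\mu*\eta_{\sigma}))\bigr) = \partial \supp(\mu*\eta_{\sigma})$. This follows because the smooth density $f := \chi_{\sigma}*\mu$ satisfies $\{f > 0\} \subseteq \interior(\supp(\mu*\eta_{\sigma})) \subseteq \supp(\mu*\eta_{\sigma}) = \overline{\{f > 0\}}$, so taking closures yields $\overline{\interior(\supp(\mu*\eta_{\sigma}))} = \supp(\mu*\eta_{\sigma})$, whence the two boundaries coincide. Under the above identification, $\partial \supp(\mu*\eta_{\sigma}) \subseteq \{x : d(x, K) = \sigma\}$, a level set of the $1$-Lipschitz distance function. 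To conclude this set has Lebesgue measure zero, I would invoke Stach\'o's theorem on parallel volumes: for any bounded $K \subset \R^d$, the function $r \mapsto \lambda(K + r\,\overline{B(0,1)})$ is continuous on $(0,\infty)$, which directly yields $\lambda(\{d(\cdot, K) = \sigma\}) = 0$ for every $\sigma > 0$.

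For the connectedness claim, assume $K$ is connected. The open set $K + B(0,\sigma) = \{d(\cdot, K) < \sigma\}$ is then connected, being the image of the connected product $K \times B(0,\sigma)$ under the continuous addition map. Combined with the sandwich
\[
K + B(0,\sigma) \ \subseteq \ \interior\bigl(\supp(\mu*\eta_{\sigma})\bigr) \ \subseteq \ \supp(\mu*\eta_{\sigma}) = \overline{K + B(0,\sigma)},
\]
the standard topological fact that any set wedged between a connected set and its closure is connected yields connectedness of $\interior(\supp(\mu*\eta_{\sigma}))$.

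The main obstacle is showing that the distance level set $\{x : d(x, K) = \sigma\}$ is Lebesgue null, which is nontrivial for arbitrary compact $K$ and relies on Stach\'o's continuity result for parallel volumes rather than an elementary argument. A secondary subtlety is that the paper's blanket assumption only requires $\chi_{\sigma}$ to vanish outside $B(0,\sigma)$, so in principle $\supp(\eta_{\sigma})$ could be a proper subset of $\overline{B(0,\sigma)}$; handling this would require either the mild additional hypothesis that $\{\chi_{\sigma} > 0\}$ is dense in $B(0,\sigma)$ (which holds for all natural kernels including the standard mollifier) or a more delicate analysis using $\supp(\eta_{\sigma})$ directly together with the smoothness of $f$.
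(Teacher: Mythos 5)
Your proof is correct, and the skeleton is the same as the paper's (identify $\supp(\mu*\eta_\sigma)$ with the Minkowski sum $\supp(\mu)+\overline{B(0,\sigma)}$, then treat the boundary and the connectedness separately), but the two key steps are executed differently. For the measure-zero claim, the paper shows that $\Lambda:=\supp(\mu)+B(0,\sigma)$ has a locally Lipschitz boundary by invoking an external result on unions of congruent balls (Example 1 in their cited reference), and then appeals to Adams--Fournier for negligibility of Lipschitz boundaries; you instead observe that $\partial(\interior(\supp(\mu*\eta_\sigma)))=\partial\supp(\mu*\eta_\sigma)\subseteq\{d(\cdot,K)=\sigma\}$ and invoke Stach\'o's continuity of the parallel-volume function $r\mapsto\lambda(K+r\overline{B(0,1)})$. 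Your route is arguably more robust: it avoids the somewhat delicate claim about Lipschitz regularity of unions of balls (which can fail to be Lipschitz in the naive sense at tangency cusps) and reduces everything to a well-known and unconditional fact about distance functions; the paper's route, when it applies, gives the stronger conclusion of Lipschitz boundary regularity rather than mere negligibility. For connectedness, the paper argues by contradiction by partitioning $\supp(\mu)$, while you use the cleaner observation that $\supp(\mu)+B(0,\sigma)$ is a continuous image of the connected product $\supp(\mu)\times B(0,\sigma)$ and then sandwich $\interior(\supp(\mu*\eta_\sigma))$ between this open connected set and its closure; both are fine, yours is tidier. Finally, you flag correctly that the identification $\supp(\eta_\sigma)=\overline{B(0,\sigma)}$ is a mild strengthening of the paper's stated assumption that $\chi_\sigma$ vanish off $B(0,\sigma)$; this implicit strengthening is present in the paper's own proof as well, so it is a shared assumption rather than a gap on your side.
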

\begin{proof}[Proof of \cref{lem:connectedsupport}]
    By \cite[p.~106]{malliavin1995integration},  
    \[
    \supp(\mu*\eta_{\sigma})=\overline{\supp(\mu)+\overline{B(0,\sigma)}}=\cup_{x\in\supp(\mu)}\overline{B(x,\sigma)},
    \]
    where the second equality is due to the fact that a union of closed balls with centers in a closed set is closed. 
    
    Let $\Gamma=\{z\in\R^d:\text{dist}(z,\supp(\mu))=\sigma\}$ and $\Lambda=\cup_{x\in\supp(\mu)}B(x,\sigma)$. Observe that, since $\supp(\mu)$ is closed, $\text{dist}(z,\supp(\mu))=\|{z-x}\|$ for some $x\in\supp(\mu)$. It is clear that $\Gamma\subset \partial\Lambda$. On the other hand, if $z\not\in \Gamma\cup \Lambda$, then $\text{dist}(z,\supp(\mu))>\sigma$ in which case $z\in\R^d\setminus\cup_{x\in\supp(\mu)}\overline{B(x,\sigma)}$. Since $\cup_{x\in\supp(\mu)}\overline{B(x,\sigma)}$ is closed, we have $z\not\in\partial\Lambda$ and $\Gamma=\partial \Lambda$. The same argument implies that $\cup_{x\in\supp(\mu)}\overline{B(x,\sigma)}\subset \Gamma\cup \Lambda$ and the other inclusion is trivial, so ${\cup_{x\in\supp(\mu)}{B(x,\sigma)}}\subset \interior(\supp(\mu*\eta_{\sigma}))$ and the closure of both sets coincide with $\supp(\mu*\eta_{\sigma})$.
    
Example $1$ in  \cite{rudi2020finding} implies that  $\Lambda$ 
    has a (locally) Lipschitz boundary, which is negligible by Section 4.9 and Section 4.11 of \cite{adams2003sobolev}. As such, $\interior(\supp(\mu*\eta_{\sigma}))$  has negligible boundary.
    
    Assume that $\Lambda$ is not connected, then there exist nonempty sets $A,B\subset \supp(\mu)$ such that $A\cup B=\supp(\mu)$, and $\cup_{x\in A}B(x,\sigma)$ and $\cup_{y\in B}B(y,\sigma)$ are disjoint, which contradicts the assumption that $\supp(\mu)$ is connected. Hence $\interior(\supp(\mu*\eta_{\sigma}))$ is connected, as $\Gamma\subset \interior(\supp(\mu*\eta_{\sigma}))\subset \overline{\Gamma}=\supp(\mu*\eta_{\sigma})$.   
\end{proof}

\begin{proof}[Proof of \cref{lem:SmoothFunctionalDerivative}]
 Set $\mu_t=\mu_0+t(\mu_1-\mu_0)$, and let $\varphi^\circ_t$ be OT potentials from $\mu_t*\eta_\sigma$ to $\nu*\eta_\sigma$ for $\mathsf{W}_p$ satisfying $0\leq \varphi^\circ_t \leq \diam(\mathcal{X}_{\sigma})^p$ on $\mathcal{X}_{\sigma}$; cf. Remark 1.13 in \cite{villani2003topics}. For any fixed $x_0\in\interior(\supp(\mu_0*\eta_{\sigma}))$, $\varphi_t:=\varphi^\circ_t-\varphi^\circ_t(x_0)$ is again an OT potential from $\mu_t*\eta_\sigma$ to $\nu*\eta_\sigma$ for $\mathsf{W}_p$ satisfying $\varphi_t(x_0)=0$ and $\abs{\varphi_t}\leq \diam(\mathcal{X}_{\sigma})^p$ on $\mathcal{X}_{\sigma}$.

Observe that, by duality,
    \[
    \begin{split}
        S_p^{(\sigma)}(\mu_t,\nu) - S_p^{(\sigma)}(\mu_0,\nu) &= [\Wp(\mu_t*\eta_\sigma,\nu*\eta)]^p - [\Wp(\mu_t*\eta_\sigma,\nu*\eta_\sigma)]^p \\
        &\leq \int_{\mathcal{X}_{\sigma}}\varphi_t\dx{(\mu_t*\eta_\sigma)}-\int_{\mathcal{X}_{\sigma}}\varphi_t\dx{(\mu_0*\eta_{\sigma})}\\
        &\le t\int_{\calX_\sigma} \varphi_t d\big( (\mu_1-\mu_0)*\eta_\sigma \big),
        \end{split}
    \]
so that
    \[
        \limsup_{t\downarrow 0}\frac{S_p^{(\sigma)}(\mu_t,\nu) - S_p^{(\sigma)}(\mu_0,\nu)}{t}\leq \limsup_{t\downarrow 0}\int_{\calX_\sigma} \varphi_t d\big( (\mu_1-\mu_0)*\eta_\sigma \big).
    \]
    Since $\interior(\supp(\mu_0*\eta_{\sigma}))$ is connected with negligible boundary by \cref{lem:connectedsupport}, and $\mu_{t_n}*\eta_\sigma
{\stackrel{w}{\to}}\mu*\eta_{\sigma}$ for any sequence $t_n\downarrow 0$, Theorem 3.4 in \cite{delbarrio2021} implies that $\varphi_{t_n}-a_n\to\varphi$ pointwise on $\interior(\supp(\mu_0*\eta_{\sigma}))$ for some sequence of constants $a_n$. Since $\varphi_{t_n}(x_0)=\varphi(x_0)=0$, we must have that $a_n\to 0$, so in fact $\varphi_{t_n}\to\varphi$ pointwise on $\interior(\supp(\mu_0*\eta_{\sigma}))$.
Since $\inte (\supp(\mu_0*\eta_\sigma))$ has negligible boundary by \cref{lem:connectedsupport} and $\supp(\mu_1) \subset \supp(\mu_0)$, 
applying the dominated convergence theorem yields
    \[
       \int_{\mathcal{X}_{\sigma}}\varphi_{t_n}d\big( (\mu_1-\mu_0)*\eta_\sigma \big)\to\int_{\mathcal{X}_{\sigma}}\varphi_0 d\big( (\mu_1-\mu_0)*\eta_\sigma \big).
    \]
  Since $t_n\downarrow 0$ is arbitrary, we have
    \[
     \limsup_{t\downarrow 0}\frac{S_p^{(\sigma)}(\mu_t,\nu) - S_p^{(\sigma)}(\mu_0,\nu)}{t}\leq \int  \varphi_0 d\big( (\mu_1-\mu_0)*\eta_\sigma \big).
    \]

 For the reverse inequality, observe that 
    \[
    \begin{split}
           S_p^{(\sigma)}(\mu_t,\nu) - S_p^{(\sigma)}(\mu_0,\nu) &\geq \int_{\mathcal{X}_{\sigma}}\varphi\dx{(\mu_t*\eta_\sigma)}-\int_{\mathcal{X}_{\sigma}}\varphi_0\dx{(\mu_0*\eta_{\sigma})}\\
           &=t\int_{\mathcal{X}_{\sigma}}\varphi_0\dx{\left((\mu_1-\mu_0)*\eta_{\sigma}\right)}, 
           \end{split}
    \]
 so that
    \[
        \liminf_{t\downarrow 0}\frac{  S_p^{(\sigma)}(\mu_t,\nu) - S_p^{(\sigma)}(\mu_0,\nu)}{t}\geq \int_{\mathcal{X}_{\sigma}}\varphi_0\dx{\left((\mu_1-\mu_0)*\eta_{\sigma}\right)}.
    \]
    
    Conclude that $\lim_{t \downarrow 0} t^{-1}\big (S_p^{(\sigma)}(\mu_t,\nu) - S_p^{(\sigma)}(\mu_0,\nu) \big)=\big( (\mu_1-\mu_0)*\eta_\sigma \big) (\varphi_0)$. The derivative is uniquely defined, as $\varphi_0$ is unique up to additive constants on $\interior(\supp(\mu*\eta_{\sigma}))$ and $(\mu_1-\mu_0)*\eta_{\sigma}$ has total mass zero.
\end{proof}

\begin{proof}[Proof of \cref{thm:LimitDistributionsSmoothWasserstein}]
As noted before, we only prove the one-sample result. We shall apply
     \cref{prop: master proposition} by identifying $S_p^{(\sigma)}(\rho,\nu) = \Wp^p(\rho*\eta_\sigma,\nu*\sigma)$ as a functional defined on $\calP_0*\eta_\sigma = \{ \rho*\eta_\sigma : \rho \in \calP_0 \}$ with $\calP_0 = \{ \rho \in \calP(\calX) : \supp (\rho) \subset \supp (\mu) \}$, and taking $\calF = B$, $F \equiv C$ with constant $C$ given in \eqref{eq:operatornorm}, and $\mu_n = \hat{\mu}_n*\eta_\sigma$. The set $\calP_0*\eta_\sigma$ is convex and contains $\hat{\mu}_n * \eta_\sigma$ with probability one. Combining Lemmas \ref{lem:LimitDistributionSmoothProcess}--\ref{lem:SmoothFunctionalDerivative}, we have verified all conditions in \cref{prop: master proposition}. Pick a version of  $\varphi$ that is bounded on $\calX_\sigma$; cf. Remark 1.13 in \cite{villani2008optimal}.
     Choose a constant $\mathsf{c}>0$ such that $\bar{\varphi}=\varphi/\mathsf{c} \in B$. Observe that the derivative formula in Lemma \ref{lem:SmoothFunctionalDerivative} agrees with $\mathsf{c} \cdot ((\mu_1-\mu_0)*\eta_\sigma)(\bar{\varphi})$. 
     Conclude that 
     \[
     \sqrt{n}(S_p^{(\sigma)}(\hat\mu_n,\nu)-S^{(\sigma)}(\mu,\nu)) \stackrel{d}{\to} \mathsf{c} \cdot \mathbb{G}_\mu(\bar{\varphi}) \sim N(0,\Var_\mu (\varphi*\chi_\sigma)).
     \]

Pertaining to semiparametric efficiency, think of $S_p^{(\sigma)}(\cdot,\nu)$ now as a functional defined on $\calP_{0}$. 
It is not difficult to see that, for any bounded measurable function $h$ on $\R^d$ with $\mu$-mean zero, $(1+ 
th)\mu\in\cP_{0}$ for sufficiently small $t > 0$.  \cref{lem:SmoothFunctionalDerivative} implies that $t^{-1}(S_p^{(\sigma)}((1+th)\mu,\nu) - S_p^{(\sigma)}(\mu,\nu)) \to \big((h\mu)*\eta_\sigma\big)(\varphi) = (h\mu)(\varphi * \chi_\sigma)$ as $t \downarrow 0$, which is the point evaluation at $\varphi*\chi_\sigma \in L^2(\mu)$.
Hence, by \cref{prop: master proposition}, the semiparametric efficiency bound for estimating $\mathsf S_p^{(\sigma)}(\cdot,\nu)$ at $\mu$ agrees with $\Var_\mu (\varphi*\chi_\sigma)$.

Finally, the bootstrap consistency follows by linearity of the derivative in Lemma \ref{lem:SmoothFunctionalDerivative} and the second claim in \cref{lem:LimitDistributionSmoothProcess}, combined with Corollary \ref{cor: master} and Theorem 23.9 in \cite{vanderVaart1998asymptotic}.
\end{proof}

\subsection{Proof of Proposition {\ref{thm:ExpectedW2Rates}}}
\label{subsec:proof-thm:ExpectedW2Rates}

Let $Z_i=\delta_{X_i}*\eta_{\sigma}$ be i.i.d. $(L^2(\mathcal{X}_{\sigma}))'$-valued random variables as in the proof of \cref{lem:LimitDistributionSmoothProcess}. By the proof of \cref{lem:SmoothLipschitzFunctional},
\begin{align*}
\begin{split}
    \abs{S_p^{(\sigma)}(\hat\mu_n,\nu)-S^{(\sigma)}(\mu,\nu)}&\leq \sqrt{\lambda(\mathcal{X}_{\sigma})}\diam(\mathcal{X}_{\sigma})^p\|{(\hat\mu_n-\mu)*\eta_{\sigma}}\|_{(L^2(\mathcal{X}_{\sigma}))'}, \\
    &=\sqrt{\lambda(\mathcal{X}_{\sigma})}\diam(\mathcal{X}_{\sigma})^p\left \|n^{-1}\sum_{i=1}^n (Z_i - \E[Z_i])\right\|_{(L^2(\mathcal{X}_{\sigma}))'}.
    \end{split}
\end{align*}
Since $(L^2(\mathcal{X}_{\sigma}))'$ is a Hilbert space, 
\[
    \E\left[\left \|n^{-1}\sum_{i=1}^n (Z_i - \E[Z_i])\right\|_{(L^2(\mathcal{X}_{\sigma}))'}^2\right]=n^{-1}\E[\| Z_1 - \E[Z_1] \|_{(L^2(\mathcal{X}_{\sigma}))'}^2] \le 4n^{-1}C^2,
\]
where $C = \| \chi_\sigma \|_{\infty} \sqrt{\lambda (B(0,\sigma))}$ is a constant given in \eqref{eq:operatornorm}. Applying Jensen's inequality in the previous display and combining both bounds yields 
\[
    \E\left[ \abs{S_p^{(\sigma)}(\hat\mu_n,\nu)-S^{(\sigma)}(\mu,\nu)}\right]\leq 2C\sqrt{\lambda(\mathcal{X}_{\sigma})}\diam(\mathcal{X}_{\sigma})^pn^{-1/2}.
\]
Applying the inequality $\abs{x-y}\leq y^{1-p}\abs{x^p-y^p}$ yields 
\[
\E\left[\abs{\mathsf{W}_p^{(\sigma)}(\hat\mu_n,\nu)-\mathsf{W}_p^{(\sigma)}(\mu,\nu)}\right]\leq 2C\sqrt{\lambda(\mathcal{X}_{\sigma})}\diam(\mathcal{X}_{\sigma})^p [\mathsf{W}_p^{(\sigma)}(\mu,\nu)]^{1-p} n^{-1/2}, 
\]
as desired. \qed

\subsection{Proof of Theorem {\ref{thm:LimitDistributionsSmoothWassersteinNULL}}}
\label{subsec:proof-thm:LimitDistributionsSmoothWassersteinNULL}

In what follows, we fix $\sigma > 0$. We will first show that the smoothed empirical process $\sqrt{n}(\hat{\mu}_n-\mu)*\eta_\sigma$ can be regarded as the scaled sum of i.i.d. random variables with values in $\Hmt{\mu*\eta_{\sigma}}$.

\begin{lemma}
    \label{lem:SmoothDualSobolev}
   If $\mu\in\cP(\mathcal{X})$ is such that $\mu*\eta_{\sigma}$ satisfies a $2$-Poincar{\'e} inequality, then  $(\delta_X-\mu)*\eta_{\sigma}\in \Hmt{\mu*\eta_{\sigma}}$ $\mu$-a.s.
\end{lemma}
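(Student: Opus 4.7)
The plan is to show that, for $\mu$-a.e.\ $X$, the linear functional $f \mapsto \int f\, d((\delta_X-\mu)*\eta_\sigma)$ on $C_0^\infty + \R$ is bounded with respect to the seminorm $\|\nabla \cdot\|_{L^2(\mu*\eta_\sigma)}$, and hence extends by density to a continuous linear functional on $\Ht{\mu*\eta_\sigma}$, i.e., an element of $\Hmt{\mu*\eta_\sigma}$. Since $(\delta_X-\mu)*\eta_\sigma$ has total mass zero, this functional vanishes on constants and is therefore well-defined modulo the kernel of the seminorm.

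Write $\rho_\sigma := \mu*\chi_\sigma$ for the Lebesgue density of $\mu*\eta_\sigma$, so that $(\delta_X-\mu)*\eta_\sigma$ has Lebesgue density $h_X := \chi_\sigma(\cdot-X) - \rho_\sigma$. My first step will be the change-of-measure identity
\[
\int f\, d((\delta_X-\mu)*\eta_\sigma) = \int \bigl(f - (\mu*\eta_\sigma)(f)\bigr)\, \frac{h_X}{\rho_\sigma}\, d(\mu*\eta_\sigma),
\]
valid for $f \in C_0^\infty + \R$ because $\int (h_X/\rho_\sigma)\, d(\mu*\eta_\sigma) = \int h_X\, d\lambda = 0$. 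Applying Cauchy--Schwarz, followed by the $2$-Poincar\'e inequality for $\mu*\eta_\sigma$ (using that $\nabla f$ is insensitive to additive constants), yields
\[
\left|\int f\, d((\delta_X-\mu)*\eta_\sigma)\right| \leq C_{\mu,\sigma}\, \bigl\|h_X/\rho_\sigma\bigr\|_{L^2(\mu*\eta_\sigma)}\, \|\nabla f\|_{L^2(\mu*\eta_\sigma)},
\]
reducing the lemma to showing that $\|h_X/\rho_\sigma\|_{L^2(\mu*\eta_\sigma)}$ is $\mu$-a.s.\ finite.

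The latter finiteness will be the main obstacle, since $\rho_\sigma$ decays to zero near the boundary of $\supp(\mu*\eta_\sigma)$ and pointwise bounds on $h_X/\rho_\sigma$ are unavailable. My approach will be to control a $\mu$-average instead. An expansion of the square gives
\[
\bigl\| h_X/\rho_\sigma \bigr\|_{L^2(\mu*\eta_\sigma)}^2 = \int \frac{\chi_\sigma(y-X)^2}{\rho_\sigma(y)}\, dy - 1,
\]
and then the pointwise bound $\chi_\sigma(y-X)^2 \leq \|\chi_\sigma\|_\infty\, \chi_\sigma(y-X)$ together with Fubini's theorem and the identity $\int \chi_\sigma(y-X)\, d\mu(X) = \rho_\sigma(y)$ yield
\[
\iint \frac{\chi_\sigma(y-X)^2}{\rho_\sigma(y)}\, dy\, d\mu(X) \leq \|\chi_\sigma\|_\infty \int_{\mathcal{X}_\sigma} \mathbf{1}_{\{\rho_\sigma(y)>0\}}\, dy \leq \|\chi_\sigma\|_\infty\, \lambda(\mathcal{X}_\sigma) < \infty,
\]
since $\supp(\rho_\sigma) \subseteq \mathcal{X}_\sigma$ and $\mathcal{X}_\sigma$ is bounded. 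Hence the inner integral is $\mu$-a.s.\ finite, completing the plan.
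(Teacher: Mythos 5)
Your proof is correct and follows the paper's strategy exactly: change of measure with respect to the density $\rho_\sigma$, Cauchy--Schwarz, the Poincar\'e inequality, and averaging the squared weighted $L^2$ norm over $X\sim\mu$ to obtain a.s.\ finiteness, with the one difference being a cleaner averaging step --- your pointwise bound $\chi_\sigma^2\le\|\chi_\sigma\|_\infty\chi_\sigma$ combined with Fubini replaces the paper's case split of $\chi_\sigma$ into regions above and below $1$, producing the constant $\|\chi_\sigma\|_\infty\lambda(\mathcal{X}_\sigma)$ in place of $(1\vee\|\chi_\sigma^2\|_\infty)\lambda(\mathcal{X}_\sigma)$. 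One point worth making explicit: the change-of-measure identity requires $h_X=0$ almost everywhere on $\{\rho_\sigma=0\}$, which holds for $X\in\supp(\mu)$ because $\chi_\sigma(\cdot-X)$ is supported in $B(X,\sigma)$ and $\rho_\sigma>0$ on $B(X,\sigma)$ --- precisely the fact the paper verifies at the outset of its proof.
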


\begin{proof}
    Let $Y\sim\mu$. Observe that the Lebesgue density of  $\mu*\eta_{\sigma}$ agrees with $\E[\chi_{\sigma}(Y-\cdot)]$. We shall verify that, for any $x \in \inte (\supp(\mu))$,  $\E[\chi_{\sigma}(Y-\cdot)]>0$ on $B(x,\sigma)$. Indeed, for any $y\in B(x,\sigma)$, there exists $0<\epsilon_y\leq \sigma$ with $B(x,\epsilon_y)\subset B(y,\sigma)$. Since $\mu(B(x,\epsilon_y))>0$ and $\chi_{\sigma}(\cdot - y)>0$ on $B(x,\epsilon_y)\subset B(y,\sigma)$, we have $\E[\chi_{\sigma}(Y-y)]\ge \E[\chi_\sigma (Y-y)\ind_{B(x,\epsilon_y)}(Y)]>0$. 
    
    Now, for any $f\in C_0^\infty+\R$ and $x\in\supp(\mu)$, we have
    \begin{align*} 
        \abs{(f*\chi_{\sigma})(x)}&\leq\int_{B(x,\sigma)}\abs{f(y)\chi_{\sigma}(x-y)}\dx{y}
        \\
        &=\int_{B(x,\sigma)}\abs{f(y)}\frac{\chi_{\sigma}(x-y)}{\E[\chi_{\sigma}(Y-y)]}\dx{(\mu*\eta_{\sigma})(y)}
        \\
        &\leq \|f\|_{L^2(B(x,\sigma),\mu*\eta_{\sigma})}\left\|\frac{\chi_{\sigma}(x-\cdot)}{\E[\chi_{\sigma}(Y-\cdot)]}\right\|_{L^2(B(x,\sigma),\mu*\eta_{\sigma})},
    \end{align*}
where $\| \cdot \|_{_{L^2(B(x,\sigma),\mu*\eta_{\sigma})}}$ is the $L^2(\mu*\eta_\sigma)$-norm restricted to $B(x,\sigma)$.

   Observe that, since $(\delta_x-\mu)*\eta_\sigma$ has total mass zero, for $\bar{f} = f-(\mu*\eta_\sigma)(f)$, 
    \[
        \abs{\big((\delta_x-\mu)*\eta_\sigma\big)f}=\abs{(\bar{f}*\chi_{\sigma})(x)}\leq\|\bar{f} \|_{L^2(B(x,\sigma),\mu*\eta_{\sigma})}\left\|\frac{\chi_{\sigma}(x-\cdot)}{\E[\chi_{\sigma}(Y-\cdot)]}\right\|_{L^2(B(x,\sigma),\mu*\eta_{\sigma})}. 
    \]
    Since $\mu*\eta_{\sigma}$ satisfies the $2$-Poincar{\'e} inequality and $B(x,\sigma)\subset \supp(\mu*\eta_{\sigma})$, 
    \[
    \|\bar{f}\|_{L^2(B(x,\sigma),\mu*\eta_{\sigma})} \le \| \bar{f} \|_{L^2(\mu*\eta_\sigma)} 
    \leq C_{\mu,\sigma}\|f\|_{\Ht{\mu*\eta_{\sigma}}}
    \]
    for some constant $C_{\mu,\sigma}>0$ depending only on $\mu$ and $\sigma$. 
    
    As for the other term, let $\Gamma=\{ y : \E[\chi_{\sigma}(Y-y)]>0\}$, and observe that, for $X\sim\mu$ independent of $Y$,
    \begin{align*}
        \int_{\Gamma}\frac{\E[\chi^2_{\sigma}(X-y)]}{\E[\chi_{\sigma}(X-y)]}\dx{y}
        &=\int_{\Gamma}\frac{\E[\chi^2_{\sigma}(X-y)\left(\mathbbm{1}_{\{\chi_{\sigma}(X-\cdot)< 1\}}(y)+\mathbbm{1}_{\{\chi_{\sigma}(X-\cdot)\geq1\}}(y)\right)]}{\E[\chi_{\sigma}(X-y)\left(\mathbbm{1}_{\{\chi_{\sigma}(Y-\cdot)< 1\}}(y)+\mathbbm{1}_{\{\chi_{\sigma}(Y-\cdot)\geq 1\}}(y)\right)]}\dx{y}
        \\
        &\leq \int_{\Gamma}\frac{\E[\chi_{\sigma}(X-y)\mathbbm{1}_{\{\chi_{\sigma}(X-\cdot)< 1\}}(y)]+\|\chi^2_{\sigma}\|_{\infty}\mu(\{\chi_{\sigma}(\cdot-y)\geq 1\})}{\E[\chi_{\sigma}(Y-y)\mathbbm{1}_{\{\chi_{\sigma}(Y-\cdot)< 1\}}(y)]+\mu(\{\chi_{\sigma}(\cdot-y)\geq 1\})}\dx{y}
        \\
        &\leq (1\vee \|{\chi_{\sigma}^2}\|_{\infty})\lambda(\Gamma).
    \end{align*}
    Applying the Fubini theorem and Jensen's inequality yields 
    \[
    \begin{split}
        \int_{\Gamma}\frac{\E[\chi^2_{\sigma}(X-y)]}{\E[\chi_{\sigma}(Y-y)]}\dx{y}&=\E\left[\left\|\frac{\chi_{\sigma}(X-\cdot)}{\E[\chi_{\sigma}(Y-\cdot)]}\right\|_{L^2(\Gamma,\mu*\eta_{\sigma})}^2\right]\\
        &\geq\left(\E\left[\left\|\frac{\chi_{\sigma}(X-\cdot)}{\E[\chi_{\sigma}(Y-\cdot)]}\right\|_{L^2(\Gamma,\mu*\eta_{\sigma})}\right]\right)^2 \\
        &\ge \left(\E\left[\left\|\frac{\chi_{\sigma}(X-\cdot)}{\E[\chi_{\sigma}(Y-\cdot)]}\right\|_{L^2(B(x,\sigma),\mu*\eta_{\sigma})}\right]\right)^2,
    \end{split}
    \]
    where the last inequality follows as $B(x,\sigma)\subset \Gamma$ for every $x\in\supp(\mu)$. Conclude that 
    \[
    \E\left[\left\|\frac{\chi_{\sigma}(X-\cdot)}{\E[\chi_{\sigma}(Y-\cdot)]}\right\|_{L^2(B(x,\sigma),\mu*\eta_{\sigma})}\right]\leq \sqrt{(1\vee \|{\chi_{\sigma}^2}\|_{\infty})\lambda(\mathcal{X}_{\sigma})}.
    \]

 We have shown that
    \[
    \| (\delta_{X} -\mu)*\eta_\sigma \|_{\dot{H}^{-1,2}(\mu*\eta_\sigma)} \le  C_{\mu,\sigma}\left\|\frac{\chi_{\sigma}(X-\cdot)}{\E[\chi_{\sigma}(Y-\cdot)]}\right\|_{L^2(B(x,\sigma),\mu*\eta_{\sigma})}, 
    \]
    and the right-hand side is $\mu$-a.s. finite as its expectation is finite, so $(\delta_X-\mu)*\eta_{\sigma}\in \Hmt{\mu*\eta_{\sigma}}$ $\mu$-a.s.
\end{proof}

\begin{lemma}
    \label{lem:DualSobolevEmpirical}
    For $\mu\in\cP(\mathcal{X})$ such that $\mu*\eta_{\sigma}$ satisfies a $2$-Poincar{\'e} inequality, $\sqrt n(\hat\mu_n-\mu)*\eta_{\sigma}\stackrel{d}{\to}\mathbb{G}_{\mu}$ in $\Hmt{\mu*\eta_{\sigma}}$, where $\mathbb{G}_{\mu} =\left(\mathbb{G}_{\mu}(f)\right)_{f\in\Ht{\mu*\eta_{\sigma}}}$ is a centered Gaussian process with paths in $\Hmt{\mu*\eta_{\sigma}}$.
\end{lemma}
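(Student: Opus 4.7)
The plan is to recognize $\Hmt{\mu*\eta_{\sigma}}$ as a (separable) Hilbert space and then apply the classical CLT in Hilbert space to the i.i.d. summands $Z_i := (\delta_{X_i}-\mu)*\eta_{\sigma}$. The identification $\sqrt{n}(\hat\mu_n-\mu)*\eta_{\sigma} = n^{-1/2}\sum_{i=1}^n Z_i$ is immediate by linearity of convolution, so the task reduces to (i) checking that the $Z_i$ are well-defined, measurable, and square-integrable in $\Hmt{\mu*\eta_{\sigma}}$, and (ii) matching the limit's covariance to $(f,g)\mapsto \Cov_\mu(f*\chi_\sigma,g*\chi_\sigma)$.

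First, since the gradient map $f\mapsto \nabla f$ is an isometry from $\Ht{\mu*\eta_\sigma}$ (modulo constants) into the separable Hilbert space $L^2(\mu*\eta_{\sigma};\R^d)$, the dual $\Hmt{\mu*\eta_{\sigma}}$ inherits the structure of a separable Hilbert space; in particular, its Borel $\sigma$-field coincides with the cylindrical $\sigma$-field generated by evaluation at elements of $\Ht{\mu*\eta_\sigma}$, so the $Z_i$ are genuine $\Hmt{\mu*\eta_{\sigma}}$-valued random variables by \cref{lem:SmoothDualSobolev}. Second, the same proof yields the quantitative bound
\[
\|Z_1\|_{\Hmt{\mu*\eta_{\sigma}}} \le C_{\mu,\sigma}\left\|\frac{\chi_{\sigma}(X_1-\cdot)}{\EE[\chi_{\sigma}(Y-\cdot)]}\right\|_{L^2(B(X_1,\sigma),\mu*\eta_\sigma)},
\]
whose square expectation was shown to be bounded by $C_{\mu,\sigma}^2(1\vee \|\chi_\sigma^2\|_\infty)\lambda(\mathcal{X}_\sigma)<\infty$, giving $\EE\|Z_1\|_{\Hmt{\mu*\eta_\sigma}}^2<\infty$. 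Since $\EE[Z_1]=0$ (trivially, as each evaluation functional has mean zero), Example 1.8.5 in \cite{van1996weak} applies and delivers $n^{-1/2}\sum_{i=1}^n Z_i \stackrel{d}{\to}\mathbb{G}_\mu$ in $\Hmt{\mu*\eta_{\sigma}}$ for a tight centered Gaussian random element $\mathbb{G}_\mu$.

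Finally, to identify the covariance, fix $f,g\in \Ht{\mu*\eta_\sigma}$ and write
\[
Z_1(f) = \big((\delta_{X_1}-\mu)*\eta_\sigma\big)(f) = (f*\chi_\sigma)(X_1) - \mu(f*\chi_\sigma),
\]
where the second equality uses Fubini and the evenness of $\chi_\sigma$. Hence
\[
\Cov\big(\mathbb{G}_\mu(f),\mathbb{G}_\mu(g)\big) = \EE[Z_1(f)Z_1(g)] = \Cov_\mu(f*\chi_\sigma,\,g*\chi_\sigma),
\]
matching the claimed covariance. Tightness of $\mathbb{G}_\mu$ in $\Hmt{\mu*\eta_\sigma}$ is automatic since the limit in a separable Hilbert-space CLT is Radon, so its paths lie in $\Hmt{\mu*\eta_\sigma}$ a.s.

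The main obstacle I anticipate is the bookkeeping around the Hilbert-space structure of $\Hmt{\mu*\eta_\sigma}$ and the measurability of $Z_i$: one has to be careful that the density of $\mu*\eta_\sigma$ vanishes near the boundary of its support, so the pointwise bounds from \cref{lem:SmoothDualSobolev} are essential for ensuring $Z_i\in\Hmt{\mu*\eta_\sigma}$ almost surely (as opposed to merely in the cylindrical sense). Once that step is in hand, invoking the Hilbert-space CLT is routine.
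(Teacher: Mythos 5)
Your proposal is correct and follows essentially the same route as the paper: represent the smoothed empirical process as a normalized i.i.d. sum of $\Hmt{\mu*\eta_\sigma}$-valued random elements, identify $\Hmt{\mu*\eta_\sigma}$ as a separable Hilbert space via the gradient isometry, invoke the integrability bound from Lemma \ref{lem:SmoothDualSobolev}, and conclude via the Hilbert-space CLT (Example 1.8.5 in van der Vaart--Wellner). The covariance identification you add is accurate and is the one reported in Theorem \ref{thm:LimitDistributionsSmoothWassersteinNULL}.
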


\begin{proof}
By \cref{lem:SmoothDualSobolev}, $Z_i := (\delta_{X_i}-\mu)*\eta_{\sigma}$ are i.i.d. mean-zero $\Hmt{\mu*\eta_{\sigma}}$-valued random variables. By Lemma 5.1 in \cite{goldfeld22limit}, $\Hmt{\mu*\eta_{\sigma}}$ is isometrically isomorphic to a closed subspace of $L^2(\mu*\eta_{\sigma};\R^d)$ and hence a separable Hilbert space. Further, from the proof of \cref{lem:SmoothDualSobolev}, 
    \[
       \E\left[\|{Z_i}\|^2_{\Hmt{\mu*\eta_{\sigma}}}\right]\leq C_{\mu,\sigma}^2{(1\vee\|\chi_{\sigma}^2\|_{\infty})\lambda(\mathcal{X}_{\sigma})}<\infty.
    \]
  Conclude from Example 1.8.5 in \cite{van1996weak} that $\sqrt n(\hat\mu_n-\mu)*\eta_{\sigma} = n^{-1/2}\sum_{i=1}^n Z_i$ satisfies a CLT in $\Hmt{\mu*\eta_{\sigma}}$. 
\end{proof}

\begin{lemma}
    \label{lem:SmoothHDDNull}
    For $\mu\in\cP(\mathcal{X})$, the map 
    \[
        \Phi:(h_1,h_2)\in\Xi_{\mu}\times \Xi_{\mu}\subset \Hmt{\mu*\eta_{\sigma}}\times \Hmt{\mu*\eta_{\sigma}}\mapsto \mathsf W_2(\mu*\eta_{\sigma}+h_1,\mu*\eta_{\sigma}+h_2),
    \]
    is Hadamard directionally differentiable at $(0,0)$ with derivative $\Phi'_{(0,0)}(h_1,h_2)=\|h_1-h_2\|_{\Hmt{\mu*\eta_{\sigma}}}$, where 
    \[
        \Xi_{\mu}=\Hmt{\mu*\eta_{\sigma}}\cap\left\{h=(\rho-\mu)*\eta_{\sigma}:\rho\in\cP(\mathcal{X}),\supp(\rho)\subset\supp(\mu)\right\}.
    \]
\end{lemma}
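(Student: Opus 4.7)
Set $\rho := \mu*\eta_\sigma$, and for each $n$ and $i \in \{1,2\}$ write $\rho_i^n := \rho + t_n h_{i,n}$, $h_n := h_{1,n}-h_{2,n}$, $h := h_1-h_2$. By the definition of $\Xi_\mu$, each $h_{i,n}$ admits the representation $h_{i,n} = (\pi_{i,n}-\mu)*\eta_\sigma$ for some $\pi_{i,n}\in\cP(\cX)$ with $\supp(\pi_{i,n})\subset \supp(\mu)$. Hence $\rho_i^n = (1-t_n)\rho + t_n\,\pi_{i,n}*\eta_\sigma$ is a convex combination of probability measures and is itself a probability measure supported in $\supp(\rho)$; moreover, since $\chi_\sigma$ is smooth and bounded, its density is bounded below by $(1-t_n)$ times that of $\rho$ on $\interior(\supp(\rho))$ and differs from that of $\rho$ uniformly by $O(t_n)$. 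In particular, $\rho_1^n-\rho_2^n = t_n h_n$, and the plan is to squeeze $\mathsf W_2(\rho_1^n,\rho_2^n)/t_n$ by two weighted dual Sobolev norms of $h_n$ whose reference measures collapse to $\rho$.

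The first step invokes a Peyr\'e-type sandwich inequality comparing $\mathsf W_2$ with weighted $\dot H^{-1,2}$ norms: for suitably regular probability measures $\alpha,\beta$ on $\R^d$,
\[
\|\alpha-\beta\|_{\Hmt{\alpha\vee\beta}} \;\le\; \mathsf W_2(\alpha,\beta) \;\le\; \|\alpha-\beta\|_{\Hmt{\alpha\wedge\beta}},
\]
where $\alpha\vee\beta$ and $\alpha\wedge\beta$ denote the (normalized) measures whose densities are the pointwise max and min of those of $\alpha$ and $\beta$. Applied with $(\alpha,\beta)=(\rho_1^n,\rho_2^n)$ and divided by $t_n$, this yields
\[
\|h_n\|_{\Hmt{\rho_1^n\vee\rho_2^n}} \;\le\; \frac{\mathsf W_2(\rho_1^n,\rho_2^n)}{t_n} \;\le\; \|h_n\|_{\Hmt{\rho_1^n\wedge\rho_2^n}}.
\]

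The second step is to show that both bounds converge to $\|h\|_{\Hmt{\rho}}$. Since the densities of $\rho_1^n\vee\rho_2^n$ and $\rho_1^n\wedge\rho_2^n$ differ from the density of $\rho$ by $O(t_n)$ uniformly on $\supp(\rho)$, for every fixed $f\in\Ht{\rho}$ one has
\[
\int |\nabla f|^2 \, d(\rho_1^n\vee\rho_2^n) \;\to\; \int |\nabla f|^2\, d\rho, \qquad \int |\nabla f|^2\, d(\rho_1^n\wedge\rho_2^n) \;\to\; \int |\nabla f|^2\, d\rho,
\]
by dominated convergence. Combined with the hypothesized convergence $h_n\to h$ in $\Hmt{\rho}$ and the $2$-Poincar\'e inequality for $\rho$ (which ensures $\Hmt{\rho}$ is nontrivial and that optimizers of the dual norms exist and can be stably compared), a routine continuity argument---bounding the weighted dual norms from above by a fixed dual test $f$ achieving the limit norm, and from below by the near-optimizers of the perturbed norms (which are bounded in $\Ht{\rho}$ and admit a weakly convergent subsequence)---yields $\|h_n\|_{\Hmt{\alpha_n}}\to\|h\|_{\Hmt{\rho}}$ for $\alpha_n\in\{\rho_1^n\vee\rho_2^n,\rho_1^n\wedge\rho_2^n\}$. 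The squeeze then gives $\mathsf W_2(\rho_1^n,\rho_2^n)/t_n\to\|h_1-h_2\|_{\Hmt{\mu*\eta_\sigma}}$, proving Hadamard directional differentiability with the claimed derivative.

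The principal obstacle is the third step: the density of $\rho$ vanishes on $\partial\supp(\rho)$, so the weighted Sobolev analysis is degenerate near the boundary, and Peyr\'e-type comparisons usually presume densities bounded below. Circumventing this will likely require localizing to compact subsets of $\interior(\supp(\rho))$ (where densities are uniformly bounded below for large $n$) and showing that the boundary contribution to $\|h_n\|_{\Hmt{\alpha_n}}$ is negligible---using both the $2$-Poincar\'e assumption and the fact that $h_n,h\in\Hmt{\rho}$ already encode the requisite regularity near $\partial\supp(\rho)$, exactly as in the proof of \cref{lem:SmoothDualSobolev}.
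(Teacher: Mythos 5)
Your overall strategy---squeeze $\mathsf{W}_2(\rho_1^n,\rho_2^n)/t_n$ between weighted dual-Sobolev norms of $h_n=h_{1,n}-h_{2,n}$ and show both bounds collapse to $\|h\|_{\Hmt{\mu*\eta_\sigma}}$---is the right one and matches the route of the cited Proposition 3.3 of \cite{goldfeld22limit}; the multiplicative bound $\rho_i^n\geq(1-t_n)\rho$ ($\rho:=\mu*\eta_\sigma$) that you note handles the upper half cleanly (indeed $\rho_1^n\wedge\rho_2^n\geq(1-t_n)\rho$ and monotonicity of $\|\cdot\|_{\dot H^{-1,2}}$ in the reference measure give $\limsup_n\mathsf{W}_2(\rho_1^n,\rho_2^n)/t_n\leq\|h\|_{\Hmt{\rho}}$).

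The lower half contains two genuine gaps, however. First, the sandwich inequality you invoke, $\mathsf{W}_2(\alpha,\beta)\geq\|\alpha-\beta\|_{\Hmt{\alpha\vee\beta}}$ with the (normalized or not) pointwise maximum of densities as reference, is not a Peyr\'e-type inequality and is in fact false in general: take $\alpha,\beta$ to be disjointly supported bumps---the right-hand side is then $+\infty$ (choose a test $f$ constant on each bump) while $\mathsf{W}_2(\alpha,\beta)<\infty$. The underlying reason is that Wasserstein geodesic densities are not pointwise dominated by the maximum of the endpoints; the classical lower bounds use the \emph{constant} $\max(\|\alpha\|_\infty,\|\beta\|_\infty)$ times Lebesgue, and applying that here yields only $\mathsf{W}_2/t_n\gtrsim\|\chi_\sigma\|_\infty^{-1/2}\|h_n\|_{\dot H^{-1}(\mathrm{Leb})}$, whose constant does not tend to one. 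Second, even granting a comparison with some measure $\alpha_n$ whose density is within $O(t_n)$ of that of $\rho$, your continuity step ``$\int|\nabla f|^2\dx{\alpha_n}\to\int|\nabla f|^2\dx\rho$ by dominated convergence for $f\in\Ht\rho$'' is incorrect: the bound $\alpha_n\leq(1-t_n)\rho+t_n\|\chi_\sigma\|_\infty$ contributes a term $t_n\|\chi_\sigma\|_\infty\int_{\mathcal{X}_\sigma}|\nabla f|^2\dx\lambda$ which may be $+\infty$, since membership in $\Ht\rho$ gives no control over the Lebesgue $\dot H^1$ norm near $\partial\supp(\rho)$ where the density of $\rho$ decays to zero. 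The fix is to use only $f\in C_0^\infty+\R$ (dense in $\Ht\rho$ by definition), combined with an argument that bounds $\int_0^1\|\nabla f\|_{L^2(\rho_t^n)}^2\dx t\to\|\nabla f\|_{L^2(\rho)}^2$ along the Wasserstein geodesic $\rho_t^n$, using that $\rho_t^n$ stays bounded by $\|\chi_\sigma\|_\infty$ and converges weakly to $\rho$; pointing to ``localization as in the proof of \cref{lem:SmoothDualSobolev}'' does not supply this---that proof addresses control of $1/\E[\chi_\sigma(Y-\cdot)]$ inside the support, a different obstacle. Finally, you invoke the $2$-Poincar\'e inequality, which is not among the hypotheses of this lemma and should not be needed: $\Hmt{\rho}$ is the dual of the completion of $C_0^\infty+\R$ regardless.
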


\begin{remark}
Any finite Borel signed measure on $\R^d$ with total mass $0$ and finite $\| \cdot \|_{H^{-1,2}(\mu*\eta_\sigma)}$ norm corresponds to an element of $H^{-1,2}(\mu*\eta_\sigma)$, and this correspondence is one-to-one (as $C_0^\infty$ is right enough). Thus, the above map $\Phi$ is well-defined. 
\end{remark}

The proof of \cref{lem:SmoothHDDNull} follows from that of Proposition 3.3 in \cite{goldfeld22limit} with only minor modifications and thus is omitted for brevity.

\begin{proof}[Proof of \cref{thm:LimitDistributionsSmoothWassersteinNULL}]
    Let $(T_{n,1},T_{n,2}):=\left((\hat\mu_n-\mu)*\eta_{\sigma},(\hat\nu_n-\nu)*\eta_{\sigma}\right)$. Then, by independence of $T_{n,1}$ and $T_{n,2}$, and \cref{lem:DualSobolevEmpirical}, 
    \[
    \sqrt n(T_{n,1},T_{n,2})\stackrel{d}{\to} (\mathbb G_{\mu},\mathbb G_{\mu}') \ \text{ in } \ \Hmt{\mu*\eta_{\sigma}}\times\Hmt{\mu*\eta_{\sigma}}
    .\] 
    Observe that $\Xi_{\mu}\times \Xi_{\mu}$ in \cref{lem:SmoothHDDNull} is convex, so that the tangent cone $\calT_{\Xi_{\mu}\times \Xi_{\mu}}(0,0)$ agrees with the closure 
     of $\left\{ (h_1,h_2)/t:(h_1,h_2) \in \Xi_{\mu}\times \Xi_{\mu},t>0 \right\}$ in $\Hmt{\mu*\eta_{\sigma}}\times \Hmt{\mu*\eta_{\sigma}}$. Thus, $\sqrt n(T_{n,1},T_{n,2})\in \calT_{\Xi_{\mu}\times\Xi_{\mu}}(0,0)$ and $(\mathbb G_{\mu},\mathbb G_{\mu}')\in \calT_{\Xi_{\mu}\times \Xi_{\mu}}(0,0)$ by the
    portmanteau theorem. 

    Now, apply the extended functional delta method, \cref{lem: functional delta method}, to conclude that
    \[
    \begin{split}
       \sqrt n  \mathsf W_p^{(\sigma)}(\hat\mu_n,\hat{\nu}_n) &=  \sqrt n \left( \Phi\left( T_{n,1},T_{n,2} \right)-\Phi\left(0,0\right) \right) \\
       &\stackrel{d}{\to} \Phi'_{(0,0)}(\mathbb{G}_\mu,\mathbb{G}_\mu') = \|{\mathbb G_{\mu}-\mathbb G_{\mu}'}\|_{\Hmt{\mu*\eta_{\sigma}}}.
       \end{split}
\]
The one-sample result is obtained analogously.
\end{proof}

\subsection{Proof of Proposition {\ref{thm:ExpectedW2RatesNULL}}}
\label{subsect:proof-thm:ExpectedW2RatesNULL}
Let $Z_i=(\delta_{X_i}-\mu)*\eta_{\sigma}$ be i.i.d.  $\Hmt{\mu*\eta_{\sigma}}$-valued random variables appearing in the proof of \cref{lem:LimitDistributionSmoothProcess}. By Proposition 2.1 of \cite{goldfeld22limit},
\[
\mathsf{W}_2^{(\sigma)}(\hat\mu_n,\mu)\leq 2\|(\hat\mu_n-\mu)*\eta_{\sigma}\|_{\Hmt{\mu*\eta_{\sigma}}} =2 \left \| n^{-1}\sum_{i=1}^n Z_i \right \|_{\Hmt{\mu*\eta_{\sigma}}}.
\]
Since $\Hmt{\mu*\eta_{\sigma}}$ is a separable Hilbert space, the expectation on the right-hand side is bounded as 
\[
2 n^{-1/2} \sqrt{\E\left[\left\|Z_1\right\|^2_{\Hmt{\mu*\eta_{\sigma}}}\right]}. 
\]
From the proof of \cref{lem:SmoothDualSobolev},
\[
    \E\left[\|Z_1\|^2_{\Hmt{\mu*\eta_{\sigma}}}\right]\leq C_{\mu,\sigma}^2\E\left[\left\|\frac{\chi_{\sigma}(X-\cdot)}{\E[\chi_{\sigma}(Y-\cdot)]}\right\|^2_{L^2(\mu*\eta_{\sigma},\Gamma)}\right]\leq C_{\mu,\sigma}^2(1\vee \|\chi_{\sigma}^2\|_{\infty})\lambda(\mathcal{X}_{\sigma}).
\]
This completes the proof.
\qed

\subsection{Proofs of Theorem \ref{thm:RatesSmooth1WassersteinNULL} and Corollary \ref{cor:RatesSmooth1WassersteinNull}}
\label{subsec:proof-thm:RatesSmooth1WassersteinNULL}
Recall $\calF_\sigma = \{ f*\chi_\sigma : f \in \Lip \}$. 
\begin{lemma}
    \label{lem:LipDonsker}
    The function class $\calF_\sigma$ is $\mu$-Donsker for any $\mu\in\cP(\mathcal{X})$.  
\end{lemma}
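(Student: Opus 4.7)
The plan is to show that $\calF_\sigma$, restricted to $\mathcal{X}$, consists of infinitely differentiable functions whose derivatives of every order are uniformly bounded, and then to invoke the classical bracketing-entropy bound for smooth function classes on a bounded convex set, independently of the choice of $\mu \in \calP(\mathcal{X})$.

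First, I would enclose $\mathcal{X}$ in a closed ball $\bar B \subset \R^d$ of radius $R$ (so $\bar B$ is bounded, convex, and has nonempty interior). For $f \in \Lip$ and any multi-index $\alpha$, differentiation under the integral sign (legitimate since $\chi_\sigma \in C_0^\infty$) yields
\[
\partial^\alpha (f*\chi_\sigma)(x) \;=\; \int_{B(0,\sigma)} f(x-y)\,\partial^\alpha \chi_\sigma(y)\,dy.
\]
Because $f(0)=0$ and $f$ is $1$-Lipschitz, $|f(x-y)| \le \|x-y\| \le R+\sigma$ for all $x \in \bar B$ and $y \in B(0,\sigma)$, so
\[
\|\partial^\alpha(f*\chi_\sigma)\|_{\infty,\bar B} \;\le\; (R+\sigma)\,\|\partial^\alpha \chi_\sigma\|_{L^1(\R^d)} \;=:\; M_\alpha,
\]
with $M_\alpha$ independent of $f$. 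Hence $\calF_\sigma|_{\bar B}$ sits inside a bounded subset of $C^k(\bar B)$ for every $k \in \NN$.

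Next, I would fix any integer $k > d/2$ and apply the classical smooth-function entropy bound (Corollary 2.7.2, or Theorem 2.7.1, in \cite{van1996weak}) on the bounded convex set $\bar B$ to obtain
\[
\log N_{[\,]}\!\left(\epsilon,\, \calF_\sigma|_{\bar B},\, \|\cdot\|_{\infty,\bar B}\right) \;\lesssim\; \epsilon^{-d/k}, \quad \epsilon \in (0,1].
\]
Since $\supp(\mu) \subset \mathcal{X} \subset \bar B$, the same bound controls $L^2(\mu)$-bracketing numbers. With $d/k < 2$, Dudley's bracketing integral $\int_0^1 \sqrt{\log N_{[\,]}(\epsilon, \calF_\sigma, L^2(\mu))}\,d\epsilon$ is finite, and the class has a uniform (in particular $L^2(\mu)$-integrable) envelope. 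Theorem 2.5.6 in \cite{van1996weak} then gives the $\mu$-Donsker conclusion.

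The argument is essentially routine; the only conceptual input is that convolution with the smooth, compactly supported kernel $\chi_\sigma$ upgrades the mere $1$-Lipschitz regularity of $f$ to arbitrary $C^k$ regularity with uniform bounds. Consequently, no assumption on $\mu$ beyond $\supp(\mu) \subset \mathcal{X}$ is required, so the Donsker property holds universally in $\mu \in \calP(\mathcal{X})$. I do not anticipate a serious obstacle; the mildest care needed is the choice of $\bar B$ to apply the smooth-function entropy estimate, which requires a bounded convex domain with nonempty interior.
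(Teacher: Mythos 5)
Your proposal is correct and takes essentially the same route as the paper: convolution with the compactly supported smooth kernel $\chi_\sigma$ upgrades $1$-Lipschitz functions to $C^s$ functions with uniformly bounded derivatives on a bounded convex set, and then the smooth-function entropy bound (Theorem 2.7.1 in \cite{van1996weak}) combined with Theorem 2.5.6 gives the Donsker conclusion. The only cosmetic difference is that you bound $\|\partial^\alpha(f*\chi_\sigma)\|_{\infty,\bar B}$ via $(R+\sigma)\|\partial^\alpha\chi_\sigma\|_{L^1}$, whereas the paper states a Lipschitz-in-$x$ bound for $\partial^k(f*\chi_\sigma)$ and then asserts uniform boundedness; both are valid and equally short.
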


\begin{proof}
     Pick any $f\in\Lip$. Since $\abs{f(y)}=\abs{f(y)-f(0)}\leq \|y\|$, $\|{y}\|\leq \|{x}\|+\sigma$ for $y\in B(x,\sigma)$, and  $\chi_{\sigma}$ is supported on $B(0,\sigma)$, we have
    \[
        \abs{(f*\chi_{\sigma})(x)}\leq \int_{B(x,\sigma)}\|y\|\chi_{\sigma}(x-y)\dx{y}\leq \int_{B(x,\sigma)}(\|x\|+\sigma)\chi_{\sigma}(x-y)\dx{y}=\|{x}\|+\sigma.
    \]

    Next, we study the derivatives. For any multi-index $k = (k_1,\dots,k_d) \in \NN_{0}^{d}$, let $\partial^k = \partial_1^{k_1} \cdots \partial_d^{k_d}$ denote the differential operator and set $\bar k:= \sum_{j=1}^{d}k_j$. 
    Observe that, for any $x,x'\in\R^d$, 
    \begin{align*}
        \abs{\partial^{k}(f*\chi_{\sigma})(x)-\partial^{k}(f*\chi_{\sigma})(x')}&\leq \int_{B(0,\sigma)}\abs{f(x-y)-f(x'-y)}\abs{\partial^{k}\chi_{\sigma}(y)}\dx{y}\\
        &\leq \|{\partial^{k}\chi_{\sigma}}\|_{\infty}\lambda(B(0,\sigma))\|x-x'\|.
    \end{align*}
    Since $\chi_\sigma$ is smooth and compactly supported, for any $s \in \NN$, there exists a constant $C_s$ independent of $f$ such that $\| \partial^k (f*\chi_\sigma) \|_{\infty} \le C_s$ for any multi-index with $1 \le \bar k \le s$.
    
    Pick sufficiently large $R > 0$ such that $\calX \subset B(0,R)$. Let $\calF_\sigma |_{B(0,R)} = \{ f|_{B(0,R)} : f \in \calF_\sigma \}$. Then, by Theorem 2.7.1 in \cite{van1996weak}, we have 
    \begin{equation}
    \label{eq: bracketing smooth W1}
    \log N(\calF_\sigma|_{B(0,R)}, \| \cdot \|_\infty, \epsilon) = O(\epsilon^{-d/s}), \ \epsilon \downarrow 0
    \end{equation}
    for any $s \in \mathbb{N}$. Choosing $s = \lfloor d/2 \rfloor +1 > d/2$, we see that the function class $\calF_\sigma$ is $\mu$-Donsker from Theorem 2.5.6 in \cite{van1996weak}.
\end{proof}

\begin{proof}[Proof of Theorem {\ref{thm:RatesSmooth1WassersteinNULL}}]
Recall that $\mathsf{W}_1^{(\sigma)}(\mu,\nu) = \sup_{f \in \calF_\sigma}(\mu-\nu)(f)$ by the Kantorovich-Rubinstein duality. Since $\calF_\sigma$ is Donsker w.r.t. $\mu$ and $\nu$, the conclusion of the theorem follows from a similar argument to the proof of \cref{thm: sliced W1 limit} Parts (iii) and (iv). We omit the details for brevity. 
\end{proof}

\begin{proof}[Proof of \cref{cor:RatesSmooth1WassersteinNull}]
Recall the expression $\mathsf{W}_1^{(\sigma)}(\hat{\mu}_n,\mu) = \sup_{f \in \calF_\sigma}(\hat{\mu}_n-\mu)(f)$. As $\mu$ is compactly supported, the desired result follows by the bracketing entropy bound (\ref{eq: bracketing smooth W1}) combined with Theorem 2.14.2 in \cite{van1996weak}.
\end{proof}

\subsection{Proof of Proposition \ref{prop:truncboundcompact}}
\label{subsec:proof-prop:truncboundcompact}

The following result is a minor adaptation of Lemma 5.3 from {\cite{goldfeld22limit}} using the fact that distributions with regular densities are equivalent to the Lebesgue measure \cite[Section 2]{polyanski2016wasserstein}.
\begin{lemma}
\label{lem:potentialbound}
    Let $1<p<\infty$. Assume that $\nu$ is $\beta$-sub-Weibull for $\beta\in(0,2]$, and that $\mu\in\cP_p(\R^d)$ has a $(c_1,c_2)$-regular density $f_{\mu}$. Let $\varphi$ be an optimal transport potential from $\mu$ to $\nu$ for $\mathsf W_p$. Then there exists a constant $C$ depending only on $p,d,\beta,c_1,c_2$, an upper bound on $\| \| Y \| \|_{\psi_{\beta}}$ for $Y \sim \nu$, and a lower bound on $f_{\mu}(0)$ such that
    \begin{equation}
        \abs{\varphi(x)-\varphi(0)}\leq C\big(1+\| x \|^{2p/\beta}\big)\| x \|,\quad \forall x\in\R^d.
        \label{eq:potentialbound}
    \end{equation}
\end{lemma}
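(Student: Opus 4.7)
The proof is framed as an adaptation of Lemma 5.3 of \cite{goldfeld22limit}, and the plan is to import that argument, replacing the Lebesgue/compact-support hypothesis used there with the quantitative lower bound on $f_\mu$ available in the current setting. I would proceed in three steps.

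First, I would convert the regularity assumption $\|\nabla \log f_\mu\|\leq c_1\|\cdot\|+c_2$ into an explicit pointwise lower bound on $f_\mu$. Integrating along the straight segment from $0$ to any $x\in\R^d$ yields
\[
|\log f_\mu(x)-\log f_\mu(0)|\leq \tfrac{c_1}{2}\|x\|^2+c_2\|x\|,
\]
so that $f_\mu(x)\geq f_\mu(0)\exp(-\tfrac{c_1}{2}\|x\|^2-c_2\|x\|)>0$ everywhere. In particular $\mu$ is equivalent to Lebesgue on $\R^d$, and $\mu$-mass of any ball is quantitatively controlled by its center, radius, $c_1,c_2$, and a lower bound on $f_\mu(0)$ (cf.~Section 2 of \cite{polyanski2016wasserstein}).

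Second, I would invoke $c$-concavity and the dual representation \eqref{eq: duality} with $\psi=\varphi^c$, giving $\varphi(x)=\inf_{y\in\R^d}[\|x-y\|^p-\psi(y)]$. For each $x$ let $y_x$ attain (or come arbitrarily close to) this infimum; then
\[
\|x-y_x\|^p-\|y_x\|^p\leq \varphi(x)-\varphi(0)\leq \|x-y_0\|^p-\|y_0\|^p.
\]
Applying the elementary estimate $|a^p-b^p|\leq p(a\vee b)^{p-1}|a-b|$ with $a=\|x-y\|$ and $b=\|y\|$ produces
\[
|\varphi(x)-\varphi(0)|\leq p\|x\|\bigl(\|x\|+\|y_0\|\vee\|y_x\|\bigr)^{p-1},
\]
reducing the problem to a pointwise norm bound on the (near-)argmins $y_x,y_0$.

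Third, and this is the main obstacle, I would bound $\|y_x\|$ by combining the sub-Weibull tail $\Prob_\nu(\|Y\|>R)\leq 2\exp(-R^\beta/K^\beta)$ (with $K$ depending on $\|\|Y\|\|_{\psi_\beta}$) with the density lower bound from Step~1. Following the strategy of Lemma 5.3 in \cite{goldfeld22limit}, if $\|y_x\|$ were much larger than a threshold $R$, then $c$-concavity and the transport-structure argument would force a ball about $y_x$ of positive $\nu$-mass to be the image of a source region whose $\mu$-mass is quantitatively controlled from below by Step~1; matching the Gaussian-type lower bound on $f_\mu$ against the $\beta$-sub-Weibull decay of $\nu$ yields a bound of the form $\|y_x\|\leq C(1+\|x\|)^{2/\beta}$ (up to constants depending only on the quantities listed in the statement). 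This is precisely the step where the regular-density hypothesis intervenes through the lower bound of Step~1, replacing a uniform lower bound on the density that would be available in the Lebesgue/bounded-support setting of the original proof.

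Inserting the bound $\|y_0\|\vee\|y_x\|\leq C(1+\|x\|)^{2/\beta}$ into the Step~2 estimate and using $(a+b)^{p-1}\lesssim a^{p-1}+b^{p-1}$ together with the fact that, for $\beta\in(0,2]$, $\|x\|\lesssim 1+\|x\|^{2/\beta}$, yields
\[
|\varphi(x)-\varphi(0)|\leq C\|x\|\bigl(1+\|x\|^{2/\beta}\bigr)^{p-1}\cdot(1+\|x\|)\leq C\bigl(1+\|x\|^{2p/\beta}\bigr)\|x\|,
\]
which is the claimed bound. The remaining verifications---existence of an OT potential, attainment of the $c$-concave infimum, and that all constants depend only on the quantities listed---are routine under $\mu,\nu\in\calP_p(\R^d)$ (the latter being a consequence of the sub-Weibull assumption).
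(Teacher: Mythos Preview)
Your proposal is correct and aligns with the paper's approach: the paper does not give a self-contained proof of this lemma but simply states that it is ``a minor adaptation of Lemma~5.3 from \cite{goldfeld22limit} using the fact that distributions with regular densities are equivalent to the Lebesgue measure \cite[Section~2]{polyanski2016wasserstein}.'' Your Step~1 is precisely this equivalence-to-Lebesgue observation, and your Steps~2--3 reconstruct the argument of that cited lemma, so you have in fact fleshed out more detail than the paper provides.
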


\begin{proof}[Proof of \cref{prop:truncboundcompact}]
\textbf{Part (i).}
    We note that \cref{lem:potentialbound} is applicable in the transport from $\mu$ to $\mu\vert_A$, as $\mu$ is $(c_1,c_2)$-regular and $\mu\vert_A$ is compactly supported.

    Let $\varphi$ be an OT potential from $\mu$ to $\mu\vert_A$ for $\mathsf W_p$ with $\varphi(0)=0$ which satisfies \eqref{eq:potentialbound}. Then,
    \begin{align*}
        \mathsf W_p^p(\mu,\mu\vert_A)&=\int_{\R^d} \varphi\dx{\mu}+\int_{\R^d} \varphi^c\dx{\mu\vert_A}\\
        &= \int_{\R^d} \varphi\dx{\mu}-\int_{\R^d} \varphi\dx{\mu\vert_A}+\int_{\R^d} \varphi\dx{\mu\vert_A}+\int_{\R^d} \varphi^c\dx{\mu\vert_A}\\
        &\leq \int_{\R^d} \varphi\dx{\mu}-\int_{\R^d} \varphi\dx{\mu\vert_A},
    \end{align*}
    where the last inequality is because $\int_{\R^d}\varphi\dx{\mu\vert_A}+\int_{\R^d}\varphi^c\dx{\mu\vert_A}\leq \mathsf W_p^p(\mu\vert_A,\mu\vert_A)=0$.
    It remains to analyze the last term of the above display, 
    \begin{align*}
        &\int_{\R^d}\varphi\dx{\mu}-\int_{\R^d}\varphi\dx{\mu\vert_A}=\int_{A^c}\varphi\dx{\mu}+\left( 1-\frac{1}{\mu(A)} \right)\int_{A}\varphi\dx{\mu}\\
        &\quad \leq \int_{A^c}C(\|{x}\|+\|{x}\|^{p+1})\dx{\mu(x)}+\left(\frac{1}{\mu(A)}-1\right)\int_{A}C(\|{x}\|+\|{x}\|^{p+1})\dx{\mu(x)}
    \\
    &\quad= \int_{\R^d}C(\|{x}\|+\|{x}\|^{p+1})\dx{\mu(x)}+\left(\frac{1}{\mu(A)}-2\right)\int_{A}C(\|{x}\|+\|{x}\|^{p+1})\dx{\mu(x)}
    \\
    &\quad= C\left( \E_{\mu}[\|{X}\|+\E_{\mu}[\|{X}\|^{p+1}]+(1-2\mu(A))(\E_{\mu\vert_A}[\|{Y}\|]+\E_{\mu\vert_A}[\|{Y}\|^{p+1}]) \right).
\end{align*}

\medskip

\textbf{Part (ii).} Let $\varphi$ be an OT potential from $\mu\vert_A$ to $\mu$ for $\mathsf W_p$ with $0\leq \varphi\leq \diam(\supp(\mu))^p$; cf. Remark 1.13 of \cite{villani2003topics}. Then,
    \begin{align*}
    \mathsf W_p^p(\mu\vert_A,\mu)&=\int_{\mathcal{X}} \varphi\dx{\mu\vert_A}+\int_{\mathcal{X}} \varphi^c\dx{\mu}\\
        &= \int_{\mathcal{X}} \varphi\dx{\mu\vert_A}-\int_{\mathcal{X}} \varphi\dx{\mu}+\int_{\R^d} \varphi\dx{\mu}+\int_{\R^d} \varphi^c\dx{\mu}\\
        &\leq \int_{\R^d} \varphi\dx{\mu\vert_A}-\int_{\R^d} \varphi\dx{\mu},
    \end{align*}
    where the last inequality is because $\int_{\R^d}\varphi\dx{\mu}+\int_{\R^d}\varphi^c\dx{\mu}\leq \mathsf W_p^p(\mu,\mu)=0$.
    Since $\varphi$ is nonnegative, 
    \begin{align*}
        \int_{\mathcal{X}}\varphi\dx{\mu\vert_A}-\int_{\mathcal{X}}\varphi\dx{\mu}\leq \left(\frac{1}{\mu(A)}-1\right)\int_{\mathcal{X}} \varphi\dx{\mu}\leq\left( \frac{1}{\mu(A)}-1 \right)\diam(\supp(\mu))^p.
\end{align*}
This completes the proof.
\end{proof}

\subsection{Proof of Proposition \ref{prop:ComputationalGuarantees}}
    The result follows by noting that 
    \[
        \abs{\mathsf W_2^{(\sigma)}(\mu_1,\mu_1)-\widetilde{\mathsf{W}}_2(\mu_1*\eta_{\sigma}\vert_{A_1},\mu_2*\eta_{\sigma}\vert_{A_2})}\leq E_1+E_2, 
    \]
    where 
    \begin{align*}
        E_1&=\abs{\mathsf W_2^{(\sigma)}(\mu_1,\mu_2)-\mathsf{W}_2(\mu_1*\eta_{\sigma}\vert_{A_1},\mu_2*\eta_{\sigma}\vert_{A_2})},
        \\
        &\leq (\mu_1(A_1)^{-1}-1)^{1/2}\diam(\supp(\mu_1))+(\mu_2(A_2)^{-1}-1)^{1/2}\diam(\supp(\mu_2)),
    \end{align*}
    where the upper bound is due to  \cref{prop:truncboundcompact} (ii), and 
    \begin{align*}
        E_2&=\abs{\mathsf{W}_2(\mu_1*\eta_{\sigma}\vert_{A_1},\mu_2*\eta_{\sigma}\vert_{A_2})-\widetilde{\mathsf{W}}_2(\mu_1*\eta_{\sigma}\vert_{A_1},\mu_2*\eta_{\sigma}\vert_{A_2})},
        \\
        &\leq 
        \abs{\mathsf{W}_2^2(\mu_1*\eta_{\sigma}\vert_{A_1},\mu_2*\eta_{\sigma}\vert_{A_2})-\widetilde{\mathsf{W}}_2^2(\mu_1*\eta_{\sigma}\vert_{A_1},\mu_2*\eta_{\sigma}\vert_{A_2})}^{1/2}.
    \end{align*}
   Conclude by applying the error of approximating $\mathsf W_2^2$ by $\widetilde{\mathsf W}_2^2$ from \cref{sec:summaryVacher}.
\qed

\section{Proof for Section \ref{sec: EOT}}
\label{sec: EOT proof}

\subsection{Proof of Theorem \ref{THM:EOT_CLT}}

We will mostly focus on the one-sample case and prove the two-sample case as an extension of the proof for the one-sample case. 
We start from recalling certain regularity properties of optimal EOT potentials between sub-Gaussian distributions. Recall that, for a multi-index $k = (k_1,\dots,k_d) \in \NN_{0}^{d}$,  $\partial^k = \partial_1^{k_1} \cdots \partial_d^{k_d}$ denotes the differential operator and $\bar k:= \sum_{j=1}^{d}k_j$. We say that $\mu$ is $\sigma^2$-sub-Gaussian for $\sigma > 0$ if $\E_\mu[e^{\| X \|^2/(2d\sigma^2)}] \le 2$. Observe that if $\mu$ is $\sigma^2$-sub-Gaussian, then $\mu$ is $\bar{\sigma}^2$-sub-Gaussian for any $\bar{\sigma}  > \sigma$. Also $\mu$ is sub-Gaussian (i.e., $\| \| X \| \|_{\psi_2}  < \infty$ for $X \sim \mu$) if and only if $\mu$ is $\sigma^2$-sub-Gaussian for some $\sigma > 0$. The following directly follows from Proposition 1 in \cite{mena2019statistical} and its proof.

\begin{lemma}[Regularity of optimal EOT potentials \cite{mena2019statistical}]
\label{lem: EOT potential}
Let $\mu,\nu\in\cP(\RR^d)$ be $\sigma^2$-sub-Gaussian. Then, there exist a version of optimal EOT potentials $(\varphi,\psi)$ for $(\mu,\nu)$ such that the following hold: for any positive integer $s \ge 2$, there exists a constant $C_{s,d,\sigma} < \infty$ that depends only on $s,d,\sigma$ such that 
\begin{equation}
\begin{split}
&\varphi(x) \le \frac{1}{2}(\|x\|+\sqrt{2d}\sigma)^2, \quad \forall x \in \R^d, \\
&|\partial^k \varphi(x)|\leq C_{s,d,\sigma}(1+\|x\|^s), \quad  \forall x\in\RR^d, \ \forall k \in \NN_0^k \ \text{with} \ \bar k  \le s, \ \\
&\psi(y) \le \frac{1}{2}(\|y\|+\sqrt{2d}\sigma)^2, \quad \forall y \in \R^d, \\
&|\partial^k \psi(y)|\leq C_{s,d,\sigma} (1+\|y\|^s),  \quad \forall y\in\RR^d, \ \forall k \in \NN_0^k \ \text{with} \ \bar k \le s.
\end{split}
 \label{EQ:EOT_smooth_potentials}
\end{equation}
Further, the optimality condition (\ref{eq: optimality}) holds for every $(x,y) \in \R^d \times \R^d$. 
\end{lemma}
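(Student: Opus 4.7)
The plan is to establish the lemma by selecting a distinguished version of the optimal EOT potentials through the Schrödinger system \eqref{eq: optimality} and then deriving quadratic growth and polynomial derivative bounds from sub-Gaussianity, following the blueprint of Proposition~1 in \cite{mena2019statistical}. First I would fix the normalization. Since an optimal pair $(\varphi,\psi)$ is unique $(\mu\otimes\nu)$-a.e.\ up to additive constants, I would pick representatives so that
\[
\varphi(x) = -\log \int e^{\psi(y) - \|x-y\|^2/2} d\nu(y), \quad \psi(y) = -\log \int e^{\varphi(x) - \|x-y\|^2/2} d\mu(x)
\]
hold for every $(x,y) \in \R^d \times \R^d$. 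These formulas define $\varphi, \psi$ unambiguously and make them $C^\infty$ by differentiation under the integral (the Gaussian kernel $e^{-\|x-y\|^2/2}$ dominates all integrands uniformly in $x,y$ on compacts, and the sub-Gaussian tails of $\mu,\nu$ supply the necessary decay at infinity). This already yields the last clause of the lemma, namely that the optimality identities hold pointwise on $\R^d \times \R^d$.

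Next I would derive the quadratic upper bounds. Applying Jensen's inequality to the concave $-\log$ in the defining formula for $\varphi$ produces
\[
\varphi(x) \le \int \left[ \tfrac12 \|x-y\|^2 - \psi(y) \right] d\nu(y) = \tfrac12 \bigl\| x - \E_\nu[Y] \bigr\|^2 + \tfrac12 \Var_\nu(Y) - \int \psi\, d\nu.
\]
I would then fix the additive-constant freedom so that $\int \psi \,d\nu$ is controlled, combining this with the sub-Gaussian moment estimates $\E_\nu[\|Y\|^2] \le 2d\sigma^2 \log 2$ and $\|\E_\nu[Y]\| \le \sqrt{2d}\sigma$, and absorbing lower-order terms via $\tfrac12\|x-a\|^2 \le \tfrac12(\|x\|+\|a\|)^2$, to reach $\varphi(x) \le \tfrac12(\|x\| + \sqrt{2d}\sigma)^2$. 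The analogous bound for $\psi$ follows by symmetry in $\mu$ and $\nu$.

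For the derivative bounds I would differentiate under the integral sign. Letting $p_x(y)\, d\nu(y)$ denote the probability measure with density $e^{\psi(y)-\|x-y\|^2/2}/\int e^{\psi(y')-\|x-y'\|^2/2}d\nu(y')$ with respect to $\nu$, a direct computation shows that $\partial^k \varphi(x)$ is a polynomial (of total degree at most $\bar k$) in the coordinates of $x$ and the conditional moments $\E_{p_x d\nu}[(x-y)^\alpha]$ for $|\alpha|\le \bar k$, via the Faà di Bruno-type expansion of derivatives of $-\log\int e^{F(x,y)}d\nu(y)$. Because the upper bound on $\psi$ forces the conditional measure $p_x d\nu$ to be dominated uniformly in $x$ by a Gaussian times the sub-Gaussian $\nu$, every such conditional moment admits a bound polynomial in $\|x\|$ of degree $\bar k$, yielding $|\partial^k \varphi(x)| \le C_{s,d,\sigma}(1+\|x\|^s)$; the bound for $\partial^k \psi$ is symmetric.

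The principal obstacle will be the second step: pinning down the additive constants so that the quadratic bound holds with the tight coefficient $\sqrt{2d}\sigma$. A naive application of Jensen leaves $\int \psi \,d\nu$ free to drift, and one must use the joint normalization $\int\!\int e^{\varphi \oplus \psi - c}\, d\mu \otimes d\nu = 1$ (valid for optimal potentials under the quadratic cost) together with sub-Gaussianity of both marginals to produce matched two-sided control on $\int \psi\, d\nu$ and $\int \varphi\, d\mu$. Once this matching is secured, the remaining assertions, including smoothness, polynomial derivative growth, and everywhere-validity of \eqref{eq: optimality}, follow by continuity and routine dominated-convergence justifications.
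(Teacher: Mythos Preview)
The paper does not supply its own proof of this lemma; it simply states that the result ``directly follows from Proposition~1 in \cite{mena2019statistical} and its proof.'' Your sketch is a faithful reconstruction of that argument---defining canonical potentials via the Schr\"odinger system, using Jensen for the quadratic upper bound, and differentiating under the integral with Fa\`a di Bruno-type expansions for the derivative estimates---so there is nothing to compare and your approach is essentially the one the cited reference takes.
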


With the estimates (\ref{EQ:EOT_smooth_potentials}) for optimal EOT potentials in mind, let 
\begin{equation}
\calF_\sigma =\big \{ f \in C^s(\R^d) : |\partial^k f(x)|\leq C_{s,d,\sigma}(1+\|x\|^s),  \forall x\in\RR^d,  \forall k \in \NN_0^k \ \text{with} \ \bar k  \le s \big \} 
\label{eq: EOT function class}
\end{equation}
for $s=\max \{\lfloor d/2 \rfloor+1,2\}$. We may assume without loss of generality that $C_{s,d,\sigma}$ is increasing in $\sigma$, so that $\calF_{\sigma} \subset \calF_{\bar{\sigma}}$ for any $\bar{\sigma} > \sigma$. We first establish Lipschitz continuity of the EOT objective w.r.t. $\| \cdot \|_{\infty,\calF_{\sigma}}$ and characterize its (directional) derivative.

\begin{lemma}
\label{lem: EOT gateaux}
Let $\mu_0,\mu_1,\nu \in \calP(\R^d)$ be $\sigma^2$-sub-Gaussian. Then,
\begin{align}
|\sS(\mu_1,\nu) - \sS(\mu_0,\nu)| \le \| \mu_0-\mu_1 \|_{\infty,\calF_{\sigma}}. \label{eq: EOT lipschitz}
\end{align}
Further, we have
\begin{align}
\lim_{t \downarrow 0} \frac{\sS(\mu_0 + t(\mu_1-\mu_0),\nu) - \sS(\mu_0,\nu)}{t}  = \int \varphi_0 d(\mu_1-\mu_0) = (\mu_1-\mu_0) (\varphi_0), \label{eq: EOT derivative}
\end{align}
where $(\varphi_0,\psi_0)$ are optimal EOT potentials for $(\mu_0,\nu)$. 
\end{lemma}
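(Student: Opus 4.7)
The plan is to work from the dual formulation of EOT, namely $\sS(\mu,\nu) = \sup_{(\varphi,\psi) \in L^1(\mu) \times L^1(\nu)} F(\mu,\nu,\varphi,\psi)$ with $F(\mu,\nu,\varphi,\psi) := \int \varphi \, d\mu + \int \psi \, d\nu - \int e^{\varphi\oplus\psi-c}\, d(\mu\otimes\nu) + 1$, observing that $F$ is affine in $\mu$. The crucial structural input is \cref{lem: EOT potential}: we fix the version of the optimal EOT potentials $(\varphi_i,\psi_i)$ for $(\mu_i,\nu)$ produced by that lemma, so that $\varphi_i,\psi_i \in \calF_\sigma$ and, more importantly, the optimality identities \eqref{eq: optimality} hold for \emph{every} $(x,y)$, not merely almost surely.

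For the Lipschitz bound \eqref{eq: EOT lipschitz}, I would use $(\varphi_0,\psi_0)$ as a feasible pair in the dual for $(\mu_1,\nu)$. The pointwise identity $\int e^{\varphi_0(x)+\psi_0(y)-c(x,y)} d\nu(y) = 1$ for \emph{every} $x$ gives $\int e^{\varphi_0\oplus\psi_0-c} d(\mu_1\otimes\nu) = 1$, so the exponential term contributes the same value under $\mu_0\otimes\nu$ and $\mu_1\otimes\nu$. This yields $\sS(\mu_1,\nu) - \sS(\mu_0,\nu) \ge (\mu_1-\mu_0)(\varphi_0)$, and symmetrically $\sS(\mu_1,\nu) - \sS(\mu_0,\nu) \le (\mu_1-\mu_0)(\varphi_1)$. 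Since $\varphi_0,\varphi_1 \in \calF_\sigma$, both bounds are controlled by $\|\mu_1-\mu_0\|_{\infty,\calF_\sigma}$.

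For the G\^ateaux derivative \eqref{eq: EOT derivative}, set $\mu_t = \mu_0 + t(\mu_1-\mu_0)$. Convexity of $\rho \mapsto \E_\rho[e^{\|X\|^2/(2d\sigma^2)}]$ shows $\mu_t$ is $\sigma^2$-sub-Gaussian, so \cref{lem: EOT potential} applies and yields optimal potentials $(\varphi_t,\psi_t) \in \calF_\sigma \times \calF_\sigma$. Repeating the two-sided bound above with $\mu_1$ replaced by $\mu_t$ gives
\[
t\cdot(\mu_1-\mu_0)(\varphi_0) \le \sS(\mu_t,\nu) - \sS(\mu_0,\nu) \le t\cdot(\mu_1-\mu_0)(\varphi_t).
\]
Dividing by $t$ reduces the claim to showing $(\mu_1-\mu_0)(\varphi_t) \to (\mu_1-\mu_0)(\varphi_0)$ as $t\downarrow 0$.

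The main obstacle is this last convergence, since no a priori continuity of the Schr\"odinger potentials in the marginal is given. My plan is to exploit the uniform membership $\{\varphi_t,\psi_t\}_{t\in[0,1]} \subset \calF_\sigma$: the derivative bounds in \eqref{EQ:EOT_smooth_potentials} furnish local equicontinuity, and the functions are uniformly polynomially bounded. By Arzel\`a--Ascoli applied on a compact exhaustion, any sequence $t_n\downarrow 0$ admits a subsequence (not relabeled) along which $\varphi_{t_n}\to\varphi^\star$ and $\psi_{t_n}\to\psi^\star$ locally uniformly, with the same polynomial envelope inherited by $(\varphi^\star,\psi^\star)$. Passing to the limit in the pointwise Schr\"odinger system \eqref{eq: optimality} for $(\mu_{t_n},\nu)$---using dominated convergence with the sub-Gaussian integrability of $\mu_{t_n}, \nu$ against the polynomial envelopes of $e^{\varphi_t\oplus\psi_t-c}$---identifies $(\varphi^\star,\psi^\star)$ as optimal EOT potentials for $(\mu_0,\nu)$. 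Uniqueness of these potentials up to additive constants forces $\varphi^\star = \varphi_0 + a$ for some constant $a$, which is immaterial since $(\mu_1-\mu_0)$ has total mass zero. Dominated convergence (again using the polynomial envelope and sub-Gaussianity of $\mu_0,\mu_1$) then gives $(\mu_1-\mu_0)(\varphi_{t_n}) \to (\mu_1-\mu_0)(\varphi_0)$. Since every subsequential limit coincides, the full limit exists and equals $(\mu_1-\mu_0)(\varphi_0)$, concluding the proof.
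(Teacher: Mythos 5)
Your proposal is correct and matches the paper's proof essentially step for step: the two-sided bound using the pointwise Schr\"odinger optimality identities, the Arzel\`a--Ascoli compactness argument on a compact exhaustion, the passage to the limit in the pointwise system via dominated convergence, and the use of uniqueness up to additive constants (harmless since $\mu_1-\mu_0$ has zero mass). The only cosmetic difference is that the paper derives the upper bound by using $(\varphi_t,\psi_t)$ as a suboptimal dual pair for $(\mu_0,\nu)$ rather than phrasing it as a symmetric application of the lower bound, but the resulting inequality $\sS(\mu_t,\nu)\le\sS(\mu_0,\nu)+t\int\varphi_t\,d(\mu_1-\mu_0)$ is identical.
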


\begin{proof}
We first prove (\ref{eq: EOT derivative}). Let $\mu_t = \mu_0 + t(\mu_1-\mu_0) = (1-t)\mu_0 + t\mu_1$ for $t \in [0,1]$ and let $(\varphi_t,\psi_t)$ be optimal EOT potentials for $(\mu_t,\nu)$. Since $\mu_t$ is $\sigma^2$-sub-Gaussian, we may choose $(\varphi_t,\psi_t)$ in such a way that they satisfy (\ref{eq: optimality}) for every $(x,y) \in \R^d \times \R^d$ and (\ref{EQ:EOT_smooth_potentials}) for $s=2$. Observe that 
\begin{equation}
\begin{split}
\sS(\mu_t,\nu) &= \int \varphi_t d\mu_t + \int \psi_t d\nu \ge \int \varphi_0 d\mu_t + \int \psi_0 d\nu  -\int e^{\varphi_0 \oplus \psi_0 - c} d\mu_t\otimes \nu + 1 \\
&= \int \varphi_0 d\mu_t + \int \psi_0 d\nu=\int \psi_0 d\mu_0 + \int \psi_0 d\nu + t  \int \varphi_0 d(\mu_1-\mu_0)  \\
&=\sS(\mu_0,\nu) + t \int \varphi_0 d(\mu_1-\mu_0),
\end{split}
\label{eq: one-sided-1}
\end{equation}
where the third equality uses the fact that $\int e^{\varphi_0(x)+ \psi_0(y) - c(x,y)}  d\nu(y)= 1$ for all $x\in\RR^d$. Thus,
\[
\liminf_{t \downarrow 0} \frac{\sS(\mu_t,\nu) - \sS(\mu_0,\nu)}{t} \ge \int \varphi_0 d(\mu_1-\mu_0).
\]
Likewise, we have
\begin{equation}
\begin{split}
\sS(\mu_t,\nu) &= \int \varphi_t d\mu_t + \int \psi_t d\nu = \int \varphi_t d\mu_0 + \int \psi_t d\nu + t \int \varphi_t d(\mu_1-\mu_0) \\
&\le \int \varphi_0 d\mu_0 + \int \psi_0 d\nu + \int e^{\varphi_t \oplus \psi_t - c}  d(\mu_0 \otimes \nu) - 1 + t \int \varphi_t d(\mu_1-\mu_0) \\
&=\int \psi_0 d\mu_0 + \int \psi_0 d\nu + t  \int \varphi_t d(\mu_1-\mu_0)\\
&=\sS(\mu_0,\nu) + t \int \varphi_t d(\mu_1-\mu_0),
\end{split}
\label{eq: one-sided-2}
\end{equation}
where the penultimate equality is because $\int e^{\varphi_t(x)+\psi_t(y) - c(x,y)}  d\nu(y) = 1$ for all $x\in\RR^d$. It suffices to show that for any sequence $t_n \downarrow 0$,
\begin{equation}
\lim_{n \to \infty} \int \varphi_{t_n} d(\mu_1-\mu_0) = \int \varphi_0 d(\mu_1-\mu_0).
\label{eq: upper limit}
\end{equation}
Pick any subsequence $n'$ of $n$. From (\ref{EQ:EOT_smooth_potentials}) and the Ascoli-Arzela theorem, there exists a further subsequence $n''$ such that $\varphi_{t_{n''}} \to \varphi$ and $\psi_{t_{n''}} \to \psi$ locally uniformly for some (continuous) functions $\varphi,\psi$. Again, from (\ref{EQ:EOT_smooth_potentials}) and the dominated convergence theorem, $(\varphi,\psi)$ satisfies (\ref{eq: optimality}) for every $(x,y) \in \R^d \times \R^d$, so that they are optimal EOT potentials for $(\mu_0,\nu)$. We shall now verify that $\varphi(x) = \varphi_0 (x)+ a$ for every $x \in \R^d$ for some constant $a \in \R$. To see this, by uniqueness of optimal EOT potentials, $\psi (y) =  \psi_0(y) - a$ for $\nu$-almost every $y \in \R^d$ for some constant $a \in \R$. We then have
\[
\varphi (x) = -\log \int_{\R^d} e^{\psi(y) - c(x,y)} d\nu(y) =-\log \int_{\R^d} e^{\psi_0(y)-c(x,y)} d\nu(y) + a =\varphi_0 (x) + a
\]
for every $x \in \R^d$. Conclude that, by (\ref{EQ:EOT_smooth_potentials}) and the dominated convergence theorem, 
\[
\lim_{n'' \to \infty} \int \varphi_{t_{n''}} d(\mu_1-\mu_0) = \int \varphi d(\mu_1-\mu_0) = \int \varphi_0 d(\mu_1-\mu_0).
\]
Since the limit does not depend on the choice of subsequence, we have proved (\ref{eq: upper limit}), completing the proof of (\ref{eq: EOT derivative}). 

Next, we prove (\ref{eq: EOT derivative}). From the inequalities (\ref{eq: one-sided-1}) and (\ref{eq: one-sided-2}), we have 
\[
| \sS(\mu_1,\nu) - \sS(\mu_0,\nu) | \le \int \varphi_0 d(\mu_1-\mu_0) \bigvee \int \varphi_1 d(\mu_1-\mu_0).
\]
In view of the estimates (\ref{EQ:EOT_smooth_potentials}), we see that the right-hand side can be bounded by $\| \mu_1 - \mu_0 \|_{\infty,\calF_\sigma}$. 
\end{proof}

Second, we will verify that the function class $\calF_\sigma$ is $\mu$-Donsker. 

\begin{lemma}
\label{lem: EOT Donsker}
If $\mu$ is sub-Gaussian, then $\calF_{\sigma}$ is $\mu$-Donsker. 
\end{lemma}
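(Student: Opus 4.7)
The plan is to show that $\calF_\sigma$ is $\mu$-Donsker by bounding the $L^2(\mu)$ bracketing numbers and verifying the bracketing integral condition (Theorem 2.5.6 in van der Vaart--Wellner). Note that each $f \in \calF_\sigma$ admits the polynomial envelope $F(x) = C_{s,d,\sigma}(1+\|x\|^s)$, which lies in $L^2(\mu)$ since $\mu$ is sub-Gaussian (all polynomial moments are finite). Moreover, by construction, $s = \max\{\lfloor d/2 \rfloor + 1, 2\}$ satisfies $s > d/2$, i.e., $d/s < 2$.

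The main step is a truncation--and--smooth-approximation argument. For each $R > 0$, restrict $f \in \calF_\sigma$ to $B(0,R)$ and rescale so that derivatives up to order $s$ are bounded by some constant $M_R \lesssim (1+R^s)$. Then Theorem 2.7.1 in van der Vaart--Wellner yields the uniform covering bound
\[
\log N\bigl(\epsilon,\, \calF_\sigma|_{B(0,R)},\, \|\cdot\|_{\infty,B(0,R)}\bigr)\ \lesssim\ R^d\,(M_R/\epsilon)^{d/s}\ \lesssim\ R^d (1+R^s)^{d/s}\epsilon^{-d/s}.
\]
Enumerate such a minimal cover as $f_1,\dots,f_N$, and form global brackets of the form
\[
\bigl[\,(f_j - \epsilon)\mathbf{1}_{B(0,R)} - F\,\mathbf{1}_{B(0,R)^c},\ (f_j + \epsilon)\mathbf{1}_{B(0,R)} + F\,\mathbf{1}_{B(0,R)^c}\,\bigr].
\]
These cover $\calF_\sigma$ and have $L^2(\mu)$-widths bounded by $2\epsilon + 2\|F\mathbf{1}_{B(0,R)^c}\|_{L^2(\mu)}$.

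Now choose $R = R(\epsilon)$ so the tail term is $\le \epsilon$. By sub-Gaussianity of $\mu$, one has $\E_\mu[\|X\|^{2s}\mathbf{1}_{\|X\|>R}] \lesssim e^{-c R^2}$ for some $c>0$ (depending on $s,d$ and the sub-Gaussian constant), so $R(\epsilon) \lesssim \sqrt{\log(1/\epsilon)}$ suffices. Substituting yields
\[
\log N_{[\,]}\bigl(\epsilon, \calF_\sigma, L^2(\mu)\bigr)\ \lesssim\ \bigl(\log(1/\epsilon)\bigr)^{d/2 + sd/(2s)}\epsilon^{-d/s} \ =\ \bigl(\log(1/\epsilon)\bigr)^{d}\epsilon^{-d/s}
\]
for $\epsilon \in (0,1)$. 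Since $d/s < 2$ we have $d/(2s) < 1$, so
\[
\int_0^1 \sqrt{\log N_{[\,]}(\epsilon, \calF_\sigma, L^2(\mu))}\,d\epsilon\ \lesssim\ \int_0^1 \bigl(\log(1/\epsilon)\bigr)^{d/2}\epsilon^{-d/(2s)}\,d\epsilon\ < \infty,
\]
and combined with the finite-$L^2(\mu)$-norm envelope $F$, Theorem 2.5.6 in van der Vaart--Wellner gives that $\calF_\sigma$ is $\mu$-Donsker.

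The only delicate point is calibrating $R(\epsilon)$ against the VW covering bound so that the polynomial blow-up $R^d(1+R^s)^{d/s}$ from the covering number is overwhelmed by the gain $\epsilon^{-d/s}$ with only polylogarithmic corrections --- this works precisely because $s > d/2$ ensures $d/s < 2$. No other obstacle is expected: the estimate \eqref{EQ:EOT_smooth_potentials} on optimal EOT potentials is exactly what makes $\calF_\sigma$ a uniformly bounded smooth class on each ball, and sub-Gaussian tails control the truncation error at the required logarithmic rate.
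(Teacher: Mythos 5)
Your proof is correct, but it goes a genuinely different route than the paper's. The paper delegates the unboundedness issue to a partitioning lemma (Lemma~\ref{lem:Donsker}, itself built on Theorem~1.1 of \cite{vanderVaart1996}): it tiles $\R^d$ by unit balls $\{\calX_j\}$, bounds $M_j = \sup_{f\in\calF_\sigma}\|f\|_{C^s(\calX_j)} \lesssim r^s$ for balls in the annulus $B_r\setminus B_{r-1}$, and verifies summability $\sum_r r^{s+d-1}\mu(B_{r-1}^c)^{1/2}<\infty$ via sub-Gaussian tails, never touching a bracketing integral explicitly. You instead do the truncation by hand: cover $\calF_\sigma|_{B(0,R)}$ via VW Theorem~2.7.1, cap the tails by the polynomial envelope, calibrate $R(\epsilon)\sim\sqrt{\log(1/\epsilon)}$ against the Gaussian decay, and then check $\int_0^1\sqrt{\log N_{[\,]}}\,d\epsilon<\infty$ directly. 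The two arguments rely on exactly the same two ingredients ($s>d/2$ and sub-Gaussianity of $\mu$), and both are complete. The paper's route avoids keeping track of $R(\epsilon)$ explicitly and hides the bookkeeping inside the cited partitioning lemma; yours is more self-contained and makes the dependence of the bracketing entropy on $\epsilon$ visible --- $\log N_{[\,]}(\epsilon,\calF_\sigma,L^2(\mu)) \lesssim (\log(1/\epsilon))^d\,\epsilon^{-d/s}$ --- which is a slightly stronger quantitative conclusion than the qualitative Donsker statement. One could remark, for completeness, that when applying VW~2.7.1 one uses that boundedness of the order-$s$ partials on $B(0,R)$ implies the Lipschitz condition on the order-$(s-1)$ partials, which is what the VW H\"older norm actually requires; your $M_R\lesssim(1+R^s)$ handles this correctly, and the paper makes the identical use of Theorem~2.7.1 in the compact-support case of Lemma~\ref{lem:LipDonsker}.
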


The proof relies on the the following lemma, which is a simple application of Theorem 1 in \cite{vanderVaart1996}.

\begin{lemma}[Lemma 8 in \cite{nietert21}]
\label{lem:Donsker}
Let $\cF \subset C^{s}(\R^{d})$ be a function class where $s$ is a positive integer with $s > d/2$, and let $\{ \cX_{j} \}_{j=1}^{\infty}$ be a cover of $\R^{d}$ consisting of nonempty bounded convex sets with bounded diameter. 
Set $M_j = \sup_{f \in \cF} \| f \|_{C^{s}(\cX_j)}$ with $\| f \|_{C^{s}(\cX_j)} = \max_{\bar{k} \le s} \sup_{x \in \mathrm{int}(\cX_j)} |\partial^{k}f(x)|$. 
If $\sum_{j=1}^{\infty} M_{j}\mu(\cX_j)^{1/2} < \infty$, then $\cF$ is $\mu$-Donsker.
\end{lemma}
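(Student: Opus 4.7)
The plan is to apply Lemma~\ref{lem:Donsker} with a carefully chosen convex cover of $\R^d$. The key observation is that elements of $\calF_\sigma$ and all their derivatives of order $\le s$ grow at most polynomially (at rate $\|x\|^s$), while the sub-Gaussian tails of $\mu$ decay super-polynomially. Thus if we partition $\R^d$ into uniformly bounded convex pieces indexed by location, the product $M_j \mu(\cX_j)^{1/2}$ will be summable.

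Concretely, I would cover $\R^d$ by the axis-aligned unit cubes $\cX_{\mathbf k} = \prod_{i=1}^d [k_i,k_i+1]$, $\mathbf k \in \Z^d$. Each cube is convex, bounded, and has diameter $\sqrt d$, so the hypotheses on the cover in Lemma~\ref{lem:Donsker} are satisfied. By the choice $s = \max\{\lfloor d/2\rfloor+1,2\}$, we have $s > d/2$, so the smoothness requirement also holds. For any $f \in \calF_\sigma$ and any multi-index $k$ with $\bar k \le s$, the defining bound gives $|\partial^k f(x)| \le C_{s,d,\sigma}(1+\|x\|^s)$, and on $\cX_{\mathbf k}$ we have $\|x\| \le \|\mathbf k\| + \sqrt d$, so
\[
M_{\mathbf k} \;=\; \sup_{f \in \calF_\sigma} \| f \|_{C^s(\cX_{\mathbf k})} \;\le\; C_{s,d,\sigma}\bigl(1 + (\|\mathbf k\|+\sqrt d)^s\bigr) \;\lesssim\; 1 + \|\mathbf k\|^s.
\]

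Next, I would use sub-Gaussianity of $\mu$ to bound $\mu(\cX_{\mathbf k})$. If $\mu$ is $\sigma_0^2$-sub-Gaussian for some $\sigma_0 > 0$, then Markov's inequality yields $\mu(\{\|X\| \ge t\}) \le 2 e^{-t^2/(2d\sigma_0^2)}$. For $\|\mathbf k\|_\infty \ge 1$, the cube $\cX_{\mathbf k}$ is contained in $\{\|x\| \ge \|\mathbf k\|_\infty - 1\}$ (using the $\ell^\infty$ norm for convenience), so $\mu(\cX_{\mathbf k}) \lesssim e^{-c(\|\mathbf k\|_\infty-1)^2}$ for some constant $c > 0$. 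Consequently
\[
\sum_{\mathbf k \in \Z^d} M_{\mathbf k}\, \mu(\cX_{\mathbf k})^{1/2} \;\lesssim\; \sum_{\mathbf k \in \Z^d} \bigl(1+\|\mathbf k\|^s\bigr)\, e^{-c(\|\mathbf k\|_\infty-1)^2/2} \;<\; \infty,
\]
since the polynomial growth is dominated by the Gaussian decay. Applying Lemma~\ref{lem:Donsker} then yields that $\calF_\sigma$ is $\mu$-Donsker.

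There is no serious obstacle here: the only mild care needed is to use a cover consisting of convex sets (so axis-aligned cubes rather than annular shells), and to invoke the sub-Gaussian tail bound in a form that controls each cube's mass. All the substantive work has already been packaged into Lemma~\ref{lem:Donsker}.
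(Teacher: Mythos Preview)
You have proved the wrong statement. Lemma~\ref{lem:Donsker} is a \emph{general} criterion: for \emph{any} function class $\cF \subset C^s(\R^d)$ with $s > d/2$ and \emph{any} cover $\{\cX_j\}$ of $\R^d$ by bounded convex sets of uniformly bounded diameter, the summability condition $\sum_j M_j \mu(\cX_j)^{1/2} < \infty$ implies $\cF$ is $\mu$-Donsker. You were asked to prove this implication. Instead, you \emph{applied} the lemma: you fixed the specific class $\calF_\sigma$ from the EOT section, constructed a specific cover by unit cubes, and verified the summability hypothesis using sub-Gaussianity of $\mu$. That is the content of Lemma~\ref{lem: EOT Donsker}, not of Lemma~\ref{lem:Donsker}. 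As a proof of the stated lemma your argument is circular, since its first line invokes the very result you are supposed to establish.

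The paper records Lemma~\ref{lem:Donsker} as a citation (Lemma~8 in \cite{nietert21}); its proof proceeds via Theorem~1.1 of \cite{vanderVaart1996}, a result on assembling a Donsker class from pieces indexed by a partition. One disjointifies the cover into $I_j = \cX_j \setminus \bigcup_{k<j} \cX_k$, defines $\cF_{I_j} = \{f\ind_{I_j} : f \in \cF\}$, and shows two things: (i) each $\cF_{I_j}$ is $\mu$-Donsker, because $s > d/2$ controls the uniform entropy of $C^s$ functions on a bounded convex set (cf.\ the discussion preceding Corollary~2.1 in \cite{vanderVaart1996}); and (ii) $\E\big[\|\sqrt{n}(\hat\mu_n - \mu)\|_{\infty,\cF_{I_j}}\big] \lesssim M_j \mu(\cX_j)^{1/2}$, with implicit constant depending only on $d$, $s$, and $\sup_j \diam(\cX_j)$. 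The summability hypothesis then feeds into the combination theorem. None of these ingredients appear in your proposal.

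As a side remark, what you wrote is a correct proof of Lemma~\ref{lem: EOT Donsker}, and essentially equivalent to the paper's own argument there (which uses $1$-nets of spherical shells $B_r \setminus B_{r-1}$ rather than unit cubes, but exploits the same polynomial-growth-versus-sub-Gaussian-tail tradeoff).
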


\begin{proof}[Proof of Lemma \ref{lem: EOT Donsker}]
We construct a cover $\{ \cX_j \}_{j=1}^{\infty}$ and verify the conditions of Lemma~\ref{lem:Donsker} for the function class $\cF_{\sigma}$. Let $B_r = \{ x : \|x\| \le r \}$. For $r =2,3,\dots$, let $\{ x_{1}^{(r)},\dots,x_{N_{r}}^{(r)} \}$ be a minimal $1$-net of $B_r \setminus B_{r-1}$.
Set $x_{1}^{(1)} = 0$ with $N_1 = 1$. From a simple volumetric argument, we see that $N_r = O(r^{d-1})$.
Set $\calX_j = \{ x : \|x-x_{j}^{(r)} \| \le 1 \}$ for $j=\sum_{\ell=1}^{r-1}N_{\ell} +1,\dots,\sum_{\ell=1}^rN_\ell$. By construction, $\{ \calX_j \}_{j=1}^{\infty}$ forms a cover of $\R^{d}$ with diameter $2$. 
Further, by construction, for $M_j := \sup_{f \in \cF_{\sigma}} \| f \|_{C^{s}(\cX_j)}$ with $s = \max \{ \lfloor d/2 \rfloor + 1,2 \}$, we have $\max_{\sum_{\ell=1}^{r-1}N_{\ell} +1\le j \le \sum_{\ell=1}^r N_{\ell}} M_j\lesssim r^{s}$. The function class $\calF_{\sigma}$ is $\mu$-Donsker if $
\sum_{r=1}^\infty r^{s+d-1} \mu(B_{r-1}^c)^{1/2} < \infty$,
which holds as $\mu$ is sub-Gaussian. 
\end{proof}

We are ready to prove Theorem \ref{THM:EOT_CLT}.

\begin{proof}[Proof of Theorem \ref{THM:EOT_CLT}]
The proof for the bootstrap consistency for each case of (i) and (ii) is analogous to that of $\SWp$ in Theorem \ref{thm: limit distribution SWp}, so we only prove the CLT results.

\textbf{Part (i).}  
Suppose $\mu$ is $\sigma^2$-sub-Gaussian. 
Observe that, for any constant $\bar{\sigma} > \sigma$, $\hat{\mu}_n$ is $\bar{\sigma}^2$-sub-Gaussian with probability approaching one. Indeed, by the law of large numbers,
\[
\E_{\hat{\mu}_n}[e^{\|X\|^2/(2d\bar{\sigma}^2)}] = \frac{1}{n} \sum_{i=1}^n e^{\|X_i\|^2/(2d\bar{\sigma}^2)} \to \E_\mu[e^{\|X\|^2/(2d\bar{\sigma}^2)}] \le 2^{\bar{\sigma}^2/\sigma^2} < 2
\]
a.s., so $\E_{\hat{\mu}_n}[e^{\|X\|^2/(2d\bar{\sigma}^2)}] \le 2$ with probability approaching one.

With this in mind, for any fixed $\bar{\sigma} > \sigma$, we apply Proposition \ref{prop: master proposition} with $\calP_0 = \{ \rho \in \calP(\R^d) : \text{$\rho$ is $\bar{\sigma}^2$-sub-Gaussian} \}$, $\calF = \calF_{\bar{\sigma}}, F(x) = C_{s,d,\bar{\sigma}}(1+\|x\|^s)$ with $s = \max \{ \lfloor d/2 \rfloor +1,2 \}$, and $\delta(\rho) = \sS (\rho,\nu)$. 
We have already verified Conditions (a)--(c) in Proposition \ref{prop: master proposition} with $\delta_{\mu}'(m) = m(\varphi)$. Conclude that
\[
\sqrt{n}\big(\sS(\hat{\mu}_n,\nu) - \sS(\mu,\nu)\big) = \sqrt{n}\big(\delta (\hat{\mu}_n) - \delta(\mu)\big) \stackrel{d}{\to} \delta_{\mu}'(G_\mu) = G_\mu(\varphi) \sim N\big(0,\Var_{\mu}(\varphi)\big). 
\]

As the derivative $\delta_{\mu}'$ is the point evaluation at $\varphi$, 
the fact that $\Var_{\mu}(\varphi)$ coincides with the semiparametric efficiency bound follows directly from \cref{prop: master proposition 2} (note: since $\mu$ is $\sigma^2$-sub-Gaussian and $\bar{\sigma} > \sigma$, for any bounded $\mu$-mean zero function $h$, $(1+th)\mu$ is $\bar{\sigma}^2$-sub-Gaussian for sufficiently small $t > 0$).

\medskip

\textbf{Part (ii)}. Suppose $\mu,\nu$ are $\sigma^2$-sub-Gaussian. Set $s=\max \{ \lfloor d/2 \rfloor+1,2 \}$.
Let
\[
\calF_\sigma^{\oplus} = \{ \varphi \oplus \psi  : (\varphi,\psi) \ \text{satisfies (\ref{EQ:EOT_smooth_potentials})} \}. 
\]
We will show that, if $\mu_i,\nu_i, i=0,1$ are $\sigma^2$-sub-Gaussian, then 
\begin{equation}
|\sS(\mu_1,\nu_1) - \sS(\mu_0,\nu_0) | \le \| \mu_1 \otimes \nu_1 - \mu_0 \otimes \nu_0 \|_{\infty,\calF_\sigma^{\oplus}}
\label{eq: lipschitz EOT}
\end{equation}
 Let $(\varphi_{i,j},\psi_{i,j})$ be optimal EOT potentials for $(\mu_i,\nu_j)$ satisfying \eqref{EQ:EOT_smooth_potentials}. Then, from \eqref{eq: one-sided-1}, we have
\[
\begin{split}
\sS(\mu_1,\nu_1) - \sS(\mu_1,\nu_1) &= \sS(\mu_1,\nu_1) - \sS(\mu_0,\nu_1) + \sS(\mu_0,\nu_1) - \sS(\mu_0,\nu_0) \\
&\ge \int \varphi_{0,1} d(\mu_1-\mu_0) + \int \psi_{0,0} d(\nu_1-\nu_0) \\
&= \int (\varphi_{0,1} \oplus \psi_{0,0}) d\big (\mu_1 \otimes \nu_1 - \mu_0 \otimes \nu_0 \big). 
\end{split}
\]
Likewise, using \eqref{eq: one-sided-2}, we see that 
\[
\sS(\mu_1,\nu_1) - \sS(\mu_1,\nu_1) \le
\int (\varphi_{1,1} \oplus \psi_{0,1}) d\big (\mu_1 \otimes \nu_1 - \mu_0 \otimes \nu_0 \big). 
\]
Since all $(\varphi_{i,j},\psi_{i,j})$ satisfy \eqref{EQ:EOT_smooth_potentials}, we obtain \eqref{eq: lipschitz EOT}. 

Further, arguing as in the proof of Lemma \ref{lem: EOT gateaux}, we have
\[
\begin{split}
&\lim_{t \downarrow 0}\frac{\sS (\mu_0 + t(\mu_1-\mu_0),\nu_0+t(\nu_1-\nu_0)) - \sS(\mu_0,\nu_0)}{t}\\
&\quad= \int (\varphi_{0,0} \oplus \psi_{0,0}) d\big(\mu_1 \otimes \nu_1 - \mu_0 \otimes \nu_0 \big).
\end{split}
\]

We shall verify that 
\[
\sqrt{n}(\hat{\mu}_n \otimes \hat{\nu}_n - \mu \otimes \nu) \stackrel{d}{\to}  G_{\mu \otimes \nu} \quad \text{in} \ \ \ell^\infty (\calF_\sigma^{\oplus})
\]
for some tight Gaussian process $G_{\mu \otimes \nu}$ in $\ell^\infty (\calF_\sigma^{\oplus})$. But this follows from the same argument as in the proof of Theorem \ref{thm: limit distribution SWp} Part (ii); we omit the details for brevity. 

Pick and fix any $\bar{\sigma} > \sigma$. Then $\hat{\mu}_n$ and $\hat{\nu}_n$ are $\bar{\sigma}^2$-sub-Gaussian with probability approaching one. We shall now apply Proposition \ref{prop: master proposition} with $\calP_0 = \{ \rho_1 \otimes \rho_2 \in \calP(\R^{2d}) : \text{$\rho_1,\rho_2$ are $\bar{\sigma}^2$-sub-Gaussian} \}$, 
$\calF = \calF_{\bar{\sigma}}^{\oplus}, F(x,y) = C_{s,d,\bar{\sigma}}(2+\|x\|^s+\|y\|^s)$ with $s=\max \{ \lfloor d/2 \rfloor +1, 2 \}$, and $\delta(\rho_1 \otimes \rho_2) = \sS (\rho_1,\rho_2)$. As in the proof of \cref{thm: limit distribution SWp} Part (ii), we may identify $(1-t) (\mu_0 \otimes \nu_0) + t (\mu_1 \otimes \nu)$ and $\big((1-t)\mu_0 + t\mu_1\big) \otimes \big((1-t)\nu_0 + t\nu_1\big)$ as elements of $\ell^\infty(\calF)$, and the class $ \calP_0$ is convex as a subset of $\ell^\infty (\calF)$.  We have already verified Conditions (a)--(c) in Proposition \ref{prop: master proposition} with $\delta_{\mu \otimes \nu}'(m) = m(\varphi\oplus \psi)$ (see also \cref{rem: convexity}). Conclude that 
\[
\begin{split}
\sqrt{n}\big(\sS(\hat{\mu}_n,\hat{\nu}_n) - \sS(\mu,\nu)\big) &= \sqrt{n}\big(\delta (\hat{\mu}_n\otimes \hat{\nu}_n) - \delta(\mu \otimes \nu)\big) \\
&\stackrel{d}{\to} G_{\mu \otimes \nu}(\varphi\oplus \psi) \sim N(0,\Var_{\mu}(\varphi) + \Var_{\nu}(\psi)). 
\end{split}
\]
Finally, the fact that $\Var_{\mu}(\varphi)+\Var_\nu (\psi)$ coincides with the semiparametric efficiency bound follows directly from \cref{cor: efficiency two sample}.
\end{proof}

\bibliographystyle{amsalpha}
\bibliography{ref}
\end{document}